\theoremstyle{definition}
\newtheorem{definition}{Definition}[section]
\newtheorem{remark}[definition]{Remark}
\newtheorem{example}[definition]{Example}
\newtheorem{examples}[definition]{Examples}
\newtheorem{notation}[definition]{Notation}
\theoremstyle{theorem}
\newtheorem{theorem}[definition]{Theorem}
\newtheorem{proposition}[definition]{Proposition}
\newtheorem{lemma}[definition]{Lemma}
\newtheorem{corollary}[definition]{Corollary}
\newcommand{\ifff}{{\em i\,f\,{f}\;  }}
\newcommand{\Proof}{\noindent \textbf{Proof:\;}}
\newcommand{\bra}{\ensuremath{\large\langle}}
\newcommand{\ket}{\ensuremath{\large\rangle}}
\newcommand{\supp}{\ensuremath{{\rm{supp}}}}
\newcommand{\card}{\ensuremath{{\rm{card}}}}
\newcommand{\st}{\ensuremath{{{\rm{st}}}}}
\newcommand{\eqdef}{\ensuremath{\overset{\text{def}}{=}}}
\newcommand{\embed}{\ensuremath{\hookrightarrow}}
\newcommand{\rest}{\ensuremath{\upharpoonright}}
\begin{document} 
\title{Lecture Notes: Non-Standard Approach to J.F. Colombeau's Theory of Generalized 
Functions$^*$\\
\small{University of Vienna, Austria, May 2006.}}

\author{Todor D. Todorov\\ 
                        Mathematics Department\\                
                        California Polytechnic State University\\
                        San Luis Obispo, California 93407, USA\\
																							(ttodorov@calpoly.edu)\\}
\date{}
\maketitle	
\begin{abstract} In these lecture notes we present an introduction to
non-standard analysis especially written for the community of mathematicians,
physicists and engineers who do research on J. F. Colombeau' theory of new generalized
functions and its applications. The main purpose of our non-standard approach to
Colombeau' theory is the improvement of the properties of the scalars of the varieties of spaces of
generalized functions: in our non-standard approach the sets of scalars of the functional spaces
always form algebraically closed non-archimedean Cantor complete fields. In contrast, the scalars
of the functional spaces in Colombeau's theory are rings with zero divisors. The improvement of the
scalars leads to other improvements and simplifications of Colombeau's theory such as reducing the
number of quantifiers and possibilities for an axiomatization of the theory. 
Some of the algebras we construct in these notes have already counterparts in Colombeau's theory,
other seems to be without counterpart. We present applications of the theory to PDE and
mathematical physics. Although our approach is directed mostly to Colombeau's community, the
readers who are already familiar with non-standard methods might also find a short
and comfortable way to learn about Colombeau's theory: a new branch of functional analysis which
naturally generalizes the Schwartz theory of distributions with numerous applications to partial
differential equations, differential geometry, relativity theory and other areas of mathematics
and physics.  
\end{abstract}

$^*$Work supported by START-project Y237 of the Austrian Science Fund\newline

MSC: Functional Analysis (46F30); Generalized Solutions of PDE (35D05).
\newpage
\section{Introduction}
This lecture notes are an extended version of the several lectures I gave at the
University of Vienna during my visit in the Spring of 2006. My audience consisted mostly of
colleagues, graduate and undergraduate students who do research on J.F. Colombeau's non-linear
theory of generalized functions (J.F. Colombeau's~(\cite{jCol84a}-\cite{jCol91}) and its
applications to ordinary and partial differential equations, differential geometry, relativity
theory and mathematical physics. With very few exceptions the colleagues attended my talks were not
familiar with nonstandard analysis.  This fact strongly influenced the nature of my lectures and
these lecture notes. I do not assume that the reader of these notes is necessarily familiar
neither with A. Robonson's non-standard analysis (A. Robonson~\cite{aRob66}) nor with  A.
Robonson's non-standard asymptotic analysis  (A. Robinson~\cite{aRob73} and A. Robonson and A.H.
Lightstone~~\cite{LiRob}). I have tried to downplay the role of mathematical logic as much as
possible. With examples from Colombeau's theory I tried to convince my colleagues that the
involvement of the non-standard methods in Colombeau theory has at least the following three
advantages: 
\begin{enumerate}
\item The scalars of the non-standard version of Colombeau's theory are algebraically closed
Cantor complete fields. Recall that in Colmbeau's theory 
the scalars of the functional spaces are rings with zero divisors.

\item The involvement of non-standard analysis in Colombeau's theory leads to simplification of
the theory by reducing the number of the quantifiers. This should be not of surprise because
non-standard analysis is famous with the so called reduction of quantifiers. For comparison, the
familiar definition of a limit of a function in standard analysis involves three (non-commuting)
quantifiers. In contrast, its non-standard characterization uses only one quantifier. Another
example gives the definition of a compact set in point set topology involves at least two
quantifiers. In contrast, there is a free of quantifiers non-standard characterization of the
compactness in terms of monads. Since Colombeau' theory is relatively heavy of
quantifiers, the reduction of the numbers of quantifiers makes the theory more attractive to
colleagues outside the Colombeau's community and in particular to theoretical physicists.

\item  In my lectures and in these notes I decided to follow
mostly the so called constructive version of the non-standard analysis where the non-standard real
number 
$a\in{^*\mathbb{R}}$ is equivalence class of families $(a_i)$ in the ultrapower
$\mathbb{R}^\mathcal{I}$ for some infinite set $\mathcal{I}$. Similarly, every non-standard
smooth function $f\in{^*\mathcal{E}}(\Omega)$ is defined as equivalence class of families $(f_i)$
in the ultrapower $\mathcal{E}(\Omega)^\mathcal{I}$. Here
$\mathcal{E}(\Omega)$ is a (short) notation for $\mathcal{C}^\infty(\Omega)$. The equivalence
relation in both $\mathbb{R}^\mathcal{I}$  and $\mathcal{E}(\Omega)^\mathcal{I}$ is defined in
terms of a free ultrafilter $\mathcal{U}$ on $\mathcal{I}$. In our approach the choice of the index
set $\mathcal{I}$ and the choice of the ultrafilter $\mathcal{U}$ are borrowed from Colombeau's
theory. This approach to non-standard analysis is more directly connected with the standard (real)
analysis and allow to involve the non-standard analysis in research with comparatively limited
knowledge in the non-standard theory. The non-standard analysis however has also axiomatic version
based on two axioms known a Saturation Principle and Transfer Principle. The involvement of
non-standard analysis, if based on these two principles, opens the opportunities for
axiomatization of Colombeau's theory. I have demonstrated this in the notes by presenting a couple
of proofs to several theorems: one using families (nets), and another using these two axioms. The
first might be more convincible for beginners to non-standard analysis but the second proofs are
more elegant and short because it does not involve the representatives of the generalized numbers
and generalized functions.
\end{enumerate} 

Let  $\mathcal{T}$ stand for
the usual topology on $\mathbb{R}^d$. J.F. Colombeau's non-linear theory of generalized functions is
based on varieties of families of differential commutative
rings $\mathcal{G}\eqdef\{\mathcal{G}(\Omega)\}_{\Omega\in\mathcal{T}}$ such that:
1) Each $\mathcal{G}$ is a {\bf sheaf} of differential rings (consequently, each
$f\in\mathcal{G}(\Omega)$ has a {\bf support} which is a closed set of $\Omega$). 2) Each
$\mathcal{G}(\Omega)$ is supplied with a chain of sheaf-preserving embeddings
$\mathcal{C}^\infty(\Omega)\subset\mathcal{D}^\prime(\Omega)\subset\mathcal{G}(\Omega)$,
where $\mathcal{C}^\infty(\Omega)$ is a {\bf differential subring} of
$\mathcal{G}(\Omega)$ and the space of L. Schwartz's distributions
$\mathcal{D}^\prime(\Omega)$ is a {\bf differential linear subspace} of $\mathcal{G}(\Omega)$. 3)
The ring of the scalars $\widetilde{\mathbb{C}}$ of the family $\mathcal{G}$  (defined as the set of
the functions in $\mathcal{G}(\mathbb{R}^d)$ with zero gradient) is a non-Archimedean ring with zero
devisors containing a copy of the complex numbers $\mathbb{C}$. Colombeau theory has numerous
applications to ordinary and partial differential equations, fluid mechanics, elasticity theory, quantum
field theory and more recently to general relativity. 

\newpage

\section{$\kappa$-Good Two Valued Measures}\label{S: kappa-Good Two Valued Measures}

	I follow the  philosophy that every non-standard real
number $a\in{^*\mathbb{R}}$ is, roughly speaking, a family $(a_i)$ in the ultrapower
$\mathbb{R}^\mathcal{I}$ for some infinite set $\mathcal{I}$. Similarly, every nonstandard
smooth function $f\in{^*\mathcal{E}}(\Omega)$ is again, roughly speaking, a  family $(f_i)$
in the ultrapower $\mathcal{E}(\Omega)^\mathcal{I}$. Here
$\mathcal{E}(\Omega)$ is a (short) notation for $\mathcal{C}^\infty(\Omega)$. 

\begin{definition}[$\kappa$-Good Two Valued Measures]\label{D: kappa-Good Two Valued Measures}
Let $\mathcal{I}$ be an infinite set of cardinality $\kappa$, i.e. $\card(\mathcal{I})=\kappa$. A
mapping  $p:\mathcal{P}(\mathcal{I})\to\{0, 1\}$ is a {\bf $\kappa$-good two-valued
(probability) measure} if
\begin{enumerate}
\item $p$ is finitely additive, i.e. $p(A\cup B)=p(A)+p(B)$ for disjoint $A$ and $B$.
\item $p(\mathcal{I})=1$.
\item $p(A)=0$ for finite $A$.
\item There exists a sequence of sets $(\mathcal{I}_n)$ such that
\begin{enumerate}
\item{\bf (a)} $\mathcal{I}\supset\mathcal{I}_1\supset\mathcal{I}_2\supset\dots$,
\item{\bf (b)} $\mathcal{I}_n\setminus\mathcal{I}_{n-1}\not=\varnothing$ for all $n$,
\item{\bf (c)} $\bigcap_{n=1}^\infty\mathcal{I}_n=\varnothing$, 
\item{\bf (d)} $p(\mathcal{I}_n)=1$ for all $n$.
\end{enumerate}
\item If $\mathcal{I}$ is uncountable, we impose one more property: $p$ should be
{\bf $\kappa$-good} in the sense that for every set $\Gamma\subseteq \mathcal{I}$, with
$\card(\Gamma)\leq\kappa$, and every {\bf reversal} $R: \mathcal{P}_\omega(\Gamma)\to\mathcal{U}$
there exists a {\bf strict reversal} 
$S: \mathcal{P}_\omega(\Gamma)\to\mathcal{U}$ such that $S(X)\subseteq R(X)$ for all 
$X\in\mathcal{P}_\omega(\Gamma)$. Here $\mathcal{P}_\omega(\Gamma)$ denotes the set of all finite
subsets of $\Gamma$ and $\mathcal{U}=\{A\in\mathcal{P}(\mathcal{I}) \mid P(A)=1\}$.
\end{enumerate}
\end{definition}

\begin{remark}[Reversals]\label{R: Reversals} Let $\Gamma\subseteq I$. A function
$R:\mathcal{P}_\omega(\Gamma)\to\mathcal{U}$ is called a {\bf reversal}  if $X\subseteq Y$ implies
$R(X)\supseteq R(Y)$ for every $X, Y\in\mathcal{P}_\omega(\Gamma)$. A function $S:
\mathcal{P}_\omega(\Gamma)\to\mathcal{U}$ is called a {\bf strict reversal}  if $S(X\cup Y)=
S(X)\cap S(Y)$ for every $X, Y\in\mathcal{P}_\omega(\Gamma)$. It is clear that every strict
reversal is a reversal (which justifies the terminology). 
\end{remark}
\newpage
\section{Existence of Two Valued $\kappa$-Good Measures}
\begin{theorem}[Existence of Two Valued $\kappa$-Good Measures]\label{T: Existence of Two Valued
kappa-Good Measures} Let $\mathcal{I}$ be an infinite set and let $(\mathcal{I}_n)$ be a
sequence of sets with the properties (a)-(c) (think of Colombeau's theory).
Then there exists a two valued $\kappa$-good measure $p:\mathcal{P}(\mathcal{I})\to\{0, 1\}$, where
$\kappa=\card(\mathcal{I})$, such that
$p(\mathcal{I}_n)=1$ for all
$n\in\mathbb{N}$. 
\end{theorem}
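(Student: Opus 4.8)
The plan is to build the measure $p$ (equivalently, the ultrafilter $\mathcal{U}=\{A\mid p(A)=1\}$) by a standard maximal-filter argument, seeded with the sets $\mathcal{I}_n$, and then to verify separately that the resulting ultrafilter can be taken to be $\kappa$-good. First I would let $\mathcal{F}_0$ be the filter on $\mathcal{I}$ generated by the cofinite sets together with the family $\{\mathcal{I}_n\}_{n\in\mathbb{N}}$. Properties (a) and (b) guarantee that finite intersections of the $\mathcal{I}_n$ are of the form $\mathcal{I}_N$ and are nonempty, and since each $\mathcal{I}_n$ has nonempty intersection with every cofinite set (as $\mathcal{I}_n$ is infinite, which follows from (a),(b)), $\mathcal{F}_0$ is a proper filter. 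Extend $\mathcal{F}_0$ to an ultrafilter $\mathcal{U}$ on $\mathcal{I}$. Defining $p(A)=1$ if $A\in\mathcal{U}$ and $p(A)=0$ otherwise immediately yields properties 1, 2, 3 of Definition~\ref{D: kappa-Good Two Valued Measures} (finite additivity and $p(\mathcal{I})=1$ because $\mathcal{U}$ is an ultrafilter, and $p(A)=0$ on finite $A$ because $\mathcal{U}$ extends the cofinite filter, hence is free), and property 4 holds since $\mathcal{I}_n\in\mathcal{F}_0\subseteq\mathcal{U}$ gives $p(\mathcal{I}_n)=1$, with (a)-(c) inherited from the hypothesis. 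Property (d) of item 4 is exactly $p(\mathcal{I}_n)=1$.

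The substantive part is property 5: $\kappa$-goodness, needed only when $\mathcal{I}$ is uncountable. Here I would not extend $\mathcal{F}_0$ arbitrarily but instead invoke (or reprove) the Keisler--Kunen theorem that every infinite set of cardinality $\kappa$ carries a $\kappa^+$-good (in particular $\kappa$-good) countably incomplete ultrafilter, and moreover that goodness can be arranged while forcing a prescribed countable descending chain of sets into the ultrafilter. Concretely, the construction is a transfinite recursion of length $2^\kappa$: enumerate all pairs $(\Gamma, R)$ with $\Gamma\subseteq\mathcal{I}$, $\card(\Gamma)\le\kappa$, and $R$ a candidate reversal, and at each stage adjoin to the filter a strict reversal $S$ refining $R$ below it, using that the filter built so far has a generating set of size $\le\kappa<2^\kappa$ so there is ``room'' to choose the finitely-many new sets $S(\{x\})$ freely. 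Simultaneously one keeps the sets $\mathcal{I}_n$ in the filter at every stage — this is compatible because $\{\mathcal{I}_n\}$ is a countable chain and countable incompleteness is precisely what the existence of such a chain with empty intersection expresses (condition (c)). One must check the recursion never collapses the filter to the improper one: this is the usual counting argument, using $\card(\mathcal{P}_\omega(\Gamma))=\kappa$ and that at each step only $\kappa$ new sets are added.

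The main obstacle I anticipate is the bookkeeping in the transfinite recursion for property 5: one must show that after adjoining a strict reversal refining each reversal $R$, the filter is still proper (finite intersections of old filter elements with the new sets $S(X)$ are nonempty) and still contains every $\mathcal{I}_n$. The cleanest route is to maintain an invariant that the filter at stage $\alpha$ is generated by a set of size $\le\card(\alpha)+\aleph_0\le\kappa$ of subsets of $\mathcal{I}$, together with the chain $\{\mathcal{I}_n\}$, in such a way that any $\le\kappa$ of them have infinite (indeed cofinal-in-some-$\mathcal{I}_n$) intersection; the freedom to pick the values $S(\{x\})$ for $x\in\Gamma$ inside an arbitrary prescribed infinite set then follows by a straightforward partition argument on $\mathcal{I}$, exactly as in the classical proof of existence of good ultrafilters. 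I would present this either in full (tracking the chain $\{\mathcal{I}_n\}$ explicitly) or, for a reader willing to cite it, by reducing to the standard existence theorem for $\kappa^+$-good countably incomplete ultrafilters and then noting that the countable incompleteness witness can be replaced by the given chain $(\mathcal{I}_n)$ without loss of generality.
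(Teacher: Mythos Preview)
Your proposal is correct and follows essentially the same route as the paper: seed a filter with the chain $(\mathcal{I}_n)$, extend to an ultrafilter, verify properties (i)--(iv) directly, and appeal to the literature (Chang--Keisler / Keisler--Kunen) for the $\kappa$-goodness in property (v). Your filter $\mathcal{F}_0$ coincides with the paper's, since condition (c) forces every cofinite set to contain some $\mathcal{I}_n$; and you are in fact more careful than the paper on one point---you correctly note that for $\kappa$-goodness the extension cannot be an \emph{arbitrary} Zorn-lemma maximal filter but must be built via the good-ultrafilter construction, whereas the paper's Step~2 takes an arbitrary maximal extension and only afterwards gestures at the literature for (v).
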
 
\begin{remark} We should note that for every infinite set $\mathcal{I}$ there exists
a sequence $(\mathcal{I}_n)$ with the properties (a)-(c).
\end{remark}
\Proof {\em Step 1:} {\bf Define} $\mathcal{F}_0\subset\mathcal{P}(\mathcal{I})$ by
\[
\mathcal{F}_0=\{A\in\mathcal{P}(\mathcal{I}) \mid \mathcal{I}_n\subseteq A \text{\; for some\; }
n\}.
\]
It is easy to check that $\mathcal{F}_0$ {\bf is a free countably incomplete filter on
}$\mathcal{I}$ in the sense that
$\mathcal{F}_0$ has the following properties:
\begin{enumerate}
\item $\varnothing\notin\mathcal{F}_0$.
\item $\mathcal{F}_0$ is closed under finite intersections.
\item $\mathcal{F}_0\ni A\subseteq B\in\mathcal{P}(\mathcal{I})$ implies
$B\in\mathcal{F}_0$.
\item $\mathcal{I}_n\in\mathcal{F}_0$ for all $n\in\mathbb{N}$.
\end{enumerate}

	{\em Step 2:} {\bf We extend $\mathcal{F}_0$ to a ultrafilter
$\mathcal{U}$} on $\mathcal{I}$ by Zorn lemma: Let $\mathcal{L}$ denote the set of all free filter
$\mathcal{F}$ on $\mathcal{I}$ containing $\mathcal{I}_n$, i.e.
\[
\mathcal{L}= \{\mathcal{F}\subset\mathcal{P}(\mathcal{I})\mid \mathcal{F} \text{\, satisfies
(i)-(iv)}, \text{\, where\, } \mathcal{F}_0 \text{\, should be replaced by\,}
\mathcal{F}\}.
\]
We shall order
$\mathcal{L}$ by inclusion $\subset$. Observe that every chain $L$ in $\mathcal{L}$ is bounded from
above by $\bigcup_{A\in L}A$ and it is not difficult to show that
$\bigcup_{A\in L}A\in\mathcal{L}$. Thus $\mathcal{L}$ has maximal elements $\mathcal{U}$ by Zorn
lemma. In what follows we shall keep $\mathcal{U}$ fixed.

	{\em Step 3:} We shall prove now that {\bf $\mathcal{U}$ has the following (free ultrafilter)
properties:}
\begin{enumerate}
\item $\varnothing\notin\mathcal{U}$.
\item $\mathcal{U}$ is closed under finite intersections.
\item $\mathcal{U}\ni A\subseteq B\in\mathcal{P}(\mathcal{I})$ implies
$B\in\mathcal{U}$.
\item{\bf (4)} $\mathcal{I}_n\in\mathcal{U}$ for all $n\in\mathbb{N}$.
\item{\bf (5)} $A\cup B\in\mathcal{U}$ implies either $A\in\mathcal{U}$ 
or $B\in\mathcal{U}$. 
\end{enumerate}
	Indeed, $\mathcal{U}$ satisfies (1)-(4) by the choice of $\mathcal{U}$ since
$\mathcal{U}\in\mathcal{L}$. To show the property (5), suppose (on the contrary) that $A\cup
B\in\mathcal{U}$ and $A, B\notin\mathcal{U}$ for some subsets $A$ and $B$ of
$\mathcal{I}$. Next, we observe that $\mathcal{F}_A=\{X\in\mathcal{P}(\mathcal{I})\mid A\cup
X\in\mathcal{U}\}$ is also a free filter on $\mathcal{I}$ (i.e. $\mathcal{F}_A$ satisfies the
properties (1)-(4)). Next, we observe that $\mathcal{F}_A$ is a proper extension of
$\mathcal{U}$ since
$B\in\mathcal{F}_A\setminus\mathcal{U}$ by the assumption for $B$, contradicting 
the maximality of\, $\mathcal{U}$.

{\em Step 4:} {\bf Define} $p: \mathcal{P}(\mathcal{I})\to\{0, 1\}$ by
$p(A)=1$ whenever $A\in\mathcal{U}$ and $p(A)=0$ whenever $A\notin\mathcal{U}$. We have to show now
that $p$ is a $\kappa$-good two valued measure (Definition~\ref{D: kappa-Good Two Valued
Measures}). To check the finite additivity property (i) of
$p$, suppose that
$A\cap B=\varnothing$ for some 
$A, B\in\mathcal{P}(\mathcal{I})$. Suppose, first, that
$A\cup B\in\mathcal{U}$, so we have $p(A\cup B)=1$. On the other hand, by properties (1) and
(5), exactly one of the following two statements is true: either (a) $A\in\mathcal{U}$ and
$A\notin\mathcal{U}$ or (b) $A\notin\mathcal{U}$ and $A\in\mathcal{U}$. In either case we have
$p(A)+p(B)=1$, as required. Suppose, now, that $A\cup B\notin\mathcal{U}$, so we have $p(A\cup
B)=0$. In this case we have $A\notin\mathcal{U}$ and $B\notin\mathcal{U}$ by property (3). Thus
$p(A)+p(B)=0$. The property (ii): $p(\mathcal{I})=1$ holds since $\mathcal{I}\in\mathcal{U}$ by
properties (3) and (4) of $\mathcal{U}$. To prove the property (iii), suppose (on the contrary) that
$p(A)=1$ for some finite set
$A\subset\mathcal{I}$, i.e. $A\in\mathcal{U}$. It follows that there exists $i\in A$ such that
$\{i\}\in\mathcal{U}$ by property (5) of $\mathcal{U}$ since we have $\bigcup_{i\in A}\{i\}=A$.
Thus
$\{i\}\in\mathcal{I}_n$ for all
$n\in\mathcal{N}$ by properties (1), (2) and (4) of $\mathcal{U}$. It follows that 
$\{i\}\in\bigcap_{n\in\mathbb{N}}\mathcal{I}_n$ contradicting property (c) of the sequence
$(\mathcal{I}_n)$. The property (iv) holds by the choice of
$\mathcal{U}$ since
$\mathcal{I}_n\in\mathcal{F}_0\subset\mathcal{U}$ thus $p(\mathcal{I}_n)=1$. For the proof of the
property (v) of the measure
$p$ we shall refer to C. C. Chang and H. J. Keisler~\cite{CKeis} or to T. Lindstr\o m~\cite{tLin}.  
$\blacktriangle$
\section{A Non-Standard Analysis: The General Theory}\label{S: A Non-Standard Analysis: The General
Theory}
\begin{definition}[A Non-Standard Extension of a Set]\label{D: A Non-Standard Extension of a Set}
Let $S$ be a set and $\mathcal{I}$ be and infinite set, and $S^\mathcal{I}$ be the corresponding
ultrapower. 
\begin{enumerate}
\item We say that $(a_i)$ and
$(b_i)$ are equal almost everywhere in $\mathcal{I}$, in symbol $a_i=b_i$ a.e., if
$p(\{i\in\mathcal{I}\mid a_i=b_i\text{\, in\, } S\})=1$, or equivalently, if 
$\{i\in\mathcal{I}\mid a_i=b_i \text{\, in\, } S\}\in\mathcal{U}$, where
$\mathcal{U}=\{A\in\mathcal{P}(\mathcal{I})\mid p(A)=1\}$. We denote by $\sim$ the corresponding
equivalence relation, i.e. $(a_i)\sim(b_i)$ if $a_i=b_i$ a.e.. 
\item We denote by
$\bra a_i\ket$ the equivalence class determined by $(a_i)$. The set of all equivalence classes
$^*S=S^\mathcal{I}/_\sim$ is called a {\bf non-standard extension} of $S$.
\item Let $s\in S$. We define $^*s=\bra a_i \ket$, where $a_i=s$ for all
$i\in\mathcal{I}$. We define the canonical embedding $\sigma: S\to{^*S}$ by 
$\sigma(s)={^*s}$, and denote by $^\sigma S=\{^*s\mid s\in S\}$ the range of $\sigma$. We shall
sometimes treat this embedding as an inclusion,
$S\subseteq{^*S}$, by letting $s={^*s}$ for all $s\in S$. 
\item More generally, if $X\subseteq S$, we define $^*X\subseteq{^*S}$ by 
\[
^*X=\{\bra x_i\ket\in{^*S}\mid x_i\in X \text{\, a.e.}\}.
\]
 We have $X\subseteq{^*X}$ under the
embedding
$x\to{^*x}$. We say that $^*X$ is the {\bf non-standard extension} of $X$.
\end{enumerate}
\end{definition}
\begin{theorem}[Axiom 1. Extension Principle]\label{T: Extension Principle} Let $S$ be a set. Then
$S\subseteq{^*S}$ and 
$S={^*S}$
\ifff
$S$ is a finite set. 

\end{theorem}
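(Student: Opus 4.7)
The plan is to split the theorem into three parts: the always-true inclusion $S\subseteq{^*S}$, the forward implication (finite $\Rightarrow$ equality), and the contrapositive of the reverse implication (infinite $\Rightarrow$ strict containment).

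First, I would note that $S\subseteq{^*S}$ holds unconditionally by the canonical embedding $\sigma$: each $s\in S$ lifts to the constant class $\bra s,s,s,\dots\ket$, and two distinct $s,t\in S$ give classes that differ on all of $\mathcal{I}$, hence on a set of $p$-measure $1$, so $\sigma$ is injective and we may identify $S$ with $^\sigma S\subseteq{^*S}$.

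For the ``finite $\Rightarrow$ equality'' direction, I would write $S=\{s_1,\dots,s_n\}$ and take an arbitrary class $\bra a_i\ket\in{^*S}$. Setting $A_k=\{i\in\mathcal{I}\mid a_i=s_k\}$, these sets partition $\mathcal{I}$, so by finite additivity (Definition~\ref{D: kappa-Good Two Valued Measures}(i)) together with $p(\mathcal{I})=1$, exactly one $A_k$ has measure $1$. Then $a_i=s_k$ a.e., whence $\bra a_i\ket={^*s_k}\in{^\sigma S}$, giving ${^*S}\subseteq{^\sigma S}$ and hence equality.

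For ``infinite $\Rightarrow$ strict containment,'' the main task is to exhibit an explicit element of ${^*S}$ that is not constant almost everywhere. Here I would invoke the nested sequence $(\mathcal{I}_n)$ granted by property (iv) of the definition, which satisfies $\mathcal{I}_n\in\mathcal{U}$ for every $n$ and $\bigcap_n\mathcal{I}_n=\varnothing$. Since $S$ is infinite, fix a countable sequence of distinct elements $s_0,s_1,s_2,\ldots\in S$ and define
\[
a_i=\begin{cases} s_n & \text{if } i\in\mathcal{I}_n\setminus\mathcal{I}_{n+1},\\ s_0 & \text{if } i\in\mathcal{I}\setminus\mathcal{I}_1.\end{cases}
\]
Because $\bigcap_n\mathcal{I}_n=\varnothing$, every $i\in\mathcal{I}_1$ lies in exactly one level $\mathcal{I}_n\setminus\mathcal{I}_{n+1}$, so $(a_i)$ is well defined. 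For any fixed $s\in S$, I would show $\{i\mid a_i=s\}\notin\mathcal{U}$: if $s\ne s_n$ for all $n$ the set is empty, while if $s=s_n$ the set is contained in $\mathcal{I}\setminus\mathcal{I}_{n+1}$, whose complement $\mathcal{I}_{n+1}$ is in $\mathcal{U}$, so by property (1) of the ultrafilter it cannot itself belong to $\mathcal{U}$. Hence $\bra a_i\ket\ne{^*s}$ for every $s\in S$, proving $^\sigma S\subsetneq{^*S}$.

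I expect the main subtlety to be the last step: recognizing that the sequence $(\mathcal{I}_n)$ with empty intersection and $p(\mathcal{I}_n)=1$ is precisely the device that lets one peel off disjoint ``levels'' in $\mathcal{I}$ and distribute a countable sequence of distinct values across them, so that no single $s\in S$ can capture the class almost everywhere. Once that construction is in hand, the verification via finite additivity and the filter property is routine.
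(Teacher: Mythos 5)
Your proposal is correct and follows essentially the same route as the paper: the canonical embedding for the inclusion, finite additivity of $p$ to collapse any class to a constant when $S$ is finite, and the same device of distributing a sequence of distinct elements of $S$ over the disjoint levels $\mathcal{I}_n\setminus\mathcal{I}_{n+1}$ to produce an element of $^*S\setminus S$ when $S$ is infinite. The only differences are cosmetic (your indexing of the levels, and arguing $\{i\mid a_i=s\}\notin\mathcal{U}$ rather than $p(\{i\mid a_i\ne s\})=1$).
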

\begin{proof} $S\subseteq{^*S}$ holds in the sense of the embedding $\sigma$. Suppose, first, that $S$ is a
finite set and let 
$\bra a_i\ket\in{^*S}$.  We observe that the finite collection of sets
$\{i\in\mathcal{I}\mid a_i=s\}$, $s\in S$, are mutually disjoint and $\bigcup_{s\in
S}\{i\in\mathcal{I}\mid a_i=s\}=\mathcal{I}$. Thus $\sum_{s\in S}p(\{i\in\mathcal{I}\mid
a_i=s\})=1$ by the finite additivity of the measure $p$. It follows that there exists a unique
$s_0\in S$ such that
$p(\{i\in\mathcal{I}\mid a_i=s_0\})=1$ (and $p(\{i\in\mathcal{I}\mid a_i=s_0\})=0$ for all $s\in S,
s\not= s_0$). Thus we have $\bra a_i\ket={^*s_0}\in{S}$, as required. Suppose now, that $S$ is an
infinite set. We have to show that $^*S\setminus S\not=\varnothing$. Indeed, by axiom of choice,
there exists a sequence $(s_n)$ in $S$ such that $s_m\not= s_n$ whenever $m\not=
n$. Next, we define
$(a_i)\in S^\mathcal{I}$ by $a_i= s_n$, where $n=\max\{m\in\mathbb{N}\mid i\in
\mathcal{I}_{m-1}\setminus\mathcal{I}_m\}$ and we have let also $\mathcal{I}_0=\mathcal{I}$. Let
$s\in S$. We  have to show that the set
$\{i\in\mathcal{I}\mid a_i\not=s\}$ is of measure 1. Indeed, if $s$ is not in the range of $(s_n)$,
then $\{i\in\mathcal{I}\mid a_i\not=s\}=\mathcal{I}$ and is of measure 1. If $s$ is in the range
of $(s_n)$, then $s=s_k$ for exactly one $k\in\mathbb{N}$. We observe that
$\mathcal{I}_k\subseteq\{i\in\mathcal{I}\mid a_i\not=s\}$. Now the set $\{i\in\mathcal{I}\mid
a_i\not=s\}$ is of measure 1 because $\mathcal{I}_k$ is of measure one, by property~(iv)-(c) of $p$.
The proof is complete. Thus $\bra s_i\ket\in{^*S}\setminus S$ as required. 

\end{proof}

	In what follows $(A_i)\in\mathcal{P}(S)^\mathcal{I}$ means that $A_i\subseteq S$ for all
$i\in\mathcal{I}$.
\begin{definition}[Internal Sets]\label{D: Internal Sets} Let $\mathcal{A}\subseteq{^*S}$. We say
that $\mathcal{A}$ is an {\bf internal set} of $^*S$ if there exists a family
$(A_i)\in\mathcal{P}(S)^\mathcal{I}$ of subsets of $S$ such that 
\[
\mathcal{A}=\{\bra s_i\ket\in{^*S}\mid s_i\in A_i \text{\, a.e.\,} \}.
\]
We say that the family $(A_i)$ generates $\mathcal{A}$ and we write $\mathcal{A}=\bra A_i\ket$.
Let, in the particular, $A_i=A$ for all $i\in\mathcal{I}$ and some $A\subseteq S$. We say that the
internal set $^*A=\bra A_i\ket$ is the {\bf non-standard extension} of $A$. We denote by
$^*\mathcal{P}(S)$ the set of the internal subsets of
$^* S$. The sets in
$^*\mathcal{P}(S)\setminus\mathcal{P}(S)$ are call {\bf external}.
\end{definition}

		If $X\subseteq S$, then $^*X$ is internal and $^*X$ is generated by the constant family $X_i=X$
for all $i\in\mathcal{I}$. In particular $^*S$ is an internal set. Let $\bra
s_i\ket\in{^*S}\setminus S$ be the element defined in the proof of Theorem~\ref{T: Extension
Principle}. Then the singleton
$\{\bra s_i\ket\}$ is an internal set which is not of the form $^*X$ for some
$X\subseteq S$. This internal set
is generated by the singletons
$\{s_i\}$, i.e. $\{\bra s_i\ket\}=\bra\{s_i\} \ket$. More generally, every finite subset of $^*S$
is an internal set. We shall give more examples of infinite internal sets of $^*\mathbb{R}$ and 
$^*\mathbb{C}$ in the next section. If $A\subseteq S$, then $A$ is an external set of $^*S$.

	In the next theorem we use for the first time the property (v) of the probability measure $p$
(Definition~\ref{D: kappa-Good Two Valued Measures}). Recall that $\kappa=\card(\mathcal{I})$."

\begin{theorem}[Axiom 2. Saturation Principle in $^*\mathbb{C}$]\label{T: Saturation Principle in
*C}
$^*\mathbb{C}$ is {\bf $\kappa$-saturated} in the sense that every family
$\{\mathcal{A}_\gamma\}_{\gamma\in\Gamma}$ of internal sets of $^*\mathbb{C}$ with the
finite intersection property, and an index set $\Gamma$ with $\card(\Gamma)\leq \kappa$, has a
non-empty intersection.
\end{theorem}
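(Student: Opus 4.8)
The plan is to reduce $\kappa$-saturation of $^*\mathbb{C}$ to the $\kappa$-goodness property (v) of the measure $p$, via the standard net/reversal argument. Suppose $\{\mathcal{A}_\gamma\}_{\gamma\in\Gamma}$ is a family of internal subsets of $^*\mathbb{C}$ with $\card(\Gamma)\le\kappa$ and the finite intersection property. For each $\gamma$ choose a generating family, so $\mathcal{A}_\gamma=\bra A^\gamma_i\ket$ with $A^\gamma_i\subseteq\mathbb{C}$. The goal is to produce $(c_i)\in\mathbb{C}^{\mathcal{I}}$ with $\bra c_i\ket\in\mathcal{A}_\gamma$ for every $\gamma$, i.e. $c_i\in A^\gamma_i$ a.e.\ for each $\gamma$ simultaneously.

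\emph{Building a reversal.} For a finite $X=\{\gamma_1,\dots,\gamma_n\}\in\mathcal{P}_\omega(\Gamma)$, the finite intersection property gives $\bra s^X_i\ket\in\mathcal{A}_{\gamma_1}\cap\dots\cap\mathcal{A}_{\gamma_n}$ for some choice, hence the set $R(X)\eqdef\{i\in\mathcal{I}\mid s^X_i\in A^{\gamma_1}_i\cap\dots\cap A^{\gamma_n}_i\}$ lies in $\mathcal{U}$. If $X\subseteq Y$ then the intersection defining $R(Y)$ is over more sets, so $R(Y)\subseteq R(X)$: thus $R:\mathcal{P}_\omega(\Gamma)\to\mathcal{U}$ is a reversal in the sense of Remark~\ref{R: Reversals}. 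Here I am using $\card(\Gamma)\le\kappa$ precisely so that property (v) applies to $\Gamma$. Invoking $\kappa$-goodness, there is a strict reversal $S:\mathcal{P}_\omega(\Gamma)\to\mathcal{U}$ with $S(X)\subseteq R(X)$ for all finite $X$; strictness means $S(X\cup Y)=S(X)\cap S(Y)$.

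\emph{Defining the witness.} Now fix $i\in\mathcal{I}$. Consider the finite sets $X\in\mathcal{P}_\omega(\Gamma)$ with $i\in S(X)$ (there is at least $X=\varnothing$, since $S(\varnothing)\in\mathcal{U}$ and a routine argument, or simply passing to $\Gamma\cup\{\ast\}$, lets us assume $i\in S(\varnothing)$ for all $i$). By strictness the collection $\{X\mid i\in S(X)\}$ is closed under finite unions, so if it is nonempty it has a largest element $X(i)$; enumerate $X(i)=\{\gamma_1,\dots,\gamma_n\}$ and set $c_i\eqdef s^{X(i)}_i$, which by $i\in S(X(i))\subseteq R(X(i))$ satisfies $c_i\in A^{\gamma}_i$ for every $\gamma\in X(i)$ (when $X(i)=\varnothing$, set $c_i$ arbitrarily). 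I claim $\bra c_i\ket\in\mathcal{A}_\gamma$ for each fixed $\gamma\in\Gamma$: indeed for $i\in S(\{\gamma\})$ we have $\{\gamma\}\subseteq X(i)$ by maximality, hence $c_i\in A^\gamma_i$; since $S(\{\gamma\})\in\mathcal{U}$, this holds a.e., so $\bra c_i\ket\in\bra A^\gamma_i\ket=\mathcal{A}_\gamma$. Therefore $\bra c_i\ket\in\bigcap_{\gamma\in\Gamma}\mathcal{A}_\gamma$, which is nonempty.

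\emph{The main obstacle} is the passage from the plain reversal $R$ to the strict reversal $S$: with only $R$ one cannot coherently choose, for each index $i$, a single maximal finite set $X(i)$, because the sets $\{X\mid i\in R(X)\}$ need not be closed under union. This is exactly the failure that property (v) of Definition~\ref{D: kappa-Good Two Valued Measures} is designed to repair, so the crux of the argument is simply recognizing that the construction of $R$ above fits the template of a reversal and then quoting $\kappa$-goodness. A secondary technical point is the bookkeeping ensuring $S(\varnothing)=\mathcal{I}$ up to a null set so that $X(i)$ is well-defined for almost every $i$; this is handled by adjoining a dummy index to $\Gamma$ (which preserves $\card(\Gamma)\le\kappa$) or by noting that indices outside $S(\varnothing)$ form a set not in $\mathcal{U}$ and may be assigned $c_i$ arbitrarily.
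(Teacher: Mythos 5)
Your proposal has the right outline (build a reversal, upgrade to a strict reversal via $\kappa$-goodness, define an index-by-index witness), but it contains two gaps, one of which is the crux of the theorem.

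First, the map you define, $R(X)=\{i\mid s^X_i\in\bigcap_{\gamma\in X}A^\gamma_i\}$, is not a reversal. Since the witness $\bra s^X_i\ket$ is chosen separately for each finite $X$, there is no reason that $s^Y_i\in\bigcap_{\gamma\in Y}A^\gamma_i$ should imply $s^X_i\in\bigcap_{\gamma\in X}A^\gamma_i$ for $X\subseteq Y$: the points $s^X_i$ and $s^Y_i$ are unrelated. Your claim that ``the intersection defining $R(Y)$ is over more sets'' silently treats the two point-families as the same. The fix, used in the paper, is to drop the explicit witness and set $R(X)=\{i\mid\bigcap_{\gamma\in X}A^\gamma_i\neq\varnothing\}$; this is manifestly monotone and still lies in $\mathcal{U}$. (You then recover a witness $c_i$ by the axiom of choice, not by $s^{X(i)}_i$.)

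Second, and more seriously, you assert that the collection $\{X\in\mathcal{P}_\omega(\Gamma)\mid i\in S(X)\}$, being closed under finite unions, has a largest element $X(i)$. Closure under unions gives only a directed family of finite sets; such a family has a largest element precisely when $\Gamma_i:=\{\gamma\mid i\in S(\{\gamma\})\}$ is \emph{finite}, and nothing you have said implies this. This is exactly the point where the paper uses property (iv) of the measure (countable incompleteness). The paper builds $\mathcal{I}_{\card(F)}$ into the definition of $R$, so that $S(X)\subseteq\mathcal{I}_{\card(X)}$; if $\card(\Gamma_i)\geq m$ then $i\in S(\{\gamma_1,\dots,\gamma_m\})\subseteq\mathcal{I}_m$, and $\bigcap_m\mathcal{I}_m=\varnothing$ then forces $\Gamma_i$ to be finite. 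Without folding the $\mathcal{I}_n$'s into $R$ your proof has no mechanism to rule out $\Gamma_i$ being infinite, and then $X(i)$ simply need not exist. Your closing paragraph identifies ``closure under union'' as the only obstacle repaired by property (v), but the boundedness of $\Gamma_i$ is a genuinely separate obstacle, repaired by property (iv).
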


\Proof We have (by assumption) that
$\bigcap_{\gamma\in F}\,\mathcal{A}_{\gamma}\not=\varnothing$  for every finite subset $F$
of $\Gamma$. We have to show that
$\bigcap_{\gamma\in\Gamma}\,\mathcal{A}_\gamma\not=\varnothing$. The fact that
$\mathcal{A}_\gamma$ is an internal set means that
$\mathcal{A}_\gamma=\bra\mathbb{A}_{\gamma,i}\ket$ for some
$\mathbb{A}_{\gamma,i}\subseteq\mathbb{C}$. Hence, for every finite subset $F$
of $\Gamma$ we have $\{i\in\mathcal{I}:
\bigcap_{\gamma\in F}\,\mathbb{A}_{\gamma, i}\not=\varnothing\}\in\mathcal{U}$. Next, we
define the function $R: \mathcal{P}_\omega(\Gamma)\to\mathcal{U}$, by 
\[
R(F)=\mathcal{I}_{\card(F)}\cap\{i\in\mathcal{I}:
\bigcap_{\gamma\in F}\,\mathbb{A}_{\gamma,i}\not=\varnothing\},
\]
for every finite subset $F$ of $\Gamma$. It is clear that $R$ is a
reversal (Remark~\ref{R: Reversals}). Since $p$ is a
$\kappa$-good measure, it follows that there exists a
strict reversal $S: \mathcal{P}_\omega(\Gamma)\to\mathcal{U}$ which minorizes $R$, i.e. 
\[
S(F) \subseteq \mathcal{I}_{\card(F)}\cap\{i\in\mathcal{I}:
\bigcap_{\gamma\in F}\,\mathbb{A}_{\gamma, i}\not=\varnothing\},
\]
for every finite subset $F$ of $\Gamma$. For every
$i\in\mathcal{I}$ we define
\[
\Gamma_i=\{\gamma\in\Gamma\mid i\in S(\{\gamma\})\}.
\]
Notice that if $\card(\Gamma_i)=m$ for some $m\in\mathbb{N}$ and some
$i\in\mathcal{I}$, then $i\in\mathcal{I}_m$. Indeed,
$\card(\Gamma_i)=m$ means that $\Gamma_i=\{\gamma_1, \gamma_2,\dots, \gamma_m\}$ for
some distinct $\gamma_1, \gamma_2,\dots, \gamma_m\in\Gamma$ such that $i\in\bigcap_{n=1}^m\,
S(\gamma_n)$. Using the fact that $S$ is a strict reversal, we have
$\bigcap_{n=1}^m\, S(\gamma_n)=S(\{\gamma_1,
\gamma_2,\dots,\gamma_m\})\subseteq R(\{\gamma_1,
\gamma_2,\dots,\gamma_m\})\subseteq\mathcal{I}_m$, hence, $i\in\mathcal{I}_m$ follows. On the
other hand, $\bigcap_{m=1}^\infty\mathcal{I}_m=\varnothing$ implies that $\Gamma_i$ is a
finite set for every $i\in\mathcal{I}$. As a result,
$\bigcap_{\gamma\in\Gamma_i}\; \mathbb{A}_{\gamma, i}\not=\varnothing$ for all
$i\in\mathcal{I}$. By
Axiom of Choice, there exists $(A_i)\in\mathbb{C}^{\mathcal{I}}$ such that
$A_i\in\bigcap_{\gamma\in\Gamma_i}\mathbb{A}_{\gamma, i}$ for all
$i\in\mathcal{I}$. We intend to show that $\bra
A_i\ket\in\bigcap_{\gamma\in\Gamma}\,\mathcal{A}_\gamma $. Indeed, for every
$\gamma\in\Gamma$ we have 
\[
S(\{\gamma\})=\{i\mid\gamma\in\Gamma_i\}\subseteq\{i\mid A_i\in
\mathbb{A}_{\gamma, i}\}.
\]
Since $S(\{\gamma\})\in\mathcal{U}$, it follows that $\{i\in\mathcal{I}\mid A_i\in
\mathbb{A}_{\gamma, i}\}\in\mathcal{U}$. Hence $\bra A_i\ket\in\bra \mathbb{A}_{\gamma,
i}\ket=\mathcal{A}_{\gamma}$, as required.

$\blacktriangle$

	In what follows we use the notation $\mathbb{N}_0=\{0,1,2,\dots\}$. 

\begin{theorem}[Sequential Saturation]\label{T: Sequential Saturation} $^*S$
is sequentially saturated in the sense that every sequence
$\{\mathcal{A}_n\}_{n\in\mathbb{N}_0}$ of internal sets of $^*S$ with the finite
intersection property has a non-empty intersection.
\end{theorem}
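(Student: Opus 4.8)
The plan is to reduce sequential saturation to the already-established $\kappa$-saturation of $^*\mathbb{C}$ (Theorem~\ref{T: Saturation Principle in *C}) — but since that theorem is stated for $^*\mathbb{C}$ rather than an arbitrary $^*S$, I expect the honest route is to re-run the argument of Theorem~\ref{T: Saturation Principle in *C} with the index set $\Gamma=\mathbb{N}_0$, which is the case $\card(\Gamma)=\aleph_0\leq\kappa$ and which uses only the countable incompleteness data (properties (a)--(d)), not the full $\kappa$-goodness. So first I would write $\mathcal{A}_n=\bra\mathbb{A}_{n,i}\ket$ for families $(\mathbb{A}_{n,i})\in\mathcal{P}(S)^{\mathcal{I}}$, and from the finite intersection property deduce that for each $m$ the set $\{i\in\mathcal{I}\mid\bigcap_{n\le m}\mathbb{A}_{n,i}\ne\varnothing\}$ lies in $\mathcal{U}$.

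Next I would set $J_m=\mathcal{I}_m\cap\{i\in\mathcal{I}\mid\bigcap_{n=0}^{m}\mathbb{A}_{n,i}\ne\varnothing\}$ for $m\in\mathbb{N}_0$ (with $\mathcal{I}_0=\mathcal{I}$), so that $J_0\supseteq J_1\supseteq\cdots$, each $J_m\in\mathcal{U}$, and $\bigcap_m J_m=\varnothing$ since $\bigcap_m\mathcal{I}_m=\varnothing$. For each $i\in\mathcal{I}$ define $m(i)=\max\{m\in\mathbb{N}_0\mid i\in J_m\}$, which is a well-defined natural number precisely because $i$ belongs to $J_0$ but not to all $J_m$. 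By construction $i\in J_{m(i)}$, hence $\bigcap_{n=0}^{m(i)}\mathbb{A}_{n,i}\ne\varnothing$; by the Axiom of Choice pick $a_i$ in this intersection for every $i$. I then claim $\bra a_i\ket\in\bigcap_{n\in\mathbb{N}_0}\mathcal{A}_n$: fix $n$; for every $i\in J_n$ we have $m(i)\ge n$, so $a_i\in\bigcap_{k=0}^{m(i)}\mathbb{A}_{k,i}\subseteq\mathbb{A}_{n,i}$, whence $\{i\mid a_i\in\mathbb{A}_{n,i}\}\supseteq J_n\in\mathcal{U}$ and therefore $\bra a_i\ket\in\bra\mathbb{A}_{n,i}\ket=\mathcal{A}_n$.

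The only subtlety — and the step I expect to cost the most care — is verifying that $J_m\in\mathcal{U}$ and that $\bigcap_m J_m=\varnothing$ simultaneously: the first uses that $\mathcal{U}$ is closed under finite intersection together with property (d) ($\mathcal{I}_m\in\mathcal{U}$), while the second uses property (c) ($\bigcap_m\mathcal{I}_m=\varnothing$) — so it is essential that I intersect with $\mathcal{I}_m$ rather than with $\mathcal{I}_{\card(F)}$ as in the uncountable case; no reversal/strict-reversal machinery is needed here because $\mathbb{N}_0$ is already well-ordered and the "strict reversal" is handed to us for free by $F\mapsto\bigcap_{n\in F}J_n=J_{\max F}$. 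Once this is set up the rest is a routine unwinding of Definition~\ref{D: Internal Sets}, so I would present the argument in the compact three-part form above rather than re-deriving the general $\kappa$-saturation proof verbatim.
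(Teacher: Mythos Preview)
Your argument is essentially the paper's own direct proof (its ``Proof~2''), and your opening remark that one may simply invoke $\kappa$-saturation with $\Gamma=\mathbb{N}_0$ is exactly the paper's ``Proof~1''. The one substantive difference is that the paper defines $\mu(i)=\max\{m:\bigcap_{n\le m}\mathbb{A}_{n,i}\ne\varnothing\}$ with values in $\mathbb{N}_0\cup\{\infty\}$, whereas you intersect with $\mathcal{I}_m$ to force $m(i)<\infty$; your device is cleaner and sidesteps the question of what $\bigcap_{n=0}^{\mu(i)}\mathbb{A}_{n,i}$ means when $\mu(i)=\infty$.

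There is one small gap to patch: you assert that $m(i)$ is well defined ``because $i$ belongs to $J_0$'', but $J_0=\mathcal{I}_0\cap\{i:\mathbb{A}_{0,i}\ne\varnothing\}$ need not be all of $\mathcal{I}$. The paper handles the analogous point by replacing the representative of $\mathcal{A}_0$ so that $\mathbb{A}_{0,i}\ne\varnothing$ for every $i$; you should either do the same, or simply define $a_i$ arbitrarily for $i\notin J_0$ (a null set for $p$), after which your inclusion $J_n\subseteq\{i:a_i\in\mathbb{A}_{n,i}\}$ goes through unchanged.
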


\noindent{\bf Proof 1  (An Indirect Proof):} An immediate consequence of Theorem~\ref{T: Saturation
Principle in *C}  in the case of countable index set $\Gamma$.

\noindent{\bf Proof  2 (A Direct Proof):} We have $\bigcap_{n=0}^m\,\mathcal{A}_n\not=\varnothing$
for all
$m\in\mathbb{N}_0$, by assumption. We have to show that
$\bigcap_{n=0}^\infty\,\mathcal{A}_n\not=\varnothing$. The fact that
$\mathcal{A}_n$ are internal sets means that
$\mathcal{A}_n=\bra\mathbb{A}_{n,i}\ket$ for some
$\mathbb{A}_{n,i}\subseteq\mathbb{C}$, where $n\in\mathbb{N}_0,\;
i\in\mathcal{I}$. We have $\left<\, \bigcap_{n=0}^m\,\mathbb{A}_{n,i}\right>=
\bigcap_{n=0}^m\,\left<\mathbb{A}_{n, i}\right>=
\bigcap_{n=0}^m\,\mathcal{A}_n\not=\varnothing$. Thus for every $m\in\mathbb{N}_0$ we have
\begin{equation}\label{E: Assumption}
\Phi_m=\{i\in\mathcal{I} \mid \cap_{n=0}^m\, \mathbb{A}_{n,i}\not=\varnothing\}\in\mathcal{U}.
\end{equation}
Without loss of generality we can assume that $\mathbb{A}_{0,i}\not=\varnothing$ for all
$i\in\mathcal{I}$ (indeed, if $\Phi_0\not=\mathcal{I}$, we
can choose another representative of $\mathcal{A}_0$ by
$\mathbb{A}^\prime_{0,i}=\mathbb{A}_{0,i}$ for $i\in\Phi_0$ and by
$\mathbb{A}^\prime_{0,i}=\mathbb{C}$ for
$i\in\mathcal{I}\setminus\Phi_0$). Next, we define the function
$\mu:\mathcal{I}\to\mathbb{N}_0\cup\{\infty\}$, by
\[
\mu(i)=\max\{m\in\mathbb{N}_0 \mid \cap_{n=0}^m\,
\mathbb{A}_{n,i}\not=\varnothing\}.
\]
Notice that $\mu$ is well-defined because the set 
\[
\{m\in\mathbb{N}_0 \mid \cap_{n=0}^m\,
\mathbb{A}_{n,i}\not=\varnothing\},
\]
is non-empty for all $i\in\mathcal{I}$ due to our assumption for 
$\mathbb{A}_{0,i}$. Thus we have $\bigcap_{n=0}^{\mu(i)}\,
\mathbb{A}_{n,i}\not=\varnothing$ for all $i\in\mathcal{I}$. Hence (by
Axiom of Choice) there exists
$(A_i)\in\mathbb{C}^{\mathcal{I}}$ such that 
$A_i\in\bigcap_{n=0}^{\mu(i)}\,\mathbb{A}_{n,i}$ for all
$i\in\mathcal{I}$. We intend to show that 
$\bra A_i\ket\in\bigcap_{n=0}^\infty\,\mathcal{A}_n$ or equivalently, to show
that for every
$m\in\mathbb{N}_0$ we have $\{i\in\mathcal{I}\mid A_i\in\mathbb{A}_{m, i}\}\in\mathcal{U}$.
We observe that 
\[
\Phi_m\subseteq\{i\in\mathcal{I} \mid
A_i\in\mathbb{A}_{m, i}\}.
\]
Indeed, $i\in\Phi_m$ implies $\bigcap_{n=0}^m\, \mathbb{A}_{n,i}\not=\varnothing$ which
implies $0\leq m\leq\mu(i)$ (by the definition of $\mu(i)$) leading to
$A_i\in\mathbb{A}_{m,i}$, by the choice of $(A_i)$. On the other hand, we have
$\Phi_m\in\mathcal{U}$,  by (\ref{E: Assumption}) implying $\{i\in\mathcal{I}\mid
A_i\in\mathbb{A}_{m, i}\}\in\mathcal{U}$, as required, by property (3) of $\mathcal{U}$.
$\blacktriangle$

\begin{definition}[Superstructure]\label{D: Superstructure} {\em Let S be an infinite set.  The
\textbf{superstructure}
$V(S)$ on $S$ is the union 
\[
V(S) = \bigcup_{n =0}^\infty V_n(S),
\] 
where the $V_n(S)$ are defined inductively by 
\begin{align}
&V_0(S)= S,\quad V_1(S) =S\cup\mathcal{P}(S),\notag\\
&V_{n+1}(S) = V_n(S) \cup\mathcal{P}(V_n(S)). 
\end{align}
The members of $V(S)$ are called \textbf{entities}. The members of $V(S)\setminus S$ are
called the \textbf{sets} of the superstructure V(S) and the members of $S$ are called
the \textbf{individuals} of the superstructure $V(S)$.
}\end{definition}

\begin{definition}[The Language $\mathcal{L}(V(S))$]\label{D: The Language L(V(S))}{\em  The
language
$\mathcal{L}(V(S))$ is the usual ``language of the analysis'' with the following restrictions: All
quantifiers are bounded by sets in the superstrucure $V(S)$, i.e. quantifiers appear in the
formulae of the language $\mathcal{L}(V(S))$ only in the form 
\[
(\forall x\in A)P(x) \text{\; or\; }  (\exists x\in A)P(x), 
\]
where  $P(x)$ is a predicate in one or more variables and $A\in V(S)\setminus S$. In particular,
formulae such as 
\begin{align}
&(\forall x)P(x),\notag \\
&(\exists x)P(x),\notag\\
&(\forall x\in s)P(x),\notag\\
&(\exists x\in s)P(x)\notag,
\end{align}
 where $s\in S$, do not belong the the language $\mathcal{L}(V(S))$. 
}\end{definition}

	In what follow $V(^*S)$ stands for the supersructure of $^*S$ and $\mathcal{L}(V(^*S))$ stands for
the language on $V(^*S)$ which are defined exactly as $V(S)$ and $\mathcal{L}(V(S))$ after
replacing $S$ by $^*S$. 

\begin{theorem}[Axiom 3. Transfer Principle]\label{T: Transfer Principle} Let $P(x_1, x_2,\dots
x_n)$ be a predicate in $\mathcal{L}(V(S))$ and
$A_1, A_2,\dots, A_n\in V(S)$. Then $P(A_1, A_2,\dots A_n)$ is true $\mathcal{L}(V(S))$ \ifff
$P({^*A_1}, {^*A_2},\dots {^*A_n})$ is true in $\mathcal{L}(V(^*S))$.
\end{theorem}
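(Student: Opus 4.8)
The plan is to prove the Transfer Principle by induction on the structure of the formula $P$, following the classical Łoś-style argument adapted to the superstructure setting. First I would set up the framework: since every entity $A_j \in V(S)$ is really an equivalence class (or is built from such) coming from a representing family $(A_{j,i})_{i\in\mathcal I}$, the key device is to associate to any formula $P(A_1,\dots,A_n)$ the \emph{truth set} $\llbracket P \rrbracket = \{ i \in \mathcal I \mid P(A_{1,i},\dots,A_{n,i}) \text{ holds in } V(S)\}$, and to prove the fundamental lemma: $P({^*A_1},\dots,{^*A_n})$ is true in $V({^*S})$ if and only if $\llbracket P \rrbracket \in \mathcal U$. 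The theorem then follows immediately, because when the $A_j$ are genuine standard entities the representing families are constant, so the truth set is either all of $\mathcal I$ (if $P(A_1,\dots,A_n)$ holds) or $\varnothing$ (if it does not), and $\mathcal I \in \mathcal U$, $\varnothing \notin \mathcal U$.

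Next I would carry out the induction. The atomic cases are equality $a = b$ and membership $a \in b$: for equality this is essentially Definition~\ref{D: A Non-Standard Extension of a Set}(1), and for membership one checks directly from the definition of internal sets (Definition~\ref{D: Internal Sets}) that $\bra a_i \ket \in \bra b_i \ket$ iff $\{i \mid a_i \in b_i\} \in \mathcal U$ --- here one must be slightly careful that the value of $\bra a_i \ket \in \bra b_i \ket$ does not depend on the choice of representatives, which again uses the filter/ultrafilter properties of $\mathcal U$. The propositional connectives $\wedge, \vee, \neg$ are handled using that $\mathcal U$ is closed under finite intersection, that $A \cup B \in \mathcal U$ implies $A \in \mathcal U$ or $B \in \mathcal U$ (property (5) of $\mathcal U$), and that exactly one of $A, \mathcal I \setminus A$ lies in $\mathcal U$; these are precisely the measure-theoretic properties (i)--(v) established in Theorem~\ref{T: Existence of Two Valued kappa-Good Measures}. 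The existential quantifier $(\exists x \in B)\,Q(x)$ is the step where Axiom of Choice enters: if the truth set of ``$(\exists x \in B_i)\,Q(\cdot,x)$'' is in $\mathcal U$, one chooses for each such $i$ a witness $c_i \in B_i$ satisfying $Q(\cdot,c_i)$, whence $\bra c_i \ket$ is a witness for the transferred formula; the converse direction is straightforward. The universal quantifier reduces to the existential one via $\neg(\exists x)\neg$.

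The main subtlety --- the one point I would treat with the most care --- is the \emph{boundedness of the quantifiers}, which is exactly why the restrictions in Definition~\ref{D: The Language L(V(S))} were imposed. When we transfer $(\exists x \in B)\,Q(x)$ with $B \in V(S) \setminus S$, the witness $x$ must be found inside ${^*B}$, and we need to know that ${^*B}$ is an internal set whose elements are exactly the classes $\bra c_i \ket$ with $c_i \in B_i$ a.e.\ for a representing family $(B_i)$ of $B$ --- this is guaranteed by Definition~\ref{D: Internal Sets} together with the fact that $B$, lying in some $V_{n+1}(S) = V_n(S) \cup \mathcal P(V_n(S))$, has representing family of subsets of $V_n(S)$, so the quantifier ranges over a set of strictly lower rank in the superstructure. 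It is precisely here that one sees why quantifiers over individuals $s \in S$ are forbidden: a single individual has no further internal structure to represent a ``family'' of witnesses. Thus the induction is carried out on formula complexity while simultaneously tracking the superstructure level of the bounding sets, and the Extension Principle (Theorem~\ref{T: Extension Principle}) together with the remarks following Definition~\ref{D: Internal Sets} supplies the base facts about how ${^*(\cdot)}$ interacts with the levels $V_n(S)$. Since this is standard fare in non-standard analysis, I would either give this induction in outline and refer to Chang--Keisler~\cite{CKeis} or Lindstr\o m~\cite{tLin} for the routine verifications, or present the fundamental lemma in full and leave the connective/quantifier cases as a guided exercise.
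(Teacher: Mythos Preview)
The paper does not actually prove this theorem: it states the Transfer Principle as ``Axiom 3'' and immediately passes to examples of its use (the first proofs of Lemma~\ref{L: No Zero Divisors} and Lemma~\ref{L: Trichotomy}), giving no argument whatsoever. This is consistent with the paper's declared goal of downplaying the logical machinery; the author is content to treat Transfer as a black-box principle supported by the ultrapower construction, much as the $\kappa$-goodness property in Definition~\ref{D: kappa-Good Two Valued Measures} is simply referred to Chang--Keisler~\cite{CKeis} and Lindstr\o m~\cite{tLin} without proof.

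Your proposal, by contrast, sketches the standard \L o\'s-theorem proof in the superstructure setting: define the truth set of a formula, prove by induction on formula complexity that truth in $V({^*S})$ coincides with the truth set lying in $\mathcal U$, and then specialize to constant families. This is the correct and essentially unique route to a genuine proof, and your identification of the key points---the ultrafilter properties for the propositional connectives, Axiom of Choice for the existential witness, and the role of bounded quantification in keeping the induction within the superstructure hierarchy---is accurate. Your closing remark that one might give the induction in outline and cite \cite{CKeis} or \cite{tLin} is, in effect, exactly what the paper does, only the paper omits even the outline. So your write-up would constitute a strict improvement over the paper on this point, supplying content the author chose to suppress.
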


	For examples of application of the Transfer Principle we refer to the first proofs of Lemma~\ref{L:
No Zero Divisors} and Lemma~\ref{L: Trichotomy} later in this text.

\section{A. Robinson's Non-Standard Numbers}

	In this section we apply the non-standard construction in the particular case $S=\mathbb{C}$,
where $\mathbb{C}$ is the field of the complex numbers. 
\begin{definition}[Non-Standard Numbers]\label{D: Non-Standard} {\em 
\begin{enumerate} 
	\item We define the {\bf complex non-standard numbers} as the factor ring
$^*\mathbb{C}=\mathbb{C}^\mathcal{I}/\sim$,
where $(a_i)\sim(b_i)$ if $a_i=b_i$ a.e., i.e. if
\[
p\left(\{i\in\mathcal{I}\mid a_i=b_i \}\right)=1
\]
(or, equivalently, if $\{i\in\mathcal{I}\mid a_i=b_i\}\in\mathcal{U}$, where
$\mathcal{U}=\{A\in\mathcal{P}\mid p(A)=1\}$.) We denote by $\bra a_i\ket\in{^*\mathbb{C}}$ 
the equivalence class determined by $(a_i)$. The algebraic operations and the absolute value in
$^*\mathbb{C}$ is inherited from $\mathbb{C}$. For example, $|\bra x_i\ket|=\bra |x_i|\ket$.
\item The set of
{\bf real non-standard numbers}
$^*\mathbb{R}$ is (by definition) the non-standard extension of $\mathbb{R}$, i.e.
\[
^*\mathbb{R}=\{\bra x_i\ket\in{^*\mathbb{C}}\mid x_i\in\mathbb{R}\text{\; a.e.\;} \}.
\]
The order relation if $^*\mathbb{R}$ is defined by $\bra a_i\ket <\bra b_i\ket$ if $a_i< b_i$ in
$\mathbb{R}$ a.e., i.e. if 
\[
p\left(\{i\in\mathcal{I}\mid a_i<b_i \}\right)=1.
\]
\item The mapping $r\to{^*r}$ defines an embeddings $\mathbb{C}\subset{^*\mathbb{C}}$
and
$\mathbb{R}\subset{^*\mathbb{R}}$ by the constant nets, i.e. $^*r=\bra
a_i\ket$, where $a_i=r$ for all $i\in\mathcal{I}$.
\end{enumerate}
}\end{definition}

\begin{theorem}[Basic Properties]\label{T: Basic Properties}
\begin{enumerate}
\item $^*\mathbb{C}$ is an algebraically closed non-archimedean field.
\item  $^*\mathbb{R}$ is a real closed  (totally ordered) non-archimedean
field.
\end{enumerate}
\end{theorem}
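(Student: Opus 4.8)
The plan is to split the statement into its first-order part and the one clause that is not first order. Being a commutative unital field, being algebraically closed, being a totally ordered field whose order is compatible with the ring operations, and being real closed are all expressible in the language $\mathcal{L}(V(\mathbb{C}))$ by sentences whose quantifiers are bounded by $\mathbb{C}$ or by $\mathbb{R}$ (both of which lie in $V(\mathbb{C})$); in the case of algebraic closedness and real closedness these are \emph{schemes} of sentences, one for each degree $n\in\mathbb{N}$. Every one of them is true in $\mathbb{C}$, respectively in $\mathbb{R}$: the field axioms, the Fundamental Theorem of Algebra, the axioms of a totally ordered field, and the standard first-order axiomatization of real closed fields (every positive element is a square, and every monic polynomial of odd degree has a root). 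By the Transfer Principle (Theorem~\ref{T: Transfer Principle}) each of these transfers to a true statement about $^*\mathbb{C}$, respectively $^*\mathbb{R}$. The only assertion of the theorem that is not first order is non-archimedeanness, and for it I would produce an explicit infinitesimal from the sequence $(\mathcal{I}_n)$.

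Carrying this out, the first thing to record is the routine identification that makes Transfer speak about the right objects: the graphs of $+$, $\cdot$, and $<$ on $\mathbb{C}$ (resp. on $\mathbb{R}$) are subsets of $\mathbb{C}^3$, so their non-standard extensions $^*(+)$, $^*(\cdot)$, $^*(<)$ coincide with the componentwise operations and the order introduced in Definition~\ref{D: Non-Standard}, and likewise $^*0$, $^*1$ are the zero and one of $^*\mathbb{C}$ and $^*(\mathbb{C}\times\mathbb{C})={^*\mathbb{C}}\times{^*\mathbb{C}}$. Granting this, transfer of the field axioms shows $^*\mathbb{C}$ is a commutative unital ring with $0\neq 1$ in which every nonzero element is invertible, i.e. a field; transfer of the order axioms shows $^*\mathbb{R}$ is a totally ordered field; transfer, for each fixed $n$, of $(\forall a_0\in\mathbb{C})\cdots(\forall a_{n-1}\in\mathbb{C})(\exists x\in\mathbb{C})\,(x^{n}+a_{n-1}x^{n-1}+\cdots+a_0=0)$ shows every monic polynomial of positive degree over $^*\mathbb{C}$ has a root, so $^*\mathbb{C}$ is algebraically closed; and transfer of the square-root and odd-degree-root schemes shows $^*\mathbb{R}$ is real closed. (If the house style prefers representative-based arguments, each of these also has a direct proof: if $\langle a_i\rangle\neq 0$ then $a_i\neq 0$ a.e., and $\langle a_i\rangle^{-1}=\langle b_i\rangle$ where $b_i=a_i^{-1}$ on that set and $b_i=0$ elsewhere; and if $\langle c_{n,i}\rangle\neq 0$ then for a.e. $i$ the polynomial $\sum_k c_{k,i}x^k$ has degree $n$, hence a complex root $r_i$, and then $\langle r_i\rangle$ is a root of $\sum_k\langle c_{k,i}\rangle x^k$.)

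For non-archimedeanness I would reuse the element built in the proof of Theorem~\ref{T: Extension Principle}. Set $\mathcal{I}_0=\mathcal{I}$ and define $\rho=\langle\rho_i\rangle\in{^*\mathbb{R}}$ by $\rho_i=1/n$, where $n=\max\{m\in\mathbb{N}\mid i\in\mathcal{I}_{m-1}\setminus\mathcal{I}_m\}$; this maximum exists for every $i\in\mathcal{I}$ because $i\in\mathcal{I}_0$ while $\bigcap_{m}\mathcal{I}_m=\varnothing$ (property (c) in Definition~\ref{D: kappa-Good Two Valued Measures}). Then $\rho>0$, since $\rho_i>0$ for all $i$; and for each $k\in\mathbb{N}$ one has $i\in\mathcal{I}_k\Rightarrow n\geq k+1\Rightarrow\rho_i\leq 1/(k+1)<1/k$, so $\{i\mid\rho_i<1/k\}\supseteq\mathcal{I}_k\in\mathcal{U}$ and hence $0<\rho<1/k$ in $^*\mathbb{R}$ for every $k$. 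Thus $\rho$ is a positive infinitesimal (and $1/\rho$ an infinitely large element), so $^*\mathbb{R}$ — and a fortiori $^*\mathbb{C}\supseteq{^*\mathbb{R}}$ — is non-archimedean. (Alternatively one could note that $\mathbb{R}$ has no proper archimedean ordered field extension whereas $^*\mathbb{R}\supsetneq\mathbb{R}$ by Theorem~\ref{T: Extension Principle}; but the explicit $\rho$ is more useful later.)

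I do not expect a genuinely hard step. The three places that need care are exactly the bookkeeping items above: (i) verifying that the operations and order of Definition~\ref{D: Non-Standard} really are the starred operations $^*(+),{^*(\cdot)},{^*(<)}$, so that the conclusions of Transfer concern the intended structure; (ii) formulating ``algebraically closed'' and ``real closed'' as schemes of first-order sentences, one per degree, rather than as single sentences, so that Transfer applies uniformly; and (iii) keeping in mind that non-archimedeanness is not first order, which is why the explicit infinitesimal $\rho$ is needed and cannot come from Transfer.
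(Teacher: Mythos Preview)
Your proposal is correct and follows essentially the same approach as the paper: transfer the first-order properties (field axioms, algebraic closedness, total order, real closedness) via the Transfer Principle---with representative-based arguments offered as an alternative---and handle non-archimedeanness separately by exhibiting an explicit infinitesimal built from the sequence $(\mathcal{I}_n)$. The paper organizes this as a sequence of small lemmas (no zero divisors, trichotomy, \dots) each given both a Transfer proof and a measure-based proof, and constructs the reciprocal $\nu=1/\rho$ of your infinitesimal in Example~\ref{Ex: Infinitesimals}; your write-up is more systematic about the scheme formulation and the identification of the starred operations, but the strategy is the same.
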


\begin{proof} We shall separate the proof of the above theorem in several small lemmas and prove some of
them. We shall present also two proofs to each of the lemmas; one of them based on the Saturation
Principle (Theorem~\ref{T: Transfer Principle}) and the other on the properties of the measure $p$.
The content of the next lemma is a small (but typical) part of the statement that both
$^*\mathbb{C}$ and $^*\mathbb{R}$ are fields
\end{proof}

\begin{lemma}[No Zero Divisors]\label{L: No Zero Divisors} $^*\mathbb{C}$  is free of zero divisors.
\end{lemma}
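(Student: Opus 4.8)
The plan is to show that if $\bra a_i\ket \cdot \bra b_i\ket = \bra 0 \ket$ in $^*\mathbb{C}$, then one of the two factors is already zero. I would give both proofs the author promises: one via the Transfer Principle and one directly from the properties of the measure $p$.

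\medskip

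\noindent\textbf{Proof 1 (via Transfer).} The statement ``$\mathbb{C}$ has no zero divisors'' is the sentence
\[
(\forall x \in \mathbb{C})(\forall y \in \mathbb{C})\big( x\cdot y = 0 \;\Rightarrow\; (x = 0 \;\vee\; y = 0)\big),
\]
which is a formula of $\mathcal{L}(V(\mathbb{C}))$ with all quantifiers bounded by the set $\mathbb{C} \in V(\mathbb{C})$. By the Transfer Principle (Theorem~\ref{T: Transfer Principle}), the same sentence holds with $\mathbb{C}$ replaced by $^*\mathbb{C}$ (one must also note that the ring operations and the constant $0$ transfer correctly, which is built into the construction). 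Hence $^*\mathbb{C}$ has no zero divisors. This is the ``elegant'' proof: no representatives appear.

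\medskip

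\noindent\textbf{Proof 2 (via the measure).} Let $\alpha = \bra a_i\ket$, $\beta = \bra b_i\ket$ with $\alpha\beta = 0$, i.e. $p(\{i : a_i b_i = 0\}) = 1$. Write $Z = \{i : a_i b_i = 0\}$, $A = \{i : a_i = 0\}$, $B = \{i : b_i = 0\}$. Since $\mathbb{C}$ is an integral domain, pointwise we have $Z = A \cup B$. Now $p(Z) = 1$ means $Z \in \mathcal{U}$, and $\mathcal{U}$ is an ultrafilter, so by property (5) of $\mathcal{U}$ (equivalently the additivity of $p$) either $A \in \mathcal{U}$ or $B \in \mathcal{U}$; that is, either $p(A) = 1$ or $p(B) = 1$. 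In the first case $a_i = 0$ a.e., so $\alpha = \bra 0\ket$; in the second case $\beta = \bra 0\ket$. Either way one factor vanishes, so $^*\mathbb{C}$ is free of zero divisors.

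\medskip

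\noindent There is no real obstacle here; the only thing to be careful about is invoking the right ultrafilter property — property (5) of $\mathcal{U}$ (that $A\cup B\in\mathcal{U}$ forces $A\in\mathcal{U}$ or $B\in\mathcal{U}$), established in Step~3 of the proof of Theorem~\ref{T: Existence of Two Valued kappa-Good Measures} — rather than trying to argue directly from finite additivity, and making sure the pointwise identity $\{i: a_ib_i=0\} = \{i:a_i=0\}\cup\{i:b_i=0\}$ is used to reduce to it. Neither proof uses the $\kappa$-goodness (property~5 of Definition~\ref{D: kappa-Good Two Valued Measures}); countable incompleteness is likewise not needed. For $^*\mathbb{R}$ the identical argument (or transfer of the same sentence over $\mathbb{R}$) applies, which is why the lemma is stated only for $^*\mathbb{C}$.
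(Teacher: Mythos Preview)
Your proposal is correct and matches the paper's two proofs essentially line for line: Proof~1 is identical, and in Proof~2 you invoke ultrafilter property~(5) directly where the paper phrases the same step via the additivity of $p$ (writing $p(A)+p(B)\geq 1$ and using that the range is $\{0,1\}$). These are equivalent formulations of the same argument, so there is nothing to add.
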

\noindent{\bf Proof 1:} The statement
\[
(\forall x, y\in {\mathbb{C}})(xy=0 \Rightarrow x=0 \vee y=0), 
\]
is true because $\mathbb{C}$ is free of zero divisors. Thus
\[
(\forall x, y\in{^*\mathbb{C}})(xy=0 \Rightarrow x=0 \vee y=0), 
\]
is true (as required) by Transfer Principle (Theorem~\ref{T: Transfer Principle}). 

$\blacktriangle$

\noindent{\bf Proof 2:} Suppose $\bra a_i\ket\bra b_i\ket=0$ in $^*\mathbb{C}$ for some $\bra
a_i\ket,\, \bra b_i\ket\in{^*\mathbb{C}}$. Thus $\bra a_i b_i\ket=0$ implying
$p(\{i\in\mathcal{I}\mid a_i b_i=0\})=1$. On the other hand,
\[
\{i\in\mathcal{I}\mid a_i b_i=0\}=\{i\in\mathcal{I}\mid a_i =0\}\cup\{i\in\mathcal{I}\mid
b_i=0\},
\]
because $\mathbb{C}$ is free of zero divisors. It follows that 
\[
p(\{i\in\mathcal{I}\mid a_i =0\})+p(\{i\in\mathcal{I}\mid b_i =0\})\geq 1,
\]
by the additivity of $p$. Since the range of $p$ is $\{0, 1\}$, it follows that ether
$p(\{i\in\mathcal{I}\mid a_i =0\})=1$ or $p(\{i\in\mathcal{I}\mid b_i =0\})=1$, i.e. either $\bra
a_i\ket=0$ or $\bra
b_i\ket=0$, as required. $\blacktriangle$
\begin{lemma}[Trichotomy]\label{L: Trichotomy} Let $a, b\in{^*\mathbb{R}}$. Then ether $a<b$ or
$a=b$ or $a>b$.
\end{lemma}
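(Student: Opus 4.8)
The plan is to prove the trichotomy for $^*\mathbb{R}$ following the same two-pronged strategy the author has used for Lemma~\ref{L: No Zero Divisors}: a short proof via the Transfer Principle and a hands-on proof via the measure $p$.

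For the first proof, I would observe that the statement
\[
(\forall x, y\in\mathbb{R})\big( x<y \;\vee\; x=y \;\vee\; x>y \big)
\]
is true in $\mathcal{L}(V(\mathbb{R}))$ because $\mathbb{R}$ is totally ordered, and moreover the three alternatives are mutually exclusive there. Since $<$ on $\mathbb{R}$ is an element of $V(\mathbb{R})$ (it is a subset of $\mathbb{R}\times\mathbb{R}$, hence an entity in the superstructure) and, by Definition~\ref{D: Non-Standard}, the order on $^*\mathbb{R}$ is exactly $^*(<)$, the Transfer Principle (Theorem~\ref{T: Transfer Principle}) immediately yields
\[
(\forall x, y\in{^*\mathbb{R}})\big( x<y \;\vee\; x=y \;\vee\; x>y \big),
\]
which is the assertion. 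The exclusivity of the three cases transfers in the same way.

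For the second proof, I would take $a=\bra a_i\ket$ and $b=\bra b_i\ket$ with $a_i, b_i\in\mathbb{R}$ a.e., and partition $\mathcal{I}$ (up to a null set) into the three pieces $A_<=\{i\mid a_i<b_i\}$, $A_==\{i\mid a_i=b_i\}$, $A_>=\{i\mid a_i>b_i\}$. These are pairwise disjoint and their union has measure $1$, so by finite additivity of $p$ exactly one of them has measure $1$; the corresponding relation among $a, b$ then holds by definition, and the other two fail. This is the direct analogue of the zero-divisor argument.

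I do not expect a genuine obstacle here; the only point that deserves a line of care is the bookkeeping about which subsets of $\mathcal{P}(\mathcal{I})$ one is really working with when the representatives $a_i, b_i$ are only real-valued almost everywhere rather than everywhere — one simply intersects with the measure-one set on which both are real before partitioning. A second minor point is making explicit, in the transfer proof, that the order relation of $^*\mathbb{R}$ coincides with the transferred relation $^*{<}$, which is immediate from Definition~\ref{D: Non-Standard}(2). Everything else is routine.
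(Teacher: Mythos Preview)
Your proposal is correct and follows essentially the same two-proof strategy as the paper: a Transfer Principle argument and a direct partition-and-measure argument. The only cosmetic difference is that the paper's first proof transfers the implication $(\forall x,y\in\mathbb{R})(x\neq y\Rightarrow x<y\vee x>y)$ rather than the full trichotomy disjunction, and the paper's second proof does not bother with the ``a.e.'' bookkeeping you mention---but these are minor variations, not substantive differences.
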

\noindent{\bf Proof 1:} The statement
\[
(\forall x, y\in {\mathbb{R}})(x\not=y \Rightarrow x<y \vee x>y), 
\]
is true because $\mathbb{R}$ is a totally ordered set. Thus
\[
(\forall x, y\in{^*\mathbb{R}})(x\not=y \Rightarrow x<y \vee x>y), 
\]
is true (as required) by Transfer Principle (Theorem~\ref{T: Transfer Principle}). 

$\blacktriangle$

\noindent{\bf Proof 2:} Suppose that $\bra a_i\ket, \bra b_i\ket\in{^*\mathbb{R}}$. We observe that
the sets
\[
A=\{i\in\mathcal{I}\mid a_i<b_i\},\quad B=\{i\in\mathcal{I}\mid a_i=b_i\},\quad 
C=\{i\in\mathcal{I}\mid a_i>b_i\},
\]
are mutually disjoint and $A\cup B\cup C=\mathcal{I}$ because $\mathbb{R}$ is a totally ordered set.
Thus $p(A)+p(B)+p(C)=1$ by the additivity of the measure $p$. It follows that exactly one of the
following is true: $p(A)=1$ or $p(B)=1$ or $p(C)=1$, since the range of
$p$ is $\{0, 1\}$. Thus exactly one of the following is true: $\bra a_i\ket<\bra b_i\ket$,
$\bra a_i\ket=\bra b_i\ket$, and $\bra a_i\ket>\bra b_i\ket$.  

$\blacktriangle$

	The rest of the proof of Theorem~\ref{T: Basic Properties} can be done in a
similar manner and we leave it to to the reader.
$\blacktriangle$

\section{Infinitesimals, Finite and Infinitely Large Numbers}\label{S: Infinitesimals, Finite and
Infinitely Large Numbers}
\begin{definition}\label{D: I(*C), F(*C), L(*C)}{\em 
\begin{enumerate}
\item We define the sets of \textbf{infinitesimal},
\textbf{finite}, and
\textbf{infinitely large} numbers as follows:
\[
    \mathcal{I}(^{*}\mathbb{C})=\{x\in\,
^{*}\mathbb{C} : \vert
    x\vert<1/n \text{ for all } n\in\mathbb{N}\},
    \]
\[
    \mathcal{F}(^{*}\mathbb{C})=\{x\in\,
^{*}\mathbb{C} : \vert
    x\vert<n \text{ for some } n\in\mathbb{N}\},
    \]
\[
    \mathcal{L}(^{*}\mathbb{C})=\{x\in\,
^{*}\mathbb{C} : \vert
    x\vert>n \text{ for all } n\in\mathbb{N}\},
    \]
\item Let $x, y\in{^*\mathbb{C}}$. We say $x$ and $y$ are infinitely close, in symbol
$x\approx y$, if $x-y\in\mathcal{I}(^*\mathbb{C})$. The relation $\approx$ is called {\bf
infinitesimal relation} on $^*\mathbb{C}$.

\item Let $x\in{^*\mathbb{C}}$ and $r\in{\mathbb{C}}$. We write $x\leadsto y$ if 
$x-r\in\mathcal{I}(^*\mathbb{C})$. We shall often refer to $\leadsto$ an {\bf asymptotic expansion}
of $x$.
\end{enumerate}
}\end{definition}
\begin{proposition}[Basic Properties]\label{P: Basic Properties} 
\begin{align}
&^*\mathbb{C}=\mathcal{F}(^*\mathbb{C})\cup\mathcal{L}(^*\mathbb{C}),\\
&\mathcal{F}(^*\mathbb{C})\cap\mathcal{L}(^*\mathbb{C}) =\varnothing,\\
&\mathcal{I}(^*\mathbb{C})\subset \mathcal{F}(^*\mathbb{C}),\\
&\mathcal{I}(^*\mathbb{C})\cap\mathbb{C}=\{0\},
\end{align}
and similarly for $^*\mathbb{R}$.
\end{proposition}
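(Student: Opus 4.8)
The plan is to derive all four identities directly from the definitions, using only the total order (trichotomy) on $^*\mathbb{R}$ established in Lemma~\ref{L: Trichotomy}, the Archimedean property of the \emph{standard} reals $\mathbb{R}$, and the elementary observation that for standard $r\in\mathbb{C}$ one has $|{}^*r|={}^*|r|$ and that strict order comparisons between standard elements of $^*\mathbb{R}$ agree with those in $\mathbb{R}$ (which follows either from the Transfer Principle, Theorem~\ref{T: Transfer Principle}, or, in the net picture, from the fact that $\{i\in\mathcal{I}\mid r<s\}$ is $\mathcal{I}$ or $\varnothing$ according as $r<s$ holds in $\mathbb{R}$, combined with $\mathcal{I}\in\mathcal{U}$, $\varnothing\notin\mathcal{U}$). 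Throughout, for $x\in{}^*\mathbb{C}$ the quantity $|x|=\bra|x_i|\ket$ lies in $^*\mathbb{R}$ and is compared with the embedded naturals $n={}^*n$.

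For the first identity, take $x\in{}^*\mathbb{C}$. By trichotomy in $^*\mathbb{R}$, either $|x|<n$ for some $n\in\mathbb{N}$, whence $x\in\mathcal{F}(^*\mathbb{C})$; or else $|x|\geq n$ for every $n\in\mathbb{N}$, and then applying this with $n+1$ in place of $n$ gives $|x|\geq n+1>n$ for every $n$, so $x\in\mathcal{L}(^*\mathbb{C})$. Since the reverse inclusion is trivial, the first identity follows. For the second identity, if $x$ belonged to both sets, choose $n$ with $|x|<n$ from membership in $\mathcal{F}(^*\mathbb{C})$; membership in $\mathcal{L}(^*\mathbb{C})$ then forces $|x|>n$, contradicting irreflexivity and transitivity of $<$ in the totally ordered field $^*\mathbb{R}$.

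For the third identity, if $x\in\mathcal{I}(^*\mathbb{C})$ then taking $n=1$ in the defining condition gives $|x|<1<2$, so $x\in\mathcal{F}(^*\mathbb{C})$; the inclusion is proper since $1\in\mathcal{F}(^*\mathbb{C})$ while $1\notin\mathcal{I}(^*\mathbb{C})$ because $|1|=1>1/2$. For the fourth identity, $0\in\mathcal{I}(^*\mathbb{C})\cap\mathbb{C}$ since $|0|=0<1/n$ for all $n$; conversely, if $r\in\mathbb{C}$ with $r\neq0$, then $|r|>0$ in $\mathbb{R}$, so by the Archimedean property of $\mathbb{R}$ there is $n\in\mathbb{N}$ with $1/n<|r|$, and since this strict inequality between standard numbers persists in $^*\mathbb{R}$, we conclude $r\notin\mathcal{I}(^*\mathbb{C})$; hence $\mathcal{I}(^*\mathbb{C})\cap\mathbb{C}=\{0\}$. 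Finally, the ``$^*\mathbb{R}$'' versions are immediate: since $^*\mathbb{R}\subset{}^*\mathbb{C}$ and $\mathcal{I}(^*\mathbb{R})=\mathcal{I}(^*\mathbb{C})\cap{}^*\mathbb{R}$ with the analogous identities for $\mathcal{F}$ and $\mathcal{L}$, one simply intersects each of the four statements with $^*\mathbb{R}$. The only step requiring genuine care — and the main (though minor) obstacle — is the fourth: one must invoke the Archimedean property of $\mathbb{R}$ (not of $^*\mathbb{R}$, which \emph{fails}) together with the fact that the canonical embedding $\mathbb{R}\hookrightarrow{}^*\mathbb{R}$ preserves strict order; everything else is a direct unwinding of definitions and of the trichotomy lemma.
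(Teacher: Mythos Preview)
Your proof is correct and follows essentially the same approach as the paper's: the paper's proof is a single sentence stating that the results follow directly from the definitions and the fact that $^*\mathbb{R}$ is a totally ordered field, and you have simply unpacked this in full detail. Your explicit invocation of the Archimedean property of $\mathbb{R}$ (not $^*\mathbb{R}$) for the fourth identity and your care about order-preservation under the canonical embedding are welcome clarifications that the paper leaves implicit.
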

\Proof These results follow directly from the definitions of infinitesimal, finite and infinitely
large numbers and the fact that $^*\mathbb{R}$ is a totally ordered field. $\blacktriangle$
\begin{example}[Infinitesimals]\label{Ex: Infinitesimals} Let $\nu=\bra a_i\ket$, where
$(a_i)\in\mathbb{C}^\mathcal{I}$, $a_i=n$, $n=\max\{m\in\mathbb{N}\mid
i\in\mathcal{I}_{m-1}\setminus\mathcal{I}_m\}$. The non-standard number $\nu$ is an infinitely
large natural number in the sense that
$\nu\in{^*\mathbb{N}}$ and $(\forall \varepsilon\in\mathbb{R}_+)(\varepsilon<\nu)$. Indeed, we
choose $n\in\mathbb{N}$ such that $\varepsilon\leq n$ and observe that
$\mathcal{I}_n\subset\{i\in\mathcal{I}\mid a_i> n\geq\varepsilon\}$. Thus $p(\{i\in\mathcal{I}\mid
a_i>\varepsilon\})=1$ since $p(\mathcal{I}_n)=1$. Among other things this example show that 
$^*\mathbb{R}$ and $^*\mathbb{C}$ are proper extensions of $\mathbb{R}$ and $\mathbb{C}$, 
respectively.The numbers $\nu^n,\, \sqrt[n]{\nu},\, \ln{\nu},\, e^\nu$ are infinitely
large numbers in $^*\mathbb{R}$. In contrast, the numbers $1/\nu^n,\, 1/\sqrt[n]{\nu},\,
1/\ln{\nu},\, e^{-\nu}$ are non-zero infinitesimals in  $^*\mathbb{R}$. If
$r\in\mathcal{\mathbb{R}}$, then $r+ 1/\nu^n$ is a finite (but not standard) number in
$^*\mathbb{R}$. Also $e^{i\nu}$ is a finite number in $^*\mathbb{C}$ and 
$e^{i\nu}\nu^2+i\ln{\nu}+5+3i$ is an infinitely large number in
$^*\mathbb{C}$.
\end{example}

	Our next goal is to define and study a ring homomorphism\, $\st$\, from the ring of finite numbers
$\mathcal{F}(^*\mathbb{C})$ to
$\mathbb{C}$, called {\em standard part mapping}.  The standard part 
mapping is, in a sense, a counterpart of the concept of {\em limit} in the usual (standard) analysis. 
In contrast to limit, however,  the standard part mapping is applied to 
non-standard numbers rather than to sequences of standard numbers or functions.

\begin{definition}[Standard Part Mapping]\label{D: Standard Part Mapping}{\em 
\begin{enumerate}
\item The {\bf
standard part mapping}\;  $\st: \mathcal{F}(^*\mathbb{R})\to\mathbb{R}$ is defined by the formula:
\begin{equation}\label{E: St}
{\rm st}(x)=\sup\{r\in\mathbb{R}\mid r<x\}.
\end{equation} 
If $x\in\mathcal{F}(^*\mathbb{R})$, then ${\rm st}(x)$ is called the {\bf standard part} of $x$. 

The standard part mappings defined in (ii) and (iii) below are extensions of the
standard part mapping just defined; we shall keep the same notation,\; \st,\; for all.

	\item The {\bf standard part mapping}\; ${\rm st}: \mathcal{F}(^*\mathbb{C})\to
\mathbb{C}$ is defined by the formula ${\rm st}(x+y\, i)={\rm st}(x)+ {\rm st}(y)\, i$.

	\item The mapping\; 
${\rm st}:{^*\mathbb{R}}\to \mathbb{R}\cup\{\pm\infty\}$ is defined by (i) and by ${\rm
st}(x)=\pm\infty$ for $x\in\mathcal{L}(^*\mathbb{R}_\pm)$, respectively.
\end{enumerate}
}\end{definition}

\begin{theorem}[Standard Part Mapping on Finite Numbers]\label{T: Standard Part Mapping on Finite
Numbers} 
\begin{enumerate}

	\item{\bf (i)} Every finite non-standard number $x\in\mathcal{F}(^*\mathbb{C})$ has a
unique asymptotic expansion 
\begin{equation}\label{E: Asy}
x={\rm st}(x)+dx.
\end{equation}
where $dx\in\mathcal{I}(^*\mathbb{C})$. Consequently, if $x\in{^*\mathbb{C}}$, then
$x\in\mathcal{F}(^*\mathbb{C})$ \ifff  $x=c+dx$ for some
$c\in\mathbb{C}$ and some $dx\in\mathcal{I}(^*\mathbb{C})$.

	\item The standard part mapping
is a ring homomorphism from $\mathcal{F}(^*\mathbb{C})$ onto $\mathbb{C}$, i.e. for
every $x, y\in\mathcal{F}(^*\mathbb{C})$ we have:
\begin{align}\label{E: StHomomorphism}
&{\rm st}(x\pm y)={\rm st}(x)\pm {\rm st}(y),\\\notag
&{\rm st}(x\, y)={\rm st}(x)\, {\rm st}(y),\\\notag
&{\rm st}(x/y)={\rm st}(x)/{\rm st}(y), \text{\; whenever\; } {\rm st}(y)\not= 0.
\end{align}

	\item $\mathbb{C}$ consists exactly of the {\bf fixed points} of\, $\st$ in
$^*\mathbb{C}$, in symbol,
\begin{equation}\label{E: Fixed Points}
\mathbb{C}=\{x\in{^*\mathbb{C}}\mid \st(x)=x\}.
\end{equation}
Consequently, $\st\circ\st=\st$, where $\circ$ denotes ``composition''.

	\item \, $x\in\mathcal{I}(^*\mathbb{R})$ \ifff $\st(x)=0$.
 
	\item The standard part mapping\, \st\,  
is an order preserving ring homomorphism from $\mathcal{F}(^*\mathbb{R})$ onto $\mathbb{R}$, where
``order preserving'' means that if
$x, y\in\mathcal{F}(^*\mathbb{R})$, then $x<y$ implies ${\rm st}(x)\leq {\rm st}(y)$ (hence, $x\leq y$
implies ${\rm st}(x)\leq {\rm st}(y)$).
\end{enumerate}
\end{theorem}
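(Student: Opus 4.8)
The plan is to establish the five items essentially in the order stated, since each one feeds the next. For item (i), I would first observe that $\st(x)$ as defined by equation (\ref{E: St}) makes sense for $x \in \mathcal{F}(^*\mathbb{R})$ because the set $\{r \in \mathbb{R} \mid r < x\}$ is nonempty and bounded above in $\mathbb{R}$ (both facts follow from $x$ being finite, i.e. $|x| < n$ for some $n \in \mathbb{N}$), and $\mathbb{R}$ is Dedekind complete. Then I would show $x - \st(x) \in \mathcal{I}(^*\mathbb{R})$: if it were not infinitesimal, it would exceed some $1/n$ in absolute value, and one could slide the supremum up or down by $1/(2n)$ to contradict either the supremum property or the fact that $\st(x)$ is an upper bound minus nothing — this is the standard squeeze argument using the trichotomy (Lemma~\ref{L: Trichotomy}) and the Archimedean property of $\mathbb{R}$ itself. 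Uniqueness of the decomposition $x = c + dx$ with $c \in \mathbb{C}$, $dx \in \mathcal{I}(^*\mathbb{C})$ is immediate from $\mathcal{I}(^*\mathbb{C}) \cap \mathbb{C} = \{0\}$ (Proposition~\ref{P: Basic Properties}): two such decompositions differ by an element that is simultaneously standard and infinitesimal. The complex case follows by applying the real case to real and imaginary parts, after checking that $x = u + vi \in \mathcal{F}(^*\mathbb{C})$ iff $u, v \in \mathcal{F}(^*\mathbb{R})$, which is a routine estimate with $|u|, |v| \le |x| \le |u| + |v|$.

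For item (ii), I would verify that $\mathcal{I}(^*\mathbb{C})$ is an ideal of $\mathcal{F}(^*\mathbb{C})$: closure under addition is clear, and if $\varepsilon$ is infinitesimal and $x$ finite then $|\varepsilon x| < (1/nm)\cdot$ nothing — more carefully, $|x| < m$ and $|\varepsilon| < 1/(nm)$ give $|\varepsilon x| < 1/n$, so $\varepsilon x$ is infinitesimal. Then the homomorphism properties for $\pm$ and $\cdot$ follow by writing $x = \st(x) + dx$, $y = \st(y) + dy$, expanding, and discarding the infinitesimal remainder using uniqueness from item (i). For the quotient formula, when $\st(y) \ne 0$ I would note $1/y$ is finite (since $y$ is bounded away from $0$: $|y| = |\st(y) + dy| \ge |\st(y)| - |dy| > |\st(y)|/2$ eventually) and then $\st(x/y)\st(y) = \st((x/y)y) = \st(x)$ by the product rule. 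Surjectivity onto $\mathbb{C}$ is trivial since $\st(c) = c$ for $c \in \mathbb{C}$, which also delivers item (iii): one inclusion is $\st(c) = c$, the other is that $\st(x) = x$ forces $x = \st(x) \in \mathbb{C}$. The idempotence $\st \circ \st = \st$ is then immediate because $\st(x) \in \mathbb{C}$ is a fixed point. Item (iv) is just the $c = 0$ case of uniqueness: $\st(x) = 0$ iff $x = 0 + dx$ iff $x \in \mathcal{I}(^*\mathbb{R})$.

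For item (v), order preservation, suppose $x < y$ in $\mathcal{F}(^*\mathbb{R})$ but $\st(x) > \st(y)$ in $\mathbb{R}$. Then $y - x > 0$ yet $\st(y - x) = \st(y) - \st(x) < 0$, so by item (iv) applied contrapositively $y - x$ is a positive non-infinitesimal, hence $y - x > 1/n$ for some $n$; but then every rational below $x$ is below $y - 1/n$, which by the supremum definition gives $\st(x) \le \st(y) - 1/n < \st(y)$, a contradiction. (Alternatively and more cleanly: if $x < y$ then $\{r \in \mathbb{R} \mid r < x\} \subseteq \{r \in \mathbb{R} \mid r < y\}$, so $\st(x) \le \st(y)$ directly from the definition as a supremum — this is the quick route and I would prefer it.) The weak version with $x \le y$ follows since $x = y$ gives $\st(x) = \st(y)$ as a function.

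I expect the main obstacle to be item (i), specifically the infinitesimality of $dx = x - \st(x)$: one must argue entirely within the ordered field $^*\mathbb{R}$ using only the trichotomy and the Dedekind completeness of the \emph{standard} reals $\mathbb{R}$, being careful that $^*\mathbb{R}$ itself is non-Archimedean and not Dedekind complete, so the supremum in (\ref{E: St}) is taken in $\mathbb{R}$, not in $^*\mathbb{R}$. Everything after that is bookkeeping with the ideal $\mathcal{I}(^*\mathbb{C})$ and the uniqueness of the splitting.
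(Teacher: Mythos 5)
Your proposal is correct and, on the central item (i), follows the same route as the paper: the supremum $\st(x)=\sup\{r\in\mathbb{R}\mid r<x\}$ exists by Dedekind completeness of $\mathbb{R}$, and $x-\st(x)$ is shown infinitesimal by contradiction, splitting into the cases $x>\st(x)$ and $x<\st(x)$ and contradicting the defining property of the supremum; uniqueness comes from $\mathbb{C}\cap\mathcal{I}(^*\mathbb{C})=\{0\}$, and the complex case reduces to real and imaginary parts. Items (ii)--(iv) are treated in the paper as immediate consequences of (i); your expansions (checking that $\mathcal{I}$ is an ideal in $\mathcal{F}$, expanding $x=\st(x)+dx$, handling $1/y$ via $|y|\geq|\st(y)|-|dy|$) are exactly the bookkeeping the paper leaves to the reader. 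The one place you genuinely diverge is item (v): the paper argues through the decomposition, assuming $\st(x)>\st(y)$ and deriving $0<\st(x)-\st(y)<dy-dx$, so that a nonzero standard number would be infinitesimal. Your preferred argument -- $x<y$ implies $\{r\in\mathbb{R}\mid r<x\}\subseteq\{r\in\mathbb{R}\mid r<y\}$, hence $\st(x)\leq\st(y)$ by monotonicity of suprema -- works directly from the definition, needs neither item (i) nor (ii), and is shorter; what it buys is independence from the decomposition, while the paper's version illustrates how the asymptotic expansion does all the work once it is in hand. Either is acceptable; your first, more roundabout argument for (v) is also sound but relies on (ii) and (iv), which you have already established by that point, so there is no circularity.
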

\Proof (i) Suppose, first, that $x\in\mathcal{F}(^*\mathbb{R})$. We have to show that $x-{\rm
st}(x)$ is infinitesimal. Suppose (on the contrary) that
$1/n<|x-{\rm st}(x)|$ for some $n$. In the case $x>{\rm st}(x)$, it follows $1/n<x-{\rm st}(x)$,
contradicting (\ref{E: St}). In the case $x<{\rm st}(x)$, it follows $1/n<{\rm st}(x)-x$, again
contradicting (\ref{E: St}). To show the uniqueness of (\ref{E: Asy}), suppose that $r+dx=s+dy$ for
some $r, s\in\mathbb{R}$ and some $dx, dy\in\mathcal{I}(^*\mathbb{R})$. It follows that $r-s$ is
infinitesimal, hence, $r=s$, since the zero is the only infinitesimal in $\mathbb{R}$. The result
extends to $\mathcal{F}(^*\mathbb{C})$ directly by the formula in part~(ii) of Definition~\ref{D:
Standard Part Mapping}.

	(ii) follows immediately from (i).

	(iii) follows immediately from (i) by letting $dx=0$.

	(iv) follows directly from the definition of $\st$.

	(v) If $x\approx y$, then it follows $\st(x)=\st(y)$ (regardless whether $x<y$, $x=y$ or
$x>y$). Suppose $x<y$ and
$x\not\approx y$. It follows $\st(x)+dx<\st(y)+dy$. We have to show that $\st(x)\leq \st(y)$.
Suppose (on the contrary) that
$\st(x)>\st(y)$. It follows $0<\st(x)-\st(y)<dy-dx$ implying $\st(x)-\st(y)\approx 0$, hence,
$\st(x)=\st(y)$, a contradiction. 
$\blacktriangle$
\begin{corollary}[An Isomorphism]\label{C: An Isomorphism} {\bf (i)} $\mathcal{F}(^*\mathbb{R})/\mathcal{I}(^*\mathbb{R})$ is ordered field
isomorphic to
$\mathbb{R}$ under the mapping $q(x)\to\st(x)$, where $q:
\mathcal{F}(^*\mathbb{R})\to\mathcal{F}(^*\mathbb{R})/\mathcal{I}(^*\mathbb{R})$ is the quotient
mapping. 

	{\bf (ii)} $\mathcal{F}(^*\mathbb{C})/\mathcal{I}(^*\mathbb{C})$ is field isomorphic
to $\mathbb{C}$ under the mapping $Q(x)\to\st(x)$, where
$Q: \mathcal{F}(^*\mathbb{C})\to\mathcal{F}(^*\mathbb{C})/\mathcal{I}(^*\mathbb{C})$ is the quotient
mapping. 

	{\bf (iii)} The isomorphism described in (ii) is an extension of the isomorphism described in (i).
\end{corollary}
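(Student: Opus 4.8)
The plan is to deduce everything from Theorem~\ref{T: Standard Part Mapping on Finite Numbers}, which already contains the substance; the corollary is essentially a repackaging of that theorem in the language of quotient fields. First I would treat part (i). Consider the standard part mapping $\st:\mathcal{F}(^*\mathbb{R})\to\mathbb{R}$, which by Theorem~\ref{T: Standard Part Mapping on Finite Numbers}(v) is a surjective order-preserving ring homomorphism. Its kernel is $\{x\in\mathcal{F}(^*\mathbb{R})\mid \st(x)=0\}$, and by part~(iv) of that theorem this kernel is exactly $\mathcal{I}(^*\mathbb{R})$. Hence $\mathcal{I}(^*\mathbb{R})$ is an ideal of $\mathcal{F}(^*\mathbb{R})$ (so the quotient ring makes sense), and by the first isomorphism theorem for rings the induced map $\bar\st:\mathcal{F}(^*\mathbb{R})/\mathcal{I}(^*\mathbb{R})\to\mathbb{R}$, $q(x)\mapsto\st(x)$, is a well-defined ring isomorphism. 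It remains to check it is an order isomorphism: the order on the quotient is the one making $q$ order-compatible, i.e. $q(x)\le q(y)$ iff $x\le y+dz$ for some $dz\in\mathcal{I}(^*\mathbb{R})$, equivalently (using part~(i), $x=\st(x)+dx$) iff $\st(x)\le\st(y)$; so $\bar\st$ and its inverse are both order-preserving. One small point to verify is that the quotient order is well-defined, i.e. that $\mathcal{I}(^*\mathbb{R})$ is an order-convex (``absolutely convex'') ideal — this is immediate since $|x|<|y|$ with $y$ infinitesimal forces $x$ infinitesimal.

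For part (ii) the argument is the same with $\mathbb{R}$ replaced by $\mathbb{C}$: by Theorem~\ref{T: Standard Part Mapping on Finite Numbers}(ii) the map $\st:\mathcal{F}(^*\mathbb{C})\to\mathbb{C}$ is a surjective ring homomorphism, and combining part~(i) (every finite $x$ is $c+dx$ with $c\in\mathbb{C}$, $dx\in\mathcal{I}(^*\mathbb{C})$) with part~(iv) applied to real and imaginary parts shows its kernel is precisely $\mathcal{I}(^*\mathbb{C})$; the first isomorphism theorem then gives the field isomorphism $Q(x)\mapsto\st(x)$. (That the quotient is a field, not merely a ring, follows because $\mathbb{C}$ is a field, or directly because $\mathcal{I}(^*\mathbb{C})$ is a maximal ideal of $\mathcal{F}(^*\mathbb{C})$.)

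For part (iii), I would simply observe that $\mathcal{F}(^*\mathbb{R})\subseteq\mathcal{F}(^*\mathbb{C})$ and $\mathcal{I}(^*\mathbb{R})=\mathcal{I}(^*\mathbb{C})\cap\mathcal{F}(^*\mathbb{R})$, so the inclusion of representatives induces an embedding $\mathcal{F}(^*\mathbb{R})/\mathcal{I}(^*\mathbb{R})\hookrightarrow\mathcal{F}(^*\mathbb{C})/\mathcal{I}(^*\mathbb{C})$; chasing the definitions, this embedding is compatible with $q$, $Q$, and the two copies of $\st$, so the square commutes and the isomorphism in (ii) restricts to the one in (i) under the identifications $\mathbb{R}\subset\mathbb{C}$ and $\mathcal{F}(^*\mathbb{R})/\mathcal{I}(^*\mathbb{R})\subset\mathcal{F}(^*\mathbb{C})/\mathcal{I}(^*\mathbb{C})$.

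There is essentially no hard part here: the only place requiring a moment's care is confirming that $\mathcal{I}(^*\mathbb{R})$ (resp.\ $\mathcal{I}(^*\mathbb{C})$) really is an ideal and that the quotient order in (i) is well-defined and convex — both of which are short consequences of the fact, recorded in Proposition~\ref{P: Basic Properties} and Theorem~\ref{T: Standard Part Mapping on Finite Numbers}, that the infinitesimals form the kernel of a ring homomorphism. Everything else is the first isomorphism theorem applied twice and a diagram chase.
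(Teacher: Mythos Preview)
Your proof is correct and is exactly the natural argument the paper has in mind: the paper itself simply says ``We leave the proof to the reader,'' so there is no alternative approach to compare against. Your use of Theorem~\ref{T: Standard Part Mapping on Finite Numbers} together with the first isomorphism theorem is precisely what is intended.
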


	We leave the proof to the reader.

\begin{example} Let $c\in\mathbb{C}$ and let $dx\in\mathcal{I}(^*\mathbb{C})$ be 
a non-zero
infinitesimal. Then we have:
\begin{align}\notag
&\st(c+dx^n)=c,\\\notag
&\st(dx/|dx|)  = \pm 1,\\\notag
&\st\left(\frac{cdx + 7dx^2 + dx^3}{dx}\right) =  \st (c + 7dx + dx^2) =  c,\\\notag
&\st\left(\frac{-3 + 4 dx}{dx}\right) = \st(1/dx)\times\st(-3 + 4 dx) =
 (\pm\infty)\times (-3) =\mp\infty,
\end{align}
where the choice of the sign $\pm$ depends on whether $dx$ is positive or negative, 
respectively.
\end{example}
\begin{definition}[Standard Part of a Set]\label{D: St of a Set} If $\mathcal{A}\subseteq{^*\mathbb{C}}$, we define the {\bf standard part} of $\mathcal{A}$ 
by
\begin{equation}\label{E: St of a Set}
\st[\mathcal{A}]=\{\st(x)\mid x\in\mathcal{A}\cap\mathcal{F}(^*\mathbb{C})\}.
\end{equation}
\end{definition}
\begin{lemma} If $\mathcal{A}\subseteq{^*\mathbb{C}}$, then 
$\mathcal{A}\cap\mathbb{C}\subseteq\st[\mathcal{A}]$. (A proper inclusion might occur; 
see the example below.). In particular, we have $\st[^*\mathbb{R}]=\mathbb{R}$ and 
$\st[^*\mathbb{C}]=\mathbb{C}$.
\end{lemma}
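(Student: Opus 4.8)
The plan is to prove the two assertions — the inclusion $\mathcal{A} \cap \mathbb{C} \subseteq \st[\mathcal{A}]$ and the two special cases $\st[{}^*\mathbb{R}] = \mathbb{R}$, $\st[{}^*\mathbb{C}] = \mathbb{C}$ — separately, since the inclusion is nearly formal while the equalities require exhibiting enough finite non-standard numbers with a given standard part.

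For the inclusion, I would take $c \in \mathcal{A} \cap \mathbb{C}$ and observe that, under the canonical embedding $\mathbb{C} \subseteq {}^*\mathbb{C}$, we have $c \in \mathcal{F}({}^*\mathbb{C})$ since $|c| < n$ for some $n \in \mathbb{N}$ (this is just Proposition~\ref{P: Basic Properties}, or rather the inclusion $\mathbb{C} \subseteq \mathcal{F}({}^*\mathbb{C})$ implicit in it). Hence $c \in \mathcal{A} \cap \mathcal{F}({}^*\mathbb{C})$, so $\st(c)$ is defined and belongs to $\st[\mathcal{A}]$ by Definition~\ref{D: St of a Set}. It remains to note $\st(c) = c$, which is exactly \eqref{E: Fixed Points} in Theorem~\ref{T: Standard Part Mapping on Finite Numbers}~(iii): $\mathbb{C}$ is the set of fixed points of $\st$. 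Therefore $c = \st(c) \in \st[\mathcal{A}]$, completing the inclusion.

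For the equalities, consider $\st[{}^*\mathbb{R}]$ first. The containment $\st[{}^*\mathbb{R}] \subseteq \mathbb{R}$ is immediate because $\st$ maps $\mathcal{F}({}^*\mathbb{R})$ into $\mathbb{R}$ by Definition~\ref{D: Standard Part Mapping}~(i). For the reverse containment, given $r \in \mathbb{R}$, apply the inclusion just proved with $\mathcal{A} = {}^*\mathbb{R}$: since $r \in {}^*\mathbb{R} \cap \mathbb{R}$, we get $r \in \st[{}^*\mathbb{R}]$. (Alternatively and just as cheaply: $r \in \mathcal{F}({}^*\mathbb{R})$ and $\st(r) = r$.) Hence $\st[{}^*\mathbb{R}] = \mathbb{R}$. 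The case of ${}^*\mathbb{C}$ is identical, using that $\st$ maps $\mathcal{F}({}^*\mathbb{C})$ into $\mathbb{C}$ by Definition~\ref{D: Standard Part Mapping}~(ii) and that $\mathbb{C} \subseteq {}^*\mathbb{C} \cap \mathbb{C}$; then $\mathbb{C} \subseteq \st[{}^*\mathbb{C}] \subseteq \mathbb{C}$.

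There is no real obstacle here: the whole lemma is a bookkeeping consequence of the fact that $\st$ is a retraction of $\mathcal{F}({}^*\mathbb{C})$ onto $\mathbb{C}$ fixing $\mathbb{C}$ pointwise (Theorem~\ref{T: Standard Part Mapping on Finite Numbers}). The only point requiring a moment's care is to be sure that elements of $\mathbb{C}$, viewed inside ${}^*\mathbb{C}$ via the constant-net embedding, really do lie in $\mathcal{F}({}^*\mathbb{C})$ — so that they survive the intersection with $\mathcal{F}({}^*\mathbb{C})$ in the definition of $\st[\mathcal{A}]$ — and this is precisely $\mathcal{I}({}^*\mathbb{C}) \cap \mathbb{C} = \{0\}$ together with $\mathbb{C} \subseteq \mathcal{F}({}^*\mathbb{C})$ from Proposition~\ref{P: Basic Properties}. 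For the remark about proper inclusion, one need only point to the subsequent example (a set $\mathcal{A}$ such as the monad of $0$, whose standard part is $\{0\}$ but whose intersection with $\mathbb{C}$ is also $\{0\}$ — or better, a single infinitesimal's worth of non-standard points clustering at a real not in $\mathcal{A}$), so no argument is needed in the lemma itself.
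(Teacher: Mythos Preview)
Your proof is correct and follows essentially the same approach as the paper: both arguments reduce the inclusion $\mathcal{A}\cap\mathbb{C}\subseteq\st[\mathcal{A}]$ to the fixed-point property $\st(c)=c$ for $c\in\mathbb{C}$ (part~(iii) of Theorem~\ref{T: Standard Part Mapping on Finite Numbers}). The paper's proof is a one-line citation of that theorem and does not spell out the ``in particular'' equalities, whereas you do; your additional detail is fine and entirely in the same spirit.
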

\Proof The inclusion 
$\mathcal{A}\cap\mathbb{C}\subseteq\st[\mathcal{A}]$ follows directly from part~(iii) of 
Theorem~\ref{T: Standard Part Mapping on Finite Numbers}.
\begin{example} Consider the set 
$\mathcal{A}=\{x\in{^*\mathbb{R}}\mid 0<x<1\}$. We have 
$\mathcal{A}\cap\mathbb{C}=\{x\in{\mathbb{R}}\mid 0<x<1\}$. On the other hand, 
$\st[\mathcal{A}]=\{x\in{\mathbb{R}}\mid 0\leq x\leq1\}$. Indeed, if $\epsilon$ is a 
positive infinitesimal in $^*\mathbb{R}$, then 
$\epsilon,\; 1-\epsilon\in\mathcal{A}$ and $\st(\epsilon)=0$, and $\st(1-\epsilon)=1$.
\end{example}

\section{NSA and the Usual Topology on $\mathbb{R}^d$}\label{S: NSA and the 
Usual Topology on Rd}

	In what follows we let
${^*\mathbb{R}^d}={^*\mathbb{R}}\times{^*\mathbb{R}}\times\dots\times{^*\mathbb{R}}$ ($d$ times).
If $x\in{^*\mathbb{R}^d}$, then $x\approx 0$ means that $||x||$ is infinitesimal. 

\begin{definition}[Monads]\label{D:Monads} If $\mathbb{X}\subseteq{\mathbb{R}^d}$, then
\[
\mu(\mathbb{X})=\{r+dx \mid r\in\mathbb{X},\; dx\in{^*\mathbb{R}^d}, ||dx||\approx 0\}.
\]
is called the {\bf monad} of $\mathbb{X}$ in $^*\mathbb{R}^d$. If $r\in\mathbb{R}^d$, we shall
write simply $\mu(r)$ instead of the more precise $\mu(\{r\})$, i.e. 
\[
\mu(r)=\{r+dx\mid dx\in\mathcal{I}(\mathbb{R}^d)\}.
\]
\end{definition}

	We observe that
$\mu(\mathbb{X})=\bigcup_{r\in\mathbb{X}}\mu(r)$.  

	In what follows $\mathcal{T}$ stands for the usual topology on $\mathbb{R}^d$.

\begin{theorem}[Boolean Properties]\label{T:Boolean Properties} The mapping
$\mu:\mathcal{T}\to\mathcal{P}(^*\mathbb{R}^d)$ is a Boolean homomorphism. Also $\mu$ preserves the
arbitrary unions in the sense that
$\mu\left(\bigcup_{\lambda\in\Lambda}\Omega_\lambda\right)=
\bigcup_{\lambda\in\Lambda}\mu(\Omega_\lambda)$ for any set $\Lambda$ and any family of open sets 
$\{\Omega_\lambda\}_{\lambda\in\Lambda}$.
\end{theorem}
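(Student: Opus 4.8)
The plan is to verify the two assertions separately: (1) that $\mu$ is a Boolean homomorphism $\mathcal{T}\to\mathcal{P}({}^*\mathbb{R}^d)$, meaning it commutes with finite unions, finite intersections, and sends the top element $\mathbb{R}^d$ to ${}^*\mathbb{R}^d$ and (in the appropriate relative sense) respects complements within $\mathcal{T}$; and (2) the stronger statement that $\mu$ preserves arbitrary unions. Throughout I would use the decomposition $\mu(\mathbb{X})=\bigcup_{r\in\mathbb{X}}\mu(r)$ noted just before the theorem, together with the fact that $x\approx y$ (equivalently $x-y\in\mathcal{I}({}^*\mathbb{R}^d)$) is an equivalence relation, and the characterization of the standard part on $\mathcal{F}({}^*\mathbb{R})$ from Theorem~\ref{T: Standard Part Mapping on Finite Numbers}.

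\emph{Arbitrary unions.} This is the easiest step and I would do it first. Given a family $\{\Omega_\lambda\}_{\lambda\in\Lambda}$ of open sets, a point $x\in{}^*\mathbb{R}^d$ lies in $\mu\!\left(\bigcup_\lambda\Omega_\lambda\right)$ iff $x\approx r$ for some $r\in\bigcup_\lambda\Omega_\lambda$, iff $x\approx r$ for some $r\in\Omega_{\lambda_0}$ for some $\lambda_0$, iff $x\in\mu(\Omega_{\lambda_0})$ for some $\lambda_0$, iff $x\in\bigcup_\lambda\mu(\Omega_\lambda)$. This is a pure set-membership chase and requires no hypotheses beyond the definition of the monad; in particular it covers the "preserves finite unions" part of being a Boolean homomorphism as a special case. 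It also immediately gives $\mu(\varnothing)=\varnothing$.

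\emph{Intersections.} Here the containment $\mu(\Omega_1\cap\Omega_2)\subseteq\mu(\Omega_1)\cap\mu(\Omega_2)$ is trivial by monotonicity. The reverse containment is the one place where openness is genuinely used, and I expect it to be the main (though still mild) obstacle. Suppose $x\in\mu(\Omega_1)\cap\mu(\Omega_2)$, so $x\approx r_1$ with $r_1\in\Omega_1$ and $x\approx r_2$ with $r_2\in\Omega_2$. Then $r_1\approx r_2$, and since both are standard points the only standard infinitesimal is $0$, whence $r_1=r_2=:r$; this forces $r\in\Omega_1\cap\Omega_2$ and $x\in\mu(\Omega_1\cap\Omega_2)$. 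The point worth stressing is that this argument actually shows the decomposition $\mu(\mathbb{X})=\bigcup_{r\in\mathbb{X}}\mu(r)$ is a \emph{disjoint} union: distinct standard points have disjoint monads, because $\mu(r)\cap\mu(s)\neq\varnothing$ forces $r\approx s$ and hence $r=s$. (Note openness of $\Omega_1,\Omega_2$ is not really needed for this particular identity — disjointness of monads of distinct real points does all the work — but it is needed for $\mu$ to land in $\mathcal{T}$'s Boolean structure coherently, e.g. so that complements behave.)

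\emph{Top element and complements.} Finally $\mu(\mathbb{R}^d)=\bigcup_{r\in\mathbb{R}^d}\mu(r)=\mathcal{F}({}^*\mathbb{R}^d)$ by part~(i) of Theorem~\ref{T: Standard Part Mapping on Finite Numbers} (every finite vector is a standard vector plus an infinitesimal), which is the distinguished "full" set one wants for the target Boolean algebra; one takes the codomain Boolean algebra to be the subalgebra of $\mathcal{P}({}^*\mathbb{R}^d)$ generated by the monads, with unit $\mathcal{F}({}^*\mathbb{R}^d)$, and relative complement $\mu(\Omega_1)\setminus\mu(\Omega_2)$ matches $\mu(\Omega_1\setminus\overline{\Omega_2})$-type relations via the disjointness of monads together with $\mathbb{R}^d=\Omega\sqcup(\mathbb{R}^d\setminus\Omega)$ at the level of standard points. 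Assembling these pieces — arbitrary unions, finite intersections, preservation of the top element, and the complement relation forced by disjointness of distinct monads — gives that $\mu$ is a Boolean homomorphism preserving arbitrary unions, completing the proof. $\blacktriangle$
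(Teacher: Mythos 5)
The paper states this theorem without proof, so there is no author's argument to compare against; your proposal fills a genuine gap. The substantive content is correct and efficiently organized: the arbitrary-union identity is, as you say, a pure membership chase from the decomposition $\mu(\mathbb{X})=\bigcup_{r\in\mathbb{X}}\mu(r)$; the only non-trivial direction is $\mu(\Omega_1)\cap\mu(\Omega_2)\subseteq\mu(\Omega_1\cap\Omega_2)$, and your key observation — that $\mu(r)\cap\mu(s)=\varnothing$ for distinct standard $r,s$ because the only standard infinitesimal is $0$ — is exactly the right lemma. You are also correct that openness plays no role in this identity, which is a nice clarification. The identification $\mu(\mathbb{R}^d)=\mathcal{F}({}^*\mathbb{R}^d)$ via the standard-part decomposition is right and correctly flags that $\mu$ does not send the top of $\mathcal{T}$ to the top of $\mathcal{P}({}^*\mathbb{R}^d)$, so ``Boolean homomorphism'' in the statement has to be read loosely (as a lattice homomorphism into $\mathcal{P}({}^*\mathbb{R}^d)$, or a Boolean homomorphism into the subalgebra with unit $\mathcal{F}({}^*\mathbb{R}^d)$ — the topology $\mathcal{T}$ is not itself a Boolean algebra, which the theorem's wording glosses over).

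One small error to fix: the displayed claim that $\mu(\Omega_1)\setminus\mu(\Omega_2)$ matches $\mu(\Omega_1\setminus\overline{\Omega_2})$ is false. Take $\Omega_1=(-1,1)$, $\Omega_2=(-1,0)$ in $\mathbb{R}$; then $\Omega_1\setminus\overline{\Omega_2}=(0,1)$, but a nonzero infinitesimal $x$ satisfies $x\in\mu(\Omega_1)$, $x\notin\mu(\Omega_2)$, and $x\notin\mu((0,1))$, since $\st(x)=0\notin(0,1)$. The correct relation, which follows directly from disjointness of the monads $\mu(r)$, is $\mu(\Omega_1)\setminus\mu(\Omega_2)=\mu(\Omega_1\setminus\Omega_2)$ with the plain set-theoretic difference (extending $\mu$ to arbitrary subsets of $\mathbb{R}^d$ via $\mu(\mathbb{X})=\bigcup_{r\in\mathbb{X}}\mu(r)$, as the paper's Definition~\ref{D:Monads} already does). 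That replacement makes the ``complement'' discussion rigorous; everything else stands.
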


\begin{theorem}[The Usual Topology on $\mathbb{R}^d$]\label{T: The Usual Topology on Rd} Let
$\mathbb{X}\subseteq\mathbb{R}^d$. Then:
\begin{enumerate}

\item A set $\mathbb{X}$ is {\bf open} in $\mathbb{R}^d$ \ifff
$\mu(\mathbb{X})\subseteq{^*\mathbb{X}}$. 

\item $\mathbb{X}$ is {\bf compact} in $\mathbb{R}^d$ \ifff
$^*\mathbb{X}\subseteq\mu(\mathbb{X})$. 
\end{enumerate}
\end{theorem}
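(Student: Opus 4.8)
The plan is to prove each of the two characterizations as a biconditional, using the monad calculus already established and the Extension and Transfer Principles. The two parts are genuinely separate statements about $\mathbb{X}$, so I would treat them in turn.

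\textbf{Part (1): openness.} First I would prove the forward direction. Suppose $\mathbb{X}$ is open and let $y \in \mu(\mathbb{X})$; write $y = r + dx$ with $r \in \mathbb{X}$ and $\|dx\| \approx 0$. Since $\mathbb{X}$ is open, there is $\varepsilon \in \mathbb{R}_+$ with the standard ball $B(r,\varepsilon) \subseteq \mathbb{X}$; the statement $(\forall z \in \mathbb{R}^d)(\|z - r\| < \varepsilon \Rightarrow z \in \mathbb{X})$ transfers to $^*\mathbb{R}^d$, and since $\|y - r\| = \|dx\| \approx 0 < \varepsilon$ (as $\varepsilon$ is a positive standard real and $dx$ is infinitesimal), we get $y \in {^*\mathbb{X}}$. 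Hence $\mu(\mathbb{X}) \subseteq {^*\mathbb{X}}$. For the converse, suppose $\mu(\mathbb{X}) \subseteq {^*\mathbb{X}}$ and fix $r \in \mathbb{X}$; I want a standard $\varepsilon > 0$ with $B(r,\varepsilon) \subseteq \mathbb{X}$. If no such $\varepsilon$ existed, then for every $n \in \mathbb{N}$ the set $\{z \in \mathbb{R}^d \mid \|z-r\| < 1/n \ \wedge\ z \notin \mathbb{X}\}$ is nonempty; I would phrase this internally and use either Transfer or, more cleanly, note that the sequence of nonempty internal sets $\mathcal{B}_n = \{z \in {^*\mathbb{R}^d} \mid \|z - r\| < 1/n \ \wedge\ z \notin {^*\mathbb{X}}\}$ has the finite intersection property, so by Sequential Saturation (Theorem~\ref{T: Sequential Saturation}) there is $y \in \bigcap_n \mathcal{B}_n$; then $\|y - r\| \approx 0$, so $y \in \mu(\mathbb{X}) \subseteq {^*\mathbb{X}}$, contradicting $y \notin {^*\mathbb{X}}$. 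Thus $\mathbb{X}$ is open.

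\textbf{Part (2): compactness.} For the forward direction, suppose $\mathbb{X}$ is compact and let $y \in {^*\mathbb{X}}$. Compactness in $\mathbb{R}^d$ means closed and bounded; boundedness gives a standard $M$ with $\|z\| \le M$ for all $z \in \mathbb{X}$, which transfers, so $\|y\| \le M$ is finite and $y \in \mathcal{F}(^*\mathbb{R}^d)$ has a standard part $r = \st(y) \in \mathbb{R}^d$ with $y \approx r$. It remains to show $r \in \mathbb{X}$ (then $y \in \mu(r) \subseteq \mu(\mathbb{X})$). If $r \notin \mathbb{X}$, then since $\mathbb{X}$ is closed there is a standard $\varepsilon > 0$ with $B(r,\varepsilon) \cap \mathbb{X} = \varnothing$; transferring, ${^*\mathbb{X}} \cap {^*B(r,\varepsilon)} = \varnothing$, but $y \in {^*\mathbb{X}}$ and $\|y - r\| \approx 0 < \varepsilon$ puts $y$ in $^*B(r,\varepsilon)$, a contradiction. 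For the converse, suppose $^*\mathbb{X} \subseteq \mu(\mathbb{X})$. Boundedness: if $\mathbb{X}$ were unbounded, then for every $n$ there is $z_n \in \mathbb{X}$ with $\|z_n\| > n$; the sets $\{z \in {^*\mathbb{X}} \mid \|z\| > n\}$ have the finite intersection property, so Sequential Saturation yields $y \in {^*\mathbb{X}}$ with $\|y\|$ infinitely large, hence $y \notin \mathcal{F}(^*\mathbb{R}^d)$ and so $y \notin \mu(\mathbb{X})$ (every point of a monad is finite), contradiction. Closedness: let $r$ be a limit point of $\mathbb{X}$ in $\mathbb{R}^d$; pick $z_n \in \mathbb{X}$ with $\|z_n - r\| < 1/n$, and by Sequential Saturation find $y \in {^*\mathbb{X}}$ with $y \approx r$; then $y \in \mu(\mathbb{X})$, so $\st(y) = r$ lies in $\mathbb{X}$ because $y$ belongs to $\mu(r')$ for some $r' \in \mathbb{X}$ and $r' = \st(y) = r$. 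Hence $\mathbb{X}$ is closed and bounded, i.e. compact.

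\textbf{Expected main obstacle.} The routine parts are the ones using Transfer on elementary $\varepsilon$-statements; the delicate point is the "$\Leftarrow$" direction of compactness, specifically extracting an \emph{internal} element of $^*\mathbb{X}$ that witnesses a failure of closedness or boundedness. One must be careful that the sequences $(z_n)$ chosen in $\mathbb{R}^d$ are genuinely used to build internal sets with the finite intersection property before invoking Theorem~\ref{T: Sequential Saturation}; a bare diagonal sequence lives in ${^*\mathbb{R}^d}$ only after one fixes a suitable representative, so routing everything through sequential saturation of internal sets (rather than through explicit representatives $\bra z_{n,i} \ket$) is the cleanest way to avoid a gap. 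I would also make explicit the small lemma, implicit in Definition~\ref{D:Monads}, that $\mu(\mathbb{X}) \subseteq \mathcal{F}(^*\mathbb{R}^d)$ and that for $y \in \mu(\mathbb{X})$ one has $\st(y) \in \mathbb{X}$ — this is exactly what drives both converse arguments.
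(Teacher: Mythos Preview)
The paper states this theorem without proof, so there is nothing to compare against. Your argument is correct and is in fact the standard one: both biconditionals are handled by Transfer for the forward directions and by Sequential Saturation (Theorem~\ref{T: Sequential Saturation}) applied to nested internal sets for the converses, exactly as you outline; the auxiliary observations $\mu(\mathbb{X})\subseteq\mathcal{F}({^*\mathbb{R}^d})$ and $\st(y)\in\mathbb{X}$ for $y\in\mu(\mathbb{X})$ are immediate from Definition~\ref{D:Monads}.
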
 
\newpage
\section{Non-Standard Smooth Functions}\label{S: Non-Standard Smooth Functions}

\begin{definition}[Non-Standard Smooth Functions]\label{D: Non-Standard Smooth Functions} {\em Let
$\Omega$ is an open set of
$\mathbb{R}^d$. Then: 
\begin{enumerate}
\item The ring (algebra) of the {\bf non-standard smooth functions} is
defined the factor ring
\[
^*\mathcal{E}(\Omega)=\mathcal{E}(\Omega)^\mathcal{I}/\sim,
\]
where
$(f_i)\sim(g_i)$ if $f_i=g_i$ in $\mathcal{E}(\Omega)$ {\bf for almost all} $i$ in the sense that 
\[
p\left(\{i\mid f_i=g_i\}\right)=1.
\]

	We denote by $\bra f_i\ket\in{^*\mathcal{E}}(\Omega)$ the equivalence class determined by $(f_i)$.

\item The algebraic operations and
partial differentiation in $^*\mathcal{E}(\Omega)$  is inherited from  $^*\mathcal{E}(\Omega)$. For
example, $\partial^\alpha\bra f_i\ket=\bra\partial^\alpha f_i\ket$.

\item The mapping $f\to{^*f}$ defines an embedding
$\mathcal{E}(\Omega)\hookrightarrow{^*\mathcal{E}(\Omega)}$ by the constant families, i.e.
$f_i=f$ for all $i\in\mathcal{I}$. We say that $^*f$ is the {\bf non-standard extension } of $f$.
\item{\bf (iv)} Every 
$\bra f_i\ket\in{^*\mathcal{E}(\Omega)}$ is a {\bf pointwise mapping} of the form 
$\bra f_i\ket:{^*\Omega}\to{^*\mathbb{C}}$,
where $\bra f_i\ket(\bra x_i\ket)=\bra f_i(x_i)\ket$ and 
\[
^*\Omega=\{\bra x_i\ket\in{^*\mathbb{R}^d}\mid x_i\in\Omega \text{\; a.e.\;}\},
\]
is the {\bf non-standard extension} of $\Omega$. 
\item{\bf (v)} Let $X\subseteq{\mathcal{E}}$. The non-standard extension $^*X$ of $X$ is defined
by 
\[
^*X=\{\bra f_i\ket\in{^*\mathcal{E}}(\Omega)\mid f_i\in X \text{\ a.e.\,}\}.
\]
In particular,
\[
^*\mathcal{D}(\Omega)=\{\bra f_i\ket\in{^*\mathcal{E}}(\Omega)\mid f_i\in \mathcal{D}(\Omega)
\text{\ a.e.\,}\}.
\]
\end{enumerate}
}\end{definition}

\begin{proposition} $^*\mathcal{E}(\Omega)$ is a differential algebra over the field $^*\mathbb{C}$.
\end{proposition}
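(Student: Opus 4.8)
The plan is to verify the differential-algebra axioms one at a time, showing that the componentwise operations of Definition~\ref{D: Non-Standard Smooth Functions} descend to well-defined operations on the quotient $^*\mathcal{E}(\Omega)=\mathcal{E}(\Omega)^\mathcal{I}/\!\sim$ and satisfy the required identities. The engine behind every step is the same: the index sets on which two families agree, or on which a given algebraic identity holds componentwise, all lie in the ultrafilter $\mathcal{U}=\{A\in\mathcal{P}(\mathcal{I})\mid p(A)=1\}$, and $\mathcal{U}$ is closed under finite intersections and supersets (properties~(2)--(3) of $\mathcal{U}$ in the proof of Theorem~\ref{T: Existence of Two Valued kappa-Good Measures}), so finitely many ``almost everywhere'' conditions may be intersected at will.

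First I would check well-definedness. If $(f_i)\sim(f_i')$ and $(g_i)\sim(g_i')$, then $A=\{i\mid f_i=f_i'\}$ and $B=\{i\mid g_i=g_i'\}$ lie in $\mathcal{U}$, hence $A\cap B\in\mathcal{U}$, and on $A\cap B$ we have $f_i+g_i=f_i'+g_i'$ and $f_ig_i=f_i'g_i'$, while $\partial^\alpha f_i=\partial^\alpha f_i'$ already on $A$. Thus $+$, $\cdot$ and each $\partial^\alpha$ are well-defined on $^*\mathcal{E}(\Omega)$, and the same argument applied to $^*\mathbb{C}=\mathbb{C}^\mathcal{I}/\!\sim$ (built with the very same $\mathcal{U}$) shows that the scalar multiplication $\bra c_i\ket\cdot\bra f_i\ket\eqdef\bra c_if_i\ket$ is well-defined.

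Next the algebra axioms. Commutativity and associativity of $+$ and $\cdot$, distributivity, the module axioms over $^*\mathbb{C}$, and the compatibility of scalar and ring multiplication are each an identity of the form ``expression $1=$ expression $2$ for all arguments''. For fixed representatives this identity holds in $\mathcal{E}(\Omega)$ (respectively between $\mathbb{C}$ and $\mathcal{E}(\Omega)$) for \emph{every} $i\in\mathcal{I}$, because $\mathcal{E}(\Omega)$ is a commutative $\mathbb{C}$-algebra; hence it holds on $\mathcal{I}\in\mathcal{U}$ and descends to $^*\mathcal{E}(\Omega)$. The additive neutral element is $\bra 0\ket$, the multiplicative unit is ${}^*1=\bra 1\ket$, and $^*\mathbb{C}$ sits inside $^*\mathcal{E}(\Omega)$ as the constant families, so it genuinely acts as the ring of scalars; it is moreover a \emph{field} by Theorem~\ref{T: Basic Properties}, which is exactly what upgrades ``algebra over a ring'' to ``algebra over a field''.

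Finally the differential structure: each $\partial_j$ ($1\le j\le d$) is $^*\mathbb{C}$-linear since $\partial_j(c_if_i+g_i)=c_i\,\partial_jf_i+\partial_jg_i$ holds in $\mathcal{E}(\Omega)$ for every $i$; it satisfies the Leibniz rule $\partial_j(\bra f_i\ket\bra g_i\ket)=\partial_j\bra f_i\ket\cdot\bra g_i\ket+\bra f_i\ket\cdot\partial_j\bra g_i\ket$ for the same reason; and $\partial_j\partial_k=\partial_k\partial_j$ componentwise by Clairaut's theorem. Thus $\{\partial_1,\dots,\partial_d\}$ is a commuting family of derivations and $^*\mathcal{E}(\Omega)$ is a differential algebra over $^*\mathbb{C}$. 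A slicker alternative is to route the whole verification through the Transfer Principle (Theorem~\ref{T: Transfer Principle}): each axiom is a bounded, universally quantified identity in $\mathcal{L}(V(S))$ for a superstructure base $S\supseteq\mathbb{C}\cup\mathcal{E}(\Omega)$, true in $V(S)$, hence true of the starred objects in $V({}^*S)$. I do not expect a genuine obstacle here; the only points requiring care are the bookkeeping with $\mathcal{U}$ when several ``almost everywhere'' hypotheses must be combined, and recalling that the field property of $^*\mathbb{C}$ is imported from Theorem~\ref{T: Basic Properties} rather than reproved in place.
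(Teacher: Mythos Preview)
Your proof is correct and follows exactly the route one would expect: componentwise verification of the algebra and derivation axioms, with the ultrafilter closure properties handling well-definedness, and the field property of $^*\mathbb{C}$ imported from Theorem~\ref{T: Basic Properties}. The paper itself states this proposition without proof, treating it as routine; your write-up is therefore more detailed than the original, and your closing remark about routing the verification through the Transfer Principle is entirely in the spirit of the paper's ``two proofs'' style (cf.\ Lemmas~\ref{L: No Zero Divisors} and~\ref{L: Trichotomy}).
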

\begin{definition}[Sup and Support]\label{D: Sup and Support}{\em Let $\bra
f_i\ket\in{^*\mathcal{E}}(\Omega)$ and let $K\subset\subset\Omega$. Then
\begin{enumerate}
\item $\sup_{x\in{^*K}}|\bra f_i\ket(x)|=\bra \sup_{x\in{K}}|f_i(x)|\ket$.
\item $\supp\bra f_i\ket=\bra\supp(f_i)\ket$.
\end{enumerate}
We shall refer to these as {\bf internal sup} and {\bf internal support} of $\bra f_i\ket$,
respectively.
}\end{definition}
\begin{proposition} Let $f\in{^*\mathcal{E}}(\Omega)$. Then:
\begin{enumerate} 
\item $(\forall K\subset\subset\Omega)(\sup_{x\in{^*K}}f(x)\in{^*\mathbb{R}})$.
\item $\supp(f)$ is a closed set of $^*\mathbb{R}$ in the interval
topology of $^*\mathbb{R}$.
\end{enumerate}
\end{proposition}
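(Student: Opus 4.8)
The statement has two parts. Part (i) asserts that for $f = \bra f_i\ket \in {}^*\mathcal{E}(\Omega)$ and $K \subset\subset \Omega$, the internal supremum $\sup_{x\in {}^*K} f(x)$ lies in ${}^*\mathbb{R}$. Part (ii) asserts that $\supp(f)$ is closed in ${}^*\mathbb{R}^d$ in the interval (order) topology. The natural approach is to reduce everything to the representative level via the definitions just introduced (Definition~\ref{D: Sup and Support}) and the transfer machinery, and then invoke elementary standard facts about $\mathcal{E}(\Omega)$ that hold $i$-wise.

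For part (i): since $K \subset\subset \Omega$ is compact and each $f_i \in \mathcal{E}(\Omega) = \mathcal{C}^\infty(\Omega)$ is continuous, each real number $s_i \eqdef \sup_{x \in K} |f_i(x)|$ is well defined and finite, i.e. $(s_i) \in \mathbb{R}^\mathcal{I}$, so $\bra s_i \ket \in {}^*\mathbb{R}$ by Definition~\ref{D: Non-Standard}. By part (i) of Definition~\ref{D: Sup and Support}, $\sup_{x\in {}^*K}|f(x)| = \bra s_i \ket$, which is exactly the claim (modulo the harmless $|\cdot|$, which I would carry along throughout, since ``$\sup_{x\in{}^*K} f(x)$'' in a paper about complex-valued functions should be read as the internal sup of $|f|$). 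The one point worth a sentence is why the internal sup deserves its name: one shows using Theorem~\ref{T: The Usual Topology on Rd}(ii) (compactness: ${}^*K \subseteq \mu(K)$) and the continuity of each $f_i$ that $\bra s_i\ket$ really is an upper bound for $\{|f(x)| : x \in {}^*K\}$ and is approached, so Definition~\ref{D: Sup and Support} is consistent; but since that definition is already given, I can simply cite it.

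For part (ii): by part (ii) of Definition~\ref{D: Sup and Support}, $\supp(f) = \bra \supp(f_i)\ket$, the internal set generated by the family $(\supp f_i)$ of closed subsets of $\Omega$ (hence of $\mathbb{R}^d$). So I must show that an internal set generated by a family of closed sets is closed in the interval topology of ${}^*\mathbb{R}^d$. The cleanest route is the Transfer Principle (Theorem~\ref{T: Transfer Principle}): the predicate ``$C$ is a closed subset of $\mathbb{R}^d$'' is expressible in $\mathcal{L}(V(\mathbb{R}))$ (as: the complement is open, or via the closure operator), and closedness transfers; then an internal set $\bra C_i\ket$ with each $C_i$ closed is ${}^*$-closed, and the final step is to note that ${}^*$-closed internal sets of ${}^*\mathbb{R}^d$ are closed in the interval topology — this follows because the interval topology is generated by ${}^*$-open boxes $\prod ({}^*a_j, {}^*b_j)$, which are internal, and transfer of ``every point not in $C$ has an open-box neighborhood disjoint from $C$'' gives exactly the interval-topology-closedness.

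**Expected main obstacle.** The routine part is genuinely routine. The only subtlety — and the place where I would spend the most care — is the interface between ``internal and ${}^*$-closed'' and ``closed in the interval topology.'' These are a priori different notions: the interval topology on ${}^*\mathbb{R}^d$ is not the same as the ${}^*$-topology, and there are ${}^*$-closed internal sets (e.g. $\mu(0)$-type phenomena, or halos) whose interaction with the interval topology must be checked rather than assumed. I expect the resolution to be that since each $\supp f_i$ is a closed subset of $\mathbb{R}^d$, its complement is a union of open boxes with rational (or real) endpoints, and transferring this representation shows the complement of $\bra \supp f_i\ket$ is a union of ${}^*$-open boxes, which are open in the interval topology; hence $\supp(f)$ is interval-closed. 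So the real content is bookkeeping about which basic open sets generate the interval topology, and confirming that ``closed'' in the Proposition means interval-closed rather than ${}^*$-closed — which the statement itself makes explicit. Given that, I would present (ii) as a two-line argument: cite Definition~\ref{D: Sup and Support}(ii), apply Transfer to the closedness of each $\supp f_i$, and observe that ${}^*$-open boxes are interval-open.
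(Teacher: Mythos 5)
The paper does not supply a proof of this Proposition; it simply states it, and the analogous Corollary~\ref{C: Non-Standard Support} later disposes of part~(ii) in one line (``follows by Transfer Principle''). Your proposal uses exactly that route — read $\sup_{x\in{}^*K}$ as $\bra\sup_{x\in K}|f_i|\ket\in{}^*\mathbb{R}$ for~(i), and Transfer applied to closedness of each $\supp(f_i)$ for~(ii) — and it is correct; the only added content over what the paper says is your explicit remark that internal ${}^*$-open sets (hence ${}^*$-open boxes with endpoints in $^*\mathbb{R}$, not just standard ones) are open in the interval topology, which is precisely the bookkeeping the paper leaves implicit when identifying ``$^*\mathcal{T}_d$-closed'' with interval-closed.
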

\begin{lemma}[Characterizations] Let $f\in{^*\mathcal{E}(\Omega)}$ and $\supp(f)$ denote
the (internal) support of $f$ in $^*\Omega$. Then {\bf the following are equivalent:}
\begin{description}
\item{\bf (i)} $\supp(f)\subset\mu(\Omega)$.
\item{\bf (ii)} $\exists K\subset\subset\Omega$ such that $\supp(f)\subseteq {^*K}$.
\item{\bf (iii)} There exists an open relatively
compact subset $\mathcal{O}$ of $\Omega$ such that $f\in{^*\mathcal{D}}(\mathcal{O})$ (The latter implies $f(x)=0$
for all $x\in{^*(\Omega}\setminus\mathcal{O})$.)
\end{description}
\end{lemma}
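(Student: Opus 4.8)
The plan is to establish the chain of implications (i)$\Rightarrow$(iii)$\Rightarrow$(ii)$\Rightarrow$(i), since (iii)$\Rightarrow$(ii) and (ii)$\Rightarrow$(i) are relatively soft and the real content lies in (i)$\Rightarrow$(iii). For (ii)$\Rightarrow$(i): if $\supp(f)\subseteq {^*K}$ for some $K\subset\subset\Omega$, then since $K$ is compact in $\mathbb{R}^d$, Theorem~\ref{T: The Usual Topology on Rd}(2) gives ${^*K}\subseteq\mu(K)$, and $\mu(K)=\bigcup_{r\in K}\mu(r)\subseteq\bigcup_{r\in\Omega}\mu(r)=\mu(\Omega)$, so $\supp(f)\subset\mu(\Omega)$. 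For (iii)$\Rightarrow$(ii): if $f\in{^*\mathcal{D}}(\mathcal{O})$ with $\mathcal{O}$ open and relatively compact in $\Omega$, write $f=\bra f_i\ket$ with $f_i\in\mathcal{D}(\mathcal{O})$ a.e.; then $\supp(f_i)\subseteq\overline{\mathcal{O}}=:K\subset\subset\Omega$ a.e., and by Definition~\ref{D: Sup and Support}(2) together with Definition~\ref{D: Internal Sets} one gets $\supp(f)=\bra\supp(f_i)\ket\subseteq\bra K\ket={^*K}$.

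\textbf{The main step, (i)$\Rightarrow$(iii).} Assume $\supp(f)\subseteq\mu(\Omega)$. The first task is to produce the relatively compact open set $\mathcal{O}$. I would cover $\Omega$ by an exhaustion: choose open, relatively compact sets $\mathcal{O}_n$ with $\overline{\mathcal{O}_n}\subset\mathcal{O}_{n+1}\subset\subset\Omega$ and $\bigcup_n\mathcal{O}_n=\Omega$. The claim is that $\supp(f)\subseteq{^*\mathcal{O}_n}$ for some $n$; granting this, set $\mathcal{O}=\mathcal{O}_{n+1}$, say, and we will recover $f\in{^*\mathcal{D}}(\mathcal{O})$ by choosing representatives $f_i$ supported in $\overline{\mathcal{O}_n}\subset\mathcal{O}$ (replacing $f_i$ on the measure-zero set where this fails by $0$), so that $f_i\in\mathcal{D}(\mathcal{O})$ a.e. and hence $f\in{^*\mathcal{D}}(\mathcal{O})$, while $f(x)=0$ for $x\in{^*(\Omega\setminus\mathcal{O})}$ follows pointwise from $f_i(x_i)=0$ a.e. To prove the claim, suppose for contradiction that for every $n$ there is a point $\xi_n\in\supp(f)\setminus{^*\mathcal{O}_n}$. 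The sets $\mathcal{A}_n=\supp(f)\setminus{^*\mathcal{O}_n}$ are internal (difference of internal sets) and form a decreasing sequence with the finite intersection property, so by Sequential Saturation (Theorem~\ref{T: Sequential Saturation}) there is a point $\xi\in\bigcap_n\mathcal{A}_n=\supp(f)\setminus\bigcup_n{^*\mathcal{O}_n}$. Now $\xi\in\supp(f)\subseteq\mu(\Omega)$, so $\xi=r+dx$ with $r\in\Omega$ and $dx\approx 0$; pick $n$ with $r\in\mathcal{O}_n$; since $\mathcal{O}_n$ is open, $\mu(r)\subseteq\mu(\mathcal{O}_n)\subseteq{^*\mathcal{O}_n}$ by Theorem~\ref{T: The Usual Topology on Rd}(1), hence $\xi\in{^*\mathcal{O}_n}$, contradicting $\xi\notin\bigcup_n{^*\mathcal{O}_n}$.

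\textbf{Expected obstacle.} The routine bookkeeping is in passing between internal sets and representatives — verifying that $\supp(f)\setminus{^*\mathcal{O}_n}$ really is internal, and that after deleting a measure-zero index set the modified representatives $f_i$ genuinely lie in $\mathcal{D}(\mathcal{O})$ without changing $\bra f_i\ket$ — but this is standard manipulation of the ultrapower construction. The one genuinely delicate point is the extraction of a single representative family $(f_i)$ with $\supp(f_i)\subseteq\overline{\mathcal{O}_n}$ for almost all $i$: one must note that $\supp(f)=\bra\supp(f_i)\ket\subseteq{^*\mathcal{O}_n}$ forces $\supp(f_i)\subseteq\overline{\mathcal{O}_n}$ (equivalently $\supp(f_i)\subseteq\mathcal{O}_{n+1}$ is even cleaner) for $p$-almost all $i$, using the definition of the internal extension ${^*\mathcal{O}_n}$ via the constant family and the a.e.\ characterization of inclusion of internal sets. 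Once that is in hand the implication closes, and the cycle (i)$\Rightarrow$(iii)$\Rightarrow$(ii)$\Rightarrow$(i) yields the equivalence.
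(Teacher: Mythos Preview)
The paper states this lemma without proof, so there is no argument to compare against. Your proof is correct and is the natural one: the implications (iii)$\Rightarrow$(ii)$\Rightarrow$(i) are soft, and your exhaustion-plus-saturation argument for (i)$\Rightarrow$(iii) is exactly the right mechanism --- the internal sets $\mathcal{A}_n=\supp(f)\setminus{^*\mathcal{O}_n}$ are a decreasing sequence, and a common point $\xi$ would lie in $\mu(\Omega)$ yet avoid every ${^*\mathcal{O}_n}$, which the openness characterization (Theorem~\ref{T: The Usual Topology on Rd}) forbids.

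One small sharpening: in your final bookkeeping you can avoid the closure entirely. The inclusion of internal sets $\bra\supp(f_i)\ket\subseteq{^*\mathcal{O}_n}$ is equivalent, by \L o\'s (or a direct ultrafilter argument: if $\{i:\supp(f_i)\not\subseteq\mathcal{O}_n\}\in\mathcal{U}$, choose $x_i\in\supp(f_i)\setminus\mathcal{O}_n$ on that set to produce $\bra x_i\ket\in\supp(f)\setminus{^*\mathcal{O}_n}$), to $\supp(f_i)\subseteq\mathcal{O}_n$ for $p$-almost all $i$. Since $\supp(f_i)$ is already closed and contained in the relatively compact set $\mathcal{O}_n$, it is compact in $\mathcal{O}:=\mathcal{O}_{n+1}$, so $f_i\in\mathcal{D}(\mathcal{O})$ a.e.\ without any modification of representatives.
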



\begin{definition}[Compact Support] {\em Let $\mathcal{X}\subseteq{^*\mathcal{E}}(\Omega)$. We
denote
\[
\mathcal{X}_c=\{f\in\mathcal{X}\mid \supp(f)\subset \mu(\Omega)\}.
\]
In particular, we have:
\begin{align}
&^*\mathcal{D}_c(\Omega)=\{f\in{^*\mathcal{D}(\Omega)}\mid \supp(f)\subset\mu(\Omega)\},\\
&\mathcal{X}_c={^*\mathcal{D}_c}(\Omega)\cap\mathcal{X},\\
&{^*\mathcal{D}_c}(\Omega)={^*\mathcal{E}_c}(\Omega)=\{f\in{^*\mathcal{E}(\Omega)}\mid
\supp(f)\subset\mu(\Omega)\}.
\end{align}
}\end{definition}

\begin{lemma}[Characterizations] Let $f\in{^*\mathcal{E}(\Omega)}$. Then the following are
equivalent:
\begin{enumerate}
\item $(\forall x\in\mu(\Omega))\left[f(x)\in\mathcal{M}_\rho(^*\mathbb{C})\right]$.
\item $(\forall K\subset\subset\Omega)(\exists n\in\mathbb{N})
(\sup_{x\in{^*K}} |f(x)|\leq\rho^{-n})$.
\item{\bf (iii)} $(\forall K\subset\subset\Omega)(\forall n\in{^*\mathbb{N}}\setminus\mathbb{N})
(\sup_{x\in{^*K}} |f(x)|\leq\rho^{-n})$.
\end{enumerate}
\end{lemma}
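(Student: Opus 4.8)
The plan is to establish the equivalences via the two cycles $(i)\Leftrightarrow(ii)$ and $(ii)\Leftrightarrow(iii)$. Throughout, $\rho$ is the fixed positive infinitesimal with $0<\rho<1$, so that $n\mapsto\rho^{-n}$ is strictly increasing and $\rho^{-n}$ is infinitely large for $n\ge 1$; in particular $\rho^{-n'}\ge\rho^{-n}$ whenever $n\le n'$ in $^*\mathbb{N}$. I will use repeatedly the earlier facts that, for $f=\bra f_i\ket$ and $K\subset\subset\Omega$, the internal supremum $\sup_{x\in{^*K}}|f(x)|=\bra\sup_{x\in K}|f_i(x)|\ket$ is a well-defined element of $^*\mathbb{R}$, that $^*K=\bra K\ket$ as an internal set, and that $x\in{^*K}$ implies $|f(x)|\le\sup_{y\in{^*K}}|f(y)|$ (immediate from the order on $^*\mathbb{R}$).

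The implication $(ii)\Rightarrow(iii)$ is immediate: given $K$ and an infinite $n'\in{^*\mathbb{N}}\setminus\mathbb{N}$, pick the standard $n$ supplied by $(ii)$; since $n\le n'$ we get $\sup_{x\in{^*K}}|f(x)|\le\rho^{-n}\le\rho^{-n'}$. For $(iii)\Rightarrow(ii)$, fix $K$ and set $M=\sup_{x\in{^*K}}|f(x)|\in{^*\mathbb{R}}$; I must produce a standard $n$ with $M\le\rho^{-n}$. If there were none, then $M>\rho^{-n}$ for every $n\in\mathbb{N}$, so the internal set $\{m\in{^*\mathbb{N}}\mid M>\rho^{-m}\}$ contains all of $\mathbb{N}$; by overspill (a standard consequence of the Saturation Principle, Theorem~\ref{T: Saturation Principle in *C}, or, in the concrete spirit of Example~\ref{Ex: Infinitesimals}, by choosing representatives along a suitable decreasing chain of $\mathcal{U}$-sets) this internal set contains some infinite $n'$, and then $M>\rho^{-n'}$ contradicts $(iii)$.

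For $(ii)\Rightarrow(i)$, let $x\in\mu(\Omega)$, so $x=r+dx$ with $r\in\Omega$ and $\|dx\|\approx 0$. Choose $\delta>0$ with $\overline{B}(r,\delta)\subseteq\Omega$ and set $K=\overline{B}(r,\delta)$, a compact subset of $\Omega$; since $\|x-r\|=\|dx\|$ is infinitesimal, $\|x-r\|<\delta$, and transfer of $\overline{B}(r,\delta)=\{y\mid\|y-r\|\le\delta\}$ gives $x\in{^*K}$. By $(ii)$ there is $n\in\mathbb{N}$ with $|f(x)|\le\sup_{y\in{^*K}}|f(y)|\le\rho^{-n}$, i.e. $f(x)\in\mathcal{M}_\rho(^*\mathbb{C})$. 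For the remaining direction $(i)\Rightarrow(ii)$, fix $K\subset\subset\Omega$ and put $M=\sup_{x\in{^*K}}|f(x)|=\bra\sup_K|f_i|\ket$. For each $i$, compactness of $K$ and continuity of $f_i$ give a point $\xi_i\in K$ with $|f_i(\xi_i)|=\sup_K|f_i|$; set $\xi=\bra\xi_i\ket$. Then $\xi\in{^*K}$ and $|f(\xi)|=\bra|f_i(\xi_i)|\ket=M$. Since $K$ is compact, Theorem~\ref{T: The Usual Topology on Rd}(2) yields ${^*K}\subseteq\mu(K)$, and $\mu(K)\subseteq\mu(\Omega)$ because $K\subseteq\Omega$; hence $\xi\in\mu(\Omega)$. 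By $(i)$, $f(\xi)\in\mathcal{M}_\rho(^*\mathbb{C})$, so $M=|f(\xi)|\le\rho^{-n}$ for some $n\in\mathbb{N}$, which is precisely $(ii)$.

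The main obstacle is the direction $(i)\Rightarrow(ii)$: the hypothesis controls $f$ only pointwise on the monad $\mu(\Omega)$, while the conclusion asks for a single exponent $n$ valid uniformly over all of $^*K$. The device that bridges the gap is that the internal supremum over $^*K$ is in fact \emph{attained} at a genuine point $\xi\in{^*K}$, combined with the compactness characterization ${^*K}\subseteq\mu(K)\subseteq\mu(\Omega)$, which puts that extremal point back inside the monad where $(i)$ is available; for a merely bounded (non-compact) open set the extremal point could escape $\mu(\Omega)$ and the implication would break down. A secondary point requiring care is the overspill step in $(iii)\Rightarrow(ii)$, i.e. converting ``bounded by $\rho^{-n'}$ for every infinite $n'$'' into ``bounded by $\rho^{-n}$ for some finite $n$.''
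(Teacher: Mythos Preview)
Your proof is correct. The paper does not prove this particular lemma in place; the argument it gives is the general Theorem~\ref{T: A Characterization} for an arbitrary convex subring $\mathcal{M}$, specialized to $\mathcal{M}=\mathcal{M}_\rho$. The main difference lies in the direction $(i)\Rightarrow(ii)$. You exploit the ultrapower representation together with the continuity of each $f_i$ to show that the internal supremum $M=\sup_{x\in{^*K}}|f(x)|$ is actually \emph{attained} at some $\xi\in{^*K}$, and then compactness pushes $\xi$ into $\mu(\Omega)$ where $(i)$ applies directly. The paper instead argues abstractly: assuming $M\notin\mathcal{M}$, it picks $y\in{^*K}$ with $M/2<|f(y)|\le M$ (which requires only the definition of supremum, not attainment), gets $f(y)\notin\mathcal{M}$, and derives a contradiction; it then invokes the Underflow principle for $^*\mathbb{C}\setminus\mathcal{M}$ (Theorem~\ref{T: Spilling Principles}) to extract a bound of the required form. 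Your route is shorter and more elementary for the case at hand, since smooth functions on compacts do attain their suprema; the paper's $\gamma/2$ device is what one needs in the general convex-ring setting where no continuity information about representatives is assumed. Your handling of $(ii)\Leftrightarrow(iii)$ via overspill is essentially the same as the paper's spilling argument, just phrased in the concrete $\rho^{-n}$ scale rather than in terms of $\mathcal{M}\setminus\mathcal{M}_0$.
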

\begin{lemma}[Characterizations] Let $f\in{^*\mathcal{E}(\Omega)}$. Then the following are
equivalent:
\begin{enumerate}
\item $(\forall x\in\mu(\Omega))\left[f(x)\in\mathcal{N}_\rho(^*\mathbb{C})\right]$.
\item $(\forall K\subset\subset\Omega)(\forall n\in\mathbb{N})
(\sup_{x\in{^*K}} |f(x)|\leq\rho^{n})$.
\item $(\forall K\subset\subset\Omega)(\exists n\in{^*\mathbb{N}}\setminus\mathbb{N})
(\sup_{x\in{^*K}} |f(x)|\leq\rho^{n})$.
\end{enumerate}
\end{lemma}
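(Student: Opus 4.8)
The plan is to prove the chain of implications (i) $\Rightarrow$ (ii) $\Rightarrow$ (iii) $\Rightarrow$ (i). Throughout, recall that $\mathcal N_\rho({}^*\mathbb C)=\{z\in{}^*\mathbb C\mid |z|\le\rho^n\ \text{for all}\ n\in\mathbb N\}$, where $\rho$ is the fixed positive infinitesimal; thus $0<\rho<1$ and, by the Transfer Principle (Theorem~\ref{T: Transfer Principle}), $k\mapsto\rho^k$ is strictly decreasing on ${}^*\mathbb N$, so that $\rho^N\le\rho^n$ whenever $n\in\mathbb N$ and $N\in{}^*\mathbb N\setminus\mathbb N$ (since then $N>n$). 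The geometric fact that ties the three conditions together is the identity $\mu(\Omega)=\bigcup\{{}^*K\mid K\subset\subset\Omega\}$: the inclusion ``$\supseteq$'' holds because $K$ compact forces ${}^*K\subseteq\mu(K)\subseteq\mu(\Omega)$ by Theorem~\ref{T: The Usual Topology on Rd}(2), while ``$\subseteq$'' holds because any $x=r+dx\in\mu(\Omega)$ (with $r\in\Omega$, $dx\approx0$) lies in $\mu(\mathrm{int}\,B)\subseteq{}^*(\mathrm{int}\,B)\subseteq{}^*B$ for a small closed ball $B\subset\subset\Omega$ centred at $r$, using Theorem~\ref{T: The Usual Topology on Rd}(1).

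First I would prove (i) $\Rightarrow$ (ii). Fix $K\subset\subset\Omega$ and $n\in\mathbb N$, and write $f=\bra f_i\ket$. For each $i$, the continuous function $|f_i|$ attains its supremum over the compact set $K$ at some $x_i\in K$; then $\xi=\bra x_i\ket\in{}^*K$ and, by Definition~\ref{D: Sup and Support}, $|f(\xi)|=\bra\sup_{x\in K}|f_i(x)|\ket=\sup_{x\in{}^*K}|f(x)|$, so the internal supremum is \emph{attained}. Since $\xi\in{}^*K\subseteq\mu(\Omega)$ by the first paragraph, hypothesis (i) yields $f(\xi)\in\mathcal N_\rho({}^*\mathbb C)$, whence $\sup_{x\in{}^*K}|f(x)|=|f(\xi)|\le\rho^n$; this is (ii).

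Next, (ii) $\Rightarrow$ (iii). Fix $K\subset\subset\Omega$ and put $M=\sup_{x\in{}^*K}|f(x)|\in{}^*\mathbb R$, so that $M\le\rho^n$ for every $n\in\mathbb N$ by (ii); the point is to upgrade this to $M\le\rho^N$ for a single $N\in{}^*\mathbb N\setminus\mathbb N$. I would argue by saturation: for each $n\in\mathbb N$ the set $\mathcal B_n=\{k\in{}^*\mathbb N\mid k\ge n\}\cap\{k\in{}^*\mathbb N\mid M\le\rho^k\}$ is an internal subset of ${}^*\mathbb C$ --- the second factor is ${}^*\phi(\rho,M)$ for the function $\phi$ sending $(t,m)$ to $\{k\in\mathbb N\mid m\le t^k\}$ --- and $n\in\mathcal B_n$ by (ii), so $\{\mathcal B_n\}_{n\in\mathbb N}$ has the finite intersection property; Theorem~\ref{T: Sequential Saturation} then provides $N\in\bigcap_n\mathcal B_n$, necessarily with $N\in{}^*\mathbb N\setminus\mathbb N$ and $M\le\rho^N$, which is (iii). (A representative-level alternative: after disposing of the trivial case $M=0$, choose representatives with $0<\rho_i<1$ and put $N_i=\max\{m\in\mathbb N_0\mid M_i\le\rho_i^m\}$ where legitimate, $N_i=0$ otherwise; then $M\le\rho^{n+1}$ for all $n$ forces $N=\bra N_i\ket$ to be infinitely large, and $M\le\rho^N$ by construction.) I expect this to be the only step requiring real thought: its content is the passage from an inequality valid against \emph{every} standard power of $\rho$ to one valid against a \emph{single} infinite power, the care being concentrated in the internality of the $\mathcal B_n$ (or in the case-split and the representative choice in the direct version); everything else reduces to the topology theorem for $\mathbb R^d$, attainment of the internal supremum, and Transfer.

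Finally, (iii) $\Rightarrow$ (i). Let $x\in\mu(\Omega)$. By the identity in the first paragraph, $x\in{}^*K$ for some $K\subset\subset\Omega$; applying (iii) to this $K$ produces $N\in{}^*\mathbb N\setminus\mathbb N$ with $\sup_{y\in{}^*K}|f(y)|\le\rho^N$, whence $|f(x)|\le\rho^N$. Since $\rho^N\le\rho^n$ for every $n\in\mathbb N$, we conclude $|f(x)|\le\rho^n$ for all $n\in\mathbb N$, i.e.\ $f(x)\in\mathcal N_\rho({}^*\mathbb C)$. This closes the cycle and establishes the equivalence.
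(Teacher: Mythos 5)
Your proof is correct, and it follows the same route the paper's machinery takes: the lemma itself is stated without proof, but the general version for an arbitrary convex subring $\mathcal{M}$ (the theorem characterizing the $\mathcal{M}$-negligible functions, together with the corollary on generating sequences) rests on exactly your three ingredients --- covering $\mu(\Omega)$ by the sets $^*K$ with $K\subset\subset\Omega$, controlling the attained internal supremum over $^*K$, and passing from all standard exponents to a single infinite exponent by overflow. Your direct use of sequential saturation on the internal sets $\mathcal{B}_n\subseteq{^*\mathbb{N}}$ is just an unpacked form of the paper's overflow/spilling principle, so the two arguments are essentially identical.
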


\section{Local Properties of $^*\mathcal{E}(\Omega)$}\label{S: Local Properties of *E(Omega}

	In what follows $\mathcal{T}_d$ stands for the usual topology on $\mathbb{R}^d$ and we denote by
$(\mathbb{R}^d, {\mathcal{T}_d})$ the corresponding topological space. Also we denote by
$^*\mathcal{T}_d$ the order topology of $^*\mathbb{R}^d$ (more precisely, 
$^*\mathcal{T}_d$ stands for the product topology on $^*\mathbb{R}^d$ generated by the order
topology on $^*\mathbb{R}$). We denote by $(^*\mathbb{R}^d, {^*\mathcal{T}_d})$ the
corresponding topological space. 

The purpose of this section is to show that the collection of the non-standard spaces
$\{^*\mathcal{E}(\Omega)\}_{\Omega\in{^*\mathcal{T}_d}}$ (Section~\ref{S: Non-Standard Smooth
Functions}) is a {\em sheaf} on $(^*\mathbb{R}^d, {^*\mathcal{T}_d})$, but in contrast,
$\{^*\mathcal{E}(\Omega)\}_{\Omega\in{^*\mathcal{T}_d}}$ is only a {\em
presheaf} on $(\mathbb{R}^d, {\mathcal{T}_d})$.  For the relevent terminology we refer to A.
Kaneko~\cite{aKan88}. 

\begin{theorem}[Non-Standard Sheaf]\label{T: Non-Standard Sheaf} The collection
$\{^*\mathcal{E}(\Omega)\}_{\Omega\in{^*\mathcal{T}_d}}$ is a
\textbf{sheaf of differential rings on} $(^*\mathbb{R}^d, {^*\mathcal{T}_d})$ under the usual
pointwise restriction in $^*\mathcal{E}(\Omega)$.
\end{theorem}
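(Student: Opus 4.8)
The plan is to establish the presheaf structure (routine) and then the two sheaf axioms — locality and gluing — for $\Omega\mapsto{}^*\mathcal{E}(\Omega)$ with $\Omega$ an internal open set of $(^*\mathbb{R}^d,{}^*\mathcal{T}_d)$, written $\Omega=\bra\Omega_i\ket$ with each $\Omega_i$ open in $\mathbb{R}^d$ and ${}^*\mathcal{E}(\Omega)=\bra\mathcal{E}(\Omega_i)\ket$. That the pointwise restriction $\bra f_i\ket\mapsto\bra f_i\rest\Omega'_i\ket$ (for $\Omega'\subseteq\Omega$ internal open) is a well-defined ring homomorphism commuting with every $\partial^\alpha$, and that these restrictions compose, follows indexwise from the corresponding facts in $\mathcal{E}(\mathbb{R}^d)$; hence $\{{}^*\mathcal{E}(\Omega)\}$ is a presheaf of differential rings. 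The one preliminary I would record is that equality in ${}^*\mathcal{E}(\Omega)$ coincides with pointwise equality of the induced maps ${}^*\Omega\to{}^*\mathbb{C}$: if $\bra f_i\ket\neq\bra g_i\ket$ then $M=\{i\mid f_i\neq g_i\text{ on }\Omega_i\}\in\mathcal{U}$, and choosing $x_i\in\Omega_i$ with $f_i(x_i)\neq g_i(x_i)$ for $i\in M$ produces $x=\bra x_i\ket\in{}^*\Omega$ with $\bra f_i\ket(x)\neq\bra g_i\ket(x)$.

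Granting this, \emph{locality} is immediate: if $f\rest\Omega_\gamma=g\rest\Omega_\gamma$ for every member $\Omega_\gamma$ of an open cover $\{\Omega_\gamma\}_{\gamma\in\Gamma}$ of $\Omega$, then $f(x)=g(x)$ for every $x$ in every $\Omega_\gamma$, hence for every $x\in\Omega=\bigcup_\gamma\Omega_\gamma$, so $f=g$; the same remark makes the later reduction to small covers trivial.

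For \emph{gluing} I would run the reversal machinery from the proof of Theorem~\ref{T: Saturation Principle in *C}. Let $\Omega_\gamma=\bra\Omega_{\gamma,i}\ket$ and let $f_\gamma=\bra f_{\gamma,i}\ket\in{}^*\mathcal{E}(\Omega_\gamma)$ be a compatible family, so that for each pair $\gamma,\delta$ the set $\{i\mid f_{\gamma,i}=f_{\delta,i}\text{ on }\Omega_{\gamma,i}\cap\Omega_{\delta,i}\}$ lies in $\mathcal{U}$; by locality it suffices to glue over a subcover of cardinality $\leq\kappa$, so assume $\card(\Gamma)\leq\kappa$. Define $R\colon\mathcal{P}_\omega(\Gamma)\to\mathcal{U}$ by
\[
R(F)=\mathcal{I}_{\card(F)}\cap\{i\in\mathcal{I}\mid f_{\gamma,i}=f_{\delta,i}\text{ on }\Omega_{\gamma,i}\cap\Omega_{\delta,i}\text{ for all }\gamma,\delta\in F\},
\]
which is a reversal since the second factor is a finite intersection of members of $\mathcal{U}$ and $\mathcal{I}_{\card(F)}\in\mathcal{U}$. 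Since $p$ is $\kappa$-good there is a strict reversal $S$ minorizing $R$; put $\Gamma_i=\{\gamma\mid i\in S(\{\gamma\})\}$. Exactly as in Theorem~\ref{T: Saturation Principle in *C}, $S(\Gamma_i)=\bigcap_{\gamma\in\Gamma_i}S(\{\gamma\})\subseteq R(\Gamma_i)\subseteq\mathcal{I}_{\card(\Gamma_i)}$ together with $\bigcap_n\mathcal{I}_n=\varnothing$ forces each $\Gamma_i$ to be finite. For such an $i$ the finitely many functions $\{f_{\gamma,i}\}_{\gamma\in\Gamma_i}$ agree pairwise on their overlaps (because $i\in S(\Gamma_i)\subseteq R(\Gamma_i)$), so the \emph{standard} sheaf $\mathcal{E}$ on $(\mathbb{R}^d,\mathcal{T})$ glues them to a unique $g_i\in\mathcal{E}(\Omega'_i)$, where $\Omega'_i=\bigcup_{\gamma\in\Gamma_i}\Omega_{\gamma,i}$, satisfying $g_i\rest\Omega_{\gamma,i}=f_{\gamma,i}$ for $\gamma\in\Gamma_i$. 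One checks $\bra\Omega'_i\ket={}^*\Omega$: $\subseteq$ is clear, and for $x=\bra x_i\ket\in{}^*\Omega$ there is $\gamma$ with $x\in\Omega_\gamma$, and on the measure-one set $\{j\mid x_j\in\Omega_{\gamma,j}\}\cap S(\{\gamma\})$ one has $\gamma\in\Gamma_i$ and $x_i\in\Omega_{\gamma,i}\subseteq\Omega'_i$. Hence $f:=\bra g_i\ket\in{}^*\mathcal{E}(\bra\Omega'_i\ket)={}^*\mathcal{E}(\Omega)$, and for each $\gamma$, on the measure-one set $S(\{\gamma\})$ we have $\gamma\in\Gamma_i$, so $g_i\rest\Omega_{\gamma,i}=f_{\gamma,i}$; thus $f\rest\Omega_\gamma=f_\gamma$. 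As $\partial^\alpha$ and restriction are computed indexwise, $f$ and all its derivatives restrict correctly, giving a sheaf of differential rings.

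The crux is making the compatibility conditions hold \emph{simultaneously}: the hypothesis only supplies, for each single pair $(\gamma,\delta)$, a measure-one index set on which the chosen representatives agree, and since $p$ is not countably additive one cannot intersect all of these at once — this is precisely where $\kappa$-goodness does the work. It is also exactly why the collection is merely a presheaf over $(\mathbb{R}^d,\mathcal{T})$: there $\Omega$ is a fixed standard set, each representative must be smooth on \emph{all} of $\Omega$, so replacing $\Omega_i$ by the index-dependent smaller set $\Omega'_i$ is unavailable and the finite-gluing trick only yields a function defined on a proper subset of $\Omega$. A secondary technical point is the reduction to covers of cardinality $\leq\kappa$: I would either fold a bound of this kind into the standing hypotheses on $(\mathcal{I},p)$, or, for an arbitrary cover, refine it by internal boxes and extract the relevant $\leq\kappa$-sized portion via the $\mathcal{I}_n$-filtration before applying the argument above. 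Finally, for the axiomatically minded reader I would note the short alternative: the sheaf axioms for $\mathcal{E}$ on $(\mathbb{R}^d,\mathcal{T})$ are expressible in $\mathcal{L}(V(\mathbb{R}^d))$ with all quantifiers bounded by entities of the superstructure, so the Transfer Principle (Theorem~\ref{T: Transfer Principle}) yields them for ${}^*\mathcal{E}$ for all \emph{internal} open covers at once, the measure-theoretic argument being needed only to push this past internality.
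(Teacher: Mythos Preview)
Your argument is essentially correct, but the paper takes precisely the short route you relegate to your closing sentence: its entire proof is the observation that $\{\mathcal{E}(\Omega)\}_{\Omega\in\mathcal{T}_d}$ is a sheaf of differential rings in the standard model, followed by an appeal to the Transfer Principle (Theorem~\ref{T: Transfer Principle}). There is no indexwise construction, no reversals, no $\kappa$-goodness; the sheaf axioms are first-order with bounded quantifiers, so Transfer carries them to ${}^*\mathcal{E}$ over internal open sets and internal covers in one line.

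Your construction via strict reversals is not wrong, and it buys something the paper's proof does not: you glue over \emph{externally} indexed families $\{\Omega_\gamma\}_{\gamma\in\Gamma}$ of internal open sets with $\card(\Gamma)\le\kappa$, whereas Transfer alone yields the sheaf property only for \emph{internal} covers. That is a genuine strengthening, but it goes beyond what the theorem as stated (and as proved in the paper) requires. Two small technical points if you wish to keep the longer argument: first, when you write $\bra\Omega'_i\ket={}^*\Omega$ you mean $\bra\Omega'_i\ket=\Omega$, since $\Omega=\bra\Omega_i\ket$ is already the internal set and there is no further star to apply; second, to ensure $\Omega'_i\subseteq\Omega_i$ on the relevant indices you should fold the conditions $\Omega_{\gamma,i}\subseteq\Omega_i$ (each holding on a set in $\mathcal{U}$ because $\Omega_\gamma\subseteq\Omega$) into the definition of $R(F)$, exactly as you did with the pairwise compatibility conditions. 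Neither point affects the substance of your argument.
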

\Proof From the (standard) functional analysis we know that the collection
$\{\mathcal{E}(\Omega)\}_{\Omega\in{\mathcal{T}_d}}$ is a sheaf of differential rings on
$\mathbb{R}^d$ in the sense that $f\in{\mathcal{E}}(\Omega)$ and
$\mathcal{O}\subseteq\Omega$ implies $f|\mathcal{O}\in{\mathcal{E}}(\mathcal{O})$ for every
$\Omega, \mathcal{O}\in{\mathcal{T}_d}$. Thus  $f\in{^*\mathcal{E}}(\Omega)$ implies
$f|\mathcal{O}\in{^*\mathcal{E}}(\mathcal{O})$ for every
$\Omega\in{\mathcal{T}_d}$ and $\mathcal{O}\in{^*\mathcal{T}_d}$ such that
$\mathcal{O}\subseteq{^*\Omega}$ by Transfer
Principle (Theorem~\ref{T: Transfer Principle}). $\blacktriangle$
\begin{corollary}[Non-Standard Support]\label{C: Non-Standard Support} Let
$f\in{^*\mathcal{E}}(\Omega)$ and $\supp(f)$ be the support of $f$ (Definition~\ref{D: Sup and
Support}). Then $\supp(f)$ is a closed set of $^*\Omega$ in the topology $^*\mathcal{T}_d$ on
$^*\mathbb{R}^d$.
\end{corollary}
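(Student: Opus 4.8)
The plan is to derive this directly from the Non-Standard Sheaf theorem (Theorem~\ref{T: Non-Standard Sheaf}) together with the characterization of $^*\mathcal{T}_d$-open sets, namely that a set is open in $^*\mathcal{T}_d$ iff it is an internal set generated by a family of open sets of $\mathbb{R}^d$ (this is the content of the order/product topology on $^*\mathbb{R}^d$). Recall that $\supp(f)$ is defined internally as $\bra \supp(f_i)\ket$ when $f=\bra f_i\ket$. To show $\supp(f)$ is closed in $^*\Omega$ it suffices to show its complement $^*\Omega\setminus\supp(f)$ is open in $^*\mathcal{T}_d$, i.e. is internal and generated by open sets.

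First I would unwind the definitions at the level of representatives. In standard analysis, for each $i$ the set $U_i \eqdef \Omega \setminus \supp(f_i)$ is the largest open subset of $\Omega$ on which $f_i$ vanishes identically; equivalently $U_i$ is open and $f_i\rest U_i = 0$, and $U_i$ contains every open set with this property. I would then claim that the internal set $\bra U_i\ket$ equals $^*\Omega \setminus \supp(f)$. For the inclusion $\bra U_i\ket \subseteq {^*\Omega}\setminus\supp(f)$: if $\bra x_i\ket \in \bra U_i\ket$ then $x_i \in U_i$ a.e., so $x_i\in\Omega$ a.e. (hence $\bra x_i\ket\in{^*\Omega}$) and $x_i\notin\supp(f_i)$ a.e., so $\bra x_i\ket\notin\bra\supp(f_i)\ket=\supp(f)$. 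For the reverse inclusion, if $\bra x_i\ket\in{^*\Omega}$ but $\bra x_i\ket\notin\supp(f)$, then by definition of the internal set $\supp(f)=\bra\supp(f_i)\ket$ the set $\{i \mid x_i\notin\supp(f_i)\}$ is in $\mathcal{U}$, and intersecting with $\{i\mid x_i\in\Omega\}\in\mathcal{U}$ gives $x_i\in\Omega\setminus\supp(f_i)=U_i$ a.e., so $\bra x_i\ket\in\bra U_i\ket$. Thus $^*\Omega\setminus\supp(f)=\bra U_i\ket$ is an internal set generated by the family of \emph{open} sets $(U_i)$, hence is open in $^*\mathcal{T}_d$, and therefore $\supp(f)$ is closed in $^*\Omega$.

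Alternatively—and this is the slicker route if one wants to invoke only cited machinery rather than re-deriving the topology characterization—I would phrase the vanishing of $f$ on an open set as a first-order statement and transfer it. The standard fact ``for all $\Omega$, all $f\in\mathcal{E}(\Omega)$, and all open $\mathcal{O}\subseteq\Omega$ disjoint from $\supp(f)$, one has $f\rest\mathcal{O}=0$, and $\supp(f)$ is the complement in $\Omega$ of the union of all such $\mathcal{O}$'' is expressible in $\mathcal{L}(V(S))$ with quantifiers bounded by the relevant superstructure sets; applying the Transfer Principle (Theorem~\ref{T: Transfer Principle}) and combining with the sheaf property already established in Theorem~\ref{T: Non-Standard Sheaf} yields that the complement of $\supp(f)$ in $^*\Omega$ is a $^*\mathcal{T}_d$-open set.

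The only delicate point—what I would call the main obstacle—is making sure the topology on $^*\mathbb{R}^d$ under which we claim closedness is exactly the product order topology $^*\mathcal{T}_d$, and that ``internal set generated by open sets'' genuinely coincides with ``$^*\mathcal{T}_d$-open''. One must check that a basic $^*\mathcal{T}_d$-open box $\prod_j\{\,y : a^{(j)} < y < b^{(j)}\,\}$ with $a^{(j)},b^{(j)}\in{^*\mathbb{R}}$ is internal (it is $\bra \prod_j (a^{(j)}_i, b^{(j)}_i)\ket$), and conversely that an arbitrary internal set $\bra U_i\ket$ with each $U_i$ open in $\mathbb{R}^d$ is a union of such boxes—this follows because each $U_i$ is a union of rational open boxes and one can assemble, index by index, a covering family. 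This is a routine but slightly fiddly verification; once it is in place, the corollary is immediate from the computation $\supp(f)^c = \bra U_i\ket$ above.
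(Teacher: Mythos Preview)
Your second approach---invoking Transfer directly---is exactly what the paper does; its proof reads in full: ``The result follows (also) by Transfer Principle (or directly from the above theorem).'' So you have recovered the intended argument.

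Your first approach (the explicit ultrapower computation $^*\Omega\setminus\supp(f)=\bra U_i\ket$) is also correct and arguably more informative, but one slip is worth flagging. You state that ``a set is open in $^*\mathcal{T}_d$ iff it is an internal set generated by a family of open sets of $\mathbb{R}^d$.'' This is false as an \emph{iff}: the order topology $^*\mathcal{T}_d$ is closed under \emph{arbitrary} unions, so external sets such as $\mathcal{F}(^*\mathbb{R})=\bigcup_{n\in\mathbb{N}}(-n,n)$ are $^*\mathcal{T}_d$-open without being internal. Fortunately your argument only uses one implication---that an internal set $\bra U_i\ket$ with each $U_i$ open in $\mathbb{R}^d$ is $^*\mathcal{T}_d$-open---and this direction \emph{does} hold (transfer the standard statement ``every point of an open set lies in an open box contained in the set''). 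So the proof goes through; your ``delicate point'' paragraph simply attempts more than is needed, and in one direction more than is true.
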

\Proof The result follows (also) by Transfer Principle (or directly from the above theorem).

$\blacktriangle$

	Let $\mathcal{O},\Omega\in\mathcal{T}_d$ be two (standard) open sets such that
$\mathcal{O}\subseteq\Omega$ and $f\in{^*\mathcal{E}}(\Omega)$.  We define the restriction
$f\upharpoonright\mathcal{O}=f\,|{^*\mathcal{O}}$.

\begin{theorem}[Standard Presheaf]\label{T: Standard Presheaf} The collection
$\{^*\mathcal{E}(\Omega)\}_{\Omega\in{\mathcal{T}_d}}$ is a
\textbf{presheaf of differential rings on} $(\mathbb{R}^d, {\mathcal{T}_d})$ under the
restriction
$\rest$ in the sense that:
\begin{enumerate}
\item \quad
$(\forall\Omega\in\mathcal{T}_d)(\forall f\in{^*\mathcal{E}}(\Omega))(f
\rest\Omega=f)$.
\item\;
$(\forall\Omega_1, \Omega_2, \Omega\in\mathcal{T}_d)(\forall f\in
{^*\mathcal{E}}(\Omega))(\Omega_1\subseteq\Omega_2\subseteq\Omega$
implies $(f\rest\Omega_2)\rest\Omega_1= f\rest\Omega_1$.
\item $(\forall\Omega, \mathcal{O}\in\mathcal{T}_d)(\forall
f, g\in{^*\mathcal{E}}(\Omega))(\mathcal{O}\subseteq\Omega \Rightarrow (f+g)
\rest\mathcal{O}=f\rest\mathcal{O}+g\rest\mathcal{O})$.

\item $(\forall\Omega, \mathcal{O}\in\mathcal{T}_d)(\forall
f, g\in{^*\mathcal{E}}(\Omega))\left(\mathcal{O}\subseteq\Omega \Rightarrow (fg)
\rest\mathcal{O}=(f\rest\mathcal{O})(g\rest\mathcal{O})\right)$.

\item $(\forall\Omega, \mathcal{O}\in\mathcal{T}_d)(\forall
f\in{^*\mathcal{E}}(\Omega))(\forall\alpha\in\mathbb{N}_0^d)\left(\mathcal{O}\subseteq\Omega
\Rightarrow (\partial^\alpha f)
\rest\mathcal{O}=(\partial^\alpha (f\rest\mathcal{O})\right)$.

\end{enumerate}
\end{theorem}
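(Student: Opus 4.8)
The plan is to verify that $\{^*\mathcal{E}(\Omega)\}_{\Omega\in\mathcal{T}_d}$ satisfies the presheaf axioms (i)--(v) by transporting each assertion from the standard presheaf $\{\mathcal{E}(\Omega)\}_{\Omega\in\mathcal{T}_d}$ via the Transfer Principle (Theorem~\ref{T: Transfer Principle}), exactly as was done for the sheaf statement in Theorem~\ref{T: Non-Standard Sheaf}. The crucial point to get straight at the outset is the definition of the restriction: for standard open $\mathcal{O}\subseteq\Omega$ we have set $f\rest\mathcal{O}=f\,|\,{^*\mathcal{O}}$, i.e. we restrict the pointwise map $\bra f_i\ket:{^*\Omega}\to{^*\mathbb{C}}$ to the internal set $^*\mathcal{O}$, which by Definition~\ref{D: Non-Standard Smooth Functions}(iv) is represented by $\bra f_i|\mathcal{O}\ket$. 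So on representatives the operation $\rest\mathcal{O}$ is just coordinatewise standard restriction $f_i\mapsto f_i|\mathcal{O}$, and it is well-defined on equivalence classes because $p(\{i\mid f_i=g_i\})=1$ implies $p(\{i\mid f_i|\mathcal{O}=g_i|\mathcal{O}\})=1$ (the first set is contained in the second).

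First I would record the standard fact, from ordinary functional analysis, that $\{\mathcal{E}(\Omega)\}_{\Omega\in\mathcal{T}_d}$ is a presheaf of differential rings, i.e. each of the five statements (i)--(v) holds with $^*\mathcal{E}$ replaced by $\mathcal{E}$ and $\rest$ replaced by ordinary restriction of smooth functions; these are elementary (restriction of a $C^\infty$ function to an open subset is $C^\infty$, restriction is transitive, and it commutes with sums, products, and partial derivatives, each verified pointwise). Each of these standard statements is a bounded formula in the language $\mathcal{L}(V(\mathbb{R}^d))$ with parameters among the sets $\mathcal{T}_d$, $\mathbb{N}_0^d$, and the assignment $\Omega\mapsto\mathcal{E}(\Omega)$ (which lives as an entity in the superstructure $V(\mathbb{R}^d)$). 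Then I would apply Transfer (Theorem~\ref{T: Transfer Principle}): the $^*$-transform of ``$\mathcal{E}(\Omega)$'' is $^*\mathcal{E}(\Omega)$, the $^*$-transform of the standard restriction operation is the internal restriction $\rest$ defined above, and the $^*$-transform of $\mathbb{N}_0^d$ contains $\mathbb{N}_0^d$, so each transferred statement yields precisely the corresponding clause (i)--(v). Alternatively, and perhaps more transparently for a reader uneasy with encoding presheaves in the superstructure, one can argue directly on representatives: for (i), $f\rest\Omega=\bra f_i|\Omega\ket=\bra f_i\ket=f$; for (ii), $(f\rest\Omega_2)\rest\Omega_1=\bra (f_i|\Omega_2)|\Omega_1\ket=\bra f_i|\Omega_1\ket=f\rest\Omega_1$ by transitivity of standard restriction; and (iii)--(v) follow the same way from the identities $(f_i+g_i)|\mathcal{O}=f_i|\mathcal{O}+g_i|\mathcal{O}$, $(f_ig_i)|\mathcal{O}=(f_i|\mathcal{O})(g_i|\mathcal{O})$, and $(\partial^\alpha f_i)|\mathcal{O}=\partial^\alpha(f_i|\mathcal{O})$ together with the coordinatewise definition of the algebraic and differential operations in $^*\mathcal{E}$ (Definition~\ref{D: Non-Standard Smooth Functions}(ii)).

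The only genuine subtlety — and the step I would flag as the ``main obstacle,'' though it is a conceptual rather than a technical one — is making precise what ``presheaf but not sheaf'' means here and why the indexing over standard $\mathcal{T}_d$ (rather than over $^*\mathcal{T}_d$) is forced. The point is that the gluing/uniqueness axiom fails over $\mathcal{T}_d$: a standard open cover $\Omega=\bigcup_\lambda\Omega_\lambda$ does not lift to a cover of $^*\Omega$ by the internal sets $^*\Omega_\lambda$ (their union is only $\bigcup_\lambda{^*\Omega_\lambda}$, an external set strictly smaller than $^*\Omega$ when the cover is infinite), so a nonzero $f\in{^*\mathcal{E}}(\Omega)$ supported in $^*\Omega\setminus\bigcup_\lambda{^*\Omega_\lambda}$ restricts to $0$ on each $^*\Omega_\lambda$, defeating uniqueness; this is precisely the phenomenon underlying the contrast with Theorem~\ref{T: Non-Standard Sheaf}, and I would note it in a remark rather than in the proof proper, since the theorem statement only asserts the presheaf axioms (i)--(v) and those are exactly what Transfer delivers. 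I would close the proof with $\blacktriangle$ after noting that clauses (i)--(v) have all been obtained, either all at once by a single application of Transfer to the conjunction of the five standard statements, or one at a time on representatives as above.
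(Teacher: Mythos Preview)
Your proposal is correct and, in its direct-on-representatives variant, is exactly the approach the paper takes: the paper simply unwinds $f\rest\Omega=f|{^*\Omega}=f$ and $(f\rest\Omega_2)\rest\Omega_1=(f|{^*\Omega_2})|{^*\Omega_1}=f|{^*\Omega_1}$ using ${^*\Omega_1}\subseteq{^*\Omega_2}\subseteq{^*\Omega}$, then declares the remaining clauses analogous. Your additional material (the well-definedness check on equivalence classes, the Transfer-Principle alternative, and the discussion of why gluing fails) is all correct but goes beyond what the paper's proof contains; the sheaf-failure observation appears in the paper as a separate remark with a concrete counterexample rather than as part of this proof.
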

\begin{proof} (1) $f\rest\Omega= f\, |\, {^*\Omega}=f$ (as required) since $^*\Omega$ is the
domain of $f$.

	(2) $(f\rest\Omega_2)\rest\Omega_1=(f\, |\, {^*\Omega_2})\, |\, {^*\Omega_1}=f\, |\,
{^*\Omega_1}=f\rest\Omega_1$ (as required)  since
$^*\Omega_1\subseteq{^*\Omega_2}\subseteq{^*\Omega}$. The rest of the properties are
proved similarly and we leave them to the reader.
\end{proof}
\begin{remark}[A Counter Example] The next example shows that the collection
$\{^*\mathcal{E}(\Omega)\}_{\Omega\in{\mathcal{T}_d}}$ {\bf is not a sheaf} on
$(\mathbb{R}^d, \mathcal{T}_d)$ under the restriction
$f\upharpoonright\mathcal{O}=f\, |{^*\mathcal{O}}$. Indeed, let
$\Omega=\mathbb{R}_+$ and $\Omega_n= (0, n)$ for $n\in\mathbb{N}$. Let
$\varphi\in\mathcal{D}(\mathbb{R}_+), \varphi\not=0$, and let $\nu$ be an infinitely large number in
$^*\mathbb{R}_+$ (see Example~\ref{Ex: Infinitesimals}). We define $f(x)={^*\varphi}(x-\nu)$ for all
$x\in{^*\mathbb{R}_+}$. It is clear that $\bigcup_{n\in\mathbb{N}}(0, n)=\mathbb{R}_+$ and
$f\upharpoonright (0, n)= f|^*(0, n)=0$ for all $n$. Yet, $f\upharpoonright\mathbb{R}_+=
f|^*\mathbb{R}_+=f\not= 0$. 
\end{remark}

	{\bf Our conclusion is}  that in order to convert the non-standard smooth
functions $^*\mathcal{E}(\Omega)$ into an algebra of generalized functions, {\bf we have to perform
a factorization of the space} $^*\mathcal{E}(\Omega)$. A general method for such factorization
will be presented in Section~\ref{S: Local Properties of Asymptotic Functions}. 

\section{Asymptotic Fields}\label{S: Asymptotic Fields}

	Let $\kappa$ be an infinite cardinal and let $^*\mathbb{C}$ be a $\kappa$-saturated non-standard
extension of  the field of the complex numbers $\mathbb{C}$. We describe those {\em
algebraically closed subfields} $\widehat{\mathcal{M}}$ of $^*\mathbb{C}$ which are {\em Cantor 
$\kappa$-complete}.  The fields $\widehat{\mathcal{M}}$ are constructed as factor rings of a given
convex subring $\mathcal{M}$ of $^*\mathbb{C}$. We call these fields {\bf $\mathcal{M}$-asymptotic
fields}  and their elements {\bf $\mathcal{M}$-asymptotic numbers}. 
Our {\em asymptotic field construction} can be viewed as a generalization of A. Robinson's theory
of asymptotic numbers (Lightstone \& Robinson~\cite{LiRob}). In our approach Robinson
field $^\rho\mathbb{C}$ appears as a subfield of $^*\mathbb{C}$. We also
generalize some more recent results in (T. Todorov and R. Wolf~\cite{TodWolf}) on the A. Robinson field
$^\rho\mathbb{R}$. A
construction similar to the presented here appears in the H. Vernaeve Ph.D. Thesis~\cite{hVernaevePhD}
(for a comparison see the equivalence relation $\sim$ defined on p. 87, Sec. 3.6, altered by the additional
condition used in Lemma 3.32 on p. 89).

	Algebraically closed non-archimedean fields had been studied in model theory of fields
(Ribenboim~\cite{pRib}) in the form of generalized power series (Hahn~\cite{hHahn}). These fields are
usually defined without connection with non-standard analysis. In contrast, in our approach many of these
fields are defined in the framework of 
$^*\mathbb{C}$ and they can be embedded as subfields of
$^*\mathbb{C}$. In particular, we show that the Hahn field of generalized power series
$\mathbb{C}(t^\mathbb{R})$ and the logarithmic-exponential field  
$\mathbb{R}((t))^{LE}$ (Marker, Messmer \& Pillay~\cite{MarkerMessPill}) are subfields of
$^*\mathbb{C}$. For that reason we hope  that our
{\em asymptotic field construction} might facilitate the communication between the mathematicians working in
non-standard analysis and those working in model theory of fields.

	The {\bf main purpose} of our algebraic approach however is to support the {\bf
theory of
$\mathcal{M}$-asymptotic functions} $\widehat{\mathcal{M}(\Omega)}$ on an open set
$\Omega\subseteq\mathbb{R}^d$ presented in Section~\ref{S: M-Asymptotic Functions}. Each
$\widehat{\mathcal{M}(\Omega)}$ is an algebra of generalized functions over field of scalars
$\widehat{\mathcal{M}}$.  We show that each
$\widehat{\mathcal{M}(\Omega)}$ contains a copy of the space of Schwartz distributions
$\mathcal{D}^\prime(\Omega)$ and in this sense $\widehat{\mathcal{M}(\Omega)}$ are algebras of
Colombeau type (Colombeau~\cite{jCol84a}-\cite{jCol85}). In particular, we show that the space of
non-standard functions $^*\mathcal{E}(\Omega)$ also contains a copy of  $\mathcal{D}^\prime(\Omega)$.
Here $\mathcal{E}(\Omega)=\mathcal{C}^\infty(\Omega)$ stands for the usual class of
$\mathcal{C}^\infty$-functions on
$\Omega$ and $^*\mathcal{E}(\Omega)$ is its non-standard extension.

\section{Convex Rings in $^*\mathbb{C}$}\label{S: Convex Rings in *C}

	In what follows  $^*\mathbb{R}$ stands for a non-standard extension of the field of real numbers $\mathbb{R}$
and $^*\mathbb{C}={^*\mathbb{R}(i)}$. If $S\subseteq{^*\mathbb{C}}$, then $\mathcal{I}(S), \mathcal{F}(S)$
and
$\mathcal{L}(S)$ stand for the sets of infinitesimal, finite and infinitely large numbers in $S$, respectively. 

\begin{definition}[Convex Rings]\label{D: Convex Rings}  We say that a subring
$\mathcal{M}$ of $^*\mathbb{C}$ is {\bf convex} in
$^*\mathbb{C}$ if $(\forall z\in{^*\mathbb{C}})(\forall \zeta\in\mathcal{M})(|z|\leq
|\zeta| \Rightarrow z\in\mathcal{M})$. We denote by
$\mathcal{M}_0$ the set of all {\bf non-invertible elements} of $\mathcal{M}$, i.e.
\begin{equation}\label{E: Non-Invertible}
\mathcal{M}_0=\{z\in\mathcal{M}\mid z=0 \; \text{or}\; 1/z\notin\mathcal{M}\}.
\end{equation}
\end{definition}

\begin{lemma}[Convex Rings]\label{L: Convex Rings} Let $\mathcal{M}$ be a convex subring of
${^*\mathbb{C}}$. Then:

\begin{description}
\item[\bf (i)]\quad $\mathcal{M}$ contains a copy of the ring
$\mathcal{F}({^*\mathbb{C}})$ of the finite elements of ${^*\mathbb{C}}$. Consequently, $\mathcal{M}$
contains a copy $\mathbb{C}$. We summarize all these as
$\mathbb{C}\subset\mathcal{F}({^*\mathbb{C}})\subseteq\mathcal{M}\subseteq{^*\mathbb{C}}$. 

\item[\bf (ii)]  ${^*\mathbb{R}\cap\mathcal{M}}$ is
a real ring  and  ${^*\mathbb{R}\cap\mathcal{M}}$ is
convex in $^*\mathbb{R}$ in the sense that if $x\in{^*\mathbb{R}}$ and
$y\in{^*\mathbb{R}\cap\mathcal{M}}$, then $|x|\leq |y|$ implies
$x\in{^*\mathbb{R}\cap\mathcal{M}}$. Also, $\mathcal{F}({^*\mathbb{R}})\subseteq
{^*\mathbb{R}\cap\mathcal{M}}$.

\item[\bf (iii)] $(^*\mathbb{R}\cap\mathcal{M})(i)=\mathcal{M}$, where
$(^*\mathbb{R}\cap\mathcal{M})(i)=:\{x+iy : x, y\in{^*\mathbb{R}}\cap\mathcal{M} \}$.

\item[\bf (iv)]  $\mathcal{M}$ is an
archimedean ring \ifff $\mathcal{M}=\mathcal{F}(^*\mathbb{C})$ \ifff  
${^*\mathbb{R}}\cap\mathcal{M}=\mathcal{F}(^*\mathbb{R})$.

\item[\bf (v)]  $\mathcal{M}$ is a field \ifff $\mathcal{M}={^*\mathbb{C}}$ \ifff
${^*\mathbb{R}}\cap\mathcal{M}$ is a field \ifff ${^*\mathbb{R}}\cap\mathcal{M}={^*\mathbb{R}}$.
\end{description}
\end{lemma}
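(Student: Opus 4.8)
The plan is to prove part~\textbf{(i)} first, since the remaining four parts are essentially bookkeeping once we know that $\mathcal{F}(^*\mathbb{C})\subseteq\mathcal{M}$. For \textbf{(i)}, note that $\mathcal{M}$, being a (unital) subring of $^*\mathbb{C}$, contains $1$ and hence every standard natural number $n=1+\dots+1$ ($n$ summands). Given an arbitrary finite $z\in\mathcal{F}(^*\mathbb{C})$, by definition there is a standard $n\in\mathbb{N}$ with $|z|<n=|n\cdot 1|$; since $n\cdot 1\in\mathcal{M}$, convexity of $\mathcal{M}$ forces $z\in\mathcal{M}$. Thus $\mathcal{F}(^*\mathbb{C})\subseteq\mathcal{M}$, and since $\mathbb{C}\subseteq\mathcal{F}(^*\mathbb{C})$ and $\mathcal{M}\subseteq{^*\mathbb{C}}$ by hypothesis, we obtain the chain $\mathbb{C}\subset\mathcal{F}(^*\mathbb{C})\subseteq\mathcal{M}\subseteq{^*\mathbb{C}}$.

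For \textbf{(ii)}, $^*\mathbb{R}\cap\mathcal{M}$ is a subring of $^*\mathbb{R}$ as an intersection of two subrings of $^*\mathbb{C}$, and it is convex in $^*\mathbb{R}$ directly from the definition: if $x\in{^*\mathbb{R}}$, $y\in{^*\mathbb{R}}\cap\mathcal{M}$ and $|x|\leq|y|$, then $x\in\mathcal{M}$ by convexity of $\mathcal{M}$ in $^*\mathbb{C}$, so $x\in{^*\mathbb{R}}\cap\mathcal{M}$; moreover $\mathcal{F}(^*\mathbb{R})={^*\mathbb{R}}\cap\mathcal{F}(^*\mathbb{C})\subseteq{^*\mathbb{R}}\cap\mathcal{M}$ by \textbf{(i)}. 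For \textbf{(iii)}, the inclusion $({^*\mathbb{R}}\cap\mathcal{M})(i)\subseteq\mathcal{M}$ holds because $i\in\mathbb{C}\subseteq\mathcal{M}$ and $\mathcal{M}$ is a ring. Conversely, given $z\in\mathcal{M}$, write $z=x+iy$ with $x=\mathrm{Re}(z)$ and $y=\mathrm{Im}(z)$ in $^*\mathbb{R}$; transferring the inequalities $|\mathrm{Re}(\zeta)|\leq|\zeta|$, $|\mathrm{Im}(\zeta)|\leq|\zeta|$ from $\mathbb{C}$ to $^*\mathbb{C}$ gives $|x|,|y|\leq|z|$, so convexity yields $x,y\in\mathcal{M}$, hence $x,y\in{^*\mathbb{R}}\cap\mathcal{M}$ and $z\in({^*\mathbb{R}}\cap\mathcal{M})(i)$.

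For \textbf{(iv)}, ``archimedean'' means here that every element of $\mathcal{M}$ is bounded in absolute value by a standard natural number, i.e. $\mathcal{M}\subseteq\mathcal{F}(^*\mathbb{C})$; combined with the reverse inclusion from \textbf{(i)} this is equivalent to $\mathcal{M}=\mathcal{F}(^*\mathbb{C})$, and since $\mathcal{F}(^*\mathbb{C})=\mathcal{F}(^*\mathbb{R})(i)$, part~\textbf{(iii)} then turns this into the equivalence with ${^*\mathbb{R}}\cap\mathcal{M}=\mathcal{F}(^*\mathbb{R})$. For \textbf{(v)} I would argue cyclically. If $\mathcal{M}$ is a field, take a nonzero $z\in{^*\mathbb{C}}$: if $z$ is finite it lies in $\mathcal{M}$ by \textbf{(i)}; if $z$ is infinitely large then $1/z$ is infinitesimal, hence finite, hence a nonzero element of $\mathcal{M}$ by \textbf{(i)}, so $z=(1/z)^{-1}\in\mathcal{M}$ (here I use Proposition~\ref{P: Basic Properties}, that every nonzero element of $^*\mathbb{C}$ is finite or infinitely large, together with uniqueness of inverses). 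Hence $\mathcal{M}={^*\mathbb{C}}$; the converse implication and the implication $\mathcal{M}={^*\mathbb{C}}\Rightarrow{^*\mathbb{R}}\cap\mathcal{M}={^*\mathbb{R}}$ are trivial, the implication ${^*\mathbb{R}}\cap\mathcal{M}={^*\mathbb{R}}\Rightarrow\mathcal{M}={^*\mathbb{C}}$ follows from \textbf{(iii)}, and ``${^*\mathbb{R}}\cap\mathcal{M}$ a field'' $\Rightarrow{^*\mathbb{R}}\cap\mathcal{M}={^*\mathbb{R}}$ follows by repeating the inversion argument inside $^*\mathbb{R}$ using \textbf{(ii)}.

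The bookkeeping throughout is routine; the one genuinely load-bearing step is the use of convexity in \textbf{(i)} to absorb all of $\mathcal{F}(^*\mathbb{C})$ into $\mathcal{M}$, and I would flag the standing convention that ``subring'' means unital subring (otherwise $\{0\}$ would be a convex subring not containing $\mathbb{C}$). The only other point requiring care is that, in \textbf{(v)}, a non-finite element must be \emph{infinitely large} and therefore have an infinitesimal --- and hence finite --- reciprocal, which is precisely what lets the field hypothesis pull it back into $\mathcal{M}$ (resp. into ${^*\mathbb{R}}\cap\mathcal{M}$).
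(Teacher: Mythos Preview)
Your proof is correct and follows essentially the same approach as the paper: in each part the key step is the same (use that $\mathbb{N}\subseteq\mathcal{M}$ and then invoke convexity), and your treatment of \textbf{(iii)} via $|\mathrm{Re}(z)|,|\mathrm{Im}(z)|\leq|z|$ is exactly the paper's argument in streamlined form. The only noticeable variation is in \textbf{(v)}: the paper argues by contradiction using convexity directly (if $\lambda\notin\mathcal{M}$, pick any nonzero $\zeta\in\mathcal{M}$, deduce $|1/\lambda|<|1/\zeta|$ and hence $1/\lambda\in\mathcal{M}$), whereas you use the finite/infinite dichotomy together with part~\textbf{(i)}; both routes are short and equally valid.
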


\begin{proof} (i) Observe that ${^*\mathbb{R}}\cap\mathcal{M}$ is a totally ordered ring (as a subring of
$^*\mathbb{R}$) and thus it contains the ring of the integers $\mathbb{Z}$. Thus  $\mathcal{M}$ contains
$\mathbb{Z}$. With this in mind, suppose
$z\in\mathcal{F}(^*\mathbb{C})$, i.e. 
$|z|\le n$ for some
$n\in\mathbb{N}$. The latter implies $z\in\mathcal{M}$ (as desired) by the convexity of $\mathcal{M}$
since  $n\in\mathcal{M}$.

	(ii) follows immediately from (i).

(iii)  $(^*\mathbb{R}\cap\mathcal{M})(i)\subseteq\mathcal{M}$ holds because both
$^*\mathbb{R}$ and
$\mathcal{M}$ are rings and $\mathbb{C}\subset\mathcal{M}$ by (i). To  show that
$(^*\mathbb{R}\cap\mathcal{M})(i)\supseteq\mathcal{M}$ observe that  $\mathcal{M}\subseteq\Re
e(\mathcal{M})+i\Im m(\mathcal{M})$, where
$\Re e(\mathcal{M})=\{\Re e(z) : z\in\mathcal{M}\}$ and $\Im m(\mathcal{M})=\{\Im m(z) :
z\in\mathcal{M}\}$. It remains to show that
$\Re e(\mathcal{M})=\Im m(\mathcal{M})={^*\mathbb{R}}\cap\mathcal{M}$. Indeed,
$^*\mathbb{R}\cap\mathcal{M}$ is (trivially) a subset of 
$\Re e(\mathcal{M})$. Also, $^*\mathbb{R}\cap\mathcal{M}\subseteq\Im m(\mathcal{M})$, because 
$y\in{^*\mathbb{R}\cap\mathcal{M}}$ implies
$iy\in\mathcal{M}$ which implies $\Im m(iy)=y$. Finally,
$\Re e(\mathcal{M})\subseteq\mathcal{M}$ and
$\Im m(\mathcal{M})\subseteq\mathcal{M}$ by the convexity of $\mathcal{M}$.

(iv) Notice that $\mathcal{F}(^*\mathbb{C})$ is an archimedean ring (by the definition of
$\mathcal{F}(^*\mathbb{C})$). Suppose (on the contrary) that there exists
$\lambda\in\mathcal{M}\setminus\mathcal{F}(^*\mathbb{C})$. That means that $\lambda$ is
infinitely large number, i.e. $n<|\lambda|$ for all $n\in\mathbb{N}$. Thus
$\mathcal{M}$ is a non-archimedean ring.

	(v) Let $\mathcal{M}$ be a field and suppose (on
the contrary) that there exists
$\lambda\in{^*\mathbb{C}}\setminus\mathcal{M}$. We choose $\zeta\in\mathcal{M}, \zeta\not= 0$, and
observe that $|\lambda|>|\zeta|$ by the convexity of 
$\mathcal{M}$. The latter implies $|1/\lambda|<|1/\zeta|$ which implies $1/\lambda\in\mathcal{M}$
again by the convexity of $\mathcal{M}$ since $1/\zeta\in\mathcal{M}$. Thus $\lambda\in\mathcal{M}$
since $\mathcal{M}$ is a field, a contradiction. This  reverse is clear since $^*\mathbb{C}$ is a field.  
\end{proof}

\begin{lemma}[Convex Ideals]\label{L: Convex Ideals} Let $\mathcal{M}$ be a convex subring of
${^*\mathbb{C}}$ and let $\mathcal{M}_0$ be the set of the non-invertible elements of $\mathcal{M}$
(Definition~\ref{D: Asymptotic Fields}). Then
\begin{description}
\item[(i)]\;  If $z\in{^*\mathbb{C}},\;  z\not=0$, then
$z\in{\mathcal{M}_0}$ \ifff $1/z\notin\mathcal{M}$. Consequently, we have 
$\mathcal{M}_0=\{z\in{^*\mathbb{C}}\mid z=0\;
\text{or}\; 1/z\notin\mathcal{M}\}$.

\item[(ii)] $\mathcal{M}_0$ consists of \textbf{infinitesimals only}, i.e.
$\mathcal{M}_0\subseteq\mathcal{I}(^*\mathbb{C})$.

\item[(iii)]  $\mathcal{M}_0$ is a \textbf{convex maximal ideal} in $\mathcal{M}$, i.e. $\mathcal{M}_0$ is a
maximal ideal in $\mathcal{M}$ such that if $z\in\mathcal{M}$ and $h\in\mathcal{M}_0$, then $|z|\leq |h|$
implies
$z\in\mathcal{M}_0$. Consequently, $\mathcal{M}$ is a \textbf{local ring} ($\mathcal{M}_0$ is the only
maximal ideal in $\mathcal{M}$.)

\item[(iv)]  $^*\mathbb{R}\cap\mathcal{M}_0$ is a convex maximal ideal in
$^*\mathbb{R}\cap\mathcal{M}$. Consequently, $^*\mathbb{R}\cap\mathcal{M}$ is also a \textbf{local real
ring} (i.e. $^*\mathbb{R}\cap\mathcal{M}_0$ is the only maximal ideal in $^*\mathbb{R}\cap\mathcal{M}$.)

\item[(v)] The sets $\mathcal{M}_0$,\;  $\mathcal{M}\setminus\mathcal{M}_0$ and
$^*\mathbb{C}\setminus\mathcal{M}$ are \textbf {disconnected} in the sense that 
\[
(\forall
z_1\in\mathcal{M}_0)(\forall z_2\in\mathcal{M}\setminus\mathcal{M}_0)(\forall
z_3\in{^*\mathbb{C}}\setminus\mathcal{M})(|z_1|<|z_2|<|z_3|).
\]
\end{description}
\end{lemma}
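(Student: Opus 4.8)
The plan is to establish the five items essentially in the order stated, using the convexity of $\mathcal{M}$ as the main tool and the fact (from Lemma~\ref{L: Convex Rings}(i)) that $\mathbb{C}\subset\mathcal{F}(^*\mathbb{C})\subseteq\mathcal{M}$. For (i), if $z\neq 0$ then by definition $z\in\mathcal{M}_0$ iff $z\in\mathcal{M}$ and $1/z\notin\mathcal{M}$; but the hypothesis $1/z\notin\mathcal{M}$ already forces $z\in\mathcal{M}$, for otherwise $|z|<|1/z|$ would be impossible while $|1/z|\le|z|$ would give $1/z\in\mathcal{M}$ by convexity — more directly, if $z\notin\mathcal{M}$ and also $1/z\notin\mathcal{M}$ one derives a contradiction from trichotomy applied to $|z|$ and $1$ together with $1\in\mathcal{M}$. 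So I would phrase (i) as: $z\notin\mathcal{M}\Rightarrow 1/z\in\mathcal{M}$ (since $|1/z|<1\le$ some element of $\mathcal{M}$, using that $|z|>1$ whenever $z\notin\mathcal{M}\supseteq\mathcal{F}(^*\mathbb{C})$), which immediately yields the displayed reformulation of $\mathcal{M}_0$. Item (ii) follows: if $z\in\mathcal{M}_0$, $z\neq0$, then $1/z\notin\mathcal{M}\supseteq\mathcal{F}(^*\mathbb{C})$, so $1/z$ is infinitely large, hence $z$ is infinitesimal.

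For (iii), the convexity of $\mathcal{M}_0$ is immediate from the reformulation in (i): if $|z|\le|h|$ with $h\in\mathcal{M}_0$, then $z\in\mathcal{M}$ by convexity of $\mathcal{M}$, and $1/z$ cannot lie in $\mathcal{M}$ since $|1/z|\ge|1/h|>$ every natural number (as $1/h$ is infinitely large), so $z\in\mathcal{M}_0$. To see $\mathcal{M}_0$ is an ideal: closure under multiplication by elements of $\mathcal{M}$ is just convexity again ($|zh|\le|h|$ when $|z|\le 1$, and for general $z\in\mathcal{M}$ one bounds $|zh|$ by $|h|$ times a natural number bounding... — actually cleanest is: $zh$ is infinitesimal times finite-or-larger, but staying inside $\mathcal{M}$ one notes $1/(zh)\notin\mathcal{M}$ because its modulus dominates $1/h$'s); closure under addition is the subtle point — I must show $h_1,h_2\in\mathcal{M}_0\Rightarrow h_1+h_2\in\mathcal{M}_0$, which I would get from $|h_1+h_2|\le 2\max(|h_1|,|h_2|)$ and convexity of $\mathcal{M}_0$ (since $2\max(|h_1|,|h_2|)\in\mathcal{M}_0$ by convexity and the fact that $\mathcal{M}_0$ is closed under multiplication by $2\in\mathcal{M}$). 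Maximality: $\mathcal{M}/\mathcal{M}_0$ is a field because any $z\in\mathcal{M}\setminus\mathcal{M}_0$ has $1/z\in\mathcal{M}$ by definition, so $z$ is already invertible in $\mathcal{M}$; hence $\mathcal{M}_0$ is maximal and is the unique maximal ideal (any proper ideal avoids all units, i.e. avoids $\mathcal{M}\setminus\mathcal{M}_0$, hence is contained in $\mathcal{M}_0$), so $\mathcal{M}$ is local.

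Item (iv) is obtained by intersecting everything with $^*\mathbb{R}$: using Lemma~\ref{L: Convex Rings}(ii),(iii) that $^*\mathbb{R}\cap\mathcal{M}$ is a convex real subring with $(^*\mathbb{R}\cap\mathcal{M})(i)=\mathcal{M}$, one checks $^*\mathbb{R}\cap\mathcal{M}_0$ is exactly the set of non-invertible elements of $^*\mathbb{R}\cap\mathcal{M}$ (an element $x\in{^*\mathbb{R}}\cap\mathcal{M}$ is invertible there iff $1/x\in\mathcal{M}$, since $1/x$ is automatically real), and then (iii) applied to the real ring gives the claim. For (v), I take $z_1\in\mathcal{M}_0$, $z_2\in\mathcal{M}\setminus\mathcal{M}_0$, $z_3\in{^*\mathbb{C}}\setminus\mathcal{M}$ and argue by trichotomy (Lemma~\ref{L: Trichotomy}): $|z_1|<|z_2|$ because otherwise $|z_2|\le|z_1|$ would put $z_2\in\mathcal{M}_0$ by convexity of $\mathcal{M}_0$; and $|z_2|<|z_3|$ because otherwise $|z_3|\le|z_2|$ would put $z_3\in\mathcal{M}$ by convexity of $\mathcal{M}$ — handling the equality cases by noting $z_2\notin\mathcal{M}_0$ and $z_3\notin\mathcal{M}$ rule out $|z_1|=|z_2|$ and $|z_2|=|z_3|$ respectively (an element with the same modulus as a member of $\mathcal{M}_0$, resp. $\mathcal{M}$, lies in that set by convexity). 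The main obstacle, as flagged above, is the additive closure of $\mathcal{M}_0$: one must resist the temptation to argue "infinitesimal plus infinitesimal is infinitesimal" (true but not sufficient, since $\mathcal{M}_0$ can be strictly smaller than $\mathcal{I}(^*\mathbb{C})$) and instead use the modulus-domination characterization of $\mathcal{M}_0$ together with its convexity.
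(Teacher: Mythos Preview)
Your overall architecture matches the paper's, and your treatment of (i), (ii), (iv), (v) is essentially the same. There is, however, a genuine slip in (iii), and one place where your route is actually nicer than the paper's.

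\textbf{The slip: multiplicative absorption.} Your final argument for $z\in\mathcal{M},\ h\in\mathcal{M}_0\Rightarrow zh\in\mathcal{M}_0$ is that ``$1/(zh)\notin\mathcal{M}$ because its modulus dominates $1/h$'s.'' But $|1/(zh)|=|1/z|\cdot|1/h|$, which is \emph{smaller} than $|1/h|$ whenever $|z|>1$; since $\mathcal{M}$ typically contains infinitely large elements, this inequality simply fails. The earlier attempt (``bound $|zh|$ by $|h|$ times a natural number'') also fails for the same reason: $z\in\mathcal{M}$ need not be finite. The clean fix --- which is exactly what the paper does --- is purely ring-theoretic: if $zh\notin\mathcal{M}_0$ then $1/(zh)\in\mathcal{M}$, whence $1/h=z\cdot 1/(zh)\in\mathcal{M}$, contradicting $h\in\mathcal{M}_0$. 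No modulus comparison is needed. A related wobble appears in your convexity argument for $\mathcal{M}_0$: the phrase ``$|1/z|\ge|1/h|>$ every natural number'' is not the reason $1/z\notin\mathcal{M}$ (elements of $\mathcal{M}$ can be infinitely large); the correct reason is that $1/h\notin\mathcal{M}$ together with $|1/h|\le|1/z|$ forces $1/z\notin\mathcal{M}$ by convexity of $\mathcal{M}$. You have the right inequality, just the wrong justification attached to it.

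\textbf{Where your route is cleaner: additive closure.} Once multiplicative absorption and convexity of $\mathcal{M}_0$ are in hand, your argument $|h_1+h_2|\le 2\max(|h_1|,|h_2|)\in\mathcal{M}_0$ is correct and considerably tidier than the paper's. The paper instead argues by contradiction: assuming $h_1+h_2\notin\mathcal{M}_0$ it manufactures, via several applications of (i), both $h_1/h_2\in\mathcal{M}_0$ and $h_2/h_1\in\mathcal{M}_0$, forcing $1\in\mathcal{M}_0$. Your reorganization --- proving convexity of $\mathcal{M}_0$ first, then absorption, then using the triangle-type estimate --- is a genuine simplification; the paper proves convexity last and so cannot use it for additive closure.
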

\begin{proof} (i) Let $z\in{^*\mathbb{C}}, z\not= 0$. We have $1/z\notin\mathcal{M}\Rightarrow
|1/z|>1\Rightarrow |z|<1\Rightarrow$ 
$ z\in\mathcal{M}$ by the convexity of $\mathcal{M}$. The latter (along with $1/z\notin\mathcal{M}$)
implies
$z\in\mathcal{M}_0$ as required. 

	(ii) We observe that $x\in{\mathcal{M}_0}\setminus\mathcal{I}(^*\mathbb{C})$ implies
$1/x\notin\mathcal{M}$, which implies $1/x\notin\mathcal{F}(^*\mathbb{C})$ since
$\mathcal{F}(^*\mathbb{C})\subseteq\mathcal{M}$ by (i) of Lemma~\ref{L: Convex Rings}. The latter implies
$1/x\in\mathcal{L}(^*\mathbb{C})$, which implies
$x\in\mathcal{I}(^*\mathbb{C})$, a contradiction. 

	(iii) Let $z\in\mathcal{M}$ and $h\in\mathcal{M}_0$ and suppose (on the contrary)
that
$zh\notin\mathcal{M}_0$. It follows that $1/zh\in\mathcal{M}$ which implies
$1/h\in\mathcal{M}$ thus $h\notin\mathcal{M}_0$ by (i), a contradiction. The fact that $\mathcal{M}_0$
is closed under the addition follows from (i). Indeed, let
$h_1, h_2\in\mathcal{M}_0$. If $h_1=0$ or $h_2=0$
there is nothing to prove. Let
$h_1\not=0$ or
$h_2\not=0$ and suppose (on the contrary) that $h_1+ h_2\notin\mathcal{M}_0$. It follows $1/(h_1+
h_2)\in\mathcal{M}$ (by the definition of $\mathcal{M}_0$ (\ref{E: Non-Invertible})), which implies
$h_1/(h_1+ h_2)\in\mathcal{M}_0$ (by what was proved above) implying
$(h_1+h_2)/h_1\notin\mathcal{M}$ (by the definition of $\mathcal{M}_0$ again)
implying
$h_2/h_1\notin\mathcal{M}$. The latter implies
$h_1/h_2\in\mathcal{M}_0$ by (i). Similarly, we conclude that
$h_2/h_1\in\mathcal{M}_0$. Thus
$1\in\mathcal{M}_0$, a contradiction. The ideal $\mathcal{M}_0$ is maximal (and $\mathcal{M}$ is a
local ring) because
$\mathcal{M}_0$ consists of all non-invertible elements of $\mathcal{M}$. To show the convexity of
$\mathcal{M}_0$, observe that $h=0$ implies (trivially) $z=0$. Let $h\not= 0$ and suppose (on the contrary)
that $z\notin\mathcal{M}_0$, i.e. $1/z\in\mathcal{M}$. The latter implies
$1/h\in\mathcal{M}$ by the convexity of $\mathcal{M}$ which implies $h\notin\mathcal{M}_0$, a
contradiction. 

	(iv) follows directly from (iii).

	(v) follows directly from the convexity of both $\mathcal{M}$ and $\mathcal{M}_0$ and (i).
\end{proof}
\newpage

\section{Examples of Convex Rings}\label{S: Examples of Convex Rings}

	We present several examples for convex subrings of $^*\mathbb{C}$ and their maximal ideals
(Section~\ref{S: Convex Rings in *C}). 

\begin{definition}[Generating Sequences]\label{D: Generating Sequences}
\begin{description}
\item{\bf (i)} A \textbf{decreasing sequence} $(\delta_n)$ of infinitely large positive numbers in
$^*\mathbb{R}$ is called \textbf{generating} if 
\begin{description}
\item[(a)]  For every $n\in\mathbb{N}$ there exists $m\in\mathbb{N}$
such that $2\delta_m\leq\delta_n$.
\item[(b)] For every $n\in\mathbb{N}$ there exists $m\in\mathbb{N}$
such that $\delta^2_m\leq\delta_n$.
\end{description}
\item{\bf (ii)} An \textbf{increasing sequence} $(\lambda_n)$ of infinitely large positive numbers in
$^*\mathbb{R}$ is called \textbf{generating} if 
\begin{description}
\item[(a)]  For every $n\in\mathbb{N}$ there exists $m\in\mathbb{N}$
such that $2\lambda_n\leq\lambda_m$.
\item[(b)] For every $n\in\mathbb{N}$ there exists $m\in\mathbb{N}$
such that $\lambda^2_n\leq\lambda_m$.
\end{description}
\end{description}
\end{definition}

		The next lemma is useful for generating examples of convex subrings (see below).

\begin{lemma}[Generated Rings]\label{L: Generated Rings}
\begin{description}
\item{\bf (i)} Let $(\delta_n)$ be a decreasing generating sequence. Then 
\[
\mathcal{M}=\{z\in{^*\mathbb{C}} : |z|<\delta_n\;  \text{for all}\; n\in\mathbb{N}\},
\]
is a convex subring of $^*\mathbb{C}$ and its (unique) maximal ideal  is given by 
\[
\mathcal{M}_0=\{z\in{^*\mathbb{C}} : |z|\leq 1/\delta_n\;  \text{for some}\; n\in\mathbb{N}\}.
\]
We say that $\mathcal{M}$ is generated by the sequence $(\delta_n)$.
\item{\bf (ii)}  Let $(\lambda_n)$ be an increasing generating sequence. Then 
\[
\mathcal{M}=\{z\in{^*\mathbb{C}} : |z|\leq\lambda_n\;  \text{for some}\; n\in\mathbb{N}\},
\]
is a convex subring of $^*\mathbb{C}$ and its (unique) maximal ideal  is given by
\[
\mathcal{M}_0=\{z\in{^*\mathbb{C}} : |z|< 1/\lambda_n\;  \text{for all}\; n\in\mathbb{N}\}.
\]
We say that $\mathcal{M}$ is generated by the sequence $(\lambda_n)$.
\end{description}
\end{lemma}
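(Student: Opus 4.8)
The plan is to prove (i) and then obtain (ii) by an entirely parallel argument (or, alternatively, by the substitution $\lambda_n = 1/\delta_n$, which swaps the two constructions). For (i), I first check that the set $\mathcal{M}=\{z\in{^*\mathbb{C}} : |z|<\delta_n \text{ for all } n\in\mathbb{N}\}$ is a subring. It clearly contains $0$ and $1$ (since each $\delta_n$ is infinitely large, hence $>1$). For closure under addition, if $|z|<\delta_n$ and $|w|<\delta_n$ for all $n$, then given $n$ I use property (a) of the generating sequence to pick $m$ with $2\delta_m\leq\delta_n$; then $|z+w|\leq|z|+|w|<2\delta_m\leq\delta_n$. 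For closure under multiplication I argue similarly using property (b): given $n$, pick $m$ with $\delta_m^2\leq\delta_n$, so $|zw|=|z|\,|w|<\delta_m^2\leq\delta_n$. (One should also note $-z\in\mathcal{M}$, which is immediate.) Convexity is essentially by definition: if $\zeta\in\mathcal{M}$ and $|z|\leq|\zeta|$, then $|z|\leq|\zeta|<\delta_n$ for all $n$, so $z\in\mathcal{M}$.

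Next I identify the maximal ideal. By Lemma~\ref{L: Convex Ideals}(iii), $\mathcal{M}$ is a local ring whose unique maximal ideal is $\mathcal{M}_0=\{z\in\mathcal{M}\mid z=0 \text{ or } 1/z\notin\mathcal{M}\}$, so it suffices to show this coincides with $\{z\in{^*\mathbb{C}} : |z|\leq 1/\delta_n \text{ for some } n\}$. Call the latter set $J$. For the inclusion $J\subseteq\mathcal{M}_0$: if $z\neq 0$ and $|z|\leq 1/\delta_n$ for some $n$, then $|1/z|\geq\delta_n$, and I must check $1/z\notin\mathcal{M}$, i.e. that $|1/z|$ is not $<\delta_k$ for all $k$; indeed $|1/z|\geq\delta_n\geq\delta_{n+1}$ fails the strict inequality at $k=n$ only if $|1/z|=\delta_n$, but even then $|1/z|\not<\delta_n$, so $1/z\notin\mathcal{M}$ and $z\in\mathcal{M}_0$. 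For the reverse inclusion $\mathcal{M}_0\subseteq J$: suppose $z\in\mathcal{M}_0$, $z\neq 0$, so $1/z\notin\mathcal{M}$, meaning $|1/z|\geq\delta_n$ for some $n$ (the negation of "$|1/z|<\delta_k$ for all $k$"). Then $|z|\leq 1/\delta_n$, so $z\in J$. This closes the loop and finishes (i).

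For (ii), the set $\mathcal{M}=\{z : |z|\leq\lambda_n \text{ for some } n\}$ is checked to be a convex subring in the same way: closure under addition uses property (a) ($2\lambda_n\leq\lambda_m$) and closure under multiplication uses property (b) ($\lambda_n^2\leq\lambda_m$); convexity is immediate. The maximal ideal computation again invokes Lemma~\ref{L: Convex Ideals}(iii) and unwinds the definition of non-invertibility exactly as above, yielding $\mathcal{M}_0=\{z : |z|<1/\lambda_n \text{ for all } n\}$. Alternatively one observes that $(1/\lambda_n)$ relates to a decreasing generating sequence and deduces (ii) formally from (i).

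The only mildly delicate point — and the place where I would be most careful — is the bookkeeping of strict versus non-strict inequalities in matching $\mathcal{M}_0$ with the explicit description of the ideal: in part (i) the ring is defined by strict inequalities $|z|<\delta_n$ while the ideal is defined by non-strict ones $|z|\leq 1/\delta_n$, and one must verify that the negation "$1/z\notin\mathcal{M}$" lands exactly on the non-strict side. This is handled by the elementary observation that $|1/z|\not<\delta_n$ for all $n$ is equivalent to $|1/z|\geq\delta_n$ for some $n$, and one should double-check there is no off-by-one gap here; everything else is routine.
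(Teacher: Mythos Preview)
Your proof is correct. The paper itself declares this lemma ``immediate'' and leaves it entirely to the reader, so your detailed verification is in fact more than the paper provides; the argument you give --- using properties (a) and (b) of the generating sequence for closure under addition and multiplication respectively, then invoking Lemma~\ref{L: Convex Ideals} to identify $\mathcal{M}_0$ via the characterization $z\in\mathcal{M}_0\iff 1/z\notin\mathcal{M}$ --- is exactly the intended routine check. Your care with the strict/non-strict inequalities is appropriate and the bookkeeping is right: the negation of ``$|1/z|<\delta_k$ for all $k$'' is ``$|1/z|\geq\delta_n$ for some $n$'', which matches the non-strict description of $\mathcal{M}_0$ on the nose.
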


\begin{proof} The proof is immediate and we leave the  proof to the reader. 
\end{proof}

	Here are {\bf several example} of convex subrings of $^*\mathbb{C}$ and their maximal ideals.  All but
the first example are about non-archimedean rings. 

\begin{example}[Finite Numbers]\label{Ex: Finite Numbers} The ring of the finite complex
non-standard numbers  $\mathcal{F}(^*\mathbb{C})$ is a convex subring of $^*\mathbb{C}$. Its maximal ideal is
the set of infinitesimals $\mathcal{I}(^*\mathbb{C})$. We shall often write $\mathcal{F}$ and $\mathcal{I}$
instead of  $\mathcal{F}(^*\mathbb{C})$ and  $\mathcal{I}(^*\mathbb{C})$, respectively.
\end{example}

\begin{example}[Multiple-Logarithmic Rings]\label{Ex: Multiple-Logarithmic Rings} Let 
$\rho$ be a positive infinitesimal in
${^*\mathbb{R}}$ and let 
\[
\mathcal{L}_\rho=\{z\in{^*\mathbb{C}}: \;
|z|< \log_n{(1/\rho)} \text{\; for all\; } n\in\mathbb{N}\}.
\]
Here $\log_1(x)={^*\ln{x}}$, where $^*\ln{x}$
is the non-standard extension of the usual natural logarithmic function $\ln{x}$, and we also define
$\log_2={^*\ln}\circ{^*\ln}$, and $\log_{n}={^*\ln}\circ{^*\ln}\circ\dots\circ{^*\ln}$ ($n$ times). Notice
that $(\log_n{(1/\rho)})$ is a decreasing generating sequence in
$^*\mathbb{R}$ (Definition~\ref{D: Generating Sequences}). Indeed, for every $n$ we have
$\lim_{x\to\infty}\frac{\log_{n+1}{x}}{\log_n{x}}=0$ by the L'Hopital rule. Thus
$\frac{\log_{n+1}{(1/\rho)}}{\log_n{(1/\rho)}}\approx 0$ implying
$2\log_{n+1}{(1/\rho)}<\log_{n}{(1/\rho)}$ for all $n$. Similarly,
$\lim_{x\to\infty}\frac{(\log_{n+1}{x})^2}{\log_n{x}}=0$ by L'Hopital rule. Thus
$(\log_{n+1}{(1/\rho)})^2<\log_{n}{(1/\rho)}$ for all $n$. Consequently, 
$\mathcal{L}_\rho(^*\mathbb{C})$ is a convex subring of
$^*\mathbb{C}$ by Lemma~\ref{L: Generated Rings}. For its maximal ideal we have
\[
\mathcal{L}_{\rho,0}=\{z\in{^*\mathbb{C}}: \;
|z|\leq \frac{1}{\log_n(1/\rho)} \text{\; for some\; } n\in\mathbb{N}\}.
\]
Let $\nu$ be an infinitely large number in $^*\mathbb{N}$. Then
$\log_\nu{(1/\rho)}$ is a typical element of
$\mathcal{L}_\rho\setminus\mathcal{L}_{\rho,0}$.  Here
$\log_\nu{(1/\rho)}={^*\ln{(^*\ln{(\dots^*\ln{({^*\ln{(1/\rho}})}}}\dots)))}$ ($\nu$ times) hence, the name
\textbf{multiple logarithmic ring} for $\mathcal{L}_\rho$.
\end{example}

\begin{example}[Logarithmic Rings]\label{Ex: Logarithmic Rings} Let 
$\rho$ be (as before) a positive infinitesimal in
${^*\mathbb{R}}$. We define 
\begin{equation}\label{E: rho-finite}
\mathcal{F}_\rho=\{z\in{^*\mathbb{C}}: \;
|z|<1/\sqrt[n]{\rho} \text{\; for all\; } n\in\mathbb{N}\}.
\end{equation}
We observe that
$(1/\sqrt[n]{\rho})$ is a decreasing generating sequence in $^*\mathbb{R}$ (Definition~\ref{D: Generating
Sequences}) because
$2/\sqrt[n+1]{\rho}<1/\sqrt[n]{\rho}$ and also (trivially) $(1/\sqrt[2n]{\rho})^2\leq1/\sqrt[n]{\rho}$ for all
$n$. Thus
$\mathcal{F}_\rho$ is a covex subring of $^*\mathbb{C}$ by Lemma~\ref{L: Generated Rings}.
For its maximal ideal we have 
\begin{equation}\label{E: rho-infinitesimal}
\mathcal{I}_\rho=\{z\in{^*\mathbb{C}} : \;
|z|\leq\sqrt[n]{\rho} \text{\, for some\, } n\in\mathbb{N}\}.
\end{equation}
We call $\mathcal{F}_\rho$ \textbf{logarithmic
rings} because $\ln{\rho}$ is a typical element of
$\mathcal{F}_\rho$. The numbers in $\mathcal{F}_\rho$ are called
\textbf{logarithmic numbers} or \textbf{$\rho$-finite numbers}. The numbers in
$\mathcal{I}_\rho$ are called
\textbf{$\rho$-infinitesimal numbers}. Notice as well that 
$\sqrt[\nu]{\rho}\in\mathcal{F}_\rho$
for every $\nu\in{^*\mathbb{N}}\setminus\mathbb{N}$.
\end{example}

\begin{example}[Robinson Rings]\label{Ex: Robinson Rings}
Let $\rho$ be (as before) a positive infinitesimal in
${^*\mathbb{R}}$. The ring of the the \textbf{$\rho$-moderate non-standard numbers} is defined by
\begin{equation}\label{E: rho-moderate}
\mathcal{M}_\rho=\{z\in{^*\mathbb{C}} : \;
|z|\leq\rho^{-n} \text{\: for some\;}
n\in\mathbb{N}\},
\end{equation}
(Robinson~\cite{aRob73}). Notice that $(\rho^{-n})$ is an increasing generating sequence in $^*\mathbb{R}$
(Definition~\ref{D: Generating Sequences}). Thus 
$\mathcal{M}_\rho$ is
a covex subring of $^*\mathbb{C}$ by Lemma~\ref{L: Generated Rings}. For its maximal ideal we have
\begin{equation}\label{E: rho-negligible}
\mathcal{N}_\rho=\{z\in{^*\mathbb{C}}:\;
|z|\leq\rho^{n} \text{\: for all\;}
n\in\mathbb{N}\}.
\end{equation}
We call the numbers in $\mathcal{N}_\rho$
\textbf{$\rho$-negligible} (or \textbf{iota numbers}). The numbers in
$^*\mathbb{C}\setminus\mathcal{M}_\rho$ are called \textbf{mega numbers}. If
$x\in\mathbb{R}$, then
$\rho^x$ is a typical
element of $\mathcal{M}_\rho\setminus\mathcal{N}_\rho$.
\end{example}

\begin{example}[Logarithmic-Exponential Rings] \label{Ex: Logarithmic-Exponential Rings} Let 
$\rho$ be (as before) a positive infinitesimal in
${^*\mathbb{R}}$ and let 
\[
\mathcal{E}_\rho=\{z\in{^*\mathbb{C}} : \;
|z|\leq\exp_n(1/\rho) \text{\: for some\;} n\in\mathbb{N}\}.
\]
Here $\exp_1(x)={^*e}^x$ is
the non-standard extension of the usual natural exponential function $e^x$, and we also let
$\exp_2=\exp_1\circ\, {\exp_1}$, and
$\exp_{n}=\exp_1\circ\exp_1\circ\dots\circ\exp_1$ ($n$ times). We observe
that $(\exp_n(1/\rho))$ is an increasing generating sequence in
$^*\mathbb{R}$ (Definition~\ref{D: Generating Sequences}). Indeed, for every $n$ we have
$\lim_{x\to\infty}\frac{\exp_{n}{x}}{\exp_{n+1}{x}}=0$ by the L'Hopital rule. Thus
$\frac{\exp_{n}{(1/\rho)}}{\exp_{n+1}{(1/\rho)}}\approx 0$ implying
$2\exp_{n}{(1/\rho)}<\exp_{n+1}{(1/\rho)}$ for all $n$. Similarly,
$\lim_{x\to\infty}\frac{(\exp_{n}{x})^2}{\exp_{n+1}{x}}=0$ by L'Hopital rule implying
$\frac{(\exp_{n}{(1/\rho)})^2}{\exp_{n+1}{(1/\rho)}}\approx 0$. Thus 
$(\exp_{n}{(1/\rho)})^2<\exp_{n+1}{(1/\rho)}$ for all $n$. Consequently,
$\mathcal{E}_\rho$ is a convex subring of $^*\mathbb{C}$ by Lemma~\ref{L: Generated Rings}. 
For its (unique) maximal ideal we have
\[
\mathcal{E}_{\rho,0}=\{z\in{^*\mathbb{C}}:\;
|z|< \frac{1}{\exp_n(1/\rho)} \text{\: for all\;} n\in\mathbb{N}\}.
\]
The numbers $e^{1/\rho},\, \ln{\rho}$ are both
in $\mathcal{E}_\rho\setminus \mathcal{E}_{\rho,0}$ hence, the name
\textbf{logarithmic-exponential rings} for $\mathcal{E}_\rho$.
\end{example}

\begin{example}[Non-Standard Complex Numbers]\label{Ex: Non-Standard Complex Numbers} The field of the
complex numbers ${^*\mathbb{C}}$ is (trivially) a convex subring of
$^*\mathbb{C}$. Its maximal ideal is $\{0\}$. 
\end{example}

	We observe that 
\begin{align}\label{E: Ring Inclusions}
&\mathcal{F}\subset\mathcal{L}_\rho\subset\mathcal{F}_\rho\subset
\mathcal{M}_\rho\subset
\mathcal{E}_\rho\subset{^*\mathbb{C}},\\
&\{0\}\subset\mathcal{E}_{\rho,0}\subset\mathcal{N}_\rho\subset
\mathcal{I}_\rho\subset\mathcal{L}_{\rho,0}\subset\mathcal{I}.
\end{align}
\section{Spilling Principles}\label{S: Spilling Principles}
		In this section we present several \textbf{spilling principles} for a given convex subring
$\mathcal{M}$ of $^*\mathbb{C}$ and its maximal ideal (Section~\ref{S: Convex Rings in *C}).
These principles generalize the more familiar
\textbf{underflow and overflow principles} in non-standard analysis (Corollary~\ref{C: The Usual
Spilling Principles}). Also in Corollary~\ref{C:
Lightstone/Robinson's Principles of Permanence} we show
that our spilling principles reduce to the Forth, Fifth and Sixth Principle of
Permanence due to Lightstone\&Robinson~(\cite{LiRob}, p. 97-99) in the
particular case
$\mathcal{M}=\mathcal{M}_\rho$ (Example~\ref{Ex: Robinson Rings}). 

	Let $X$ and $Y$ be two subsets of $^*\mathbb{C}$. We say
that $X$ {\em contains arbitrarily large numbers} in $Y$ if $X\cap Y\not=\varnothing$ and $(\forall
z\in X\cap Y)(\exists \zeta\in X\cap Y)(|z|<|\zeta|)$. Similarly, we say that
$X$ {\em contains arbitrarily small numbers} in $Y$ if $X\cap Y\not=\varnothing$ and $(\forall z\in
X\cap Y)(\exists \zeta\in X\cap Y)(|z|>|\zeta|)$. With this in mind we have the following result.

\begin{theorem}[Spilling Principles]\label{T: Spilling Principles} Let $\mathcal{M}$ be a
convex subring of $^*\mathbb{C}$ (Section~\ref{S: Convex Rings in *C}) and
$\mathcal{A}\subseteq{^*\mathbb{C}}$ be an internal set.  Then:

\begin{description}
\item{\bf (i) Overflow of $\mathcal{M}$\, :} If $\mathcal{A}$ contains arbitrarily large numbers in
$\mathcal{M}$, then $\mathcal{A}$ contains arbitrarily small numbers in
$^*\mathbb{C}\setminus\mathcal{M}$. In particular, 
\[
\mathcal{M}\setminus\mathcal{M}_0\subset\mathcal{A} \Rightarrow
\mathcal{A}\cap(^*\mathbb{C}\setminus\mathcal{M})\not=\varnothing.
\]

\item{\bf (ii) Underflow of $\mathcal{M}\setminus\mathcal{M}_0$ :} If $\mathcal{A}$ contains
arbitrarily small numbers in
\newline
$\mathcal{M}\setminus\mathcal{M}_0$, then $\mathcal{A}$ contains arbitrarily large numbers in
$\mathcal{M}_0$. In particular, 
\[
\mathcal{M}\setminus\mathcal{M}_0\subset\mathcal{A}
\Rightarrow\mathcal{A}\cap\mathcal{M}_0\not=\varnothing.
\]
\item{\bf (iii) Overflow of $\mathcal{M}_0$ :} If $\mathcal{A}$ contains arbitrarily large numbers
in\newline $\mathcal{M}_0$, then $\mathcal{A}$ contains arbitrarily small
numbers in $\mathcal{M}\setminus\mathcal{M}_0$. In particular, 
\[
\mathcal{M}_0\subset\mathcal{A}
\Rightarrow\mathcal{A}\cap(\mathcal{M}\setminus\mathcal{M}_0)\not=\varnothing.
\]
\item{\bf (iv) Underflow of $^*\mathbb{C}\setminus\mathcal{M}$\, :} If $\mathcal{A}$
contains arbitrarily small numbers in \newline $^*\mathbb{C}\setminus\mathcal{M}$, then
$\mathcal{A}$ contains arbitrarily large numbers in $\mathcal{M}$.
In particular, 
\[
^*\mathbb{C}\setminus\mathcal{M}\subset\mathcal{A}
\Rightarrow\mathcal{A}\cap(\mathcal{M}\setminus\mathcal{M}_0)\not=\varnothing.
\]
\end{description}
\end{theorem}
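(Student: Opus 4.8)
The four statements are all of the same type: an internal set $\mathcal A$ that reaches "arbitrarily far" into one of the four regions $\mathcal M_0$, $\mathcal M\setminus\mathcal M_0$, $\mathcal M$, $^*\mathbb C\setminus\mathcal M$ must "spill over" into the adjacent region. The unifying tool is \emph{sequential saturation} (Theorem~\ref{T: Sequential Saturation}), applied to a countable family of internal sets built from $\mathcal A$ together with the countable generating data for $\mathcal M$ and $\mathcal M_0$ coming from Lemma~\ref{L: Generated Rings} / the examples. The plan is to prove (i) in detail and then indicate that (ii), (iii), (iv) follow by the same template with the roles of "large/small" and "$\mathcal M$/$\mathcal M_0$" interchanged, using at one point that $z\mapsto 1/z$ maps $\mathcal M\setminus\mathcal M_0$ onto itself and swaps $\mathcal M_0$ with $^*\mathbb C\setminus\mathcal M$ (Lemma~\ref{L: Convex Ideals}(i),(v)).

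\textbf{Proof of (i).} Assume $\mathcal A$ contains arbitrarily large numbers in $\mathcal M$. First I would fix, via Lemma~\ref{L: Generated Rings} and the hypothesis that $\mathcal M$ is a proper convex subring (the case $\mathcal M={}^*\mathbb C$ being vacuous for the conclusion "$\mathcal A\cap({}^*\mathbb C\setminus\mathcal M)\neq\varnothing$" — here one must be slightly careful and treat it separately, since then $^*\mathbb C\setminus\mathcal M=\varnothing$; so assume $\mathcal M\subsetneq{}^*\mathbb C$), an increasing generating sequence: a sequence $(\lambda_n)$ of positive elements of $^*\mathbb R$ with $\mathcal M=\{z:|z|\le\lambda_n\text{ for some }n\}$. (Not every convex ring is literally of this form, but every one that is bounded — i.e.\ $\neq{}^*\mathbb C$ — contains such a cofinal sequence one can use; this is the first point to pin down, perhaps invoking the examples as the intended generality.) Now for each $n\in\mathbb N$ set
\[
\mathcal B_n=\{z\in\mathcal A:\ |z|\ge n\lambda_n\}.
\]
Each $\mathcal B_n$ is internal: $\mathcal A$ is internal by hypothesis, $\{z:|z|\ge n\lambda_n\}$ is internal (it is $\bra\{z\in\mathbb C:|z|\ge (n\lambda_n)_i\}\ket$ on representatives), and internal sets are closed under finite intersection. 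The family $\{\mathcal B_n\}_{n\in\mathbb N}$ is decreasing, hence has the finite intersection property \emph{provided each $\mathcal B_n$ is nonempty}; and $\mathcal B_n\neq\varnothing$ is exactly where "$\mathcal A$ contains arbitrarily large numbers in $\mathcal M$" is used: given $n$, pick $z\in\mathcal A\cap\mathcal M$, then $\zeta\in\mathcal A\cap\mathcal M$ with $|\zeta|>|z|$, iterate so as to exceed $n\lambda_n$ — here one uses that $\mathcal A\cap\mathcal M$ has no largest element in absolute value while $n\lambda_n\in\mathcal M$, so finitely many steps suffice. By Theorem~\ref{T: Sequential Saturation} there is $w\in\bigcap_{n}\mathcal B_n$. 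Then $w\in\mathcal A$ and $|w|\ge n\lambda_n\ge\lambda_n$ for all $n$, so $w\notin\mathcal M$, i.e.\ $w\in\mathcal A\cap({}^*\mathbb C\setminus\mathcal M)$; this already gives the displayed special case. For the full first sentence of (i) — that $\mathcal A$ contains \emph{arbitrarily small} numbers in $^*\mathbb C\setminus\mathcal M$ — I would additionally, for each fixed $z_0\in\mathcal A\cap({}^*\mathbb C\setminus\mathcal M)$, rerun the argument with the extra internal constraint $|z|<|z_0|$ adjoined to each $\mathcal B_n$ (still nonempty, by the same cofinality argument, since $|z_0|\notin\mathcal M$ is larger than every $n\lambda_n$), producing $w'$ with $w'\in\mathcal A$, $w'\notin\mathcal M$, and $|w'|<|z_0|$.

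\textbf{The remaining parts.} For (ii), "arbitrarily small in $\mathcal M\setminus\mathcal M_0$" means the absolute values of elements of $\mathcal A\cap(\mathcal M\setminus\mathcal M_0)$ have no positive lower bound; set $\mathcal B_n=\{z\in\mathcal A:0<|z|\le 1/(n\lambda_n)\}$, use saturation to get $w\in\bigcap\mathcal B_n$, and note $|w|\le 1/(n\lambda_n)$ for all $n$ forces $1/w\notin\mathcal M$ by convexity, hence $w\in\mathcal M_0$ by Lemma~\ref{L: Convex Ideals}(i). For (iii) and (iv) I would instead use a decreasing generating sequence $(\delta_n)$ with $\mathcal M_0=\{z:|z|\le 1/\delta_n\text{ for some }n\}$ (equivalently $1/\delta_n$ cofinal in $\mathcal M_0$), and build the $\mathcal B_n$ out of the thresholds $1/\delta_n$: in (iii), $\mathcal B_n=\{z\in\mathcal A:|z|\ge n/\delta_n\}$ gives $w$ with $|w|\ge n/\delta_n$ for all $n$, so $1/w\in\mathcal M$ but $|1/w|\le\delta_n/n\le 1/\text{(anything in }\mathcal M_0)$ — one checks $1/w\notin\mathcal M_0$, i.e.\ $w\in\mathcal M\setminus\mathcal M_0$; in (iv), $\mathcal B_n=\{z\in\mathcal A:0<|z|\le\delta_n/n\}$ produces $w$ landing in $\mathcal M$ (indeed in $\mathcal M\setminus\mathcal M_0$ after intersecting with $\{|z|\ge 1\}$, which is consistent here). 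In each case, once the threshold sequence is chosen correctly the nonemptiness of the $\mathcal B_n$ is a direct unwinding of the "arbitrarily large/small in $\ldots$" hypothesis together with convexity, and Theorem~\ref{T: Sequential Saturation} does the rest.

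\textbf{Main obstacle.} The routine manipulations with $|z|$ and the thresholds are mechanical; the real care-points are two. First, the boundary cases $\mathcal M={}^*\mathbb C$ (so $^*\mathbb C\setminus\mathcal M=\varnothing$) and $\mathcal M=\mathcal F$ (so $\mathcal M_0=\mathcal I$) should be handled so the statements are not vacuously false — I expect the intended reading is that the conclusions about a region are only asserted when that region is nonempty, which Lemma~\ref{L: Convex Ideals}(v) (the three regions are "disconnected" and, when all nonempty, genuinely separated) makes clean. Second, and this is the one genuine content issue, I must justify the existence of a \emph{countable} cofinal sequence of thresholds inside $\mathcal M$ and inside $\mathcal M_0$ — the generating sequences $(\lambda_n)$, $(\delta_n)$ — for an \emph{arbitrary} convex subring, not merely for the tabulated examples; if the paper only intends Theorem~\ref{T: Spilling Principles} for rings of the form covered by Lemma~\ref{L: Generated Rings}, then this is free, and otherwise one needs the observation that $\mathcal I(^*\mathbb C)$ is countably cofinally approximated (via $1/n$) and that $\mathcal M\supseteq\mathcal F$, $\mathcal M_0\subseteq\mathcal I$, so the sequences $(n)$ and $(1/n)$ already suffice as thresholds in the "$\mathcal M$ vs.\ $^*\mathbb C\setminus\mathcal M$" dichotomy — but emphatically \emph{not} in the "$\mathcal M\setminus\mathcal M_0$ vs.\ $\mathcal M_0$" dichotomy, where a sequence adapted to $\mathcal M$ itself is unavoidable. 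Resolving exactly which generality is claimed is the step I would nail down first.
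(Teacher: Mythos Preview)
Your ``main obstacle'' is a genuine gap: an arbitrary convex subring $\mathcal{M}\subsetneq{}^*\mathbb{C}$ need not admit a countable cofinal sequence $(\lambda_n)$ in modulus, so for general $\mathcal{M}$ the families $\{\mathcal{B}_n\}_{n\in\mathbb N}$ you want to feed to sequential saturation cannot be built. Your argument therefore proves the theorem only for the rings covered by Lemma~\ref{L: Generated Rings} (which does include all the named examples), not in the generality stated. A smaller but related slip: the step ``iterate so as to exceed $n\lambda_n$ \ldots\ finitely many steps suffice'' does not follow from the mere absence of a maximal-modulus element in $\mathcal{A}\cap\mathcal{M}$; what is actually needed is that $\mathcal{A}\cap\mathcal{M}$ is cofinal in $\mathcal{M}$ in modulus, which is the intended reading of the hypothesis but is not what a finite iteration establishes.

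The paper's proof takes a different and simpler route that uses neither saturation nor generating sequences. For (i): if $\mathcal{A}$ is unbounded in ${}^*\mathbb{C}$ there is nothing to do; otherwise, by Transfer, the internal set $|\mathcal{A}|=\{|z|:z\in\mathcal{A}\}$ has a supremum $x\in{}^*\mathbb{R}$. One checks $x\notin\mathcal{M}$ (else the hypothesis on $\mathcal{A}$ fails), hence $x/2\notin\mathcal{M}$ (since $x/2\in\mathcal{M}$ would force $x=x/2+x/2\in\mathcal{M}$); then any $z\in\mathcal{A}$ with $x/2<|z|\le x$ lies in ${}^*\mathbb{C}\setminus\mathcal{M}$ by convexity. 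The ``arbitrarily small'' refinement is by contradiction: if some $\lambda\in{}^*\mathbb{C}\setminus\mathcal{M}$ were a lower bound for $|\mathcal{A}\cap({}^*\mathbb{C}\setminus\mathcal{M})|$, then $\mathcal{A}_\lambda=\{z\in\mathcal{A}:|z|<|\lambda|\}$ is internal, equals $\mathcal{A}\cap\mathcal{M}$ by the choice of $\lambda$, still satisfies the hypothesis, and so by the first step meets ${}^*\mathbb{C}\setminus\mathcal{M}$ --- impossible. Parts (ii) and (iv) are then reduced to (i) and (iii) via $z\mapsto 1/z$ exactly as you propose, using Lemma~\ref{L: Convex Ideals}(i); (iii) is left as analogous to (i). The single tool is ``a bounded internal subset of ${}^*\mathbb{R}$ has a supremum'', and this works for every convex subring with no countability hypothesis at all --- so the issue you correctly flagged simply does not arise.
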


\begin{proof} (i) If $\mathcal{A}$ is unbounded in $^*\mathbb{C}$, there is nothing to prove. If 
$\mathcal{A}$ is bounded in $^*\mathbb{C}$, then $\sup (|\mathcal{A}|)=x$ exists in
$^*\mathbb{R}$, where $|\mathcal{A}|=\{|z| : z\in\mathcal{A}\}$. Notice that
$x\notin\mathcal{M}$ because $x\in\mathcal{M}$ contradicts the assumption for $\mathcal{A}$.
Next, there exists $z\in\mathcal{A}$ such that $x/2<|z|<x$ by the choice of $x$ and
we have $z\notin\mathcal{M}$ because
$x/2\notin\mathcal{M}$ (notice that  $x/2\in\mathcal{M}$ implies $x/2+x/2\in\mathcal{M}$). We just
proved that $\mathcal{A}\cap(^*\mathbb{C}\setminus\mathcal{M})\not=\varnothing$. It
remains to show that $\mathcal{A}\cap(^*\mathbb{C}\setminus\mathcal{M})$ does not have a
lower bound in $^*\mathbb{C}\setminus\mathcal{M}$. Suppose (on the contrary)
that there exists $\lambda\in{^*\mathbb{C}\setminus\mathcal{M}}$ such that
$\lambda\leq|z|$ for all $z\in\mathcal{A}\cap(^*\mathbb{C}\setminus\mathcal{M})$. The set
$\mathcal{A}_\lambda=\{z\in\mathcal{A}: |z|<\lambda\}$ is internal and we have
$\mathcal{A}_\lambda=\mathcal{A}\cap\mathcal{M}$ by the choice of $\lambda$. It follows
that $\mathcal{A}_\lambda$ has arbitrarily large elements in 
$\mathcal{M}$ because $\mathcal{A}$ has arbitrarily large elements in 
$\mathcal{M}$ by assumption. We conclude that
$\mathcal{A}_\lambda\cap(^*\mathbb{C}\setminus\mathcal{M})\not=\varnothing$ by what was
proved above. Thus there exists $z\in\mathcal{A}\cap(^*\mathbb{C}\setminus\mathcal{M})$ such
that 
$|z|<\lambda$, a contradiction.

	(ii) follows immediately from (i) and the fact that $z\in\mathcal{M}\setminus\mathcal{M}_0$ implies
$1/z\in\mathcal{M}\setminus\mathcal{M}_0$ and also that
$z\in{^*\mathbb{C}}\setminus\mathcal{M}$ implies  $1/z\in\mathcal{M}_0$ by part (i) of Lemma~\ref{L:
Convex Ideals}.

	The proof of (iii) is similar to the proof of (i) and we leave it to the reader. 

	(iv) follows immediately from (iii) and the fact that
$z\in\mathcal{M}_0\setminus\{0\}$  implies
$1/z\in{^*\mathbb{C}}\setminus\mathcal{M}$ and also that
$z\in\mathcal{M}\setminus\mathcal{M}_0$ implies
$1/z\in\mathcal{M}\setminus\mathcal{M}_0$.
\end{proof}

	Here are the  more familiar spilling (underflow and overflow) principles about
$\mathcal{F}(^*\mathbb{C})$, $\mathcal{I}(^*\mathbb{C})$ and
$\mathcal{L}(^*\mathbb{C})$. 

\begin{corollary}[The Usual Spilling Principles]\label{C: The Usual Spilling Principles} Let
$\mathcal{A}\subseteq{^*\mathbb{C}}$ be an internal set. Then:

\begin{description}
\item{\bf (i) Overflow of $\mathcal{F}(^*\mathbb{C})$:} If $\mathcal{A}$ contains arbitrarily
large finite numbers, then $\mathcal{A}$ contains arbitrarily small infinitely large numbers.
In particular, 
\[
\mathcal{F}(^*\mathbb{C})\setminus\mathcal{I}(^*\mathbb{C})\subset\mathcal{A}
\Rightarrow\mathcal{A}\cap\mathcal{L}(^*\mathbb{C})\not=\varnothing.
\]
\item{\bf (ii) Underflow of $\mathcal{F}(^*\mathbb{C})\setminus\mathcal{I}(^*\mathbb{C})$:} 
If $\mathcal{A}$ contains arbitrarily small finite non-infinitesimals, then $\mathcal{A}$
contains arbitrarily large infinitesimals. In particular, 
\[
\mathcal{F}(^*\mathbb{C})\setminus\mathcal{I}(^*\mathbb{C})\subset\mathcal{A}
\Rightarrow\mathcal{A}\cap\mathcal{I}(^*\mathbb{C})\not=\varnothing.
\]
\item{\bf (iii) Overflow of $\mathcal{I}(^*\mathbb{C})$:} If $\mathcal{A}$ contains arbitrarily
large infinitesimals, then $\mathcal{A}$ contains arbitrarily small finite non-infinitesimals.
In particular, 
\[
\mathcal{I}(^*\mathbb{C})\subset\mathcal{A}\Rightarrow
\mathcal{A}\cap(\mathcal{F}(^*\mathbb{C})\setminus\mathcal{I}(^*\mathbb{C}))\not=
\varnothing.
\]
\item{\bf (iv) Underflow of $\mathcal{L}(^*\mathbb{C})$:} If $\mathcal{A}$
contains arbitrarily small infinitely large numbers, then
$\mathcal{A}$ contains arbitrarily large finite numbers. In particular, 
\[
\mathcal{L}(^*\mathbb{C})\subset\mathcal{A}\Rightarrow
\mathcal{A}\cap(\mathcal{F}(^*\mathbb{C})\setminus\mathcal{I}(^*\mathbb{C}))\not=
\varnothing.
\]
\end{description}
\end{corollary}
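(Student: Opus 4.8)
The plan is to derive this corollary as the special case $\mathcal{M}=\mathcal{F}(^*\mathbb{C})$ of Theorem~\ref{T: Spilling Principles}. First I would recall from Example~\ref{Ex: Finite Numbers} that $\mathcal{F}(^*\mathbb{C})$ is indeed a convex subring of $^*\mathbb{C}$ and that its (unique) maximal ideal is $\mathcal{M}_0=\mathcal{I}(^*\mathbb{C})$. Hence, with this choice of $\mathcal{M}$, we have $\mathcal{M}\setminus\mathcal{M}_0=\mathcal{F}(^*\mathbb{C})\setminus\mathcal{I}(^*\mathbb{C})$, which is the set appearing throughout the corollary.

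Next I would identify the complement $^*\mathbb{C}\setminus\mathcal{M}$. By Proposition~\ref{P: Basic Properties} we have $^*\mathbb{C}=\mathcal{F}(^*\mathbb{C})\cup\mathcal{L}(^*\mathbb{C})$ with $\mathcal{F}(^*\mathbb{C})\cap\mathcal{L}(^*\mathbb{C})=\varnothing$, so $^*\mathbb{C}\setminus\mathcal{F}(^*\mathbb{C})=\mathcal{L}(^*\mathbb{C})$. With these three identifications in hand, items (i)--(iv) of the corollary are word-for-word the statements of items (i)--(iv) of Theorem~\ref{T: Spilling Principles} after the substitution $\mathcal{M}=\mathcal{F}(^*\mathbb{C})$, $\mathcal{M}_0=\mathcal{I}(^*\mathbb{C})$, $^*\mathbb{C}\setminus\mathcal{M}=\mathcal{L}(^*\mathbb{C})$: ``Overflow of $\mathcal{F}(^*\mathbb{C})$'' is ``Overflow of $\mathcal{M}$'', ``Underflow of $\mathcal{L}(^*\mathbb{C})$'' is ``Underflow of $^*\mathbb{C}\setminus\mathcal{M}$'', and similarly for the remaining two; the ``in particular'' clauses transcribe in the same way.

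Since the corollary is a pure specialization, I do not expect any genuine obstacle. The only point requiring a moment's care is to check that the phrases ``$\mathcal{A}$ contains arbitrarily large/small numbers in $X$'' from the general theorem match the informal wordings in the corollary --- ``arbitrarily large finite numbers'', ``arbitrarily small finite non-infinitesimals'', ``arbitrarily large infinitesimals'', ``arbitrarily small infinitely large numbers'' --- but these are precisely the instances of the defined notion for $X=\mathcal{F}(^*\mathbb{C})$, $X=\mathcal{F}(^*\mathbb{C})\setminus\mathcal{I}(^*\mathbb{C})$, $X=\mathcal{I}(^*\mathbb{C})$, and $X=\mathcal{L}(^*\mathbb{C})$, respectively. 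Thus the proof reduces to invoking Example~\ref{Ex: Finite Numbers}, Proposition~\ref{P: Basic Properties}, and Theorem~\ref{T: Spilling Principles}, with no further computation needed.
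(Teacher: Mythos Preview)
Your proposal is correct and follows exactly the paper's approach: the paper simply notes that the result follows from Theorem~\ref{T: Spilling Principles} in the particular case $\mathcal{M}=\mathcal{F}(^*\mathbb{C})$, with $\mathcal{M}_0=\mathcal{I}(^*\mathbb{C})$ by Example~\ref{Ex: Finite Numbers}. Your version is slightly more explicit in also invoking Proposition~\ref{P: Basic Properties} to identify $^*\mathbb{C}\setminus\mathcal{F}(^*\mathbb{C})=\mathcal{L}(^*\mathbb{C})$, which is a welcome clarification but not a different argument.
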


\begin{proof} The result follows directly from the previous theorem in the particular case of 
$\mathcal{M}=\mathcal{F}(^*\mathbb{C})$ taking into account that in this case
$\mathcal{M}_0=\mathcal{I}(^*\mathbb{C})$ (Example~\ref{Ex:
Finite Numbers}).      
\end{proof}

\begin{corollary}[Generating Sequences]\label{C: Generating Sequences}
\begin{description}
\item{\bf (i)}  Let $(\delta_n)$ be a decreasing
generating  sequence in
$^*\mathbb{R}$ and $\mathcal{M}$ be the convex subring of $^*\mathbb{C}$ generated by $(\delta_n)$
(part~(i) of Lemma~\ref{L: Generated Rings}). Let  $(^*\delta_n)$ be the non-standard extension of 
$(\delta_n)$. Then 
\begin{description}
\item[(a)] $z\in\mathcal{M}$ \ifff $(\exists \nu\in{^*\mathbb{N}\setminus\mathbb{N}})(
|z|\leq{^*\delta_\nu})$.
\item[(b)] $z\in\mathcal{M}_0$ \ifff $(\forall \nu\in{^*\mathbb{N}\setminus\mathbb{N}})(
|z|<1/{^*\delta_\nu})$.
\end{description}
\item{\bf (ii)}  Let $(\lambda_n)$ be a increasing generating 
sequence in
$^*\mathbb{R}$ and $\mathcal{M}$ be the convex subring of $^*\mathbb{C}$ generated by $(\lambda_n)$
(part~(ii) of Lemma~\ref{L: Generated Rings}). Let  $(^*\lambda_n)$ be the non-standard extension of 
$(\lambda_n)$. Then
\begin{description}
\item[(a)] $z\in\mathcal{M}$ \ifff $(\forall \nu\in{^*\mathbb{N}\setminus\mathbb{N}})(
|z|<{^*\lambda_\nu})$.
\item[(b)] $z\in\mathcal{M}_0$ \ifff $(\exists \nu\in{^*\mathbb{N}\setminus\mathbb{N}})( |z|\leq
1/{^*\lambda_\nu})$.
\end{description}
\end{description}
\end{corollary}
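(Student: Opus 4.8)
The plan is to regard the generating sequence as an internal object, lift its defining properties by the Transfer Principle, and close the two non-trivial implications by an overspill argument on ${^*\mathbb{N}}$. First I would observe that a sequence $(\delta_n)_{n\in\mathbb{N}}$ with values in ${^*\mathbb{R}}$ is an internal function: writing $\delta_n=\bra\delta_{n,i}\ket$ and setting $\delta_{(i)}(n)=\delta_{n,i}$ gives $(\delta_n)=\bra\delta_{(i)}\ket$, so $(\delta_n)$ extends to an internal function $\nu\mapsto{^*\delta_\nu}$ on ${^*\mathbb{N}}$ agreeing with $(\delta_n)$ on $\mathbb{N}$. Applying the Transfer Principle (Theorem~\ref{T: Transfer Principle}) to the first-order statements expressing that $(\delta_n)$ is a decreasing generating sequence (Definition~\ref{D: Generating Sequences}), this extension is positive, ${^*}$-decreasing (so $\mu\ge\nu$ in ${^*\mathbb{N}}$ forces ${^*\delta_\mu}\le{^*\delta_\nu}$), and for every standard $n$ there is a standard $m$ with $2\,\delta_m\le\delta_n$; the same applies to an increasing generating sequence $(\lambda_n)$.

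For part (i)(a), the direction $(\Leftarrow)$ is elementary: if $|z|\le{^*\delta_\nu}$ for some $\nu\in{^*\mathbb{N}}\setminus\mathbb{N}$, then given a standard $n$ choose a standard $m$ with $2\delta_m\le\delta_n$; since $\nu$ is infinite, $\nu\ge m$, hence $|z|\le{^*\delta_\nu}\le{^*\delta_m}=\delta_m<\delta_n$, so $z\in\mathcal{M}$. For $(\Rightarrow)$, assume $z\in\mathcal{M}$; the set $\mathcal{A}=\{n\in{^*\mathbb{N}}\mid |z|\le{^*\delta_n}\}$ is internal and contains all of $\mathbb{N}$ (because $|z|<\delta_n$ for every standard $n$), so by the overflow of $\mathcal{F}({^*\mathbb{C}})$ (Corollary~\ref{C: The Usual Spilling Principles}(i)) --- equivalently, because $\mathbb{N}$ is not internal in ${^*\mathbb{N}}$ --- the set $\mathcal{A}$ contains an infinitely large natural $\nu$, which is the desired witness. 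Part (i)(b) then follows without a further appeal to overspill: $(\Rightarrow)$ is direct, since if $|z|\le 1/\delta_{n_0}$ then, choosing a standard $m$ with $2\delta_m\le\delta_{n_0}$, every infinite $\nu$ satisfies ${^*\delta_\nu}\le\delta_m\le\delta_{n_0}/2$ and hence $|z|\le 1/\delta_{n_0}<2/\delta_{n_0}\le 1/{^*\delta_\nu}$; and $(\Leftarrow)$ is the contrapositive via Lemma~\ref{L: Convex Ideals}(i): if $z\ne 0$ and $z\notin\mathcal{M}_0$ then $1/z\in\mathcal{M}$, so by (i)(a) there is an infinite $\nu$ with $|1/z|\le{^*\delta_\nu}$, i.e.\ $|z|\ge 1/{^*\delta_\nu}$, contradicting the hypothesis. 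Part (ii) is the mirror image: here the lone overspill step is used in (ii)(b)$(\Rightarrow)$ --- the internal set $\{n\in{^*\mathbb{N}}\mid |z|\le 1/{^*\lambda_n}\}$ contains $\mathbb{N}$, hence an infinite element --- while (ii)(a)$(\Rightarrow)$ and (ii)(b)$(\Leftarrow)$ are direct from the factor-$2$ growth in Definition~\ref{D: Generating Sequences}(ii), and (ii)(a)$(\Leftarrow)$ drops out of (ii)(b) together with Lemma~\ref{L: Convex Ideals}(i).

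Almost everything here is bookkeeping of strict versus non-strict inequalities, and it is precisely conditions (a) of Definition~\ref{D: Generating Sequences} --- the factor-$2$ gaps $2\delta_m\le\delta_n$ and $2\lambda_n\le\lambda_m$ --- that make this bookkeeping go through, by allowing one to upgrade ``$\le$'' to ``$<$'' wherever necessary. The only genuinely non-elementary ingredient, invoked exactly twice, is the overspill principle: an internal subset of ${^*\mathbb{N}}$ containing every standard natural number must contain an infinitely large one, which I would cite from Corollary~\ref{C: The Usual Spilling Principles} (or directly from Theorem~\ref{T: Spilling Principles}) rather than reprove. The one point to state with a little care is that $\nu\mapsto{^*\delta_\nu}$ is a well-defined internal function, independent of the chosen representatives $(\delta_{n,i})$ of the $\delta_n$; once that is in place, the Transfer Principle supplies all the remaining internal properties of the extended sequence for free.
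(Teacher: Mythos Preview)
Your proposal is correct and follows essentially the same route as the paper: the forward direction of (i)(a) (and, dually, of (ii)(b)) is handled by overspill on the internal set $\{n\in{^*\mathbb{N}}:|z|\le{^*\delta_n}\}$, the reverse directions come from monotonicity of the extended sequence, and (i)(b) (respectively (ii)(a)) is reduced to the companion statement via the duality $z\in\mathcal{M}_0\Leftrightarrow 1/z\notin\mathcal{M}$ from Lemma~\ref{L: Convex Ideals}(i). The paper's proof is terser --- it simply asserts ${^*\delta_\nu}<\delta_n$ for infinite $\nu$ and standard $n$ without isolating the role of the factor-$2$ condition, and dispatches (i)(b) in a single line --- whereas you make the strict/non-strict bookkeeping explicit and spell out why the sequence extends to an internal function on ${^*\mathbb{N}}$; these are welcome clarifications rather than a different method.
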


\begin{proof} (i)-(a): Suppose $z\in\mathcal{M}$. The internal set $\mathcal{A}=\{n\in{^*\mathbb{N}} :
|z|<{^*\delta_n}\}$ contains $\mathbb{N}$ by assumption hence,
there exists $\nu\in({^*\mathbb{N}\setminus\mathbb{N}})\cap\mathcal{A}$ (as required) by the overflow of
$\mathcal{F}(^*\mathbb{C})$ (Corollary~\ref{C: The Usual Spilling Principles}) since
${^*\mathbb{N}\setminus\mathbb{N}}\subset\mathcal{F}(^*\mathbb{C}))$. Conversely, $|z|<{^*\delta_\nu}$
for some $\nu\in{^*\mathbb{N}}\setminus\mathbb{N}$ implies $z\in\mathcal{M}$ by the convexity of
$\mathcal{M}$, since $^*\delta_\nu<\delta_n$ for all $n\in\mathbb{N}$.  

	(i)-(b) follows immediately from (i)-(a) and part~(i) of Lemma~\ref{L: Convex Ideals}.

	The proof of (ii) is similar to the proof of (i) and leave it to the reader. 
\end{proof}

		In the next corollary we derive the Robinson's
Principles of Permanence  as a particular case of our more general Spilling Principles
for $\mathcal{M}=\mathcal{M}_\rho$   (Example~\ref{Ex: Robinson
Rings}). We should note that in  Lightstone\&Robinson~(\cite{LiRob},
p. 97-99) the numbers in
$\mathcal{N}_\rho(^*\mathbb{C})$ are called \textbf{iota numbers} and the numbers in
$^*\mathbb{C}\setminus\mathcal{M}_\rho$ are called \textbf{mega
numbers}.

\begin{corollary}[Robinson's Principles of Permanence]\label{C: Robinson's Principles of Permanence} Let
$\mathcal{A}\subseteq{^*\mathbb{C}}$ be an internal set.  Then:

\begin{description}
\item{\bf (i) Overflow of $\mathcal{M}_\rho(^*\mathbb{C})$:} If $\mathcal{A}$ contains
arbitrarily large numbers in
$\mathcal{M}_\rho(^*\mathbb{C})$, then $\mathcal{A}$ contains arbitrarily small numbers in
$^*\mathbb{C}\setminus\mathcal{M}_\rho(^*\mathbb{C})$. In particular, 
\[
\mathcal{M}_\rho(^*\mathbb{C})\setminus\mathcal{N}_\rho(^*\mathbb{C})\subset
\mathcal{A}\Rightarrow
\mathcal{A}\cap(^*\mathbb{C}\setminus\mathcal{M}_\rho(^*\mathbb{C}))\not=\varnothing.
\]

\item{\bf (ii) Underflow of
$\mathcal{M}_\rho(^*\mathbb{C})\setminus\mathcal{N}_\rho(^*\mathbb{C})$:} If
$\mathcal{A}$ contains arbitrarily small numbers in
$\mathcal{M}_\rho(^*\mathbb{C})\setminus\mathcal{N}_\rho(^*\mathbb{C})$, then
$\mathcal{A}$ contains arbitrarily large numbers in
$\mathcal{N}_\rho(^*\mathbb{C})$. In particular, 
\[
\mathcal{M}_\rho(^*\mathbb{C})\setminus\mathcal{N}_\rho(^*\mathbb{C})\subset
\mathcal{A}\Rightarrow\mathcal{A}\cap\mathcal{N}_\rho(^*\mathbb{C})\not=\varnothing.
\]
\item{\bf (iii) Overflow of $\mathcal{N}_\rho(^*\mathbb{C})$:} If $\mathcal{A}$ contains
arbitrarily large numbers in\newline $\mathcal{N}_\rho(^*\mathbb{C})$, then $\mathcal{A}$
contains arbitrarily small numbers in
$\mathcal{M}_\rho(^*\mathbb{C})\setminus\mathcal{N}_\rho(^*\mathbb{C})$. In particular, 
\[
\mathcal{N}_\rho(^*\mathbb{C})\subset\mathcal{A}
\Rightarrow\mathcal{A}\cap(\mathcal{M}_\rho(^*\mathbb{C})\setminus
\mathcal{N}_\rho(^*\mathbb{C}))\not=\varnothing.
\]
\item{\bf (iv) Underflow of $^*\mathbb{C}\setminus\mathcal{M}_\rho(^*\mathbb{C})$:} If
$\mathcal{A}$ contains arbitrarily small numbers in
$^*\mathbb{C}\setminus\mathcal{M}_\rho(^*\mathbb{C})$, then
$\mathcal{A}$ contains arbitrarily large numbers in $\mathcal{M}_\rho(^*\mathbb{C})$.
In particular, 
\[
^*\mathbb{C}\setminus\mathcal{M}_\rho(^*\mathbb{C})\subset\mathcal{A}
\Rightarrow\mathcal{A}\cap(\mathcal{M}_\rho(^*\mathbb{C})\setminus
\mathcal{N}_\rho(^*\mathbb{C}))\not=\varnothing.
\]
\end{description}
\end{corollary}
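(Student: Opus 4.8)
The plan is to obtain this corollary as nothing more than the specialization of Theorem~\ref{T: Spilling Principles} to the particular convex subring $\mathcal{M}=\mathcal{M}_\rho$ of $^*\mathbb{C}$. The only preparatory fact I need is that $\mathcal{M}_\rho$ is a convex subring of $^*\mathbb{C}$ whose unique maximal ideal is $\mathcal{M}_{\rho,0}=\mathcal{N}_\rho$; both statements are already recorded in Example~\ref{Ex: Robinson Rings}, where they follow from Lemma~\ref{L: Generated Rings} applied to the increasing generating sequence $(\rho^{-n})$. With that identification in place, $\mathcal{M}\setminus\mathcal{M}_0$ becomes $\mathcal{M}_\rho\setminus\mathcal{N}_\rho$ and $^*\mathbb{C}\setminus\mathcal{M}$ becomes $^*\mathbb{C}\setminus\mathcal{M}_\rho$, so each of the four clauses of the corollary is literally the corresponding clause of Theorem~\ref{T: Spilling Principles}.

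Concretely, I would argue item by item. For (i) I would invoke part~(i) of Theorem~\ref{T: Spilling Principles}: since $\mathcal{A}$ is internal and contains arbitrarily large numbers in $\mathcal{M}_\rho$, it contains arbitrarily small numbers in $^*\mathbb{C}\setminus\mathcal{M}_\rho$, and the ``in particular'' clause is the special case $\mathcal{M}_\rho\setminus\mathcal{N}_\rho\subset\mathcal{A}$. For (ii), (iii) and (iv) I would repeat this verbatim with parts (ii), (iii), (iv) of the theorem, always substituting $\mathcal{N}_\rho$ for $\mathcal{M}_0$. No additional argument is needed beyond these four substitutions.

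Since everything reduces to a substitution, there is essentially no obstacle: the real work was done in Theorem~\ref{T: Spilling Principles} and in checking the hypotheses of Lemma~\ref{L: Generated Rings} for $(\rho^{-n})$. The only point worth a single line of care is confirming that $(\rho^{-n})$ meets the two clauses of Definition~\ref{D: Generating Sequences}(ii) — take $m=n+1$ for clause (a), using $\rho^{-1}>2$ because $\rho$ is a positive infinitesimal, and $m=2n$ for clause (b), since $\rho^{-2n}=(\rho^{-n})^2$ — but this is already asserted in Example~\ref{Ex: Robinson Rings}. Accordingly I would write the proof in one sentence: the result follows directly from Theorem~\ref{T: Spilling Principles} applied with $\mathcal{M}=\mathcal{M}_\rho$, noting that $\mathcal{M}_{\rho,0}=\mathcal{N}_\rho$ by Example~\ref{Ex: Robinson Rings}.
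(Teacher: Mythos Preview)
Your proposal is correct and matches the paper's own proof exactly: the paper simply states that the result follows immediately from Theorem~\ref{T: Spilling Principles} applied with $\mathcal{M}=\mathcal{M}_\rho$, citing Example~\ref{Ex: Robinson Rings} for the identification $\mathcal{M}_{\rho,0}=\mathcal{N}_\rho$. Your additional remarks verifying the generating-sequence conditions for $(\rho^{-n})$ are already contained in Example~\ref{Ex: Robinson Rings} and need not be repeated.
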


\begin{proof} These results follow immediately from
our general Spilling Principles (Theorem~\ref{T: Spilling Principles}) for
$\mathcal{M}=\mathcal{M}_\rho$   (Example~\ref{Ex: Robinson Rings}). 
\end{proof}
\newpage

\section{Asymptotic Fields}\label{S: Asymptotic Fields}
In this section we study particular type of algebraically closed subfields of $^*\mathbb{C}$ called {\em
asymptotic fields}.
\begin{definition}[Asymptotic Fields]\label{D: Asymptotic Fields} \begin{enumerate} \item An
\textbf{asymptotic field} is a field of the form $\mathcal{M}/\mathcal{M}_0$, where  $\mathcal{M}$ is a
convex subring of $^*\mathbb{C}$ and $\mathcal{M}_0$ is its maximal ideal (Definition~\ref{D: Convex
Rings}). A field which is isomorphic to a field of the form $\mathcal{M}/\mathcal{M}_0$ will be also
called {\em asymptotic}. We shall call the elements of
$\mathcal{M}/\mathcal{M}_0$ {\bf complex $\mathcal{M}$-asymptotic numbers} (or simply {\em
asymptotic numbers} if no confusion could arise). We denote by
$q_\mathcal{M}: \mathcal{M}\to\mathcal{M}/\mathcal{M}_0$ the corresponding {\bf quotient mapping}.
\item The elements of $q_\mathcal{M}[{^*\mathbb{R}}\cap\mathcal{M}]$ are called {\bf real
$\mathcal{M}$-asymptotic numbers} (or simply {\em real asymptotic numbers} if no confusion could arise).
We shall sometime refer to $q_\mathcal{M}[{^*\mathbb{R}}\cap\mathcal{M}]$ as the \textbf{real part} of 
$\mathcal{M}/\mathcal{M}_0$ and denote it by $\Re
e(\mathcal{M}/\mathcal{M}_0)$.

\item We define the \textbf{order relation} in $q_\mathcal{M}[{^*\mathbb{R}}\cap\mathcal{M}]$ by:
$q_\mathcal{M}(x)>0$ if  $q_\mathcal{M}(x)\not=0$  and $x>0$ in $^*\mathbb{R}$.

\item We define the \textbf{absolute value} $|\cdot | :
\mathcal{M}/\mathcal{M}_0\to q_\mathcal{M}[{^*\mathbb{R}}\cap\mathcal{M}]$ by  the formula
$|q_\mathcal{M}(z)|=q_\mathcal{M}(|z|)$.
\end{enumerate}
\end{definition}
\begin{notation}[Suppressing $\mathcal{M}$]\label{N: Suppressing M} Let $\mathcal{M}$ be a convex subring
of $^*\mathbb{C}$ (Definition~\ref{D: Asymptotic Fields}). We shall often \textbf{suppress the
dependence on} $\mathcal{M}$ and use the following simplified notation:
\begin{description}
\item[(i)] If $z\in\mathcal{M}$, we shall often write $\widehat{z}$ instead of the more precise
$q_\mathcal{M}(z)$. Also, we shall write $z\to\widehat{z}$ for the quotient mapping $q_\mathcal{M}$.

\item[(ii)]  If $S\subseteq{^*\mathbb{C}}$, we denote
$\widehat{S}=q_\mathcal{M}[S\cap\mathcal{M}]$. Observe that we have 
$\widehat{^*\mathbb{C}}=\widehat{\mathcal{M}}=q_\mathcal{M}[\mathcal{M}]$. Also, we have
$\widehat{^*\mathbb{R}}=q_\mathcal{M}[{^*\mathbb{R}}\cap\mathcal{M}]$.   We shall often prefer the
simpler notation $\widehat{^*\mathbb{C}}$ instead of the more precise $\mathcal{M}/\mathcal{M}_0$ or
$\widehat{\mathcal{M}}$, when no confusion could arise. Also, we shall often write 
$\widehat{^*\mathbb{R}}$ instead of
$q_\mathcal{M}[{^*\mathbb{R}}\cap\mathcal{M}]$.  Summarizing, we have
\begin{equation}\label{E: Hat Notation}
\widehat{^*\mathbb{C}}=\widehat{\mathcal{M}}=q_\mathcal{M}[\mathcal{M}] \text{\; and\; } 
\widehat{^*\mathbb{R}}=\Re e(\widehat{^*\mathbb{C}})=\Re
e(\widehat{\mathcal{M}})=q_\mathcal{M}[{^*\mathbb{R}}\cap\mathcal{M}].
\end{equation}

\item[(iii)]  In this notation the
\textbf{order relation} in $\widehat{^*\mathbb{R}}$ (defined above) is phrase as follows: 
$\widehat{x}> 0$  in $\widehat{^*\mathbb{R}}$ if $\widehat{x}\not= 0$ and $x> 0$ in  $^*\mathbb{R}$. 

\item[(iv)]  In this notation the \textbf{absolute value} $|\cdot | :
{\widehat{^*\mathbb{C}}}\to \widehat{^*\mathbb{R}}$ (defined above) is given  by  the
formula $|\widehat{z}|=\widehat{|z|}$.

\item[(v)]  If $S\subseteq{\mathbb{C}}$, we shall often write simply $S$ instead of the more precise
$q_\mathcal{M}[S\cap\mathcal{M}]$ or
$\widehat{S}$. In particular, we shall often write simply $\mathbb{C}$ instead of 
$\widehat{\mathbb{C}}$ or $q_\mathcal{M}[\mathbb{C}\cap\mathcal{M}]$. Similarly, we shall often write
simply $\mathbb{R}$ instead of  $\widehat{\mathbb{R}}$ or 
$q_\mathcal{M}[\mathbb{R}\cap\mathcal{M}]$.
\item[(vi)] When we dealing not with one (fixed) but rather with two (or more than two) convex subrings, say
$\mathcal{M}_1$ and $\mathcal{M}_2$, we shall prefer the original notation introduced in Definition~\ref{D:
Asymptotic Fields} or the ``hat'' notation $\widehat{\mathcal{M}_1}$
and $\widehat{\mathcal{M}_2}$ instead of $\widehat{^*\mathbb{C}}$.
\end{description}
\end{notation}


\begin{theorem}[Asymptotic Fields]\label{T: Asymptotic Fields} Let $\mathcal{M}$ be a convex subring in
${^*\mathbb{C}}$ and  $\widehat{^*\mathbb{C}}=\mathcal{M}/\mathcal{M}_0$ be the corresponding
asymptotic field and let $\widehat{^*\mathbb{R}}=q_\mathcal{M}[^*\mathbb{R}\cap\mathcal{M}]$ be its
real part. Then:
\begin{description}
\item[(i)]  $\widehat{^*\mathbb{C}}$ is a field. Also, $\widehat{^*\mathbb{R}}$ is a totally ordered field
and we have $\widehat{^*\mathbb{C}}=\widehat{^*\mathbb{R}}(i)$.  

\item[(iv)] Either of $\widehat{^*\mathbb{C}}$ or $\widehat{^*\mathbb{R}}$  is an archimedean field \ifff
$\mathcal{M}=\mathcal{F}(^*\mathbb{C})$. 
\end{description}
\end{theorem}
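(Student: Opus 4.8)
The plan is to reduce both parts to the ring-theoretic structure already recorded in Lemma~\ref{L: Convex Rings} and Lemma~\ref{L: Convex Ideals}, together with Corollary~\ref{C: An Isomorphism}. For part~(i), the fact that $\widehat{^*\mathbb{C}}=\mathcal{M}/\mathcal{M}_0$ is a field is immediate from Lemma~\ref{L: Convex Ideals}(iii), since $\mathcal{M}_0$ is a maximal ideal of the commutative ring $\mathcal{M}$. For the real part, I would note that the restriction of $q_\mathcal{M}$ to ${^*\mathbb{R}}\cap\mathcal{M}$ has image $\widehat{^*\mathbb{R}}$ and kernel $({^*\mathbb{R}}\cap\mathcal{M})\cap\mathcal{M}_0={^*\mathbb{R}}\cap\mathcal{M}_0$, so the first isomorphism theorem gives $\widehat{^*\mathbb{R}}\cong({^*\mathbb{R}}\cap\mathcal{M})/({^*\mathbb{R}}\cap\mathcal{M}_0)$; since ${^*\mathbb{R}}\cap\mathcal{M}_0$ is a maximal ideal of ${^*\mathbb{R}}\cap\mathcal{M}$ by Lemma~\ref{L: Convex Ideals}(iv), this quotient is a field.

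Next I would verify that the order relation defined in Definition~\ref{D: Asymptotic Fields} is well defined and makes $\widehat{^*\mathbb{R}}$ totally ordered. The one step that genuinely needs care — and the step I would write out in full — is well-definedness: if $x,y\in{^*\mathbb{R}}\cap\mathcal{M}$ with $\widehat{x}=\widehat{y}\neq 0$ and $x>0$, then $y\leq 0$ would give $0<x\leq x-y=|x-y|$ with $x-y\in\mathcal{M}_0$, so the convexity of $\mathcal{M}_0$ (Lemma~\ref{L: Convex Ideals}(iii)) would force $x\in\mathcal{M}_0$, i.e.\ $\widehat{x}=0$, a contradiction; hence $y>0$. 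Trichotomy in $^*\mathbb{R}$ then transfers to $\widehat{^*\mathbb{R}}$, and compatibility of the order with $+$ and $\cdot$ is inherited from $^*\mathbb{R}$ once one uses that $\mathcal{M}_0$ is prime, so that a sum or product of two representatives lying in $\mathcal{M}\setminus\mathcal{M}_0$ and positive in $^*\mathbb{R}$ again lies outside $\mathcal{M}_0$. Finally, for $\widehat{^*\mathbb{C}}=\widehat{^*\mathbb{R}}(i)$ I would apply $q_\mathcal{M}$ to the identity $({^*\mathbb{R}}\cap\mathcal{M})(i)=\mathcal{M}$ of Lemma~\ref{L: Convex Rings}(iii): every element of $\widehat{^*\mathbb{C}}$ then has the form $\widehat{x}+\widehat{y}\,\widehat{i}$ with $\widehat{x},\widehat{y}\in\widehat{^*\mathbb{R}}$, and since $\widehat{i}^{\,2}=-1$ while the totally ordered field $\widehat{^*\mathbb{R}}$ contains no square root of $-1$, the polynomial $X^2+1$ is irreducible over $\widehat{^*\mathbb{R}}$, so $\widehat{^*\mathbb{C}}$ is the quadratic extension $\widehat{^*\mathbb{R}}(\widehat{i})=\widehat{^*\mathbb{R}}(i)$.

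For part~(iv) I would prove the two equivalences simultaneously. If $\mathcal{M}=\mathcal{F}(^*\mathbb{C})$, then $\mathcal{M}_0=\mathcal{I}(^*\mathbb{C})$ by Example~\ref{Ex: Finite Numbers}, so Corollary~\ref{C: An Isomorphism} gives $\widehat{^*\mathbb{C}}\cong\mathbb{C}$ and $\widehat{^*\mathbb{R}}\cong\mathbb{R}$ as (ordered) fields, both of which are archimedean. Conversely, suppose $\mathcal{M}\neq\mathcal{F}(^*\mathbb{C})$; then $\mathcal{M}$ is non-archimedean by Lemma~\ref{L: Convex Rings}(iv), so there is $\lambda\in\mathcal{M}$ with $|\lambda|>n$ for all $n\in\mathbb{N}$, and $\mu=|\lambda|$ lies in ${^*\mathbb{R}}\cap\mathcal{M}$ by convexity of $\mathcal{M}$. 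For every $n$ we have $\mu-n>1$, hence $\mu-n\notin\mathcal{I}(^*\mathbb{C})\supseteq\mathcal{M}_0$, so $\widehat{\mu}-n=\widehat{\mu-n}>0$ in $\widehat{^*\mathbb{R}}$; thus $\widehat{\mu}$ is infinitely large in $\widehat{^*\mathbb{R}}$ and $|\widehat{\mu}|=\widehat{\mu}$ is infinitely large in $\widehat{^*\mathbb{C}}$, so neither field is archimedean.

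Overall I do not anticipate a real obstacle: the argument is essentially bookkeeping on top of the two convexity lemmas and Corollary~\ref{C: An Isomorphism}. The only subtle point — and the one deserving a carefully written argument — is the well-definedness of the order on $\widehat{^*\mathbb{R}}$, which is exactly where the convexity of the ideal $\mathcal{M}_0$ (rather than merely the inclusion $\mathcal{M}_0\subseteq\mathcal{I}(^*\mathbb{C})$) is used.
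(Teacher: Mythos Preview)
Your proposal is correct and follows essentially the same route as the paper: both reduce (i) to Lemma~\ref{L: Convex Ideals}(iii),(iv) and Lemma~\ref{L: Convex Rings}(iii), and reduce (iv) to Lemma~\ref{L: Convex Rings}(iv) together with $\mathcal{M}_0\subseteq\mathcal{I}(^*\mathbb{C})$. You supply considerably more detail than the paper does---in particular the explicit check that the order on $\widehat{^*\mathbb{R}}$ is well defined via the convexity of $\mathcal{M}_0$, and the explicit exhibition of an infinitely large $\widehat{\mu}$ when $\mathcal{M}\neq\mathcal{F}(^*\mathbb{C})$---whereas the paper simply asserts ``$\widehat{^*\mathbb{R}}$ is a real field because $^*\mathbb{R}\cap\mathcal{M}_0$ is a convex maximal ideal'' and ``archimedean \ifff $\mathcal{M}$ is an archimedean ring \ifff $\mathcal{M}=\mathcal{F}(^*\mathbb{C})$'' without unpacking. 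One small slip: for the closure of the positive cone under \emph{addition} you should invoke the convexity of $\mathcal{M}_0$ (from $0<x\le x+y$), not its primeness; primeness is what you need for the product.
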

\begin{proof}  (i) $\widehat{^*\mathbb{C}}$ is a field because $\mathcal{M}_0$ is a maximal ideal in
$\mathcal{M}$ by Lemma~\ref{L: Convex Ideals}. $\widehat{^*\mathbb{R}}$ is a real field because $^*\mathbb{R}\cap\mathcal{M}$ is a
real ring and $^*\mathbb{R}\cap\mathcal{M}_0$ is a convex maximal ideal in
$^*\mathbb{R}\cap\mathcal{M}$ by Lemma~\ref{L: Convex Ideals}. The connection
$\widehat{^*\mathbb{C}}=\widehat{^*\mathbb{R}}(i)$ follows from
$\mathcal{M}=(^*\mathbb{R}\cap\mathcal{M})(i)$ (Lemma~\ref{L: Convex Rings}). 

	(ii) Either $\widehat{^*\mathbb{C}}$ or $\widehat{^*\mathbb{C}}$ is an archimedean field
\ifff  $\mathcal{M}$ is an archimedean ring \ifff 
$\mathcal{M}=\mathcal{F}(^*\mathbb{C})$ by Lemma~\ref{L: Convex Rings}.
\end{proof}

	Our next goal is to show that every asymptotic field $\widehat{^*\mathbb{C}}$ is algebraically closed
field and its real part $\widehat{^*\mathbb{R}}$ is a real closed field. We start
with the following observation.

\begin{lemma}[Isomorphic Fields]\label{L: Isomorphic Fields}  Let
 $\mathcal{M}$ be (as before) a convex subring in ${^*\mathbb{C}}$. Let $\mathbb{K}$ be a field
which is a subring of $\mathcal{M}$ and let $\widehat{\mathbb{K}}=q_\mathcal{M}[\mathbb{K}]$ (\# 1 of
Definition~\ref{D: Asymptotic Fields}). Then 
\begin{description}
\item[(i)] The fields $\mathbb{K}$ and $\widehat{\mathbb{K}}$ are isomorphic under the mapping
$z\to\widehat{z}$ (or, equivalently, under the quotient mapping
$q_\mathcal{M}\, |\, \mathbb{K}$). In particular,
$\mathbb{C}$ and $\widehat{\mathbb{C}}$ are isomorphic and $\mathbb{R}$ and
$\widehat{\mathbb{R}}$ are isomorphic. 
\item[(ii)] The following are equivalent: 
$\mathcal{F}(^*\mathbb{C})\subseteq\mathbb{K}$ \ifff 
$\mathcal{I}(^*\mathbb{C})\subseteq\mathbb{K}$ \ifff
$\mathcal{L}(^*\mathbb{C})\subseteq\mathbb{K}$ \ifff $\mathbb{K}={^*\mathbb{C}}$. 

\item[(iii)] Similarly, the following are equivalent:  $\mathcal{F}(^*\mathbb{R})\subseteq\mathbb{K}$ \ifff 
$\mathcal{I}(^*\mathbb{R})\subseteq\mathbb{K}$ \ifff
$\mathcal{L}(^*\mathbb{R})\subseteq\mathbb{K}$ \ifff $\mathbb{K}={^*\mathbb{R}}$. 
\end{description}
\end{lemma}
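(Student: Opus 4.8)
The plan is to prove part~(i) by a kernel computation, to prove part~(ii) as a short cycle of implications, and to obtain part~(iii) by repeating the argument of part~(ii) with $^*\mathbb{R}$ in place of $^*\mathbb{C}$. For part~(i): the restriction $q_\mathcal{M}\rest\mathbb{K}\colon\mathbb{K}\to\widehat{\mathbb{K}}$ is by definition a surjective ring homomorphism, so it suffices to prove injectivity. Its kernel is $\mathbb{K}\cap\mathcal{M}_0$, which is an ideal of $\mathbb{K}$ (indeed $\mathcal{M}_0$ is an ideal of $\mathcal{M}$ and $\mathbb{K}\subseteq\mathcal{M}$); since $\mathbb{K}$ is a field this ideal equals $\{0\}$ or $\mathbb{K}$, and the latter is impossible because it would force $1\in\mathcal{M}_0\subseteq\mathcal{I}(^*\mathbb{C})$, contradicting part~(ii) of Lemma~\ref{L: Convex Ideals}. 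Hence $q_\mathcal{M}\rest\mathbb{K}$ is a field isomorphism onto $\widehat{\mathbb{K}}$. The two ``in particular'' statements then follow because $\mathbb{C}\subseteq\mathcal{F}(^*\mathbb{C})\subseteq\mathcal{M}$ and $\mathbb{R}\subseteq\mathcal{F}(^*\mathbb{R})\subseteq{^*\mathbb{R}}\cap\mathcal{M}$ by Lemma~\ref{L: Convex Rings}, so $\mathbb{C}$ and $\mathbb{R}$ are subfields of $\mathcal{M}$ to which the general statement applies.

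For part~(ii), writing $\mathcal{F},\mathcal{I},\mathcal{L}$ for $\mathcal{F}(^*\mathbb{C}),\mathcal{I}(^*\mathbb{C}),\mathcal{L}(^*\mathbb{C})$ as in Example~\ref{Ex: Finite Numbers}, I would establish the cycle
\[
\mathcal{L}\subseteq\mathbb{K}\;\Rightarrow\;\mathcal{I}\subseteq\mathbb{K}\;\Rightarrow\;\mathcal{F}\subseteq\mathbb{K}\;\Rightarrow\;\mathbb{K}={^*\mathbb{C}}\;\Rightarrow\;\mathcal{L}\subseteq\mathbb{K},
\]
the last link being trivial since $\mathcal{L}\subseteq{^*\mathbb{C}}$. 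Each of the first three implications uses only that $\mathbb{K}$ is a subfield of $^*\mathbb{C}$ closed under inversion, together with the elementary facts that $z\in{^*\mathbb{C}}$ is infinitely large iff $1/z$ is a nonzero infinitesimal, that a finite number times an infinitesimal is an infinitesimal, and that $^*\mathbb{C}=\mathcal{F}\cup\mathcal{L}$ (Proposition~\ref{P: Basic Properties}). Concretely: if $\mathcal{L}\subseteq\mathbb{K}$, then every nonzero infinitesimal $\varepsilon$ equals $1/(1/\varepsilon)$ with $1/\varepsilon\in\mathcal{L}\subseteq\mathbb{K}$, so (with $0\in\mathbb{K}$ trivially) $\mathcal{I}\subseteq\mathbb{K}$; if $\mathcal{I}\subseteq\mathbb{K}$, fix a nonzero infinitesimal $\varepsilon$ (which exists by Example~\ref{Ex: Infinitesimals}) and note that for any finite $z\neq 0$ we have $z\varepsilon\in\mathcal{I}\subseteq\mathbb{K}$ and $1/\varepsilon\in\mathbb{K}$, hence $z=(z\varepsilon)(1/\varepsilon)\in\mathbb{K}$, giving $\mathcal{F}\subseteq\mathbb{K}$; and if $\mathcal{F}\subseteq\mathbb{K}$, then any infinitely large $z$ has $1/z\in\mathcal{I}\subseteq\mathcal{F}\subseteq\mathbb{K}$, so $z\in\mathbb{K}$, and therefore $\mathbb{K}={^*\mathbb{C}}$.

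Part~(iii) is the same argument verbatim with $^*\mathbb{R}$ in place of $^*\mathbb{C}$, using the ``and similarly for $^*\mathbb{R}$'' clause of Proposition~\ref{P: Basic Properties} (in particular $^*\mathbb{R}=\mathcal{F}(^*\mathbb{R})\cup\mathcal{L}(^*\mathbb{R})$). I do not expect a genuine obstacle here; the whole argument is routine. The only points that deserve a moment's care are that $\mathbb{K}\cap\mathcal{M}_0$ really is an ideal of $\mathbb{K}$ in part~(i), and that in part~(ii) the auxiliary infinitesimal $\varepsilon$ may be taken nonzero --- which rests on $^*\mathbb{C}$ being a proper extension of $\mathbb{C}$.
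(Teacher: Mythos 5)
Your proof is correct, and for part~(i) it is essentially the paper's own argument: the paper also reduces everything to showing $\mathbb{K}\cap\mathcal{M}_0=\{0\}$, only it does so by the direct element-wise contradiction (a nonzero $z\in\mathbb{K}\cap\mathcal{M}_0$ would have $1/z\in\mathbb{K}\subseteq\mathcal{M}$ yet $1/z\notin\mathcal{M}$ by the definition of $\mathcal{M}_0$), whereas you invoke the ``an ideal of a field is trivial or everything'' dichotomy and rule out the second horn via $1\notin\mathcal{I}(^*\mathbb{C})$ --- the same kernel computation in two equivalent phrasings. The paper leaves (ii) and (iii) to the reader, and your cycle of implications $\mathcal{L}\subseteq\mathbb{K}\Rightarrow\mathcal{I}\subseteq\mathbb{K}\Rightarrow\mathcal{F}\subseteq\mathbb{K}\Rightarrow\mathbb{K}={^*\mathbb{C}}\Rightarrow\mathcal{L}\subseteq\mathbb{K}$ is a clean and correct way to supply them. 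One small caveat on (iii): your argument, transferred verbatim, yields ${^*\mathbb{R}}\subseteq\mathbb{K}$, and the stated conclusion $\mathbb{K}={^*\mathbb{R}}$ additionally requires $\mathbb{K}\subseteq{^*\mathbb{R}}$, which is not among the lemma's hypotheses (only $\mathbb{K}\subseteq\mathcal{M}\subseteq{^*\mathbb{C}}$ is assumed; e.g.\ $\mathbb{K}={^*\mathbb{C}}$ satisfies $\mathcal{F}(^*\mathbb{R})\subseteq\mathbb{K}$); this is an imprecision in the statement itself rather than in your proof, but it is worth flagging that (iii) should be read with $\mathbb{K}$ a subfield of $^*\mathbb{R}$.
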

\begin{proof}  (i) We observe that $\mathbb{K}\cap\mathcal{M}_0=\{0\}$. Indeed, suppose (on the
contrary) that
$z\in\mathbb{K}\cap\mathcal{M}_0$ for some
$z\not= 0$. It follows
$1/z\in\mathbb{K}$ (since $\mathbb{K}$ is a field) and also $1/z\notin\mathcal{M}$ (by the definition of
$\mathcal{M}_0$) which contradicts the assumption $\mathbb{K}\subseteq\mathcal{M}$. Consequently,
$\mathbb{K}$ and $\widehat{\mathbb{K}}$ are isomorphic. We leave the verification of (ii) and (iii) to the
reader.
\end{proof}

	The notation introduced in part (v) of Notation~\ref{N: Suppressing M} is justified by the following result.

\begin{corollary}[Embedding of Complex Numbers]\label{C: Embedding of Complex Numbers} The mapping
$\sigma: {\mathbb{C}}\to \widehat{^*\mathbb{C}}$, defined by
$\sigma(z)=\widehat{z}$, is a \textbf{field embedding} of
$\mathbb{C}$ into $\widehat{^*\mathbb{C}}$.
\end{corollary}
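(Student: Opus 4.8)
The plan is to deduce the statement immediately from Lemma~\ref{L: Isomorphic Fields}(i) applied with $\mathbb{K}=\mathbb{C}$. First I would check that the hypotheses of that lemma are met: $\mathbb{C}$ is a field, and $\mathbb{C}$ is a subring of $\mathcal{M}$. The second point is exactly the content of part~(i) of Lemma~\ref{L: Convex Rings}, which supplies the chain $\mathbb{C}\subset\mathcal{F}(^*\mathbb{C})\subseteq\mathcal{M}$; in particular $\mathbb{C}\subseteq\mathcal{M}$, so the quotient map $q_\mathcal{M}$ may be restricted to $\mathbb{C}$, and $\sigma=q_\mathcal{M}\,|\,\mathbb{C}$ is then a well-defined map with values in $q_\mathcal{M}[\mathcal{M}]=\widehat{\mathcal{M}}=\widehat{^*\mathbb{C}}$ by Notation~\ref{N: Suppressing M}.

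Next I would invoke Lemma~\ref{L: Isomorphic Fields}(i): since $\mathbb{C}$ is a subfield of $\mathcal{M}$, the restriction $q_\mathcal{M}\,|\,\mathbb{C}$ is a unital ring homomorphism whose kernel is $\mathbb{C}\cap\mathcal{M}_0$, and this intersection is $\{0\}$, because a nonzero $z\in\mathbb{C}$ has $1/z\in\mathbb{C}\subseteq\mathcal{M}$, which is incompatible with $z\in\mathcal{M}_0$ (by the definition~(\ref{E: Non-Invertible}) of $\mathcal{M}_0$). Hence $\sigma$ is injective. Being an injective unital ring homomorphism out of the field $\mathbb{C}$, it is a field embedding; its image is the subfield $\widehat{\mathbb{C}}=q_\mathcal{M}[\mathbb{C}]$ of $\widehat{^*\mathbb{C}}$, and $\sigma:\mathbb{C}\to\widehat{\mathbb{C}}$ is a field isomorphism. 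That is precisely the assertion that $\sigma$ embeds $\mathbb{C}$ into $\widehat{^*\mathbb{C}}$.

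There is essentially no obstacle here: the corollary is a routine specialization of Lemma~\ref{L: Isomorphic Fields}. The only point worth stating explicitly is that it is the convexity of $\mathcal{M}$ (together with the fact that $^*\mathbb{R}\cap\mathcal{M}$, being a subring of $^*\mathbb{R}$, contains $\mathbb{Z}$) that guarantees $\mathbb{C}\subseteq\mathcal{M}$ in the first place, so that $\sigma$ is defined at all; the homomorphism property and injectivity are then inherited verbatim from the quotient map and from the argument already given in the proof of Lemma~\ref{L: Isomorphic Fields}(i). If one wishes to avoid even the appeal to that lemma, one can simply repeat the two-line kernel computation above directly.
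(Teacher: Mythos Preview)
Your proposal is correct and follows exactly the paper's approach: apply Lemma~\ref{L: Isomorphic Fields}(i) with $\mathbb{K}=\mathbb{C}$, using Lemma~\ref{L: Convex Rings}(i) to ensure $\mathbb{C}\subseteq\mathcal{M}$. The additional detail you supply (the kernel computation) merely unpacks the proof of Lemma~\ref{L: Isomorphic Fields}(i), which the paper simply cites.
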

\begin{proof}  The result follows from the above lemma since $\mathbb{C}\subseteq\mathcal{M}$ by
Lemma~\ref{L: Convex Rings}. 
\end{proof}

\begin{definition}[Maximal Fields]\label{D: Maximal Fields}  Let $\mathcal{M}$ be a 
subring of $^*\mathbb{C}$ containing $\mathbb{C}$. A subfield
$\mathbb{M}$ of 
${^*\mathbb{C}}$ is called \textbf{maximal in $\mathcal{M}$} if:
(a) $\mathbb{M}$ is a subring of
$\mathcal{M}$ and $\mathbb{M}$ also contains a copy of $\mathbb{C}$, i.e.
$\mathbb{C}\subseteq\mathbb{M}\subseteq\mathcal{M}$; (b) There is no subfield
$\mathbb{K}$ of ${^*\mathbb{C}}$ which is a subring of $\mathcal{M}$ and which is a proper field
extension of $\mathbb{M}$. We denote by $\mathcal{M}ax(\mathcal{M})$ the {\bf set of all maximal fields} in
$\mathcal{M}$. 
\end{definition}

	For example, the field of the complex numbers
$\mathbb{C}$ is a maximal field in the ring of finite numbers $\mathcal{F}(^*\mathbb{C})$
(Example~\ref{Ex: Finite Numbers}).

\begin{lemma}[Existence of Maximal Fields]\label{L: Existence of Maximal Fields} Let 
$\mathbb{K}$ be subfield
of $^*\mathbb{C}$ such that $\mathbb{C}\subseteq\mathbb{K}\subseteq\mathcal{M}$. Then there exists a
maximal field
$\mathbb{M}\in\mathcal{M}ax(\mathcal{M})$ which is a field extension of $\mathbb{K}$. Consequently,
$\mathcal{M}ax(\mathcal{M})\not=\varnothing$. 
\end{lemma}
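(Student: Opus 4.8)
The plan is a routine Zorn's lemma argument. First I would introduce the partially ordered set
\[
\mathcal{P}=\{\mathbb{L}\mid \mathbb{L}\text{ is a subfield of }{^*\mathbb{C}}\text{ with }\mathbb{K}\subseteq\mathbb{L}\subseteq\mathcal{M}\},
\]
ordered by inclusion. This is a genuine (nonempty) set: it is a subset of $\mathcal{P}(^*\mathbb{C})$, and it contains $\mathbb{K}$ itself by hypothesis. Next I would verify the chain condition. If $\{\mathbb{L}_\alpha\}_{\alpha\in A}$ is a nonempty chain in $\mathcal{P}$, put $\mathbb{L}=\bigcup_{\alpha\in A}\mathbb{L}_\alpha$. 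The inclusions $\mathbb{K}\subseteq\mathbb{L}\subseteq\mathcal{M}$ are immediate from $\mathbb{K}\subseteq\mathbb{L}_\alpha\subseteq\mathcal{M}$ for each $\alpha$, so the only point needing a word is that $\mathbb{L}$ is a subfield of $^*\mathbb{C}$: this is the standard fact that a directed union of subfields is a subfield. Indeed $0,1\in\mathbb{L}$, and given $x,y\in\mathbb{L}$ there is a single index $\alpha$ with $x,y\in\mathbb{L}_\alpha$ (because the family is totally ordered by inclusion), whence $x\pm y,\ xy\in\mathbb{L}_\alpha\subseteq\mathbb{L}$ and, if $x\neq 0$, also $x^{-1}\in\mathbb{L}_\alpha\subseteq\mathbb{L}$. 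Thus $\mathbb{L}\in\mathcal{P}$ is an upper bound for the chain, and Zorn's lemma supplies a maximal element $\mathbb{M}$ of $\mathcal{P}$.

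Then I would check that this $\mathbb{M}$ is a maximal field in $\mathcal{M}$ in the sense of Definition~\ref{D: Maximal Fields}. Clause (a) holds because $\mathbb{C}\subseteq\mathbb{K}\subseteq\mathbb{M}\subseteq\mathcal{M}$ and $\mathbb{M}$ is a subfield of $^*\mathbb{C}$. For clause (b), suppose toward a contradiction that some subfield $\mathbb{K}'$ of $^*\mathbb{C}$ is a subring of $\mathcal{M}$ and a proper field extension of $\mathbb{M}$. Then $\mathbb{K}\subseteq\mathbb{M}\subsetneq\mathbb{K}'\subseteq\mathcal{M}$, so $\mathbb{K}'\in\mathcal{P}$ and $\mathbb{K}'\supsetneq\mathbb{M}$, contradicting the maximality of $\mathbb{M}$ in $\mathcal{P}$. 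Hence $\mathbb{M}\in\mathcal{M}ax(\mathcal{M})$, and since $\mathbb{K}\subseteq\mathbb{M}$ it is a field extension of $\mathbb{K}$. For the final assertion, apply what was just proved with $\mathbb{K}=\mathbb{C}$; this is legitimate because $\mathbb{C}\subseteq\mathbb{C}\subseteq\mathcal{M}$ by part~(i) of Lemma~\ref{L: Convex Rings}, so $\mathcal{M}ax(\mathcal{M})\neq\varnothing$.

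There is essentially no hard step here: the argument is entirely formal. The only items that warrant care are (i) confirming that the directed-union field $\mathbb{L}$ stays inside $\mathcal{M}$ and inherits the field operations (handled above via the chain property), and (ii) checking that ``maximal in $\mathcal{P}$'' is precisely clause (b) of Definition~\ref{D: Maximal Fields}, which it is. If one preferred to skip even this mild bookkeeping, one could instead cite the general principle that a poset in which every chain has an upper bound and which has a designated element $\mathbb{K}$ possesses a maximal element $\geq\mathbb{K}$; but the direct application of Zorn's lemma as above is the cleanest route.
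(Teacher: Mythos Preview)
Your proof is correct and follows essentially the same approach as the paper: both apply Zorn's lemma to the poset of subfields $\mathbb{L}$ of ${^*\mathbb{C}}$ with $\mathbb{K}\subseteq\mathbb{L}\subseteq\mathcal{M}$, ordered by inclusion, and then deduce $\mathcal{M}ax(\mathcal{M})\not=\varnothing$ by taking $\mathbb{K}=\mathbb{C}$ via Lemma~\ref{L: Convex Rings}. Your write-up simply spells out in more detail the verification that the union of a chain is a subfield and that maximality in the poset coincides with clause~(b) of Definition~\ref{D: Maximal Fields}.
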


\begin{proof} Let  $\mathcal{L}_\mathbb{K}$  denote the
set of all subfields $\mathbb{L}$ of ${^*\mathbb{C}}$ such that
$\mathbb{K}\subseteq\mathbb{L}\subseteq\mathcal{M}$. We order
$\mathcal{L}_\mathbb{K}$ by inclusion. We obviously have
$\mathcal{L}_\mathbb{K}\not=\varnothing$, since $\mathbb{K}\in\mathcal{L}_\mathbb{K}$. Also, we
observe that if $S$ is a totally ordered subset of
$\mathcal{L}_\mathbb{K}$ under the inclusion $\subset$, then $\bigcup_{\mathbb{L}\in
S}\mathbb{L}\in\mathcal{L}_\mathbb{K}$. Thus
$\mathcal{L}_\mathbb{K}$ has a maximal element, say $\mathbb{M}$, as required, by Zorn's lemma.
Consequently, $\mathcal{M}ax(\mathcal{M})\not=\varnothing$ because $\mathbb{C}$ is a subring of
$\mathcal{M}$ by Lemma~\ref{L: Convex Rings}.
\end{proof}

\begin{theorem}[Field of Representatives]\label{T: Field of Representatives} Let $\mathcal{M}$ 
be (as before) a convex subring of ${^*\mathbb{C}}$ and $\mathbb{M}\in\mathcal{M}ax(\mathcal{M})$. 
Then: 
\begin{description}
\item[(i)]\; $\mathbb{M}$ and $\widehat{\mathbb{M}}$  are \textbf{algebraically closed
isomorphic fields}. 

\item[(ii)] Let $z_0 \in
\mathcal{M}$ be a point which is \textbf{away from} $\mathbb{M}$ in the sense that
\begin{equation}\label{E: Away From M}
(\forall z\in\mathbb{M})\left(z-z_0 \notin \mathcal{M}_0\right).
\end{equation}
Then $P(z_0) \notin
\mathcal{M}_0$ for  any non-zero polynomials $P$ with coefficients in $\mathbb{M}$. Consequently, the
field of the rational functions $\mathbb{M}(z_0)$ is a proper field extension of $\mathbb{M}$ within
$\mathcal{M}$, in symbol, $\mathbb{M}\subsetneqq\mathbb{M}(z_0)\subseteq\mathcal{M}$.
\item[(iii)]  We have the following \textbf{characterization} of $\mathcal{M}$ and $\mathcal{M}_0$ 
\begin{align} 
&\mathcal{M}=\{z\in{^*\mathbb{C}}\mid (\exists\varepsilon\in\mathbb{M}_+)(|z|\leq\varepsilon\},
\label{E: Characterization of M}\\ 
&\mathcal{M}_0=\{z\in{^*\mathbb{C}}\mid (\forall\varepsilon\in\mathbb{M}_+)(|z|<\varepsilon\},
\label{E: Characterization of M0}
\end{align}
where $\mathbb{M}_+=\{|z| :  z\in\mathbb{M}, z\not=0\}$.

\item[(iv)]  We have $\mathcal{M}=\mathbb{M}\oplus
\mathcal{M}_0$ in the sense that every  $z\in\mathcal{M}$ has a 
unique asymptotic expansion $z = c+dz$, where $c\in\mathbb{M}$\, and\,
$dz\in\mathcal{M}_0$. Consequently, $\mathbb{M}$ is a \textbf{field of representatives} for 
$\widehat{^*\mathbb{C}}$ in the sense that
$\widehat{^*\mathbb{C}}=\widehat{\mathbb{M}}$ or $\widehat{\mathcal{M}}=\widehat{\mathbb{M}}$
depending on the choice of the notation (Notation~\ref{N: Suppressing M}). 
\end{description}

\end{theorem}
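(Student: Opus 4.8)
The plan is to establish the four parts in the logical order (i), (ii), (iv), (iii): the factorization used in (ii) needs the algebraic closedness of $\mathbb{M}$ proved in (i), the decomposition (iv) follows from (ii) together with the maximality of $\mathbb{M}$, and the two characterizations in (iii) are then read off from (iv) and Lemma~\ref{L: Convex Ideals}. A fact I will use repeatedly is that $\mathbb{M}\cap\mathcal{M}_0=\{0\}$: a nonzero $w\in\mathbb{M}\cap\mathcal{M}_0$ would be invertible in $\mathbb{M}\subseteq\mathcal{M}$ yet non-invertible in $\mathcal{M}$, a contradiction (this is the argument already used in the proof of Lemma~\ref{L: Isomorphic Fields}).

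For (i), I first show that every root $\alpha\in{^*\mathbb{C}}$ of a monic polynomial $P(x)=x^n+a_{n-1}x^{n-1}+\dots+a_0$ with $a_k\in\mathbb{M}$ lies in $\mathcal{M}$. If $|\alpha|\le 1$ this is immediate from convexity, since $1\in\mathcal{M}$; if $|\alpha|>1$, then from $\alpha^n=-\sum_{k<n}a_k\alpha^k$ and $|\alpha|^k\le|\alpha|^{n-1}$ we get $|\alpha|\le\sum_{k<n}|a_k|$, and the right-hand side lies in the ring $\mathcal{M}$ because each $|a_k|\in\mathcal{M}$ by convexity, so $\alpha\in\mathcal{M}$ by convexity again. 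Since $\alpha$ is algebraic over $\mathbb{M}$, the field $\mathbb{M}(\alpha)=\mathbb{M}[\alpha]$ satisfies $\mathbb{C}\subseteq\mathbb{M}\subseteq\mathbb{M}(\alpha)\subseteq\mathcal{M}$, so the maximality of $\mathbb{M}$ (Definition~\ref{D: Maximal Fields}) forces $\mathbb{M}(\alpha)=\mathbb{M}$, i.e. $\alpha\in\mathbb{M}$. Because ${^*\mathbb{C}}$ is algebraically closed (Theorem~\ref{T: Basic Properties}) and any non-constant polynomial over $\mathbb{M}$ normalizes to a monic one, $\mathbb{M}$ is algebraically closed; and $\widehat{\mathbb{M}}$, being isomorphic to $\mathbb{M}$ by Lemma~\ref{L: Isomorphic Fields}(i), is algebraically closed as well.

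For (ii), using that $\mathbb{M}$ is algebraically closed I factor $P(x)=c\prod_{j=1}^m(x-\alpha_j)$ with $c\in\mathbb{M}\setminus\{0\}$ and $\alpha_j\in\mathbb{M}$. Hypothesis (\ref{E: Away From M}) with $z=\alpha_j$ gives $z_0-\alpha_j\notin\mathcal{M}_0$ for each $j$, i.e. each $z_0-\alpha_j$ is invertible in $\mathcal{M}$ (Lemma~\ref{L: Convex Ideals}(i)); and $c$ is invertible in $\mathcal{M}$ because $\mathbb{M}$ is a field inside $\mathcal{M}$. Hence $P(z_0)=c\prod_j(z_0-\alpha_j)$ is invertible in $\mathcal{M}$, so $P(z_0)\notin\mathcal{M}_0$. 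For the consequence, a general element of $\mathbb{M}(z_0)$ has the form $P(z_0)/Q(z_0)$ with $P,Q\in\mathbb{M}[x]$ and $Q\ne 0$; then $P(z_0)\in\mathcal{M}$ (a ring containing $z_0$ and the coefficients) while $1/Q(z_0)\in\mathcal{M}$ by what was just proved, so $\mathbb{M}(z_0)\subseteq\mathcal{M}$; and the extension is proper since $z_0\notin\mathbb{M}$ (otherwise $z=z_0$ in (\ref{E: Away From M}) would give $0\in\mathcal{M}_0$).

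For (iv), uniqueness is immediate: $z=c_1+dz_1=c_2+dz_2$ gives $c_1-c_2=dz_2-dz_1\in\mathbb{M}\cap\mathcal{M}_0=\{0\}$. For existence, suppose some $z\in\mathcal{M}$ satisfied $z-c\notin\mathcal{M}_0$ for all $c\in\mathbb{M}$; then $z$ is away from $\mathbb{M}$ in the sense of (\ref{E: Away From M}), so by (ii) $\mathbb{M}(z)$ is a proper field extension of $\mathbb{M}$ inside $\mathcal{M}$, contradicting the maximality of $\mathbb{M}$. Hence every $z\in\mathcal{M}$ equals $c+dz$ with $c\in\mathbb{M}$ and $dz=z-c\in\mathcal{M}_0$, whence $\widehat{\mathcal{M}}=q_\mathcal{M}[\mathcal{M}]=q_\mathcal{M}[\mathbb{M}]=\widehat{\mathbb{M}}$, i.e. $\mathbb{M}$ is a field of representatives. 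For (iii): if $z\in\mathcal{M}$, write $z=c+dz$ as above; if $c\ne 0$ then $|dz|<|c|$ by Lemma~\ref{L: Convex Ideals}(v), so $|z|\le|c|+|dz|<|2c|\in\mathbb{M}_+$, and if $c=0$ then $|z|=|dz|<1=|1|\in\mathbb{M}_+$; conversely $|z|\le|w|$ with $0\ne w\in\mathbb{M}\subseteq\mathcal{M}$ forces $z\in\mathcal{M}$ by convexity, proving (\ref{E: Characterization of M}). In (\ref{E: Characterization of M0}), $\subseteq$ is Lemma~\ref{L: Convex Ideals}(v) applied to $z\in\mathcal{M}_0$ and any $0\ne w\in\mathbb{M}$; for $\supseteq$, if $|z|<\varepsilon$ for all $\varepsilon\in\mathbb{M}_+$ then $|z|<1$ so $z\in\mathcal{M}$, and were $z\notin\mathcal{M}_0$ we would have $1/z\in\mathcal{M}$, hence $|1/z|\le|w|$ for some $0\ne w\in\mathbb{M}$ by (\ref{E: Characterization of M}), i.e. $|z|\ge|1/w|\in\mathbb{M}_+$, contradicting the hypothesis, so $z\in\mathcal{M}_0$. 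The main obstacle is (ii) together with the existence half of (iv): one has to see that a point away from $\mathbb{M}$ generates a genuine proper subfield of $\mathcal{M}$, and it is exactly there that the algebraic closedness of $\mathbb{M}$ and the local-ring structure of $\mathcal{M}$ (Lemma~\ref{L: Convex Ideals}) are indispensable; everything else is routine convexity bookkeeping.
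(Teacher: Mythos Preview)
Your proof is correct and follows the same overall strategy as the paper: root bounds plus maximality for (i), algebraic closedness for (ii), and maximality together with (ii) for the decomposition. Two small organizational differences are worth noting. In (ii) you factor $P$ over $\mathbb{M}$ directly and argue that a product of units in the local ring $\mathcal{M}$ is a unit; the paper instead passes to the quotient, observes $\widehat{P}(\widehat{z_0})=0$, and invokes the algebraic closedness of $\widehat{\mathbb{M}}$ to pull a root back to $\mathbb{M}$---your version is a touch more concrete. You also swap the order of (iii) and (iv): you first establish the decomposition $\mathcal{M}=\mathbb{M}\oplus\mathcal{M}_0$ and then read off the characterizations of $\mathcal{M}$ and $\mathcal{M}_0$ from it, whereas the paper proves (iii) first by arguing that an element with $|z_0|>\varepsilon$ for every $\varepsilon\in\mathbb{M}_+$ must be away from $\mathbb{M}$ (a step it leaves implicit and which in effect repeats the maximality argument you use for (iv)). Your ordering avoids that slight redundancy; otherwise the two arguments coincide.
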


\begin{proof} (i) We intend to show that $\mathbb{M}$ is algebraically closed. We denote by ${\rm
cl}(\mathbb{M})$ the relative algebraic closure of $\mathbb{M}$ in
${^*\mathbb{C}}$. Since ${^*\mathbb{C}}$ is an algebraically closed field, so is ${\rm cl}(\mathbb{M})$.
To show that ${\rm cl}(\mathbb{M})\subseteq\mathcal{M}$, suppose that $z \in {\rm
cl}(\mathbb{M})$. Since $z$ is algebraic over $\mathbb{M}$, it follows that
$z$ is a root of some polynomial $P(x)=x^n + a_1\,x^{n-1} + \dots + a_n $ with coefficients in
$\mathbb{M}$. The estimation $|z| \leq 1 + |a_1| +\dots+|a_n|$ implies $z \in\mathcal{M}$ by the
convexity of
$\mathcal{M}$. Now, $\mathbb{M} \subseteq {\rm cl}(\mathbb{M})\subseteq\mathcal{M}$ implies
$\mathbb{M} = {\rm cl}(\mathbb{M})$ by the
maximality of $\mathbb{M}$ in $\mathcal{M}$. The fields $\mathbb{M}$ and
$\widehat{\mathbb{M}}$ are isomorphic by Lemma~\ref{L: Isomorphic Fields}.

	(ii)  Suppose (on the contrary) that $P(z_0)\in\mathcal{M}_0$ for some polynomial
$P$. It follows that $\widehat{P(z_0)} = 0$ implying
$\widehat{P}(\widehat{z_0}) = 0$,  where $\widehat{P}$ denotes the polynomial, obtained from P by
replacing the coefficients $a_k$ in $P$ by $\widehat{a_k}$. Since
$\widehat{\mathbb{M}}$ is an algebraically  closed field, it follows that
$\widehat{z_0}\in \widehat{\mathbb{M}}$ meaning $z_0-z
\in\mathcal{M}_0$ for some 
$z\in\mathbb{M}$, a contradiction. 

	(iii) Let $z_0\in\mathcal{M}$ and suppose (on the contrary) that
$(\forall\varepsilon\in\mathbb{M}_+)(|z_0|>\varepsilon)$. Observe that $z_0$ is
away from 
$\mathbb{M}$ in the sense of (\ref{E: Away From M}).  Thus
$\mathbb{M}(z_0)$ is a proper field extension of
$\mathbb{M}$ within
$\mathcal{M}$ by (ii), contradicting the maximality of
$\mathbb{M}$. This proves the formula (\ref{E: Characterization of M}) about
$\mathcal{M}$ since the inclusion in the opposite direction follows from the convexity of $\mathcal{M}$.
Let
$z\in\mathcal{M}_0$. If $z=0$, there is nothing to prove. If $z\not= 0$, we have
$1/z\notin\mathcal{M}$ by the definition of $\mathcal{M}_0$.  Next, suppose (on the
contrary) that
$|z|\geq\varepsilon$ for some
$\varepsilon\in\mathbb{M}_+$. It follows that $|1/z|\leq1/\varepsilon$ implying
$1/z\in\mathcal{M}$ by formula (\ref{E:
Characterization of M}), a contradiction. Conversely, suppose that $|z|<\varepsilon$ for all
$\varepsilon\in\mathbb{M}_+$ and some $z\in{^*\mathbb{C}}$. It follows that
$1/\varepsilon<|1/z|$ for all $\varepsilon\in\mathbb{M}_+$ implying
$1/z\notin\mathcal{M}$ by the formula (\ref{E: Characterization of M}).
It follows $z\in\mathcal{M}_0$ (by part (i) of Lemma~\ref{L: Convex Ideals}),  which proves formula
(\ref{E: Characterization of M0}). 

	(iv) To show the existence of asymptotic expansion, suppose (on the contrary) that
there exists $z\in\mathcal{M}$ such that 
$z-c \notin\mathcal{M}_0$ for all $c\in\mathbb{M}$. We have
$\mathbb{M}\subsetneqq\mathbb{M}(z)\subseteq{M}$ by (ii), contradicting the maximality of
$\mathbb{M}$. To show the
uniqueness, suppose that $c+dz=c_1+dz_1$. It follows $c-c_1\in\mathcal{M}_0$ thus $c-c_1=0$ (as
required) since $\mathbb{M}\cap\mathcal{M}_0=\{0\}$.
\end{proof}

\begin{definition}[$\mathbb{M}$-Standard Part Mapping]\label{D: M-Standard Part Mapping} The mapping
$\st_\mathbb{M}: \mathcal{M}\to {^*\mathbb{C}}$, defined by $\st_\mathbb{M}(c+dz)=c$, is
called \textbf{$\mathbb{M}$-standard part mapping}.  
\end{definition}

\begin{lemma}[$\mathbb{M}$-Standard Part Mapping]\label{L: M-Standard Part Mapping} 
\begin{description}
\item[(i)] For every $z\in\mathcal{M}$
 we have $z=\st_\mathbb{M}(z)+dz$, 
where $\st_\mathbb{M}(z)\in\mathbb{M}$ and $dz\in\mathcal{M}_0$.
\item[(ii)] The $\mathbb{M}$-standard part
mapping
$\st_\mathbb{M}: \mathcal{M}\to {^*\mathbb{C}}$  is a ring homomorphism with
range
$\st_\mathbb{M}\left[\mathcal{M}\right]=\mathbb{M}$. 

\item[(iii)]  $\st_\mathbb{M}$
is an extension of the usual standard part mapping
$\st:\mathcal{F}(^*\mathbb{C})\to\mathbb{C}$, i.e.
$\st_\mathbb{M}\, |\, \mathcal{F}(^*\mathbb{C})=\st$. 
\end{description}
\end{lemma}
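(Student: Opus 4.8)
The plan is to obtain all three assertions as direct corollaries of part~(iv) of Theorem~\ref{T: Field of Representatives}, which furnishes the direct-sum decomposition $\mathcal{M}=\mathbb{M}\oplus\mathcal{M}_0$: every $z\in\mathcal{M}$ has a \emph{unique} asymptotic expansion $z=c+dz$ with $c\in\mathbb{M}$ and $dz\in\mathcal{M}_0$. I would begin by pointing out that this uniqueness is exactly what makes $\st_\mathbb{M}$ a well-defined function (Definition~\ref{D: M-Standard Part Mapping}), after which part~(i) is immediate: for $z\in\mathcal{M}$ one has $z=\st_\mathbb{M}(z)+dz$ with $\st_\mathbb{M}(z)=c\in\mathbb{M}$ and $z-\st_\mathbb{M}(z)=dz\in\mathcal{M}_0$.

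For part~(ii) I would fix $z_1=c_1+dz_1$ and $z_2=c_2+dz_2$ with $c_j\in\mathbb{M}$, $dz_j\in\mathcal{M}_0$. Additivity is read off from $z_1+z_2=(c_1+c_2)+(dz_1+dz_2)$: the first summand lies in $\mathbb{M}$ and the second in $\mathcal{M}_0$ (a subgroup under addition), so by uniqueness $\st_\mathbb{M}(z_1+z_2)=c_1+c_2$. For products I would expand
\[
z_1 z_2 = c_1 c_2 + \bigl(c_1\,dz_2 + c_2\,dz_1 + dz_1\,dz_2\bigr),
\]
where $c_1 c_2\in\mathbb{M}$ and the bracket lies in $\mathcal{M}_0$ because $\mathcal{M}_0$ is an \emph{ideal} of $\mathcal{M}$ (Lemma~\ref{L: Convex Ideals}(iii)) and all four factors lie in $\mathcal{M}$ (here $\mathbb{M}\subseteq\mathcal{M}$ by Definition~\ref{D: Maximal Fields}); uniqueness again yields $\st_\mathbb{M}(z_1 z_2)=c_1 c_2$, and $\st_\mathbb{M}(1)=1$ since $1=1+0$. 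For the range I would note $\st_\mathbb{M}[\mathcal{M}]\subseteq\mathbb{M}$ by construction, while $\st_\mathbb{M}(c)=c$ for every $c\in\mathbb{M}$ (its expansion being $c+0$), so $\st_\mathbb{M}[\mathcal{M}]=\mathbb{M}$.

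For part~(iii), given $x\in\mathcal{F}(^*\mathbb{C})$ --- which lies in $\mathcal{M}$ by convexity, since $\mathcal{F}(^*\mathbb{C})\subseteq\mathcal{M}$ (Lemma~\ref{L: Convex Rings}(i)) --- I would invoke Theorem~\ref{T: Standard Part Mapping on Finite Numbers}(i) to write $x=\st(x)+dx$ with $\st(x)\in\mathbb{C}$ and $dx\in\mathcal{I}(^*\mathbb{C})$, and then argue that this is already the $\mathbb{M}\oplus\mathcal{M}_0$ decomposition of $x$: indeed $\st(x)\in\mathbb{C}\subseteq\mathbb{M}$ (Definition~\ref{D: Maximal Fields}) and $dx\in\mathcal{M}_0$. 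Uniqueness in Theorem~\ref{T: Field of Representatives}(iv) then forces $\st_\mathbb{M}(x)=\st(x)$, i.e. $\st_\mathbb{M}\,|\,\mathcal{F}(^*\mathbb{C})=\st$.

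In all of this the computations are routine; the single point that I expect to require genuine attention --- and hence the main obstacle --- is verifying that the ``error terms'' actually land in $\mathcal{M}_0$: in part~(ii) this is precisely the ideal property of $\mathcal{M}_0$ (Lemma~\ref{L: Convex Ideals}), while in part~(iii) it amounts to checking that the infinitesimal remainder $x-\st(x)$ is non-invertible in $\mathcal{M}$, so that it belongs to $\mathcal{M}_0$. Once these membership facts are secured, every clause of the lemma follows mechanically from the direct-sum structure of Theorem~\ref{T: Field of Representatives}(iv).
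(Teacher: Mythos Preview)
Your treatment of parts (i) and (ii) is correct and matches the paper's approach: (i) is a restatement of Theorem~\ref{T: Field of Representatives}(iv), and the homomorphism property in (ii) follows from the expansion $z=\st_\mathbb{M}(z)+dz$ together with the fact that $\mathcal{M}_0$ is an ideal in $\mathcal{M}$ (Lemma~\ref{L: Convex Ideals}(iii)). Your explicit verification of additivity, multiplicativity, and surjectivity onto $\mathbb{M}$ simply spells out what the paper leaves implicit.

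For part (iii), however, there is a genuine gap, and you have in fact put your finger on it. You correctly identify the crux as showing that the infinitesimal remainder $dx=x-\st(x)$ lies in $\mathcal{M}_0$, but this step fails whenever $\mathcal{M}\supsetneq\mathcal{F}(^*\mathbb{C})$. Concretely, take $\mathcal{M}=\mathcal{M}_\rho$ (Example~\ref{Ex: Robinson Rings}): then $\rho\in\mathcal{F}(^*\mathbb{C})$ with $\st(\rho)=0$, yet $\rho\notin\mathcal{N}_\rho=\mathcal{M}_0$ since $1/\rho\in\mathcal{M}_\rho$. More generally, if $\mathcal{M}\supsetneq\mathcal{F}(^*\mathbb{C})$ then by Theorem~\ref{T: Field of Representatives}(iii) the set $\mathbb{M}_+$ must contain an infinitely large element, so $\mathbb{M}$ contains a nonzero infinitesimal $\epsilon$; then $\epsilon\in\mathcal{F}(^*\mathbb{C})$ and $\st_\mathbb{M}(\epsilon)=\epsilon\neq 0=\st(\epsilon)$. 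In short, the inclusion you would need is $\mathcal{I}(^*\mathbb{C})\subseteq\mathcal{M}_0$, which is the \emph{reverse} of what actually holds (Lemma~\ref{L: Convex Ideals}(ii) gives only $\mathcal{M}_0\subseteq\mathcal{I}(^*\mathbb{C})$). The paper's own one-line justification for (iii) invokes precisely the available inclusion $\mathcal{M}_0\subseteq\mathcal{I}(^*\mathbb{C})$, but that direction does not yield the conclusion either; as written, part~(iii) appears to hold only in the trivial case $\mathcal{M}=\mathcal{F}(^*\mathbb{C})$.
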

\begin{proof} (i) is a notational modification of the result of part (iv) of Theorem~\ref{T: Field of
Representatives}.

(ii) The fact that $\st_\mathbb{M}$ is homomorphism follows directly from the formula
$z=\st_\mathbb{M}(z)+dz$.

(iii) follows directly from the fact that $\mathbb{C}\subseteq\mathbb{M}$ and
$\mathcal{M}_0\subseteq\mathcal{I}(^*\mathbb{C})$.
\end{proof}

\begin{theorem}[Algebraically Closed Field]\label{T: Algebraically Closed Field} Let $\mathcal{M}$ be a
convex subring of $^*\mathbb{C}$ and let
$\widehat{^*\mathbb{C}}=\widehat{\mathcal{M}}$ be its
asymptotic field (Notation~\ref{N: Suppressing M}). Let $\mathbb{M}\in\mathcal{M}ax(\mathcal{M})$.
Then the fields
$\mathbb{M}$ and $\widehat{^*\mathbb{C}}=\widehat{\mathcal{M}}$ are isomorphic under 
the mapping $z\to\widehat{z}$ (or, equivalently, under the quotient mapping $q_\mathcal{M}\, |\,
\mathbb{M}$).
 The
situation can be summarized in the following commutative diagram:
\[
\begin{CD}
\mathcal{M}@>q_\mathcal{M}>>\widehat{^*\mathbb{C}}\\
@A{id}AA         @AA{id}A\\   
\mathbb{M}@>{q_\mathcal{M}|\mathbb{M}}>>\widehat{\mathbb{M}}.
\end{CD}
\]
Consequently, every asymptotic field is an
\textbf{algebraically closed field} and the real part of an asymptotic field is a \textbf{real closed field}. 
\end{theorem}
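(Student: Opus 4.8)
The plan is to read off this theorem almost entirely from Theorem~\ref{T: Field of Representatives}, the only genuinely new input being the classical Artin--Schreier theorem, which is needed for the ``real closed'' clause.

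First I would verify that $q_\mathcal{M}\,|\,\mathbb{M}\colon\mathbb{M}\to\widehat{^*\mathbb{C}}$ is a field isomorphism. Being a ring homomorphism of fields it is injective as soon as it is nonzero; more precisely it is injective because $\mathbb{M}\cap\mathcal{M}_0=\{0\}$: if $z\in\mathbb{M}$ with $z\neq 0$, then $1/z\in\mathbb{M}\subseteq\mathcal{M}$, so $z\notin\mathcal{M}_0$ (this is exactly the argument in the proof of Lemma~\ref{L: Isomorphic Fields}). Surjectivity onto $\widehat{^*\mathbb{C}}=\widehat{\mathcal{M}}$ is the content of part~(iv) of Theorem~\ref{T: Field of Representatives}: every $z\in\mathcal{M}$ has an asymptotic expansion $z=c+dz$ with $c\in\mathbb{M}$ and $dz\in\mathcal{M}_0$, whence $\widehat{z}=\widehat{c}=q_\mathcal{M}(c)$ lies in the image of $q_\mathcal{M}\,|\,\mathbb{M}$. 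Commutativity of the displayed square is then immediate, since both composites $\mathbb{M}\to\widehat{^*\mathbb{C}}$ are just $z\mapsto\widehat{z}$ (the vertical ``$\mathrm{id}$'' arrows being the inclusion $\mathbb{M}\hookrightarrow\mathcal{M}$ on the left and, since $\widehat{\mathbb{M}}=\widehat{^*\mathbb{C}}$, the genuine identity on the right). Because $\mathbb{M}$ is algebraically closed by part~(i) of Theorem~\ref{T: Field of Representatives} and algebraic closedness is an isomorphism invariant, $\widehat{^*\mathbb{C}}$ is algebraically closed.

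It remains to see that the real part $\widehat{^*\mathbb{R}}=q_\mathcal{M}[{^*\mathbb{R}}\cap\mathcal{M}]$ is real closed. By part~(i) of Theorem~\ref{T: Asymptotic Fields} it is a totally ordered field and $\widehat{^*\mathbb{C}}=\widehat{^*\mathbb{R}}(i)$. Since $\widehat{^*\mathbb{R}}$ is ordered, $-1$ is not a square in it, so $i\notin\widehat{^*\mathbb{R}}$ and hence $[\widehat{^*\mathbb{C}}:\widehat{^*\mathbb{R}}]=2$. As $\widehat{^*\mathbb{C}}$ is algebraically closed, the Artin--Schreier theorem (any field admitting an algebraically closed proper extension of finite degree is real closed, and that degree equals $2$) applies to the pair $\widehat{^*\mathbb{R}}\subset\widehat{^*\mathbb{C}}$ and yields that $\widehat{^*\mathbb{R}}$ is real closed.

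The only thing worth deliberating is how to treat this last step: the short route is to cite Artin--Schreier as a black box. Alternatively one could avoid it by repeating the argument of Theorem~\ref{T: Field of Representatives} with a maximal \emph{real} subfield $\mathbb{M}_\mathbb{R}$ of $^*\mathbb{R}\cap\mathcal{M}$ in place of $\mathbb{M}$, proving $^*\mathbb{R}\cap\mathcal{M}=\mathbb{M}_\mathbb{R}\oplus({^*\mathbb{R}}\cap\mathcal{M}_0)$ and that $\mathbb{M}_\mathbb{R}$ is real closed (using that square roots of positive elements and roots of odd-degree polynomials over $\mathbb{M}_\mathbb{R}$ stay inside $\mathcal{M}$ by convexity); but this is longer and in effect re-derives Artin--Schreier in the case at hand, so I would prefer the citation.
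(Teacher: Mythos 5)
Your proposal is correct and follows essentially the same route as the paper: the isomorphism $\mathbb{M}\cong\widehat{\mathbb{M}}=\widehat{^*\mathbb{C}}$ via Lemma~\ref{L: Isomorphic Fields} together with part~(iv) of Theorem~\ref{T: Field of Representatives}, and then the Artin--Schreier theorem applied to $\widehat{^*\mathbb{C}}=\widehat{^*\mathbb{R}}(i)$ for real closedness. The only difference is that you unpack the injectivity argument ($\mathbb{M}\cap\mathcal{M}_0=\{0\}$) rather than citing the lemma wholesale, which is a harmless elaboration.
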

\begin{proof} The fields $\mathbb{M}$
and $\widehat{\mathbb{M}}$ are isomorphic by Lemma~\ref{L: Isomorphic Fields}. Also, 
$\widehat{^*\mathbb{C}}$ and $\widehat{\mathbb{M}}$ are isomorphic (under the identity) since 
$\widehat{^*\mathbb{C}}=\widehat{\mathbb{M}}$ by Theorem~\ref{T: Field of Representatives}. Thus
$\widehat{^*\mathbb{C}}$ is an algebraically closed field because $\mathbb{M}$ is an algebraically
closed field (Theorem~\ref{T: Field of Representatives}). The field $\widehat{^*\mathbb{R}}$ is a real
closed by Artin-Schreier theorem (Marker, Messmer, Pillay~\cite{MarkerMessPill}, p. 9),  since 
$\widehat{^*\mathbb{C}}=\widehat{^*\mathbb{R}}(i)$ by Theorem~\ref{T: Asymptotic Fields}.
\end{proof}
\begin{definition}[Convex Cover]\label{D: Convex Cover} Let $S$ be a subset of $^*\mathbb{C}$. The set
${\rm cov}(S)=\{z\in{^*\mathbb{C}} : |z|\leq | \zeta| \text{\, for some\, }
\zeta\in S \}$ is the \textbf{convex
cover} of $S$ in $^*\mathbb{C}$.
\end{definition}
\begin{lemma} \label{L: Convex Cover} Let $S$ be a subring of $^*\mathbb{C}$ which is closed
under the absolute value in the sense that $z\in S$ implies
$|z|\in S$. Then  ${\rm cov}(S)$ is a convex subring of
$^*\mathbb{C}$ (Definition~\ref{D: Convex Rings}). In particular, ${\rm cov}(\mathbb{K})$ is a convex
subring of
$^*\mathbb{C}$ for any algebraically closed subfield $\mathbb{K}$ of $^*\mathbb{C}$. Also ${\rm
cov}(\mathcal{M})=\mathcal{M}$ for any convex subring $\mathcal{M}$ of $^*\mathbb{C}$.
\end{lemma}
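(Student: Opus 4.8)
The plan is to verify the two defining requirements of Definition~\ref{D: Convex Rings} for ${\rm cov}(S)$ by hand, using only that $^*\mathbb{R}$ is a totally ordered field and that $|\cdot|\colon {}^*\mathbb{C}\to{}^*\mathbb{R}$ is multiplicative and obeys the triangle inequality. Convexity is immediate from transitivity of $\le$: if $z\in{}^*\mathbb{C}$, $w\in{\rm cov}(S)$ and $|z|\le|w|$, choose $\zeta\in S$ with $|w|\le|\zeta|$; then $|z|\le|\zeta|$, so $z\in{\rm cov}(S)$.

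For the subring part, $0,1\in S\subseteq{\rm cov}(S)$ gives the additive and multiplicative identities, and $|-z|=|z|$ gives closure under negation. Given $z_1,z_2\in{\rm cov}(S)$, pick $\zeta_1,\zeta_2\in S$ with $|z_k|\le|\zeta_k|$. For products, $|z_1z_2|=|z_1|\,|z_2|\le|\zeta_1|\,|\zeta_2|=|\zeta_1\zeta_2|$ and $\zeta_1\zeta_2\in S$, so $z_1z_2\in{\rm cov}(S)$. For sums, $|z_1+z_2|\le|z_1|+|z_2|\le|\zeta_1|+|\zeta_2|$; this is the one place the hypothesis is convenient, since it lets me take $\eta=|\zeta_1|+|\zeta_2|\in S$, a nonnegative element of $^*\mathbb{R}$ with $|\eta|=\eta$, so that $|z_1+z_2|\le|\eta|$ and hence $z_1+z_2\in{\rm cov}(S)$.

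Finally the two ``in particular'' clauses. The identity ${\rm cov}(\mathcal{M})=\mathcal{M}$ splits into $\mathcal{M}\subseteq{\rm cov}(\mathcal{M})$, which is the trivial bound $|z|\le|z|$, and ${\rm cov}(\mathcal{M})\subseteq\mathcal{M}$, which is exactly the convexity of $\mathcal{M}$. For ${\rm cov}(\mathbb{K})$ with $\mathbb{K}$ an algebraically closed subfield, one would like to invoke the first part directly, but here lies the one genuine subtlety and the main thing to be careful about: an algebraically closed subfield of $^*\mathbb{C}$ need not literally be closed under $z\mapsto|z|$. I would handle this by noting that the additive step above survives for an arbitrary subring $S$ once the bound is relaxed to $|\zeta_1|+|\zeta_2|\le 2\max\{|\zeta_1|,|\zeta_2|\}=|2\zeta_j|$, where $\zeta_j$ is whichever of $\zeta_1,\zeta_2$ has larger modulus and $2\zeta_j\in S$; thus ${\rm cov}(S)$ is a convex subring for every subring $S$, and the case $S=\mathbb{K}$ is a special instance. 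Apart from this bookkeeping point, every step is a single line of ordered-field arithmetic.
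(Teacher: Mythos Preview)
Your proof is correct; the paper simply leaves the details to the reader, so there is nothing to compare against. Your observation about the ``in particular'' clause is well taken: an algebraically closed subfield $\mathbb{K}$ of $^*\mathbb{C}$ need not be closed under complex conjugation, and hence need not be closed under $z\mapsto|z|$, so the clause does not follow immediately from the first assertion as stated. Your $2\max$ trick---bounding $|\zeta_1|+|\zeta_2|\le|2\zeta_j|$ with $2\zeta_j\in S$---fixes this cleanly and in fact shows that the absolute-value hypothesis on $S$ is superfluous throughout: ${\rm cov}(S)$ is a convex subring for \emph{every} subring $S$ of $^*\mathbb{C}$.
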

\begin{proof} We leave the details to the reader.
\end{proof}
\begin{theorem}[A Characterization]\label{T: A Characterization} Let $\mathbb{K}$ be a
subring of $^*\mathbb{C}$ which is closed under the absolute value. Then the following are equivalent:
\begin{description}
\item[(i)]\;  $\mathbb{K}$ is an asymptotic field (Definition~\ref{D: Asymptotic Fields}).
\item[(ii)] $\mathbb{K}$ is a maximal field in ${\rm cov}(\mathbb{K})$, in symbol, 
$\mathbb{K}\in\mathcal{M}ax({\rm cov}(\mathbb{K}))$ (Definition~\ref{D:
Maximal Fields}).
\end{description}
\end{theorem}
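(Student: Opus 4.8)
The plan is to isolate, via Lemma~\ref{L: Convex Cover}, the convex subring naturally attached to $\mathbb{K}$: since $\mathbb{K}$ is a subring of $^*\mathbb{C}$ closed under the absolute value, $\mathcal{M}:={\rm cov}(\mathbb{K})$ is itself a convex subring of $^*\mathbb{C}$. Everything else is then a matter of feeding this into the structure theory already established: Theorems~\ref{T: Field of Representatives} and~\ref{T: Algebraically Closed Field} say, concretely, that the asymptotic fields are exactly the fields of representatives $\mathbb{M}\in\mathcal{M}ax(\mathcal{N})$ of convex subrings $\mathcal{N}$, and that for such an $\mathbb{M}$ the ambient ring is recovered by the characterization $\mathcal{N}=\{z\in{^*\mathbb{C}}\mid(\exists\varepsilon\in\mathbb{M}_+)(|z|\le\varepsilon)\}$ of formula~(\ref{E: Characterization of M}). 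So the theorem is really the statement that ``being a field of representatives'' is the same as ``being maximal in one's own convex cover''.

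For the direction (ii)$\Rightarrow$(i): assuming $\mathbb{K}\in\mathcal{M}ax({\rm cov}(\mathbb{K}))$, set $\mathcal{M}={\rm cov}(\mathbb{K})$, which is a convex subring by Lemma~\ref{L: Convex Cover}. Theorem~\ref{T: Algebraically Closed Field}, applied with this $\mathcal{M}$ and with $\mathbb{M}=\mathbb{K}$, gives a field isomorphism $q_\mathcal{M}|\mathbb{K}:\mathbb{K}\to\mathcal{M}/\mathcal{M}_0$; since $\mathcal{M}/\mathcal{M}_0$ is an asymptotic field and, by Definition~\ref{D: Asymptotic Fields}, any field isomorphic to such a quotient is also called asymptotic, $\mathbb{K}$ is an asymptotic field. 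This direction uses only Lemma~\ref{L: Convex Cover} and Theorem~\ref{T: Algebraically Closed Field}.

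For the direction (i)$\Rightarrow$(ii): realize $\mathbb{K}$ as a field of representatives, i.e. produce a convex subring $\mathcal{M}$ with $\mathbb{K}\in\mathcal{M}ax(\mathcal{M})$ — every asymptotic field admits such a realization inside $^*\mathbb{C}$ by Theorem~\ref{T: Field of Representatives}(iv) together with Theorem~\ref{T: Algebraically Closed Field} (this is the identification built into Notation~\ref{N: Suppressing M}). Once $\mathbb{K}\in\mathcal{M}ax(\mathcal{M})$ for a convex $\mathcal{M}$, the rest is formal: $\mathbb{K}\subseteq\mathcal{M}$ and convexity of $\mathcal{M}$ give ${\rm cov}(\mathbb{K})\subseteq\mathcal{M}$, while formula~(\ref{E: Characterization of M}) gives $\mathcal{M}=\{z\in{^*\mathbb{C}}\mid(\exists\varepsilon\in\mathbb{K}_+)(|z|\le\varepsilon)\}\subseteq{\rm cov}(\mathbb{K})$, because $\mathbb{K}_+=\{|\zeta|\mid\zeta\in\mathbb{K},\ \zeta\ne0\}$. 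Hence $\mathcal{M}={\rm cov}(\mathbb{K})$ and therefore $\mathbb{K}\in\mathcal{M}ax({\rm cov}(\mathbb{K}))$.

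The one point that deserves care — the main obstacle — is the realization step in (i)$\Rightarrow$(ii): an a priori abstract isomorphism $\mathbb{K}\cong\mathcal{N}/\mathcal{N}_0$ must be upgraded to the statement that $\mathbb{K}$ \emph{itself} is maximal in ${\rm cov}(\mathbb{K})$, equivalently that $q_{{\rm cov}(\mathbb{K})}|\mathbb{K}$ is already onto ${\rm cov}(\mathbb{K})/{\rm cov}(\mathbb{K})_0$ (no element of ${\rm cov}(\mathbb{K})$ is ``away from'' $\mathbb{K}$ in the sense of~(\ref{E: Away From M})). Algebraic closedness of $\mathbb{K}$ alone does not suffice here; what rescues the argument is that $\mathbb{K}$ is closed under the absolute value, so the isomorphism with $\mathcal{N}/\mathcal{N}_0$ carries enough of the valued/ordered structure to transport the surjectivity of $q_\mathcal{N}|\mathbb{N}$ (Theorem~\ref{T: Field of Representatives}(iv)), for a field of representatives $\mathbb{N}$ of $\mathcal{N}$, back to $\mathbb{K}$. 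Concretely I would pick $\mathbb{M}\in\mathcal{M}ax({\rm cov}(\mathbb{K}))$ with $\mathbb{M}\supseteq\mathbb{K}$ by Lemma~\ref{L: Existence of Maximal Fields}, and then show $\mathbb{M}=\mathbb{K}$ by checking that each $z\in\mathbb{M}$ differs from some element of $\mathbb{K}$ by an element of ${\rm cov}(\mathbb{K})_0$, which forces $z\in\mathbb{K}$ since $\mathbb{M}\cap{\rm cov}(\mathbb{K})_0=\{0\}$; the remaining bookkeeping (convexity, the inclusion chains, Lemma~\ref{L: Existence of Maximal Fields}) is routine.
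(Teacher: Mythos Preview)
Your direction (ii)$\Rightarrow$(i) is correct and identical to the paper's argument.

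For (i)$\Rightarrow$(ii) there is a genuine gap, which you have in fact located yourself without closing. Your opening move --- ``realize $\mathbb{K}$ as a field of representatives, i.e.\ produce a convex subring $\mathcal{M}$ with $\mathbb{K}\in\mathcal{M}ax(\mathcal{M})$'' via Theorems~\ref{T: Field of Representatives} and~\ref{T: Algebraically Closed Field} --- begs the question: those theorems say that every maximal field in a convex $\mathcal{M}$ is isomorphic to $\widehat{\mathcal{M}}$, not that an arbitrary subfield of $^*\mathbb{C}$ abstractly isomorphic to some $\widehat{\mathcal{N}}$ is itself maximal in a convex ring. Your fallback is to take $\mathbb{M}\in\mathcal{M}ax({\rm cov}(\mathbb{K}))$ with $\mathbb{M}\supseteq\mathbb{K}$ and ``check that each $z\in\mathbb{M}$ differs from some element of $\mathbb{K}$ by an element of ${\rm cov}(\mathbb{K})_0$''. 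But that claim is a restatement of the goal: it says precisely that $q_{{\rm cov}(\mathbb{K})}|\mathbb{K}$ is onto $\widehat{{\rm cov}(\mathbb{K})}$, i.e.\ that $\mathbb{K}$ is already a field of representatives for ${\rm cov}(\mathbb{K})$, which is equivalent to $\mathbb{K}\in\mathcal{M}ax({\rm cov}(\mathbb{K}))$. You offer no argument for it beyond ``the isomorphism carries enough of the valued/ordered structure'', and that phrase does not supply one --- an abstract field isomorphism $\mathbb{K}\cong\mathcal{N}/\mathcal{N}_0$ need not respect absolute values or the ambient inclusion in $^*\mathbb{C}$, so there is no mechanism here for transporting the surjectivity of $q_\mathcal{N}|\mathbb{N}$ back to $\mathbb{K}$. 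Calling the remainder ``routine bookkeeping'' misidentifies where the content lies.

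For comparison, the paper organises (i)$\Rightarrow$(ii) differently, by contradiction: assuming $\mathbb{K}\notin\mathcal{M}ax({\rm cov}(\mathbb{K}))$, it uses ${\rm cov}(\mathbb{K})\subseteq\mathcal{M}$ to infer $\mathbb{K}\notin\mathcal{M}ax(\mathcal{M})$, extends $\mathbb{K}$ to a properly larger $\mathbb{M}\in\mathcal{M}ax(\mathcal{M})$ via Lemma~\ref{L: Existence of Maximal Fields}, and then argues that the chain of isomorphisms $\mathbb{K}\cong\widehat{\mathcal{M}}\cong\mathbb{M}$ together with the strict inclusion $\mathbb{K}\subsetneqq\mathbb{M}$ is impossible.
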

\begin{proof} (i)$\Rightarrow$(ii) Suppose that $\mathbb{K}$ is isomorphic to some asymptotic field
$\widehat{\mathcal{M}}$, where $\mathcal{M}$ is a convex subring of $^*\mathbb{C}$. Suppose (on the
contrary) that $\mathbb{K}\notin\mathcal{M}ax({\rm cov}(\mathbb{K}))$. It follows that
$\mathbb{K}\notin\mathcal{M}ax(\mathcal{M})$ because ${\rm
cov}(\mathbb{K})\subseteq\mathcal{M}$. Thus there exists a maximal field
$\mathbb{M}\in\mathcal{M}ax(\mathcal{M})$ which is a proper field extension of $\mathbb{K}$
(Lemma~\ref{L: Existence of Maximal Fields}). On the other hand, $\widehat{\mathcal{M}}$ and
$\mathbb{M}$ are isomorphic by Theorem~\ref{T: Algebraically Closed Field} thus
$\mathbb{K}$ and $\mathbb{M}$ must be isomorphic, a contradiction.

(i)$\Leftarrow$(ii) $\mathbb{K}\in\mathcal{M}ax({\rm cov}(\mathbb{K}))$ implies that the fields
$\mathbb{K}$ and $\widehat{{\rm cov}(\mathbb{K})}$ are isomorphic by Theorem~\ref{T: Algebraically
Closed Field}. Thus $\mathbb{K}$ is an asymptotic field since ${\rm cov}(\mathbb{K})$ is a convex subrings
of $^*\mathbb{C}$. 
\end{proof}
\begin{example} Let $\mathbb{C}(t)$ be the field of rational functions in one variable with complex
coefficients. Let $\rho$ be a positive infinitesimal in $^*\mathbb{R}$. Then $\mathbb{C}(\rho)$ is a subfield
of $^*\mathbb{C}$ which is closed under the absolute value. It is easy to see that
${\rm cov}(\mathbb{C}(\rho))=\mathcal{M}_\rho$ (Example~\ref{Ex: Robinson Rings}). Thus 
$\mathbb{C}(\rho)$ is not an asymptotic field because $\mathbb{C}(\rho)$ is not maximal in
$\mathcal{M}_\rho$. Indeed, we have 
$\mathbb{C}(\rho)\subsetneqq\mathbb{C}(\rho^\mathbb{Z})\subset
{^\rho\mathbb{C}}\subset\mathcal{M}_\rho$, where $\mathbb{C}(\rho^\mathbb{Z})$ stands for the field of
the Laurent series with complex coefficients. Similarly, the field of the Levi-Civita series
$\mathbb{C}\left<\rho\right>$ and the Hanh field $\mathbb{C}(\rho^\mathbb{R})$ are not asymptotic
(Section~\ref{S: Power Series in *C}).
\end{example}
\section{Embeddings in $^*\mathbb{C}$}\label{S: Embeddings in *C}
 In this section we show that the asymptotic fields can be treated as subfields of $^*\mathbb{C}$.

\begin{definition}[Embedding in $^*\mathbb{C}$]\label{D: Embedding in *C} Let $\mathcal{M}$ be a convex
subring of
$^*\mathbb{C}$ and let $\widehat{^*\mathbb{C}}=\widehat{\mathcal{M}}$ is the corresponding asymptotic
field (Definition~\ref{D: Asymptotic Fields}). For every $\mathbb{M}\in\mathcal{M}ax(\mathcal{M})$
(Definition~\ref{D: Maximal Fields}) we define the mapping
$\sigma_\mathbb{M}: \widehat{^*\mathbb{C}}\to{^*\mathbb{C}}$ by
$\sigma_\mathbb{M}(\widehat{z})=\st_\mathbb{M}(z)$ (or, equivalently, by
$\sigma_\mathbb{M}=(q_\mathcal{M}\, |\mathbb{M})^{-1}$), where $\st_\mathbb{M}$ is the
$\mathbb{M}$-standard part mapping (Definition~\ref{D: M-Standard Part Mapping}).

\end{definition}

\begin{theorem}[Embedding in $^*\mathbb{C}$]\label{T: Embedding in *C} The mapping
$\sigma_\mathbb{M}$ is a
\textbf{field embedding} of $\widehat{^*\mathbb{C}}$ into ${^*\mathbb{C}}$ with range
$\sigma_\mathbb{M}[\widehat{^*\mathbb{C}}]=\mathbb{M}$. We shall often write simply
$\widehat{^*\mathbb{C}}\subseteq{^*\mathbb{C}}$ (suppressing the dependence of  $\sigma_\mathbb{M}$
on the choice of $\mathbb{M}$). We summarize all these in
\begin{equation}\label{E: Inclusions of Mhat}
\mathbb{C}\subseteq\widehat{^*\mathbb{C}}\subseteq{^*\mathbb{C}} \text{\; or\; }
\mathbb{C}\subseteq\widehat{\mathcal{M}}\subseteq{^*\mathbb{C}},
\end{equation}
depending on the choice of the notation $\widehat{^*\mathbb{C}}$ or $\widehat{\mathcal{M}}$
(part~(ii) of Notation~\ref{N: Suppressing M}).
\end{theorem}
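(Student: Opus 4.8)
The plan is to recognize that $\sigma_\mathbb{M}$ is nothing but the inverse of the restricted quotient map $q_\mathcal{M}\,|\,\mathbb{M}$, so that essentially all the work has already been carried out in Theorem~\ref{T: Field of Representatives} and Theorem~\ref{T: Algebraically Closed Field}; what remains is bookkeeping.

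First I would recall that, by part~(iv) of Theorem~\ref{T: Field of Representatives}, $\mathbb{M}$ is a field of representatives for $\widehat{^*\mathbb{C}}$, i.e. $\widehat{\mathbb{M}}=\widehat{\mathcal{M}}=\widehat{^*\mathbb{C}}$, and that by Lemma~\ref{L: Isomorphic Fields}(i) (equivalently, by Theorem~\ref{T: Algebraically Closed Field}) the restriction $q_\mathcal{M}\,|\,\mathbb{M}:\mathbb{M}\to\widehat{\mathbb{M}}$ is a field isomorphism; here one uses $\mathbb{M}\cap\mathcal{M}_0=\{0\}$, which holds because $\mathbb{M}$ is a field contained in $\mathcal{M}$. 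Consequently the inverse $(q_\mathcal{M}\,|\,\mathbb{M})^{-1}:\widehat{^*\mathbb{C}}\to\mathbb{M}$ is a field isomorphism of $\widehat{^*\mathbb{C}}$ onto $\mathbb{M}$, and composing it with the inclusion $\mathbb{M}\subseteq{^*\mathbb{C}}$ exhibits it as a field embedding of $\widehat{^*\mathbb{C}}$ into ${^*\mathbb{C}}$ with range $\mathbb{M}$.

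Next I would verify that this inverse agrees with the map $\widehat{z}\mapsto\st_\mathbb{M}(z)$ appearing in Definition~\ref{D: Embedding in *C}, so that the two descriptions of $\sigma_\mathbb{M}$ coincide and the map is well defined. Indeed, given $z\in\mathcal{M}$, part~(iv) of Theorem~\ref{T: Field of Representatives} provides the unique asymptotic expansion $z=c+dz$ with $c\in\mathbb{M}$ and $dz\in\mathcal{M}_0$; then $\widehat{z}=\widehat{c}$ since $\widehat{dz}=0$, while $\st_\mathbb{M}(z)=c$ by Definition~\ref{D: M-Standard Part Mapping}. Hence $(q_\mathcal{M}\,|\,\mathbb{M})^{-1}(\widehat{z})=(q_\mathcal{M}\,|\,\mathbb{M})^{-1}(\widehat{c})=c=\st_\mathbb{M}(z)$. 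Uniqueness of the expansion guarantees that $\widehat{z}\mapsto\st_\mathbb{M}(z)$ is well defined on the quotient, and injectivity is automatic since $\widehat{^*\mathbb{C}}$ is a field and $\sigma_\mathbb{M}$ is a non-trivial ring homomorphism.

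Finally, for the displayed chain (\ref{E: Inclusions of Mhat}) I would note that $\mathbb{C}\subseteq\mathbb{M}$ by Lemma~\ref{L: Convex Rings}(i), and that $\sigma_\mathbb{M}$ restricted to the copy $\sigma[\mathbb{C}]$ of $\mathbb{C}$ inside $\widehat{^*\mathbb{C}}$ (Corollary~\ref{C: Embedding of Complex Numbers}) is the identity on $\mathbb{C}$, so the composite embedding $\mathbb{C}\hookrightarrow\widehat{^*\mathbb{C}}\hookrightarrow{^*\mathbb{C}}$ is just the usual inclusion $\mathbb{C}\subseteq{^*\mathbb{C}}$. The only point requiring any care is the dependence on the choice of $\mathbb{M}\in\mathcal{M}ax(\mathcal{M})$: distinct maximal fields produce distinct ranges $\mathbb{M}$, yet each yields a legitimate embedding, which is precisely why the shorthand $\widehat{^*\mathbb{C}}\subseteq{^*\mathbb{C}}$ suppresses that choice. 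I do not anticipate a genuine obstacle here; the entire content rests on the algebraically-closed-field theorem already established.
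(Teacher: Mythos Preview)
Your proposal is correct and follows essentially the same route as the paper: both rely on Theorem~\ref{T: Algebraically Closed Field} to identify $\widehat{^*\mathbb{C}}$ with $\mathbb{M}$ via $q_\mathcal{M}\,|\,\mathbb{M}$, and then observe that $\mathbb{M}\subseteq{^*\mathbb{C}}$. The paper's proof is a one-line citation of that theorem, whereas you additionally spell out the agreement of the two descriptions of $\sigma_\mathbb{M}$ from Definition~\ref{D: Embedding in *C} and the inclusion $\mathbb{C}\subseteq\mathbb{M}$; this extra bookkeeping is sound but not required beyond what the paper already assumes.
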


\begin{proof} The fields $\widehat{^*\mathbb{C}}$ and $\mathbb{M}$ are isomorphic by
Theorem~\ref{T: Algebraically Closed Field} and $\mathbb{M}$ is a subfield of $^*\mathbb{C}$.
\end{proof}

\begin{remark} According
to the above theorem, every maximal field $\mathbb{M}$ determines a unique field embedding
$\sigma_\mathbb{M}$. Conversely, every field embedding 
$\sigma_\mathbb{M}$ of $\widehat{^*\mathbb{C}}$ into $^*\mathbb{C}$ determines 
a maximal field $\mathbb{M}\subset\mathcal{M}$ by
$\sigma_\mathbb{M}[\, \widehat{^*\mathbb{C}}\, ]=\mathbb{M}$. On the ground of the isomorphism
between
$\mathbb{M}$ and $\widehat{^*\mathbb{C}}$ we shall sometimes identify $\mathbb{M}$ with
$\widehat{^*\mathbb{C}}$ by simply letting $\mathbb{M}=\widehat{^*\mathbb{C}}$. In this environment
$\st_\mathbb{M}$ reduces to the quotient
mapping $q_\mathcal{M}:\mathcal{M}\to\widehat{^*\mathbb{C}}$.  We should note that the embedding of 
$\widehat{\mathcal{M}}$ into $^*\mathbb{C}$ is neither unique, nor canonical because the existence of a
maximal field $\mathbb{M}$ depends on the axiom of choice (Lemma~\ref{L: Existence of Maximal Fields}).
\end{remark}
Every convex subring $\mathcal{M}$ of $^*\mathbb{C}$ determines an asymptotic field
$\widehat{\mathcal{M}}$ which we denoted by short by $\widehat{^*\mathbb{C}}$ (Notation~\ref{N:
Suppressing M}). In what follows we shall consider several subrings $\mathcal{M}_1, \mathcal{M}_2$, etc.
simultaneously and we shall prefer the more precise notations  $\widehat{\mathcal{M}_1},
\widehat{\mathcal{M}_2}$, etc. instead of
$\widehat{^*\mathbb{C}}$. Now, suppose that $\mathcal{M}_1\subset\mathcal{M}_2$. Our next goal is to show
that there exists a (non-unique, non-canonical) field embedding of $\widehat{\mathcal{M}_1}$ into
$\widehat{\mathcal{M}_2}$. We should note that the obvious candidate for a canonical embedding
$q_1(z)\to q_2(z)$ of $\widehat{\mathcal{M}_1}$ into
$\widehat{\mathcal{M}_2}$ is not defined correctly
due to the reverse inclusion of the ideals $\mathcal{M}_{2,0}\subset\mathcal{M}_{1,0}$ ($q_1$ and
$q_2$  stand for the corresponding quotient mappings).

\begin{lemma}[Synchronized Embeddings]\label{L: Synchronized Embeddings} Let $\mathcal{M}_1$ and
$\mathcal{M}_2$ be two convex subrings of $^*\mathbb{C}$ such that
$\mathcal{M}_1\subset\mathcal{M}_2$. Let $\mathcal{M}_{1,0}$ and
$\mathcal{M}_{2,0}$ be their maximal ideals and let
$\widehat{\mathcal{M}_1}=\mathcal{M}_1/\mathcal{M}_{1,0}$ and
$\widehat{\mathcal{M}_2}=\mathcal{M}_2/\mathcal{M}_{2,0}$ be the asymptotic fields generated by 
$\mathcal{M}_1$ and
$\mathcal{M}_2$, respectively (Definition~\ref{D: Asymptotic Fields}).  Then for every field embedding
$\sigma_1$ of $\widehat{\mathcal{M}_1}$ into $^*\mathbb{C}$ there exists a field embedding $\sigma_2$ of
$\widehat{\mathcal{M}_2}$ into  $^*\mathbb{C}$ such that
$\sigma_1[\widehat{\mathcal{M}_1}]\subset\sigma_2[\widehat{\mathcal{M}_2}]$. We say that the
\textbf{embeddings $\sigma_1$ and $\sigma_2$ are} \textbf{synchronized}. A similar result holds for every
finite many convex subrings $\mathcal{M}_1\subset\dots\subset\mathcal{M}_n$ of $^*\mathbb{C}$.
\end{lemma}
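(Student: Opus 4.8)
The plan is to reduce everything to the correspondence, established in Theorem~\ref{T: Embedding in *C} and the Remark following it, between field embeddings of an asymptotic field into $^*\mathbb{C}$ and maximal fields of the generating convex ring. There is essentially no computation: the lemma is a bookkeeping consequence of Lemma~\ref{L: Existence of Maximal Fields} and Theorem~\ref{T: Embedding in *C}.

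First I would unpack the datum $\sigma_1$. By the Remark following Theorem~\ref{T: Embedding in *C}, the given embedding is of the form $\sigma_1=\sigma_{\mathbb{M}_1}$ for the maximal field $\mathbb{M}_1:=\sigma_1[\widehat{\mathcal{M}_1}]\in\mathcal{M}ax(\mathcal{M}_1)$, and $\sigma_1$ is the inverse of the quotient map $q_{\mathcal{M}_1}\rest\mathbb{M}_1$. In particular $\mathbb{C}\subseteq\mathbb{M}_1\subseteq\mathcal{M}_1$, and since $\mathcal{M}_1\subset\mathcal{M}_2$ we also get $\mathbb{C}\subseteq\mathbb{M}_1\subseteq\mathcal{M}_2$. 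Then I would enlarge $\mathbb{M}_1$ inside the bigger ring: applying Lemma~\ref{L: Existence of Maximal Fields} with $\mathbb{K}=\mathbb{M}_1$ and with $\mathcal{M}_2$ in the role of $\mathcal{M}$, I obtain a maximal field $\mathbb{M}_2\in\mathcal{M}ax(\mathcal{M}_2)$ with $\mathbb{M}_1\subseteq\mathbb{M}_2$. (This inclusion is automatically proper when $\mathcal{M}_1\subsetneqq\mathcal{M}_2$: if $\mathbb{M}_2=\mathbb{M}_1\subseteq\mathcal{M}_1$, then the characterization $\mathcal{M}_2=\{z\in{^*\mathbb{C}}\mid |z|\le\varepsilon\ \text{for some}\ \varepsilon\in(\mathbb{M}_2)_+\}$ from part~(iii) of Theorem~\ref{T: Field of Representatives}, together with the convexity of $\mathcal{M}_1$, forces $\mathcal{M}_2\subseteq\mathcal{M}_1$, a contradiction. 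Properness is not needed for the statement, but it is worth noting.)

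Now let $\sigma_2:=\sigma_{\mathbb{M}_2}$ be the field embedding of $\widehat{\mathcal{M}_2}$ into $^*\mathbb{C}$ produced by Theorem~\ref{T: Embedding in *C} applied to the pair $(\mathcal{M}_2,\mathbb{M}_2)$; that theorem guarantees $\sigma_2$ is a field embedding with $\sigma_2[\widehat{\mathcal{M}_2}]=\mathbb{M}_2$. Then $\sigma_1[\widehat{\mathcal{M}_1}]=\mathbb{M}_1\subseteq\mathbb{M}_2=\sigma_2[\widehat{\mathcal{M}_2}]$, which is exactly the synchronization claim. For the final assertion about a chain $\mathcal{M}_1\subset\dots\subset\mathcal{M}_n$, I would simply iterate: starting from $\sigma_1$, build $\sigma_2$ synchronized with it as above, then $\sigma_3$ synchronized with $\sigma_2$, and so on; transitivity of set inclusion of the images yields $\sigma_1[\widehat{\mathcal{M}_1}]\subset\dots\subset\sigma_n[\widehat{\mathcal{M}_n}]$.

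The only genuinely delicate point — and the step I would be most careful with — is the very first one: ensuring that the image of the \emph{arbitrary} field embedding $\sigma_1$ is a maximal field sitting \emph{inside} $\mathcal{M}_1$, rather than some isomorphic copy of $\widehat{\mathcal{M}_1}$ living elsewhere in $^*\mathbb{C}$. This is precisely why one appeals to the Remark after Theorem~\ref{T: Embedding in *C} (which identifies the admissible embeddings with the maximal fields of $\mathcal{M}_1$) instead of arguing in the abstract; once $\mathbb{M}_1\subseteq\mathcal{M}_1$ is in hand, the rest is a direct application of the two cited results.
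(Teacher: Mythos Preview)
Your argument is correct and follows essentially the same route as the paper: pass from the embedding $\sigma_1$ to a maximal field $\mathbb{M}_1\in\mathcal{M}ax(\mathcal{M}_1)$, enlarge it via Lemma~\ref{L: Existence of Maximal Fields} to some $\mathbb{M}_2\in\mathcal{M}ax(\mathcal{M}_2)$, and take $\sigma_2=\sigma_{\mathbb{M}_2}$. The only difference is that the paper's proof simply \emph{starts} from an arbitrary $\mathbb{M}_1\in\mathcal{M}ax(\mathcal{M}_1)$ and then sets $\sigma_1=\sigma_{\mathbb{M}_1}$, whereas you begin from an arbitrary embedding and invoke the Remark after Theorem~\ref{T: Embedding in *C} to recover $\mathbb{M}_1$; your version is thus a touch more explicit about why every admissible $\sigma_1$ arises this way.
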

\begin{proof} Let $\mathbb{M}_1\in\mathcal{M}ax(\mathcal{M}_1)$ (chosen arbitrarily). There exists
$\mathbb{M}_2\in\mathcal{M}ax(\mathcal{M}_2)$ such that $\mathbb{M}_1\subset\mathbb{M}_2$ by
Lemma~\ref{L: Existence of Maximal Fields}, since $\mathbb{M}_1\subseteq\mathcal{M}_2$ by assumption.
We let $\sigma_1=\sigma_{\mathbb{M}_1}$ and $\sigma_2=\sigma_{\mathbb{M}_2}$ (Definition~\ref{D:
Embedding in *C}). We have
$\sigma_1[\widehat{\mathcal{M}_1}]=\mathbb{M}_1\subset\mathbb{M}_2=
\sigma_2[\widehat{\mathcal{M}_2}]$ by Theorem~\ref{T: Embedding in *C}.
\end{proof}
\begin{theorem}[Two Fields]\label{T: Two Fields} Let $\mathcal{M}_1$ and
$\mathcal{M}_2$ be two convex subrings of
$^*\mathbb{C}$ such that $\mathcal{M}_1\subseteq\mathcal{M}_2$. Then there exists (non-unique) a field
embedding $\sigma :
\widehat{\mathcal{M}_1}\to
\widehat{\mathcal{M}_2}$ of $\widehat{\mathcal{M}_1}$ into
$\widehat{\mathcal{M}_2}$.  We shall write all this simply as inclusions $
\widehat{\mathcal{M}_1}\subset\widehat{\mathcal{M}_2}$.
A similar result holds for every
finite many convex subrings $\mathcal{M}_1\subset\dots\subset\mathcal{M}_n$ of $^*\mathbb{C}$.
\end{theorem}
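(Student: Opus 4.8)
The plan is to reduce the statement to the already-established Lemma~\ref{L: Synchronized Embeddings} together with Theorem~\ref{T: Embedding in *C}, by composing the resulting embeddings inside $^*\mathbb{C}$. The reason one cannot simply use the naive map $q_1(z)\mapsto q_2(z)$ is, as already observed, that $\mathcal{M}_{2,0}\subseteq\mathcal{M}_{1,0}$ rather than the reverse, so this assignment is not well defined on cosets; passing through $^*\mathbb{C}$ circumvents this.

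First I would fix a maximal field $\mathbb{M}_1\in\mathcal{M}ax(\mathcal{M}_1)$ and let $\sigma_1=\sigma_{\mathbb{M}_1}$ be the associated field embedding of $\widehat{\mathcal{M}_1}$ into $^*\mathbb{C}$ (Definition~\ref{D: Embedding in *C}); by Theorem~\ref{T: Embedding in *C} its range is exactly $\mathbb{M}_1$. Since $\mathcal{M}_1\subseteq\mathcal{M}_2$, Lemma~\ref{L: Synchronized Embeddings} supplies a field embedding $\sigma_2$ of $\widehat{\mathcal{M}_2}$ into $^*\mathbb{C}$, synchronized with $\sigma_1$: its range is some $\mathbb{M}_2\in\mathcal{M}ax(\mathcal{M}_2)$ with $\mathbb{M}_1=\sigma_1[\widehat{\mathcal{M}_1}]\subseteq\sigma_2[\widehat{\mathcal{M}_2}]=\mathbb{M}_2$.

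Next I would define $\sigma=\sigma_2^{-1}\circ\sigma_1:\widehat{\mathcal{M}_1}\to\widehat{\mathcal{M}_2}$. This composition is well defined precisely because $\sigma_1[\widehat{\mathcal{M}_1}]\subseteq\sigma_2[\widehat{\mathcal{M}_2}]$, so every value of $\sigma_1$ lies in the domain $\mathbb{M}_2$ of $\sigma_2^{-1}$. Since $\sigma_2$ is a field isomorphism onto $\mathbb{M}_2$ (Theorem~\ref{T: Embedding in *C}), the inverse $\sigma_2^{-1}\colon\mathbb{M}_2\to\widehat{\mathcal{M}_2}$ is a field isomorphism, and $\sigma_1$ is an injective field homomorphism; hence $\sigma$ is an injective field homomorphism, i.e.\ a field embedding. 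Writing this as the inclusion $\widehat{\mathcal{M}_1}\subseteq\widehat{\mathcal{M}_2}$ is then merely a notational convention. Non-uniqueness of $\sigma$ is inherited from the non-uniqueness of the maximal fields $\mathbb{M}_1,\mathbb{M}_2$, whose existence rests on the axiom of choice (Lemma~\ref{L: Existence of Maximal Fields}).

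For finitely many convex subrings $\mathcal{M}_1\subset\dots\subset\mathcal{M}_n$ I would iterate: pick $\mathbb{M}_1\in\mathcal{M}ax(\mathcal{M}_1)$ and apply Lemma~\ref{L: Existence of Maximal Fields} successively, using $\mathbb{M}_{k-1}\subseteq\mathcal{M}_k$, to obtain a chain $\mathbb{M}_1\subseteq\mathbb{M}_2\subseteq\dots\subseteq\mathbb{M}_n$ with $\mathbb{M}_k\in\mathcal{M}ax(\mathcal{M}_k)$; then set $\sigma_k=\sigma_{\mathbb{M}_k}$ and compose as above to get embeddings $\widehat{\mathcal{M}_k}\to\widehat{\mathcal{M}_{k+1}}$. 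The only point genuinely requiring care — and the main, though mild, obstacle — is checking that $\sigma_2^{-1}\circ\sigma_1$ does land in $\widehat{\mathcal{M}_2}$, which is exactly what synchronization guarantees; everything else is formal verification that composites and inverses of field embeddings are field embeddings.
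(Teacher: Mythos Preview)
Your proof is correct and follows essentially the same approach as the paper: obtain synchronized embeddings $\sigma_1,\sigma_2$ via Lemma~\ref{L: Synchronized Embeddings} and set $\sigma=\sigma_2^{-1}\circ\sigma_1$. You supply more detail than the paper does (why the composite is well defined and why it is a field embedding), but the idea is identical.
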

\begin{proof}
Let $\sigma_1$ and $\sigma_2$ be two synchronized
embeddings of $\widehat{\mathcal{M}_1}$ and $\widehat{\mathcal{M}_2}$ into $^*\mathbb{C}$, respectively,
in the sense of the above lemma. Then $\sigma : \widehat{\mathcal{M}_1}\to \widehat{\mathcal{M}_2}$, defined
by the formula $\sigma=\sigma_2^{-1}\circ\sigma_1$, is a field imbedding of $\widehat{\mathcal{M}_1}$ into
$\widehat{\mathcal{M}_2}$. 
\end{proof}

\begin{remark}[Non-Canonnical]\label{R: Non-Canonical}We should note that the field embedding $\sigma$ is
neither unique, nor canonical in the sense that it can not be determined uniquely in the terms of used in the
definitions of $\widehat{\mathcal{M}_1}$ and $\widehat{\mathcal{M}_2}$. Indeed, the embedding $\sigma$
depends on the choices of maximal fields the existence of which was proved with the help of the Zorn lemma
(Lemma~\ref{L: Existence of Maximal Fields}).
\end{remark}
\section{Examples of Asymptotic Fields}\label{S: Examples of Asymptotic Fields}
	In this section we present several examples of asymptotic fields closely related with the convex rings in
Section~\ref{S: Examples of Convex Rings}.

\begin{examples}[Examples of Asymptotic Fields]\label{Ex: Examples of Asymptotic Fields} Let $\rho$ be a
positive infinitesimal in $^*\mathbb{R}$ and let
$\mathcal{L}_\rho(^*\mathbb{C})$, $\mathcal{F}_\rho(^*\mathbb{C}), 
\mathcal{M}_\rho(^*\mathbb{C})$ and $\mathcal{E}_\rho(^*\mathbb{C})$ be the convex subrings of
$^*\mathbb{C}$ defined in Section~\ref{S: Examples of Convex Rings}. We denote by
\begin{align}\label{E: Examples}
&\widehat{\mathcal{F}}=\mathcal{F}/\mathcal{I}=\mathbb{C},
\notag\\
&\widehat{\mathcal{L}_\rho}=\mathcal{L}_\rho/\mathcal{L}_{\rho,0},\notag\\
&\widehat{\mathcal{F}_\rho}=\mathcal{F}_\rho/\mathcal{I}_\rho,
\notag\\
&\widehat{\mathcal{M}_\rho}=\mathcal{M}_\rho/\mathcal{N}_\rho={^\rho\mathbb{C}},\notag\\
&\widehat{\mathcal{E}_\rho}=\mathcal{E}_\rho/\mathcal{E}_{\rho,0},\notag\\
&\widehat{^*\mathbb{C}}={^*\mathbb{C}}/\{0\}={^*\mathbb{C}},\notag
\end{align}
the corresponding asymptotic fields. We denote by $\mathbb{R}, \Re e(\widehat{\mathcal{L}_\rho}), 
\Re e(\widehat{\mathcal{F}_\rho}), \Re e(^\rho\mathbb{C})={^\rho\mathbb{R}},\Re
e(\widehat{\mathcal{E}_\rho})$ and ${^*\mathbb{R}}$ their real parts,
respectively (Definition~\ref{D: Asymptotic Fields}). We call the field
$\widehat{\mathcal{L}_\rho}$
\textbf{multiple logarithmic field} because $\widehat{\log_\nu{(1/\rho)}}$ is a typical element of
$\widehat{\mathcal{L}_\rho}$, where 
$\nu$ is an infinitely large number in $^*\mathbb{N}$ (Example~\ref{Ex: Multiple-Logarithmic Rings}). We call 
the field $\widehat{\mathcal{F}_\rho}$ \textbf{logarithmic field} because $\widehat{\ln{\rho}}$ is 
a typical element of
$\widehat{\mathcal{F}_\rho}$ (Example~\ref{Ex: Logarithmic Rings}). We call $^\rho\mathbb{C}$
\textbf{Robinson field of (complex) asymptotic numbers} because $\widehat{\rho^m}$, where
$m\in\mathbb{Z}$, and more generally the series of the form
$\sum_{n=m}^\infty c_n\widehat{\rho}\,^n$, where 
$c_n\in\mathbb{C}$, are typical elements of $^\rho\mathbb{C}$ (see the remark
below). We call the field
$\widehat{\mathcal{E}_\rho}$
\textbf{logarithmic-exponential} field because $\widehat{e^{1/\rho}}$ and
$\widehat{\ln{\rho}}$ are typical element of  $\widehat{\mathcal{E}_\rho}$ (Example~\ref{Ex:
Logarithmic-Exponential Rings}).
\end{examples}
\begin{theorem}\label{T: The Examples}
\begin{description}
\item[(i)] Each of $\mathbb{C}, \widehat{\mathcal{L}_\rho}, \widehat{\mathcal{F}_\rho}, 
\widehat{\mathcal{M}_\rho}={^\rho\mathbb{C}}, \widehat{\mathcal{E}_\rho}$ and $^*\mathbb{C}$ is an
\textbf{algebraically closed subfield} of
$^*\mathbb{C}$. 
\item[(ii)] There exists an embeddings of each of these fields into $^*\mathbb{C}$ such that
\begin{equation}\label{E: ChainFields} \notag
\mathbb{C}\subset{\widehat{\mathcal{L}_\rho}}\subset
{\widehat{\mathcal{F}_\rho}}\subset\widehat{\mathcal{M}_\rho}\subset{\widehat{\mathcal{E}_\rho}}
\subset{^*\mathbb{C}}.
\end{equation}
\item[(iii)] Consequently, each of $\Re e({\widehat{\mathcal{L}_\rho}}), 
\Re e({\widehat{\mathcal{F}_\rho}}), \Re e(\widehat{\mathcal{M}_\rho})={^\rho\mathbb{R}}$ and $\Re
e({\widehat{\mathcal{E}_\rho}})$  is a
\textbf{real closed subfield} of $^*\mathbb{R}$ and 
\begin{equation}\label{E: ChainRealFields}\notag
\mathbb{R}\subset\Re e(\widehat{\mathcal{L}_\rho})\subset
\Re e(\widehat{\mathcal{F}_\rho})\subset\Re e(\widehat{\mathcal{M}_\rho})\subset\Re
e(\widehat{\mathcal{E}_\rho})\subset{^*\mathbb{R}}.
\end{equation}
\end{description}
\end{theorem}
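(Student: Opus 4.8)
The plan is to obtain all three parts as corollaries of the structure theory of asymptotic fields developed above, together with the explicit ring inclusions recorded in~(\ref{E: Ring Inclusions}); the genuinely new content has already been proved, so what remains here is essentially bookkeeping. For part~(i), first I would recall that each of $\mathcal{F}(^*\mathbb{C})$, $\mathcal{L}_\rho$, $\mathcal{F}_\rho$, $\mathcal{M}_\rho$, $\mathcal{E}_\rho$ and $^*\mathbb{C}$ was shown to be a convex subring of $^*\mathbb{C}$, with the maximal ideal indicated, in Examples~\ref{Ex: Finite Numbers}, \ref{Ex: Multiple-Logarithmic Rings}, \ref{Ex: Logarithmic Rings}, \ref{Ex: Robinson Rings}, \ref{Ex: Logarithmic-Exponential Rings} and~\ref{Ex: Non-Standard Complex Numbers} (the non-trivial cases via the generating-sequence criterion, Lemma~\ref{L: Generated Rings}). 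By Theorem~\ref{T: Algebraically Closed Field} each associated asymptotic field $\widehat{\mathcal{M}}$ is then an algebraically closed field, and by Theorem~\ref{T: Embedding in *C} each choice of a maximal field $\mathbb{M}\in\mathcal{M}ax(\mathcal{M})$ realizes $\widehat{\mathcal{M}}$ as an algebraically closed subfield of $^*\mathbb{C}$. The two degenerate cases are the identifications $\widehat{\mathcal{F}}=\mathcal{F}/\mathcal{I}\cong\mathbb{C}$ (Corollary~\ref{C: An Isomorphism}) and $\widehat{^*\mathbb{C}}={^*\mathbb{C}}/\{0\}={^*\mathbb{C}}$.

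For parts~(ii) and~(iii), I would use that the inclusions~(\ref{E: Ring Inclusions}) give a chain of convex subrings $\mathcal{F}\subset\mathcal{L}_\rho\subset\mathcal{F}_\rho\subset\mathcal{M}_\rho\subset\mathcal{E}_\rho\subset{^*\mathbb{C}}$. Applying the finite version of Lemma~\ref{L: Synchronized Embeddings} (equivalently, iterating Lemma~\ref{L: Existence of Maximal Fields} to pick nested maximal fields $\mathbb{C}=\mathbb{M}_1\subseteq\mathbb{M}_2\subseteq\cdots\subseteq\mathbb{M}_6={^*\mathbb{C}}$ with $\mathbb{M}_k\in\mathcal{M}ax(\mathcal{M}_k)$) yields synchronized embeddings $\sigma_k=\sigma_{\mathbb{M}_k}$ of the six asymptotic fields into $^*\mathbb{C}$ with increasing ranges; composing consecutive ones as in the proof of Theorem~\ref{T: Two Fields} gives the chain of embeddings asserted in~(ii). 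For~(iii), Theorem~\ref{T: Algebraically Closed Field} already states that the real part of each asymptotic field is a real closed field (Artin--Schreier), and since $\sigma_{\mathbb{M}_k}$ is the inverse of the order-preserving quotient map $q_{\mathcal{M}_k}$ (Definition~\ref{D: Asymptotic Fields}), it carries $\Re e(\widehat{\mathcal{M}_k})$ order-isomorphically onto $\mathbb{M}_k\cap{^*\mathbb{R}}$; the sets $\mathbb{M}_k\cap{^*\mathbb{R}}$ are nested because the $\mathbb{M}_k$ are, so the real chain in~(iii) follows simply by restricting the embeddings of~(ii) to the real parts.

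The only step that is more than routine is the \emph{simultaneity} demanded in~(ii)--(iii): the individual embeddings furnished for part~(i) are non-canonical, since they depend on a Zorn's-lemma choice of maximal field, so to make all five inclusions hold at once one must choose the $\mathbb{M}_k$ compatibly. This is precisely what the finite iteration of Lemma~\ref{L: Synchronized Embeddings} arranges, so even this point reduces to an already-established statement. Everything else --- matching each example to a convex ring, reading off its maximal ideal, and quoting Theorems~\ref{T: Algebraically Closed Field} and~\ref{T: Embedding in *C} --- is immediate.
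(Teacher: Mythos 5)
Your proof is correct and follows essentially the same route as the paper's: part (i) by recognizing each field as an asymptotic field $\widehat{\mathcal{M}}$ for a convex subring and quoting Theorem~\ref{T: Algebraically Closed Field}, part (ii) from the chain of convex-ring inclusions together with the embedding machinery, and part (iii) as a consequence. If anything you are more careful than the paper's own two-line argument for (ii), which cites only Theorem~\ref{T: Embedding in *C}: you correctly flag that the simultaneous validity of all the inclusions requires the nested choice of maximal fields provided by Lemma~\ref{L: Synchronized Embeddings}, which is the right justification.
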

\begin{proof} (i) The result follows directly from Theorem~\ref{T: Algebraically Closed Field} because all these
fields are asymptotic fields, i.e. fields of the form
$\widehat{\mathcal{M}}$ for some convex subring $\mathcal{M}$ of $^*\mathbb{C}$ (Section~\ref{S: Examples
of Convex Rings}). 

	(ii) follows from Theorem~\ref{T: Embedding in *C}, since
$\mathcal{F}(^*\mathbb{C})=\mathcal{F}\subset\mathcal{L}_\rho\subset\mathcal{F}_\rho\subset
\mathcal{M}_\rho\subset
\mathcal{E}_\rho\subset{^*\mathbb{C}}$ (see the end of Section~\ref{S: Examples of Convex
Rings}).

	(iii) follows directly from (i) and (ii).
\end{proof}
\begin{example}[A. Robinson's Asymptotic Numbers]\label{Ex: A. Robinson's Asymptotic Numbers}
 The field of the \textbf{real $\rho$-asymptotic numbers} ${^\rho\mathbb{R}}$
is introduced by  A. Robinson~\cite{aRob73}  and it is intimately connected with the asymptotic
expansions of standard functions (Lightstone\&Robinson~\cite{LiRob}). The fields
$^\rho\mathbb{R}$ and $^\rho\mathbb{C}$ are also known as \textbf{Robinson's valuation fields} because
it is endowed with a non-archimedean valuation $v:{^\rho\mathbb{C}}\to\mathbb{R}\cup\{\infty\}$ 
defined by 
\[
v(\widehat{z})=\sup\{r\in\mathbb{Q}\mid z/\rho^r\approx 0\}, \quad
\widehat{z}\not= 0, 
\]
and $v(0)=\infty$. We also have the following {\bf valuation formula} (due to A. Robinson):
$v(\widehat{z})={\rm st}\left(\ln{|z|}/\ln{\rho}\right)$ if
$z\in\mathcal{M}_\rho\setminus\mathcal{N}_\rho$
and $v(\widehat{z})=\infty$ if $z\in\mathcal{N}_\rho$. Notice that
$v(\widehat{\rho^x})=\st(x)$ for every finite number $x$ in
$^*\mathbb{R}$. The {\bf valuation metric} $d_v: {^\rho\mathbb{C}}\times
{^\rho\mathbb{C}}\to\mathbb{R}$ is defined by $d_v(\widehat{z},
\widehat{\zeta})=e^{-v(\widehat{z-\zeta})}$ under the convention that
$e^{-\infty}=0$. We also define the \textbf{valuation norm} $|\widehat{z}|_v=e^{-v(\widehat{z})}$.
We should note that the valuation topology and the order topology on
$^\rho\mathbb{C}$ are the same. Also, the series of the form $\sum_{n=m}^\infty
c_n\widehat{\rho}\,^n$ (mentioned earlier) are always convergent. For more recent results on
$^\rho\mathbb{R}$ we refer to (Todorov\&Wolf~\cite{TodWolf}). 
\end{example}
\begin{example}[Logarithmic-Exponential Power Series]\label{Ex: Logarithmic-Exponential Power Series}
The real part 
$\Re e(\widehat{\mathcal{E}_\rho})$ of $\widehat{\mathcal{E}_\rho}$ is a field extension of the
field
$\mathbb{R}((t))^{LE}$ of the {\em logarithmic-exponential power series} introduced in  L. Van den Dries, A. MacIntyre and D. Marker~ 
\cite{DriesMacintyreMarker}). We should mention that the field
$\mathbb{R}((t))^{LE}$ has several important applications including the solution of a Hardy's asymptotic
open problem. For more on the ordered exponential fields we refer to  S. Kuhlmann~\cite{KuhlKuhlShel},
where the reader will find more references on the subject.
\end{example}
\section{Cantor Completeness}

	We show that all asymptotic fields are Cantor complete and some are algebraically saturated. 

\begin{definition}[Cantor Completeness] \label{D: Cantor Completeness} Let $\mathbb{K}$ be an
ordered field (totally ordered field) and $\kappa$ be an infinite cardinal. 
\begin{description}
\item[(i)] We say that $\mathbb{K}$ is
\textbf{Cantor $\kappa$-complete} if every collection of fewer than $\kappa$ closed intervals in $\mathbb{K}$
with the finite intersection property (f.i.p.) has a non-empty intersection. We say that
$\mathbb{K}$ is simply {\bf Cantor complete} (or, $\mathbb{K}$ is a \textbf{semi-$\eta_1$-set}) if it is Cantor
$\aleph_1$-complete, where
$\aleph_1$ is the successor of $\aleph_0=\card(\mathbb{N})$. This means every nested sequence of
closed intervals in $\mathbb{K}$ has a non-empty intersection.
\item[(ii)]  We say that $\mathbb{K}(i)$ is Cantor $\kappa$-complete if $\mathbb{K}$ is
Cantor $\kappa$-complete.
\end{description}
\end{definition}

	 It is easy to show that a Cantor complete ordered field must be
sequentially complete. Two counter-examples to the converse are described
in the next section. The terminology  for semi-$\eta_1$-sets is introduced in 
Dales\&Woodin~\cite{DalWoodin} (see pp.7, 35, 50, 98). 
\begin{theorem}[Cantor Completeness] \label{T: Cantor Completeness} Let $^*\mathbb{C}$ be
$\kappa$-saturated for some infinite cardinal $\kappa$. Then every asymptotic subfield
of $^*\mathbb{C}$ is Cantor $\kappa$-complete. In particular, the asymptotic
fields ${\widehat{\mathcal{L}_\rho}}, {\widehat{\mathcal{F}_\rho}}, {^\rho\mathbb{C}}$ and
${\widehat{\mathcal{E}_\rho}}$ and their real parts
$\Re e({\widehat{\mathcal{L}_\rho}}), \Re e({\widehat{\mathcal{F}_\rho}}),
{^\rho\mathbb{R}}$ and $\Re e({\widehat{\mathcal{E}_\rho}})$ (Section~\ref{S:
Examples of Asymptotic Fields}) are all Cantor
$\kappa$-complete.  
\end{theorem}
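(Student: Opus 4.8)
The plan is to reduce the statement to the real part, lift the given closed intervals to \emph{internal} intervals of $^*\mathbb{R}$, invoke the Saturation Principle, and then transport the resulting witness back into the asymptotic field via an $\mathbb{M}$-standard part map. First I would fix the convex subring $\mathcal{M}\subseteq{^*\mathbb{C}}$ with $\widehat{^*\mathbb{C}}=\widehat{\mathcal{M}}$, choose a maximal field $\mathbb{M}\in\mathcal{M}ax(\mathcal{M})$ (Lemma~\ref{L: Existence of Maximal Fields}), and use Theorem~\ref{T: Embedding in *C} to regard $\widehat{^*\mathbb{C}}$ as the subfield $\mathbb{M}$ of $^*\mathbb{C}$, with real part $\widehat{^*\mathbb{R}}=\Re e(\mathbb{M})\subseteq{^*\mathbb{R}}$. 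By Definition~\ref{D: Cantor Completeness}(ii) together with $\widehat{^*\mathbb{C}}=\widehat{^*\mathbb{R}}(i)$ (Theorem~\ref{T: Asymptotic Fields}(i)), it suffices to show $\widehat{^*\mathbb{R}}$ is Cantor $\kappa$-complete, so let $\{[a_\gamma,b_\gamma]\}_{\gamma\in\Gamma}$ be closed intervals of $\widehat{^*\mathbb{R}}$ with $\card(\Gamma)<\kappa$ and the finite intersection property, where now each endpoint $a_\gamma\le b_\gamma$ is a genuine element of $^*\mathbb{R}$.

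Next I would pass to the internal sets $I_\gamma=\{x\in{^*\mathbb{R}}\mid a_\gamma\le x\le b_\gamma\}$; choosing real representatives of $a_\gamma$ and $b_\gamma$ exhibits each $I_\gamma$ as an internal subset of $^*\mathbb{C}$ (Definition~\ref{D: Internal Sets}). Since the order on $\widehat{^*\mathbb{R}}$ is the one induced from $^*\mathbb{R}$, any $c\in\widehat{^*\mathbb{R}}$ witnessing $\bigcap_{\gamma\in F}[a_\gamma,b_\gamma]\ne\varnothing$ for a finite $F$ also lies in $\bigcap_{\gamma\in F}I_\gamma$, so $\{I_\gamma\}_{\gamma\in\Gamma}$ has the finite intersection property in $^*\mathbb{C}$. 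Because $\card(\Gamma)<\kappa\le\kappa$, the Saturation Principle (Theorem~\ref{T: Saturation Principle in *C}) produces $z\in\bigcap_{\gamma\in\Gamma}I_\gamma$, that is, $z\in{^*\mathbb{R}}$ with $a_\gamma\le z\le b_\gamma$ for all $\gamma$. Fixing one $\gamma_0\in\Gamma$, the estimate $|z|\le|a_{\gamma_0}|+|b_{\gamma_0}|$ with $|a_{\gamma_0}|+|b_{\gamma_0}|\in\mathcal{M}$ and convexity of $\mathcal{M}$ give $z\in{^*\mathbb{R}}\cap\mathcal{M}$.

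Finally I would project $z$ back down: write $z=c+dz$ with $c=\st_\mathbb{M}(z)\in\mathbb{M}$ and $dz\in\mathcal{M}_0$ (Definition~\ref{D: M-Standard Part Mapping}, Lemma~\ref{L: M-Standard Part Mapping}). One checks $c\in\widehat{^*\mathbb{R}}=\Re e(\mathbb{M})$ because $z$ is real; here I would record a small lemma (the evident real analogue of Theorem~\ref{T: Field of Representatives}, or: maximal fields are closed under complex conjugation, whence $\mathbb{M}=(\mathbb{M}\cap{^*\mathbb{R}})(i)$ and $\mathbb{M}\cap\mathcal{M}_0=\{0\}$ as in the proof of Lemma~\ref{L: Isomorphic Fields}) showing that the imaginary part of $c$ lies in $\mathbb{M}\cap\mathcal{M}_0$ and hence vanishes. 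To see $c\in[a_\gamma,b_\gamma]$ for every $\gamma$: if $c<a_\gamma$ then $a_\gamma-c\in\mathbb{M}_+:=\{|w|\mid w\in\mathbb{M},\,w\ne0\}$, while $a_\gamma-c\le z-c=dz$ forces $|dz|\ge a_\gamma-c$, contradicting the characterization $\mathcal{M}_0=\{w\in{^*\mathbb{C}}\mid |w|<\varepsilon\text{ for all }\varepsilon\in\mathbb{M}_+\}$ of Theorem~\ref{T: Field of Representatives}(iii); the case $c>b_\gamma$ is symmetric. Thus $c\in\bigcap_{\gamma\in\Gamma}[a_\gamma,b_\gamma]$ in $\widehat{^*\mathbb{R}}$, and the assertions for $\widehat{\mathcal{L}_\rho},\widehat{\mathcal{F}_\rho},{^\rho\mathbb{C}},\widehat{\mathcal{E}_\rho}$ and their real parts follow since these are asymptotic fields embedded in $^*\mathbb{C}$ (Theorem~\ref{T: The Examples}).

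The step I expect to be the real obstacle is the passage to internal intervals with the correct endpoints: with a careless choice of representatives in $\mathcal{M}$ the sets $I_\gamma$ need \emph{not} have the finite intersection property (two degenerate intervals $\{\alpha\}$ whose chosen endpoints differ by a nonzero element of $\mathcal{M}_0$ lift to disjoint singletons of $^*\mathbb{R}$), which is precisely why one first embeds $\widehat{^*\mathbb{C}}$ as a genuine subfield $\mathbb{M}$ of $^*\mathbb{C}$ so that interval endpoints become bona fide non-standard reals. The only other delicate point, confining the standard part $c$ to each closed interval, is handled exactly by the convexity characterization of $\mathcal{M}_0$ in Theorem~\ref{T: Field of Representatives}(iii).
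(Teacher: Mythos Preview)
Your argument is correct and follows the same broad arc as the paper (lift to internal intervals of $^*\mathbb{R}$, saturate, push back down), but the two implementations place the work in different spots. The paper never invokes a maximal field or $\st_\mathbb{M}$: it lifts each $a_\gamma$ to an arbitrary real representative $\alpha_\gamma\in{^*\mathbb{R}}\cap\mathcal{M}$, then \emph{chooses} $\beta_\gamma\in q_\mathcal{M}^{-1}(b_\gamma)\cap{^*\mathbb{R}}$ with $\alpha_\gamma\le\beta_\gamma$ (splitting into the cases $a_\gamma<b_\gamma$ and $a_\gamma=b_\gamma$), applies saturation to the family $\{[\alpha_\gamma,\beta_\gamma]\}$, and simply projects the resulting $x$ via the quotient map $q_\mathcal{M}$. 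Since $q_\mathcal{M}$ is order-preserving on $^*\mathbb{R}\cap\mathcal{M}$ and lands in $\widehat{^*\mathbb{R}}$ by definition, both the realness of the witness and the inequalities $a_\gamma\le q_\mathcal{M}(x)\le b_\gamma$ come for free. Your route trades that careful lift for canonical endpoints in $\mathbb{M}\subset{^*\mathbb{R}}$, which gives the clean FIP transfer you correctly flag as the obstacle, but you then pay at the end with the $\st_\mathbb{M}$ projection and the two checks you outline: realness of $c$ (your conjugation lemma---most cleanly handled by building $\mathbb{M}$ as $\mathbb{M}_\mathbb{R}(i)$ for a maximal ordered subfield $\mathbb{M}_\mathbb{R}\subseteq{^*\mathbb{R}}\cap\mathcal{M}$, so that $\st_\mathbb{M}$ manifestly sends reals to reals) and confinement to $[a_\gamma,b_\gamma]$ via the $\mathbb{M}_+$-characterization of $\mathcal{M}_0$. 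Both arguments are sound; the paper's is more elementary, yours more structural.
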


\begin{proof}  Every asymptotic subfield of $^*\mathbb{C}$ is of the form $\widehat{\mathcal{M}}$ for some
convex subring $\mathcal{M}$ of $^*\mathbb{C}$ (Definition~\ref{D: Asymptotic Fields}). Thus
$\widehat{\mathcal{M}}=\mathbb{K}(i)$, where $\mathbb{K}=\widehat{^*\mathbb{R}\cap\mathcal{M}}$
is the real part of $\widehat{\mathcal{M}}$. Notice that $\widehat{^*\mathbb{R}\cap\mathcal{M}}$ is a totally
ordered field as a real closed field (Theorem~\ref{T: Algebraically Closed Field}). We have to show that
$\widehat{^*\mathbb{R}\cap\mathcal{M}}$ is Cantor $\kappa$-complete. Suppose that
$\{[a_\gamma, b_\gamma]\}_{\gamma\in
\Gamma}$ is a family of closed intervals in $\widehat{^*\mathbb{R}\cap\mathcal{M}}$ with the finite
intersection property and
$\card(\Gamma) <
\kappa$. We have $a_\gamma = q_\mathcal{M}(\alpha_\gamma)$ for some $\alpha_\gamma$
in $^*\mathbb{R}\cap\mathcal{M}$. Define $B_\gamma = \{\beta\in{^*\mathbb{R}\cap
q_\mathcal{M}^{-1}}(b_\gamma) : 
\alpha_\gamma\leq
\beta
\}$ and observe that $B_\gamma \neq \varnothing$ for each $\gamma \in \Gamma$.
Indeed, if $a_\gamma < b_\gamma$, then $B_\gamma={^*\mathbb{R}\cap q_\mathcal{M}^{-1}}(b_\gamma)$. If
$a_\gamma= b_\gamma$, then $\alpha_\gamma \in B_\gamma$. Thus (by the axiom of choice)
there exists a family $\{\beta_\gamma\}_{\gamma\in \Gamma}$ in $^*\mathbb{R}\cap\mathcal{M}$ such that
$\beta_\gamma \in B_\gamma$ for all $\gamma \in \Gamma$. As a
result, $\{[\alpha_\gamma, \beta_\gamma]\}_{\gamma\in \Gamma}$ is a family of closed
intervals in $^*\mathbb{R}$ with the finite
intersection property. It follows that there exists
$x\in{^*\mathbb{R}}$ such that $\alpha_\gamma \leq x \leq \beta_\gamma$ for all
$\gamma\in \Gamma$ by the $\kappa$-saturation of $^*\mathbb{R}$. It is clear that $x
\in \mathcal{M}$ by the convexity of $\mathcal{M}$; hence, $ a_\gamma \leq q_\mathcal{M}(x) \leq
b_\gamma$ for all $\gamma\in\Gamma$.
\end{proof}
\begin{definition}[Algebraic Saturation] \label{D: Algebraic Saturation} Let $\mathbb{K}$ be an
ordered field (totally ordered field) and $\kappa$ be an infinite cardinal. 
\begin{description}
\item[(i)] We say that $\mathbb{K}$ is
\textbf{algebraically $\kappa$-saturated} if every collection of fewer than $\kappa$ open intervals in
$\mathbb{K}$ with the finite intersection property has a non-empty intersection. We say that
$\mathbb{K}$ is simply {\bf algebraically saturated} if it is algebraically
$\aleph_1$-saturated. This means every nested sequence of
open intervals in $\mathbb{K}$ has a non-empty intersection.
\item[(ii)]  We say that $\mathbb{K}(i)$ is  $\kappa$-saturated if $\mathbb{K}$ is
algebraically $\kappa$-saturated.
\end{description}
\end{definition}
	The fields $^*\mathbb{R}$ and $^*\mathbb{C}$ are always algebraically $\kappa$-saturated for some infinite
cardinal $\kappa$, since the open intervals in $^*\mathbb{R}$ are internal sets. 

\begin{theorem}[Algebraically Saturated Fields]\label{T: Algebraically Saturated Fields} Let $^*\mathbb{C}$ be
$\kappa$-saturated for some infinite cardinal $\kappa$. Let $(\delta_n)$ be a decreasing generating 
sequence in
$^*\mathbb{R}$ and $\mathcal{M}$ be the convex subring of $^*\mathbb{C}$ generated by $(\delta_n)$
(part~(i) of Definition~\ref{D: Generating Sequences}). Then the asymptotic field $\widehat{\mathcal{M}}$ and its
real part
$\widehat{^*\mathbb{R}\cap\mathcal{M}}$ are algebraically $\kappa$-saturated. In particular, the asymptotic
fields
${\widehat{\mathcal{L}_\rho}}$ and ${\widehat{\mathcal{F}_\rho}}$ as well as their real parts
$\Re e({\widehat{\mathcal{L}_\rho}})$ and $\Re e({\widehat{\mathcal{F}_\rho}})$ (Section~\ref{S:
Examples of Asymptotic Fields}) are algebraically
$\kappa$-saturated.
\end{theorem}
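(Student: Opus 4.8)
The plan is to reduce the statement to its real part and then imitate the proof of Theorem~\ref{T: Cantor Completeness}, the one essential modification being that an \emph{open} interval of $\widehat{\mathcal{M}}$ pulls back to a \emph{countable} intersection of internal open intervals of $^*\mathbb{R}$; it is precisely here that the hypothesis that $\mathcal{M}$ is generated by a sequence enters.

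First I would note that $\widehat{\mathcal{M}}=\widehat{^*\mathbb{R}\cap\mathcal{M}}(i)$ by Theorem~\ref{T: Asymptotic Fields}, so by part~(ii) of Definition~\ref{D: Algebraic Saturation} it suffices to prove that $\mathbb{K}:=\widehat{^*\mathbb{R}\cap\mathcal{M}}$ is algebraically $\kappa$-saturated. So let $\{(a_\gamma,b_\gamma)\}_{\gamma\in\Gamma}$ be a family of open intervals of $\mathbb{K}$ with the finite intersection property and $\card(\Gamma)<\kappa$; we may assume $\Gamma\neq\varnothing$, and each $(a_\gamma,b_\gamma)$ is automatically non-empty. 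By the axiom of choice choose $\alpha_\gamma,\beta_\gamma\in{}^*\mathbb{R}\cap\mathcal{M}$ with $q_\mathcal{M}(\alpha_\gamma)=a_\gamma$ and $q_\mathcal{M}(\beta_\gamma)=b_\gamma$.

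The crucial observation concerns the order. By Lemma~\ref{L: Generated Rings}, $\mathcal{M}_0=\{z\in{}^*\mathbb{C}\mid |z|\leq 1/\delta_n\ \text{for some}\ n\in\mathbb{N}\}$, so a positive $t\in{}^*\mathbb{R}$ lies outside $\mathcal{M}_0$ exactly when $t>1/\delta_n$ for all $n$. Applying this to $t=x-\alpha_\gamma$ and to $t=\beta_\gamma-x$, together with the definition of the order in $\mathbb{K}$ (\#3 of Definition~\ref{D: Asymptotic Fields}), yields, for every $x\in{}^*\mathbb{R}$,
\[
a_\gamma<q_\mathcal{M}(x)<b_\gamma\quad\Longleftrightarrow\quad \alpha_\gamma+\tfrac1{\delta_n}<x<\beta_\gamma-\tfrac1{\delta_n}\ \text{ for all }\ n\in\mathbb{N},
\]
where the right-hand side already forces $x\in\mathcal{M}$ by convexity. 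Accordingly I would set, for $\gamma\in\Gamma$ and $n\in\mathbb{N}$,
\[
J_{\gamma,n}=\{x\in{}^*\mathbb{R}\mid \alpha_\gamma+\tfrac1{\delta_n}<x<\beta_\gamma-\tfrac1{\delta_n}\},
\]
an internal open interval of $^*\mathbb{R}$, and consider the family $\{J_{\gamma,n}\}_{(\gamma,n)\in\Gamma\times\mathbb{N}}$, which has at most $\kappa$ members.

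Next I would check that this family has the finite intersection property: any finite subfamily lies inside $\bigcap_{j=1}^{k}\bigcap_{n\leq N}J_{\gamma_j,n}$ for suitable $\gamma_1,\dots,\gamma_k\in\Gamma$ and $N\in\mathbb{N}$, and by the finite intersection property of the original intervals there is $y\in{}^*\mathbb{R}\cap\mathcal{M}$ with $q_\mathcal{M}(y)\in\bigcap_{j=1}^{k}(a_{\gamma_j},b_{\gamma_j})$; the displayed equivalence then gives $y\in J_{\gamma_j,n}$ for all $j\leq k$ and all $n$, in particular for $n\leq N$. Since $^*\mathbb{R}$ is algebraically $\kappa$-saturated (open intervals of $^*\mathbb{R}$ being internal), there is $x\in\bigcap_{(\gamma,n)}J_{\gamma,n}$. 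Fixing one $\gamma\in\Gamma$ we get $\alpha_\gamma<x<\beta_\gamma$, hence $|x|\leq\max(|\alpha_\gamma|,|\beta_\gamma|)$ and $x\in\mathcal{M}$ by convexity; and the equivalence once more gives $q_\mathcal{M}(x)\in\bigcap_{\gamma\in\Gamma}(a_\gamma,b_\gamma)$, so this intersection is non-empty. This proves $\mathbb{K}$, hence also $\widehat{\mathcal{M}}=\mathbb{K}(i)$, is algebraically $\kappa$-saturated. The statements about $\widehat{\mathcal{L}_\rho}$, $\widehat{\mathcal{F}_\rho}$ and their real parts then follow at once, since $(\log_n(1/\rho))$ and $(1/\sqrt[n]{\rho})$ are decreasing generating sequences (Example~\ref{Ex: Multiple-Logarithmic Rings}, Example~\ref{Ex: Logarithmic Rings}).

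The main obstacle is the displayed equivalence, or rather the fact behind it: that $\mathcal{M}_0$ admits the \emph{countable} cofinal family $(1/\delta_n)_{n\in\mathbb{N}}$. This is what lets the pull-back $q_\mathcal{M}^{-1}[(a_\gamma,b_\gamma)]$ be described by countably many internal conditions, so that the whole family $\{J_{\gamma,n}\}$ still has at most $\kappa$ members and $\kappa$-saturation of $^*\mathbb{R}$ applies. For an arbitrary convex subring this fails, and the analogue of $\{J_{\gamma,n}\}$ would have to be indexed by $\Gamma$ times a possibly much larger set, defeating the saturation argument; this is exactly why open intervals require the sequence-generated hypothesis while closed intervals (Theorem~\ref{T: Cantor Completeness}) do not.
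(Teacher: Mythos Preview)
Your proof is correct and follows essentially the same route as the paper's: lift each open interval $(a_\gamma,b_\gamma)$ to the countable family of internal intervals $(\alpha_\gamma+1/\delta_n,\beta_\gamma-1/\delta_n)$, check FIP, apply $\kappa$-saturation of $^*\mathbb{R}$, and use convexity plus the description of $\mathcal{M}_0$ from Lemma~\ref{L: Generated Rings} to conclude. Your explicit statement of the equivalence $a_\gamma<q_\mathcal{M}(x)<b_\gamma\Leftrightarrow(\forall n)\,\alpha_\gamma+1/\delta_n<x<\beta_\gamma-1/\delta_n$ is a nice way to package the argument, and your final paragraph correctly isolates the role of the decreasing generating sequence.
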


\begin{proof} Suppose that $(a_\gamma, b_\gamma)_{\gamma\in
\Gamma}$ is a family of open intervals in $\widehat{^*\mathbb{R}\cap\mathcal{M}}$ with the finite intersection
property and
$\card(\Gamma) <
\kappa$. We have $a_\gamma = \widehat{\alpha_\gamma}$ and $b_\gamma = \widehat{\beta_\gamma}$ for
some $\alpha_\gamma$ and
$\beta_\gamma$ in $^*\mathbb{R}\cap\mathcal{M}$ such that $\alpha_\gamma<\beta_\gamma$ and
$\alpha_\gamma-\beta_\gamma\notin\mathcal{M}_0$ for all $\gamma\in\Gamma$. Thus the
family of open intervals $(\alpha_\gamma,
\beta_\gamma)_{\gamma\in
\Gamma}$ in $^*\mathbb{R}\cap\mathcal{M}$ has also the finite intersection
property. Next, we observe that family of open intervals 
$(\alpha_\gamma+1/\delta_n, \beta_\gamma-1/\delta_n)_{(\gamma, n)\in\Gamma\times\mathbb{N}}$ in
$^*\mathbb{R}\cap\mathcal{M}$ has also the finite intersection property, since $1/\delta_n\in\mathcal{M}_0$
for all $n\in\mathbb{N}$. Thus there exists $x\in{^*\mathbb{R}}$ such that $\alpha_\gamma+1/\delta_n<x<
\beta_\gamma-1/\delta_n$ for all $\gamma\in\Gamma$ and all $n\in\mathbb{N}$ by the $\kappa$-saturation
of $^*\mathbb{R}$ since $\card(\Gamma\times\mathbb{N})=\card(\Gamma)$. Also $x\in\mathcal{M}$ by
the convexity of $\mathcal{M}$. Next, we observe that
$x-\alpha_\gamma, x-\beta_\gamma\notin\mathcal{M}_0$ for all $\gamma\in\Gamma$ because for each (fixed)
$\gamma\in\Gamma$ we have $x-\alpha_\gamma>1/\delta_n$ and $\beta_\gamma-x>1/\delta_n$ for all
$n\in\mathbb{N}$. Thus 
$a_\gamma=\widehat{\alpha_\gamma}=\widehat{\alpha_\gamma+1/\delta_n}<
\widehat{x}<\widehat{\beta_\gamma-1/\delta_n}=\widehat{\beta_\gamma}=b_\gamma$ as required.
\end{proof}

\begin{remark}[Non-Saturated Fields]\label{R: Non-Saturated Fields} We should note  that not
every asymptotic field is algebraically saturated.  For example, the fields $\mathbb{R}$ and $\mathbb{C}$ are
certainly not saturated. Let
$(\lambda_n)$ be a increasing generating  sequence in
$^*\mathbb{R}$ and $\mathcal{M}$ be the convex subring of $^*\mathbb{C}$ generated by $(\lambda_n)$
(part~(ii) of Definition~\ref{D: Generating Sequences}). Then the asymptotic field $\widehat{\mathcal{M}}$ and
its real part $\widehat{^*\mathbb{R}\cap\mathcal{M}}$ are not algebraically saturated. In particular, the
asymptotic fields 
${^\rho\mathbb{C}}$ and ${\widehat{\mathcal{E}_\rho}}$ and their real parts
${^\rho\mathbb{R}}$ and $\Re e({\widehat{\mathcal{E}_\rho}})$ (Section~\ref{S:
Examples of Asymptotic Fields}) are not algebraically saturated. Indeed, we observe that the nested sequence
of open intervals
$(0, \widehat{1/\lambda_n})$ in $\widehat{\mathcal{M}}$ has an empty intersection. To show that,  suppose
(on the contrary) that there exists $x\in\mathcal{M}$ such that 
$0<\widehat{x}<\widehat{1/\lambda_n}$ for all $n\in\mathbb{N}$. It follows
$0< x< 1/\lambda_n$ for all $n\in\mathbb{N}$ implying $x\in\mathcal{M}_0$. Thus $\widehat{x}=0$, a
contradiction.
\end{remark}

\begin{theorem}[Hypothesis 1]\label{T: Characterization} Let $^*\mathbb{C}$ be
$\kappa$-saturated. Let $\mathbb{K}$ be a subring of $^*\mathbb{C}$ which is closed
under the absolute value in the sense that $z\in \mathbb{K}$ implies
$|z|\in \mathbb{K}$. Then the following are equivalent:
\begin{description}
\item[(i)] $\mathbb{K}$ is an asymptotic field (Definition~\ref{D: Asymptotic Fields}).
\item[(ii)] $\mathbb{K}$ is an algebraically closed Cantor $\kappa$-complete field.
\end{description}
\end{theorem}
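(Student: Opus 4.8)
The plan is to dispatch (i)$\Rightarrow$(ii) by citation and to reduce (ii)$\Rightarrow$(i), via Theorem~\ref{T: A Characterization}, to the statement that $\mathbb{K}$ is a maximal field in its convex cover. For (i)$\Rightarrow$(ii): if $\mathbb{K}$ is an asymptotic field, then $\mathbb{K}$ is algebraically closed by Theorem~\ref{T: Algebraically Closed Field}, and, realised as an asymptotic subfield of the $\kappa$-saturated $^*\mathbb{C}$, it is Cantor $\kappa$-complete by Theorem~\ref{T: Cantor Completeness}. Closure under the absolute value is one of the standing hypotheses, so there is nothing further to check; this direction is immediate, and all the content lies in (ii)$\Rightarrow$(i).

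For (ii)$\Rightarrow$(i), put $\mathcal{M}={\rm cov}(\mathbb{K})$, which is a convex subring of $^*\mathbb{C}$ by Lemma~\ref{L: Convex Cover}, with maximal ideal $\mathcal{M}_0$. By Theorem~\ref{T: A Characterization} it suffices to prove $\mathbb{K}\in\mathcal{M}ax(\mathcal{M})$. Suppose not; since $\mathbb{C}\subseteq\mathbb{K}\subseteq\mathcal{M}$, Lemma~\ref{L: Existence of Maximal Fields} yields a maximal field $\mathbb{M}\in\mathcal{M}ax(\mathcal{M})$ with $\mathbb{K}\subsetneqq\mathbb{M}$; fix $z_0\in\mathbb{M}\setminus\mathbb{K}$. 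First I would record that $z_0$ is \emph{away from} $\mathbb{K}$ in the sense of~(\ref{E: Away From M}): if $z_0-c\in\mathcal{M}_0$ for some $c\in\mathbb{K}$, then $z_0-c\in\mathbb{M}\cap\mathcal{M}_0=\{0\}$ (a subfield of $\mathcal{M}$ meets $\mathcal{M}_0$ only in $0$, as in the proof of Lemma~\ref{L: Isomorphic Fields}), forcing $z_0=c\in\mathbb{K}$, a contradiction. Hence $|z_0-c|\notin\mathcal{M}_0$ for every $c\in\mathbb{K}$, while $|z_0|\leq\delta_0$ for some $\delta_0\in\mathbb{K}$, $\delta_0>0$, because $z_0\in\mathcal{M}$. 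Using that $\mathbb{K}$ is a field closed under the absolute value one also gets $\mathbb{K}=L(i)$ with $L=\mathbb{K}\cap{^*\mathbb{R}}$ a real closed, Cantor $\kappa$-complete field, and $\sqrt2\in L$.

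The engine of the argument is a shrinking chain of closed balls. Writing $\bar D(c,r)=\{w\in{^*\mathbb{C}}:|w-c|\leq r\}$, I would build, by transfinite recursion, centres $c_\alpha\in\mathbb{K}$ and radii $r_\alpha\in L$, $r_\alpha>0$, with $z_0\in\bar D(c_\alpha,r_\alpha)$, the balls nested and the radii strictly decreasing, continuing as long as a strictly smaller ball of this form still captures $z_0$ (taking each $r_{\alpha+1}$ to be essentially $|z_0-c_\alpha|$ makes the chain strictly improving). If at some stage the radii become $\mathcal{M}_0$-small --- i.e.\ $\inf_{c\in\mathbb{K}}|z_0-c|$ is below every positive element of $\mathbb{K}$ --- then a common point $c^\ast$ of the corresponding small balls, extracted from the Cantor $\kappa$-completeness of $L$, satisfies $|z_0-c^\ast|\in\mathcal{M}_0$, contradicting away-ness. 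Otherwise the sets $\bar D(c_\alpha,r_\alpha)\cap\mathbb{K}$ --- which, since $\sqrt2\in L$, are sandwiched between products of closed intervals of $L$ --- have the finite intersection property (witnessed by $z_0$) but empty intersection in $\mathbb{K}$ (a common $\mathbb{K}$-point would lie within $\mathcal{M}_0$ of $z_0$); by Definition~\ref{D: Cantor Completeness} this contradicts Cantor $\kappa$-completeness of $L$, \emph{provided the chain has length $<\kappa$}. The substance of the construction is precisely that it must stop before stage $\kappa$.

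The hard part will be this length bound: one must rule out that the Dedekind cut induced by $z_0$ on $L$ has both cofinality and coinitiality $\geq\kappa$, since a gap of character $(\lambda,\mu)$ with $\lambda,\mu\geq\kappa$ is ``too wide'' to be closed by a chain of fewer than $\kappa$ intervals. This is where the $\kappa$-saturation of $^*\mathbb{C}$ --- hence of $^*\mathbb{R}$ --- has to be played against the Cantor $\kappa$-completeness of $\mathbb{K}$: any gap of $L$ whose two sides both have character $<\kappa$ is already filled inside $L$ by Cantor $\kappa$-completeness, so a genuine gap of $L$ has one side of character $\geq\kappa$, and realising such a configuration inside the $\kappa$-saturated $^*\mathbb{R}$ --- together with the convex descriptions of $\mathcal{M}$ and $\mathcal{M}_0$ in Theorem~\ref{T: Field of Representatives}(iii) --- is what should produce the contradiction that keeps the chain short. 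Once the chain is known to terminate below $\kappa$, the dichotomy of the preceding paragraph closes the proof.
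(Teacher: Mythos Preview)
Your direction (i)$\Rightarrow$(ii) and your reduction of (ii)$\Rightarrow$(i) to the statement $\mathbb{K}\in\mathcal{M}ax({\rm cov}(\mathbb{K}))$ coincide exactly with what the paper does. You should be aware, however, that the paper's own proof \emph{stops at precisely that reduction}: after writing ``it suffices to show that $\mathbb{K}$ is a maximal field in $\mathcal{M}$'' the proof environment simply closes. The result is deliberately titled ``Hypothesis~1'', and the implication (ii)$\Rightarrow$(i) is left open in the paper. So up to the reduction you match the paper, and beyond that point there is nothing in the paper to compare against.

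Your attempt to go further via a transfinite chain of nested balls is a natural line of attack, but as written it has two genuine gaps. First, and as you yourself flag, the length bound is the crux and you have not established it: the paragraph about gap characters and saturation is a heuristic, not an argument. You would need to show concretely that the cut in $L$ determined by the approximation distances $|z_0-c|$ has at least one side of cofinality $<\kappa$; it is not clear how Cantor $\kappa$-completeness of $L$ together with $\kappa$-saturation of the ambient ${^*\mathbb{R}}$ forces this, and your invocation of Theorem~\ref{T: Field of Representatives}(iii) does not do that work. Second, your dichotomy is not sound as stated. In the ``otherwise'' branch (radii never $\mathcal{M}_0$-small) you assert that a common $\mathbb{K}$-point $c^\ast$ of the balls would satisfy $z_0-c^\ast\in\mathcal{M}_0$; but $|z_0-c^\ast|\leq r_\alpha$ for all $\alpha$ only yields $\mathcal{M}_0$-smallness if the $r_\alpha$ are cofinal with $\mathcal{M}_0$, which is exactly the case you have excluded. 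Without repairing both points, the nested-balls argument does not close, and the implication (ii)$\Rightarrow$(i) remains, as in the paper, a hypothesis rather than a theorem.
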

\begin{proof} (i)$\Rightarrow$(ii) If $\mathbb{K}$ is an asymptotic field, then  $\mathbb{K}$ is algebraically
closed Cantor $\kappa$-complete by Theorem~\ref{T: Algebraically Closed Field} and Theorem~\ref{T: Cantor
Completeness}. 

(i)$\Leftarrow$(ii) We let $\mathcal{M}={\rm
cov}(\mathbb{K})$ (Definition~\ref{D: Convex Cover}) and observe that $\mathcal{M}$ is a convex subring of
$^*\mathbb{C}$ by Lemma~\ref{L: Convex Cover}.  In view of Theorem~\ref{T: Algebraically Closed Field}, to
show that
$\mathbb{K}$ is an asymptotic field, it suffices to show that $\mathbb{K}$ is a maximal field in
$\mathcal{M}$. 

\end{proof}
\newpage
\section{Power Series in $^*\mathbb{C}$}\label{S: Power Series in *C}
In this section we show that different fields of generalized power series can be embedded as subfields of
$^*\mathbb{C}$. We start with some preliminaries.
	
\begin{enumerate}
\item Let $\mathbb{K}$ be a field. We denote (as usual) by $\mathbb{K}[t]$ the ring of polynomials in
one  variable with coefficients in $\mathbb{K}$. If $\mathbb{K}$ is ordered, we supply
$\mathbb{K}[t]$ with the ordering in which a polynomial is positive if the coefficient in front of 
the least power of $t$ is positive. We denote by
$\mathbb{K}(t)$ the field of rational functions in one  variable with coefficients in $\mathbb{K}$. If
$\mathbb{K}$ is ordered, then $\mathbb{K}(t)$ is an ordered field under the ordering inherited from 
$\mathbb{K}[t]$. In this ordering every rational function of the form $t^n$ for some $n\in\mathbb{N}$,
is between 0 and every positive element of
$\mathbb{K}$. The field $\mathbb{K}$ is naturally embedded in $\mathbb{K}(t)$ by mapping $a\to
at^0$. 

\item We denote by $\mathbb{K}(t^\mathbb{Z})$ the field of
\textbf{Laurent series} with coefficients in $\mathbb{K}$. If $\mathbb{K}$ is ordered, then
$\mathbb{K}(t^\mathbb{Z})$ is an ordered field under the ordering in which a series is positive if its
leading coefficient is positive. The field $\mathbb{K}(t)$ has a
natural embedding into $\mathbb{K}(t^\mathbb{Z})$ by $f\to L(f)$, where $L(f)$ is the Laurent
expansion of $f$. 

	\item Let  $G$ an ordered abelian group.  For any formal power series $f = \sum_{g \in G} {{a_g} t^g}$,
where each 
$a_g\in\mathbb{K}$, the \textbf{support} of
$f$ is defined by $\supp(f)=\{g \in G : a_g \not= 0\}$. Recall that the \textbf{Hahn field}
$\mathbb{K}(t^G)$ or
$\mathbb{K}((G))$ is the set of all such $f$'s whose support $\supp(f)$ is a well-ordered set
(Hahn~\cite{hHahn}). We supply
$\mathbb{K}(t^G)$  with the ordinary
polynomial-like addition and multiplication. The field  
$\mathbb{K}(t^G)$ has a canonical $G$-valued Krull valuation in which each non-zero power series is
mapped to the least exponent in its support (Krull~\cite{wKrull}). If $\mathbb{K}$ is ordered, then
$\mathbb{K}(t^G)$ has a natural ordering in which a series is positive if the coefficient
corresponding to the least element in its support is positive. This ordering is compatible with the
canonical valuation, and is the unique ordering on $\mathbb{K}(t^G)$ in which every positive power of
$t$ is between 0 and every positive element of $\mathbb{K}$. Notice that every ordered abelian
group contains a copy of $\mathbb{Z}$. Thus
$\mathbb{K}(t^\mathbb{Z})\subset\mathbb{K}(t^\mathbb{R})$. Summarizing, we have
\[
\mathbb{K}\subset\mathbb{K}(t)\subset\mathbb{K}(t^\mathbb{Z})
\subset\mathbb{K}(t^G).
\]
The field
$\mathbb{K}(t^\mathbb{R})$ ($\mathbb{K}(t^G)$ is an algebraically closed or real-closed
valuation field whenever
$\mathbb{K}$ is algebraically closed or real-closed, respectively. 

	\item Let $G=(\mathbb{R}, +, <)$ be the abelian group of $\mathbb{R}$ with the usual addition and
order. In addition to the Hahn field $\mathbb{K}(t^\mathbb{R})$, we also consider the field
$\mathbb{K}\left<t^\mathbb{R}\right>$ of  \textbf{Levi-Civita's series} with coefficients in
$\mathbb{K}$; the field $\mathbb{K}\left<t^\mathbb{R}\right>$ consists of the series of the form
$\sum_{n=0}^\infty c_nt^{\nu_n}$,  where $c_n\in\mathbb{K}$, which are either finite sums, or
$(c_n)$ is a sequence in $\mathbb{K}$, such that $c_n \neq 0$ for all $n$,  and $(\nu_n)$ is a strictly
increasing unbounded sequence in $\mathbb{R}$.  It is clear that
$\mathbb{K}\left<t^\mathbb{R}\right>
\subset\mathbb{K}(t^\mathbb{R})$. Thus we have 
\[
\mathbb{K}\subset\mathbb{K}(t)\subset\mathbb{K}(t^\mathbb{Z})\subset\mathbb{K}\left<t^\mathbb{R}\right>
\subset\mathbb{K}(t^\mathbb{R}).
\] 
The field $\mathbb{K}\left<t^\mathbb{R}\right>$ is an algebraically closed or real-closed valuation
field whenever $\mathbb{K}$ is algebraically closed or real-closed, respectively. The field $\mathbb{R}\left<t^\mathbb{R}\right>$ was introduced by
Levi-Civita in \cite{tLC} and later was investigated by Laugwitz in \cite{dLaug} as a potential framework
for the rigorous foundation of infinitesimal calculus before the advent of Robinson's nonstandard
analysis. 

\item From Krull~\cite{wKrull} and Theorem 2.12 in Luxemburg \cite{wLux}, it is
known that every Hahn field of the form $\mathbb{K}(t^\mathbb{R})$ is spherically complete in its
canonical valuation. In particular,
$\mathbb{Q}(t^{\mathbb{R}})$ is spherically complete, hence, sequentially complete. But
$\mathbb{Q}(t^{\mathbb{R}})$ is not Cantor complete (for the same reason that $\mathbb{Q}$ is not
Cantor complete).  Also, $\mathbb{K}\left<t^\mathbb{R}\right>$ is sequentially complete but not spherically complete
(Pestov~\cite{vPes}, pp. 67). 

\item The field $\mathbb{C}\left<t^\mathbb{R}\right>$ is embedded as a subfield of
$^\rho\mathbb{C}$ by the mapping $\sum_{n=0}^\infty c_nt^{\nu_n}\to\sum_{n=0}^\infty
c_n\rho^{\nu_n}$ (Robinson~\cite{aRob73}). The same formula defines a field embedding $M_\rho$ of
the Levi-Civita field $\widehat{\mathcal{F}_\rho}\left<t^\mathbb{R}\right>$ into the ring
$\mathcal{M}_\rho$ and thus into the field
$^\rho\mathbb{C}$ (Todorov and Wolf~\cite{TodWolf}, Section 5), where
$\widehat{\mathcal{F}_\rho}$ is the logarithmic field defined in Section~\ref{S: Examples of Asymptotic
Fields}. (We should note that in \cite{TodWolf} the real part $\Re e(\widehat{\mathcal{F}_\rho})$ of the
field
$\widehat{\mathcal{F}_\rho}$ is denoted by 
$\widehat{^\rho\mathbb{R}}$.) We sometimes write simply $t\to\rho$ instead of the more
precise $M_\rho$. Finally, the embedding
$M_\rho$ is extended to a field isomorphism between the Hahn field
$\widehat{\mathcal{F}_\rho}(t^\mathbb{R})$ and
$^\rho\mathbb{C}$ (Todorov and Wolf~\cite{TodWolf}, Section 6). We shall write this isomorphism
simply as equality
$\widehat{\mathcal{F}_\rho}(\rho^\mathbb{R})={^\rho\mathbb{C}}$ and summarize all these in
\begin{equation}\label{E: RobTodWolf}\notag
\widehat{\mathcal{F}_\rho}(\rho)\subset\widehat{\mathcal{F}_\rho}(\rho^\mathbb{Z})\subset
\widehat{\mathcal{F}_\rho}\left<\rho^\mathbb{R}\right>\subset\mathcal{M}_\rho
\subset\widehat{\mathcal{F}_\rho}(\rho^\mathbb{R})={^\rho\mathbb{C}}.
\end{equation} 
Also, $\mathbb{C}(\rho^\mathbb{R})\subset\widehat{\mathcal{F}_\rho}(\rho^\mathbb{R})$  (trivially) since
$\mathbb{C}\subset\widehat{\mathcal{F}_\rho}$.  Thus we have also 
\begin{equation}\label{E: RobTodWolf}\notag
\mathbb{C}\subset\mathbb{C}(\rho)\subset\mathbb{C}(\rho^\mathbb{Z})\subset
\mathbb{C}\left<\rho^\mathbb{R}\right>\subset\mathbb{C}(\rho^\mathbb{R})\subset\mathcal{M}_\rho
\subset{^\rho\mathbb{C}}.
\end{equation} 
\end{enumerate}

\begin{theorem}[Power Series in $^*\mathbb{C}$]\label{T: Power Series in *C} There exists a field
embedding of the Hahn field $\widehat{\mathcal{F}_\rho}(\rho^\mathbb{R})$ into $^*\mathbb{C}$.
Consequently, we have the embeddings
\begin{align}
&\widehat{\mathcal{F}_\rho}(\rho)\subset\widehat{\mathcal{F}_\rho}(\rho^\mathbb{Z})\subset
\widehat{\mathcal{F}_\rho}\left<\rho^\mathbb{R}\right>\subset
\widehat{\mathcal{F}_\rho}\left<\rho^\mathbb{R}\right>
\subset\mathcal{M}_\rho
\subset\widehat{\mathcal{F}_\rho}(\rho^\mathbb{R})\subset{^*\mathbb{C}},\notag\\
&\Re e(\widehat{\mathcal{F}_\rho})(\rho)\subset\Re
e(\widehat{\mathcal{F}_\rho})(\rho^\mathbb{Z})\subset
\Re e(\widehat{\mathcal{F}_\rho})\left<\rho^\mathbb{R}\right>\subset\Re e(\mathcal{M}_\rho)
\subset\Re e(\widehat{\mathcal{F}_\rho})(\rho^\mathbb{R})\subset{^*\mathbb{R}},\notag\\
&\mathbb{C}(\rho)\subset\mathbb{C}(\rho^\mathbb{Z})\subset
\mathbb{C}\left<\rho^\mathbb{R}\right>\subset\mathbb{C}(\rho^\mathbb{R})
\subset\mathcal{M}_\rho
\subset{^*\mathbb{C}},\notag\\
&\mathbb{R}(\rho)\subset\mathbb{R}(\rho^\mathbb{Z})\subset
\mathbb{R}\left<\rho^\mathbb{R}\right>\subset\mathbb{R}(\rho^\mathbb{R})
\subset\Re e(\mathcal{M}_\rho)
\subset{^*\mathbb{R}}.\notag
\end{align}
\end{theorem}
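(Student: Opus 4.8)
The plan is to reduce the theorem to results already established, namely Theorem~\ref{T: Algebraically Closed Field}, Theorem~\ref{T: Embedding in *C}, and Theorem~\ref{T: Characterization} (Hypothesis~1), together with the classical fact that the Hahn field $\mathbb{K}(t^\mathbb{R})$ is algebraically closed and spherically complete whenever $\mathbb{K}$ is algebraically closed, recorded in the preliminaries to Section~\ref{S: Power Series in *C}. The key observation is that $\widehat{\mathcal{F}_\rho}(\rho^\mathbb{R})$ is (isomorphic to) the Hahn field $\widehat{\mathcal{F}_\rho}(t^\mathbb{R})$ over the coefficient field $\widehat{\mathcal{F}_\rho}$, which by Theorem~\ref{T: Algebraically Closed Field} is already an algebraically closed (asymptotic) subfield of $^*\mathbb{C}$; and by Theorem~\ref{T: Power Series in *C}'s standing hypothesis (the $\kappa$-saturation of $^*\mathbb{C}$ inherited from earlier sections) it is Cantor $\kappa$-complete by Theorem~\ref{T: Cantor Completeness}.

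First I would invoke the chain of embeddings from item~(6) of the preliminaries, which already realizes $\widehat{\mathcal{F}_\rho}(\rho^\mathbb{R}) = {^\rho\mathbb{C}}$ as a subfield of $^*\mathbb{C}$ via the Robinson--Todorov--Wolf isomorphism $t\mapsto\rho$. Strictly speaking this hands us the embedding into $^*\mathbb{C}$ directly, so the remaining work is bookkeeping: one must verify that the image $\widehat{\mathcal{F}_\rho}(\rho^\mathbb{R})$, viewed inside $^*\mathbb{C}$, is closed under the absolute value (it is, since it is a subfield of $^*\mathbb{C}$ containing its own moduli — every element of ${^\rho\mathbb{C}}$ has modulus in ${^\rho\mathbb{R}}\subset{^\rho\mathbb{C}}$), so that Theorem~\ref{T: Characterization} applies and identifies it as an asymptotic field, in fact $\widehat{{\rm cov}(\,\cdot\,)} = \widehat{\mathcal{M}_\rho} = {^\rho\mathbb{C}}$. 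Then the displayed chains of inclusions follow by concatenating: the subfield inclusions $\widehat{\mathcal{F}_\rho}(\rho)\subset\widehat{\mathcal{F}_\rho}(\rho^\mathbb{Z})\subset\widehat{\mathcal{F}_\rho}\langle\rho^\mathbb{R}\rangle\subset\widehat{\mathcal{F}_\rho}(\rho^\mathbb{R})$ from items (1)--(4), the inclusion $\mathcal{M}_\rho\subset\widehat{\mathcal{F}_\rho}(\rho^\mathbb{R})={^\rho\mathbb{C}}$ from item (6), and finally ${^\rho\mathbb{C}}\subset{^*\mathbb{C}}$ from Theorem~\ref{T: Embedding in *C} applied to the convex ring $\mathcal{M}_\rho$ (Example~\ref{Ex: Robinson Rings}). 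Taking real parts throughout and using $\widehat{\mathcal{F}_\rho}(\rho^\mathbb{R})=\Re e(\widehat{\mathcal{F}_\rho})(\rho^\mathbb{R})(i)$ gives the real chains.

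The main obstacle is not the embedding into $^*\mathbb{C}$ (that is essentially quoted), but ensuring the internal consistency of the various identifications: one must check that the Hahn-field ordering on $\widehat{\mathcal{F}_\rho}(\rho^\mathbb{R})$ (a series positive iff its leading coefficient is positive, with $\rho^r$ between $0$ and every positive element of the coefficient field) agrees with the order inherited from $^*\mathbb{R}$ under the embedding $t\mapsto\rho$ — this is exactly the compatibility with the canonical valuation noted in item (3) and the valuation formula $v(\widehat{z})=\st(\ln|z|/\ln\rho)$ of Example~\ref{Ex: A. Robinson's Asymptotic Numbers}, which forces $\rho^r$ to be infinitesimal relative to $\widehat{\mathcal{F}_\rho}$ precisely when $r>0$. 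Once that order-compatibility is in place, all the claimed inclusions are order-field embeddings and the theorem is proved; I would therefore state the order-compatibility as the one genuine verification and relegate the rest to a remark that it follows from the cited constructions.
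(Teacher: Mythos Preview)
Your core argument is exactly the paper's: use the Todorov--Wolf isomorphism $\widehat{\mathcal{F}_\rho}(\rho^\mathbb{R})\cong{^\rho\mathbb{C}}$ from item~(6) of the preliminaries, then embed ${^\rho\mathbb{C}}$ into ${^*\mathbb{C}}$ via the asymptotic-field machinery (the paper cites Theorem~\ref{T: The Examples}, which rests on Theorem~\ref{T: Embedding in *C}); the displayed chains then follow from the inclusions already recorded in the preliminaries together with $\mathbb{C}\subset\widehat{\mathcal{F}_\rho}$.

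Where you diverge from the paper is in the surrounding apparatus, and here you are doing more than the theorem asks. The invocations of Theorem~\ref{T: Characterization} (Hypothesis~1), Cantor $\kappa$-completeness, and closure under absolute value are not used in the paper's proof and are not needed: once you have the isomorphism with ${^\rho\mathbb{C}}$ and the embedding of ${^\rho\mathbb{C}}$ into ${^*\mathbb{C}}$, the statement is finished. More importantly, what you flag as ``the one genuine verification'' --- the order-compatibility of the Hahn ordering with the inherited order from ${^*\mathbb{R}}$ --- is not part of the theorem's claim. The statement asserts only \emph{field} embeddings (look at the word ``field embedding'' in the first sentence and the bare inclusions in the displays); it does not assert ordered-field embeddings. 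So your proposed main obstacle is a self-imposed one. The paper's proof is correspondingly two sentences long: cite the isomorphism, cite the embedding, done.
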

\begin{proof} There exists a field embedding of $^\rho\mathbb{C}$ into $^*\mathbb{C}$ by
Theorem~\ref{T: The Examples}. Thus there exists a field embedding of
$\widehat{\mathcal{F}_\rho}(\rho^\mathbb{R})$ into $^*\mathbb{C}$ because
$\widehat{\mathcal{F}_\rho}(\rho^\mathbb{R})$ and $^\rho\mathbb{C}$ are field isomorphic 
(Todorov\&Wolf~\cite{TodWolf}, Section 6). Also,
$\mathbb{C}(\rho^\mathbb{R})\subset\widehat{\mathcal{F}_\rho}(\rho^\mathbb{R})$  (trivially) since
$\mathbb{C}\subset\widehat{\mathcal{F}_\rho}$.
\end{proof}

	We should note that the embedding of $\widehat{\mathcal{F}_\rho}(\rho^\mathbb{R})$ into
$^*\mathbb{C}$ is neither canonical, nor unique because the embedding of $^\rho\mathbb{C}$ into
$^*\mathbb{C}$ is neither canonical, nor unique (Remark~\ref{R: Non-Canonical}).
\section{Asymptotic Vectors}\label{S: Asymptotic Vectors}

Let $\mathcal{M}$ be a convex subring of $^*\mathbb{C}$ (Definition~\ref{D: Convex Rings}). Let
$\widehat{\mathcal{M}}$ be the associated asymptotic field and $\Re e(\widehat{\mathcal{M}})$ 
be its real part (Definition~\ref{D: Asymptotic Fields}).
Recall that $\Re e(\widehat{\mathcal{M}})$ can be denoted equivalently by $\widehat{^*\mathbb{R}}$
(see (\ref{E: Hat Notation}) in Notation~\ref{N: Suppressing M}). We also let
$\widehat{^*\mathbb{R}}^d=\widehat{^*\mathbb{R}}\times\widehat{^*\mathbb{R}}\times\dots\times
\widehat{^*\mathbb{R}}$ ($d$ times). 

	In this section we present $\widehat{^*\mathbb{R}}^d$ as a factor space in $^*\mathbb{R}^d$. We also discuss
the concept of monad in $\widehat{^*\mathbb{R}}^d$.

\begin{definition}[Asymptotic Vectors]\label{D: Asymptotic Vectors} Let $\mathcal{M}$ be a
convex subring of $^*\mathbb{C}$. 
\begin{enumerate}
\item We define the linear spaces
\begin{align}
&\mathcal{M}^d(^*\mathbb{R}^d)=\left\{\mathbf{x}\in{^*\mathbb{R}^d}: ||\mathbf{x}||\in\mathcal{M}\right\},
\notag\\ 
&\mathcal{M}^d_0(^*\mathbb{R}^d)=\left\{\mathbf{x}\in{^*\mathbb{R}^d}:
||\mathbf{x}||\in\mathcal{M}_0\right\},\notag
\end{align}
where $||\cdot||$ stands for the usual Euclidean norm in $^*\mathbb{R}^d$. We define the factor vector space
$\widehat{\mathcal{M}^d(^*\mathbb{R}^d)}=\mathcal{M}^d(^*\mathbb{R}^d)/\mathcal{M}^d_0(^*\mathbb{R}^d)$
and denote by $q_\mathcal{M}^d:
\mathcal{M}^d(^*\mathbb{R}^d)\to\widehat{\mathcal{M}^d(^*\mathbb{R}^d)}$ the quotient mapping. We shall
often write $\widehat{\mathbf{x}}$ instead of  $q_\mathcal{M}^d(\mathbf{x})$.
\item We supplied $\widehat{\mathcal{M}^d(^*\mathbb{R}^d)}$ by the addition inherited from
$\mathcal{M}^d(^*\mathbb{R}^d)$. We define multiplication between an asymptotic number
$\widehat{\lambda}\in \widehat{^*\mathbb{R}}$ and an asymptotic vector 
$\widehat{\mathbf{x}}\in\widehat{\mathcal{M}^d(^*\mathbb{R}^d)}$ by the
formula $\widehat{\lambda}\; \widehat{\mathbf{x}}=\widehat{\lambda\mathbf{x}}$. 
\item Let $S\subseteq{^*\mathbb{R}^d}$. We shall sometimes write simply $\widehat{S}$ instead of the more
precise $q_\mathcal{M}^d[S\cap\mathcal{M}^d(^*\mathbb{R}^d)]$ (suppressing the dependence on
$\mathcal{M}$). In this simplified notation we have
$\widehat{\mathcal{M}^d(^*\mathbb{R}^d)}=\widehat{^*\mathbb{R}^d}$ (cf. Notation~\ref{N: Suppressing M}).
\end{enumerate}
\end{definition}
\begin{lemma}[Finite Points] \label{L: Finite Points} Let $\mathcal{M}$ be a convex subring of $^*\mathbb{C}$
and let
$\mathcal{F}(^*\mathbb{R}^d)$ denote the set of the finite points in $^*\mathbb{R}^d$. Then 
$\mathcal{F}(^*\mathbb{R}^d)\subseteq\mathcal{M}^d(^*\mathbb{R}^d)$. Consequently,
$\mathbb{R}^d\subseteq\mathcal{M}^d(^*\mathbb{R}^d)$.
\end{lemma}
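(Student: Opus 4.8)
The plan is to prove $\mathcal{F}(^*\mathbb{R}^d)\subseteq\mathcal{M}^d(^*\mathbb{R}^d)$ by unwinding the definition of $\mathcal{M}^d(^*\mathbb{R}^d)$ and reducing the claim to a fact already established about the scalar ring $\mathcal{M}$, namely $\mathcal{F}(^*\mathbb{R})\subseteq{^*\mathbb{R}}\cap\mathcal{M}$ from part (ii) of Lemma~\ref{L: Convex Rings}. The point $\mathbf{x}\in\mathcal{F}(^*\mathbb{R}^d)$ means each coordinate is finite (equivalently $||\mathbf{x}||$ is finite), and $\mathbf{x}\in\mathcal{M}^d(^*\mathbb{R}^d)$ means $||\mathbf{x}||\in\mathcal{M}$; so everything hinges on transferring finiteness of the Euclidean norm into membership in $\mathcal{M}$.

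First I would take $\mathbf{x}=(x_1,\dots,x_d)\in\mathcal{F}(^*\mathbb{R}^d)$. By definition $\mathbf{x}$ is a finite point, so there is an $n\in\mathbb{N}$ with $|x_k|\leq n$ for all $k=1,\dots,d$; hence $||\mathbf{x}||=\sqrt{x_1^2+\dots+x_d^2}\leq n\sqrt{d}\leq n\lceil\sqrt{d}\,\rceil$, which shows $||\mathbf{x}||\in\mathcal{F}(^*\mathbb{R})$. Then by part (ii) of Lemma~\ref{L: Convex Rings} we have $\mathcal{F}(^*\mathbb{R})\subseteq{^*\mathbb{R}}\cap\mathcal{M}\subseteq\mathcal{M}$, so $||\mathbf{x}||\in\mathcal{M}$, which is exactly the condition for $\mathbf{x}\in\mathcal{M}^d(^*\mathbb{R}^d)$. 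This gives the first inclusion.

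For the consequence $\mathbb{R}^d\subseteq\mathcal{M}^d(^*\mathbb{R}^d)$, I would simply note that $\mathbb{R}^d$ (identified with $^\sigma(\mathbb{R}^d)$ via the canonical embedding) is contained in $\mathcal{F}(^*\mathbb{R}^d)$: each standard real number is finite, so each standard vector has finite norm. Combining this with the inclusion just proved yields $\mathbb{R}^d\subseteq\mathcal{F}(^*\mathbb{R}^d)\subseteq\mathcal{M}^d(^*\mathbb{R}^d)$.

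There is essentially no obstacle here — the only mildly non-routine point is making sure the norm estimate $||\mathbf{x}||\leq n\sqrt{d}$ is legitimate in the ordered field $^*\mathbb{R}$, which it is since $^*\mathbb{R}$ is real closed (Theorem~\ref{T: Basic Properties}) so square roots of nonnegative elements exist and monotonicity of $\sqrt{\cdot}$ holds; alternatively one avoids square roots entirely by comparing $||\mathbf{x}||$ with $\max_k|x_k|$ times a standard constant. Either way the argument is short and self-contained given Lemma~\ref{L: Convex Rings}.
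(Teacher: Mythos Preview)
Your proof is correct and follows essentially the same approach as the paper: reduce the vector statement to the scalar inclusion $\mathcal{F}({^*\mathbb{R}})\subseteq\mathcal{M}$ by noting that a finite point has finite Euclidean norm. The only cosmetic difference is that the paper cites part~(i) of Lemma~\ref{L: Convex Rings} (which gives $\mathcal{F}({^*\mathbb{C}})\subseteq\mathcal{M}$) whereas you cite part~(ii) (which gives $\mathcal{F}({^*\mathbb{R}})\subseteq{^*\mathbb{R}}\cap\mathcal{M}$); either one yields the needed membership $||\mathbf{x}||\in\mathcal{M}$ immediately.
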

\begin{proof} The result follows immediately from part (i) of Lemma~\ref{L: Convex Rings}. 
\end{proof}

	To the end of this section we shall use $\widehat{^*\mathbb{R}}$, $\widehat{^*\mathbb{R}}^d,
\widehat{^*\mathbb{R}^d}$ and $\widehat{S}$ instead of the more precise $\Re e(\widehat{\mathcal{M}})$,
$(\Re e(\widehat{\mathcal{M}}))^d$, $\widehat{\mathcal{M}^d(^*\mathbb{R}^d)}$ and 
$q_\mathcal{M}^d[S\cap\mathcal{M}^d(^*\mathbb{R}^d)]$, respectively, suppressing the dependence on
$\mathcal{M}$ (cf. Notation~\ref{N: Suppressing M}).

\begin{theorem}\label{T: Vector Spaces} Let $\mathcal{M}$ be a convex subring of $^*\mathbb{C}$. Then:
\begin{description}
\item[(i)] $\widehat{^*\mathbb{R}^d}$ is a vector space over the field $\widehat{^*\mathbb{R}}$ (\ref{E: Hat
Notation}). Also 
$\widehat{^*\mathbb{R}^d}$ and
${\widehat{^*\mathbb{R}}^d}$ are isomorphic vector spaces under the mapping $\widehat{\mathbf{x}}\to
(\widehat{x_1}, \widehat{x_2},\dots, \widehat{x_d})$, where $\mathbf{x}=(x_1, x_2,\dots,
x_d)\in\mathcal{M}^d(^*\mathbb{R}^d)$. 
\item[(ii)]$\mathbb{R}^d$ is a linear subspace of
$\widehat{^*\mathbb{R}^d}$ over the field $\mathbb{R}$.
\end{description}
\end{theorem}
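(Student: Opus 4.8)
The plan is to reduce everything to the ring‑theoretic facts about $\mathcal{M}$ and $\mathcal{M}_0$ already established in Lemma~\ref{L: Convex Rings} and Lemma~\ref{L: Convex Ideals}, together with the fact that $\widehat{^*\mathbb{R}}$ is a field (Theorem~\ref{T: Asymptotic Fields}). First I would check that $\mathcal{M}^d(^*\mathbb{R}^d)$ is a module over the ring $^*\mathbb{R}\cap\mathcal{M}$: closure under addition follows from the triangle inequality $||\mathbf{x}+\mathbf{y}||\le||\mathbf{x}||+||\mathbf{y}||$ together with the convexity of $\mathcal{M}$ (since $||\mathbf{x}||+||\mathbf{y}||\in\mathcal{M}$), and closure under scalar multiplication from $||\lambda\mathbf{x}||=|\lambda|\,||\mathbf{x}||$ and the fact that $\mathcal{M}$ is a ring; the analogous statements for $\mathcal{M}^d_0(^*\mathbb{R}^d)$ use in addition that $\mathcal{M}_0$ is an ideal of $\mathcal{M}$ that absorbs $\mathcal{M}$. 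Hence $\widehat{^*\mathbb{R}^d}$ is a module over $(^*\mathbb{R}\cap\mathcal{M})/(^*\mathbb{R}\cap\mathcal{M}_0)=\widehat{^*\mathbb{R}}$; one checks that the scalar multiplication $\widehat{\lambda}\,\widehat{\mathbf{x}}=\widehat{\lambda\mathbf{x}}$ is well defined by writing $\lambda\mathbf{x}-\lambda'\mathbf{x}'=\lambda(\mathbf{x}-\mathbf{x}')+(\lambda-\lambda')\mathbf{x}'$ and noting that both summands have norm in $\mathcal{M}_0$. Since $\widehat{^*\mathbb{R}}$ is a field, $\widehat{^*\mathbb{R}^d}$ is a vector space over it, and the module axioms transfer directly from $^*\mathbb{R}^d$.

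For the isomorphism $\widehat{^*\mathbb{R}^d}\cong\widehat{^*\mathbb{R}}^d$, I would define $\Phi(\widehat{\mathbf{x}})=(\widehat{x_1},\dots,\widehat{x_d})$ for $\mathbf{x}=(x_1,\dots,x_d)$. Each coordinate $x_j$ lies in $^*\mathbb{R}\cap\mathcal{M}$ because $|x_j|\le||\mathbf{x}||$ and $\mathcal{M}$ is convex, so $\Phi$ takes values in $\widehat{^*\mathbb{R}}^d$; well‑definedness and $\widehat{^*\mathbb{R}}$‑linearity are immediate from $|x_j-x_j'|\le||\mathbf{x}-\mathbf{x}'||$ and the convexity of $\mathcal{M}_0$. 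Injectivity is the one place needing a small argument: if every $\widehat{x_j}=0$, i.e. $x_j\in\mathcal{M}_0$, then $||\mathbf{x}||^2=\sum_{j=1}^d x_j^2\in\mathcal{M}_0$ (a finite sum of elements of the ideal), and $h^2\in\mathcal{M}_0$ forces $h\in\mathcal{M}_0$ (otherwise $1/h\in\mathcal{M}$, hence $1/h^2\in\mathcal{M}$, contradicting $h^2\in\mathcal{M}_0$); thus $||\mathbf{x}||\in\mathcal{M}_0$ and $\widehat{\mathbf{x}}=0$. Surjectivity: given $y_j\in{}^*\mathbb{R}\cap\mathcal{M}$, the point $\mathbf{y}=(y_1,\dots,y_d)$ has $||\mathbf{y}||^2=\sum_{j=1}^d y_j^2\in\mathcal{M}$, and the estimate $\sqrt{a}\le 1+a$ for $a\ge 0$ gives $||\mathbf{y}||\in\mathcal{M}$ by convexity, so $\mathbf{y}\in\mathcal{M}^d(^*\mathbb{R}^d)$ and $\Phi(\widehat{\mathbf{y}})=(\widehat{y_1},\dots,\widehat{y_d})$.

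For part (ii), $\mathbb{R}^d\subseteq\mathcal{F}(^*\mathbb{R}^d)\subseteq\mathcal{M}^d(^*\mathbb{R}^d)$ by Lemma~\ref{L: Finite Points}, so $q_\mathcal{M}^d$ restricts to a map $\mathbb{R}^d\to\widehat{^*\mathbb{R}^d}$; this restriction is injective because a nonzero $r\in\mathbb{R}^d$ has $||r||$ a positive real number, which is not infinitesimal and hence not in $\mathcal{M}_0\subseteq\mathcal{I}(^*\mathbb{C})$, so $r\notin\mathcal{M}^d_0(^*\mathbb{R}^d)$. The image is closed under addition of asymptotic vectors and under multiplication by the embedded reals $\widehat{\mathbb{R}}\subseteq\widehat{^*\mathbb{R}}$, since both operations are computed coordinatewise on standard reals; combining this with Lemma~\ref{L: Isomorphic Fields} (which identifies $\mathbb{R}$ with $\widehat{\mathbb{R}}$) exhibits $\mathbb{R}^d$ as an $\mathbb{R}$‑linear subspace of $\widehat{^*\mathbb{R}^d}$.

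The whole argument is essentially bookkeeping; the only mildly non‑obvious points are that one must pass through the \emph{squared} Euclidean norm — because $||\cdot||$ itself is not additive — in order to stay inside $\mathcal{M}$ and $\mathcal{M}_0$, and that $\mathcal{M}$ and $\mathcal{M}_0$ are stable under the square root of a nonnegative element. Both facts are immediate from convexity together with the elementary estimates $\sqrt{a}\le 1+a$ and the implication $h^2\in\mathcal{M}_0\Rightarrow h\in\mathcal{M}_0$, so I expect no genuine obstacle.
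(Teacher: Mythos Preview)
Your proposal is correct and carefully fills in exactly the details the paper omits: the paper's own proof reads simply ``The verification is straightforward and we leave it to the reader.'' Your argument is the natural one the reader would be expected to supply, and the two points you flag as mildly non-obvious --- passing through the squared norm and the stability of $\mathcal{M}$ and $\mathcal{M}_0$ under square roots of nonnegative elements --- are precisely the places where a careless reader might stumble.
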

\begin{proof} The verification is straightforward and we leave it to the reader.
\end{proof}
\begin{definition}[Monads in $\widehat{^*\mathbb{R}^d}$]\label{D: Monads in *Rdhat} Let
$\Omega\subseteq\mathbb{R}^d$ and let $\mu(\Omega)\subset{^*\mathbb{R}^d}$ be the monad of $\Omega$
in
$^*\mathbb{R}^d$, i.e. 
\begin{equation}\label{E: Monad in *Rd}
\mu(\Omega)=\{\mathbf{r}+\mathbf{h}: \mathbf{r}\in\Omega,\;
\mathbf{h}\in{^*\mathbb{R}^d}, ||\mathbf{h}||\approx 0\}.
\end{equation}
Let $\mathcal{M}$ be a convex subring of
$^*\mathbb{C}$. We define the \textbf{monad of
$\Omega$ in $\widehat{^*\mathbb{R}^d}$} by the formula 
$\widehat{\mu(\Omega)}=q_\mathcal{M}^d[\mu(\Omega)]$.
\end{definition}

	Note that $\mu(\Omega)\subseteq\mathcal{M}(^*\mathbb{R}^d)$ by Lemma~\ref{L: Finite
Points}, since $\mu(\Omega)\subseteq\mathcal{F}(^*\mathbb{R}^d)$, which guarantees the correctness of the
above definition.

\begin{lemma} Let $\Omega\subseteq\mathbb{R}^d$ and $\mathcal{M}$ be a convex subring of
$^*\mathbb{C}$. Then:
\begin{equation}\label{E: Monad in *Rdhat}
\widehat{\mu(\Omega)}=\{\mathbf{r}+\widehat{\mathbf{h}}: \mathbf{r}\in\Omega,\;
\widehat{\mathbf{h}}\in\widehat{{^*\mathbb{R}^d}}, ||\widehat{\mathbf{h}}||\approx 0\}.
\end{equation}
\end{lemma}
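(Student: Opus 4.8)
The plan is to prove the claimed identity by establishing the two set-theoretic inclusions, both obtained by simply unwinding the definitions of $\mu(\Omega)$ (Definition~\ref{D:Monads}), of the quotient map $q_\mathcal{M}^d$ (Definition~\ref{D: Asymptotic Vectors}), and of the order on $\Re e(\widehat{\mathcal{M}})=\widehat{^*\mathbb{R}}$ (Definition~\ref{D: Asymptotic Fields}). Throughout I will use three earlier facts without further comment: (a) $\mathcal{F}(^*\mathbb{R}^d)\subseteq\mathcal{M}^d(^*\mathbb{R}^d)$ (Lemma~\ref{L: Finite Points}), so that $q_\mathcal{M}^d$ is defined on every infinitesimal vector; (b) $\mathbb{R}^d$ is a linear subspace of $\widehat{^*\mathbb{R}^d}$ and $q_\mathcal{M}^d$ is additive with $q_\mathcal{M}^d(\mathbf{r})=\mathbf{r}$ for $\mathbf{r}\in\mathbb{R}^d$ (Theorem~\ref{T: Vector Spaces}); (c) the norm on $\widehat{^*\mathbb{R}^d}$ satisfies $\|\widehat{\mathbf{x}}\|=\widehat{\|\mathbf{x}\|}$, which follows because $q_\mathcal{M}$ is an order preserving ring homomorphism and hence commutes with nonnegative square roots in the real closed field $\widehat{^*\mathbb{R}}$.

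For the inclusion ``$\subseteq$'' I would take $\widehat{\mathbf{x}}\in q_\mathcal{M}^d[\mu(\Omega)]=\widehat{\mu(\Omega)}$ and write $\widehat{\mathbf{x}}=q_\mathcal{M}^d(\mathbf{x})$ with $\mathbf{x}=\mathbf{r}+\mathbf{h}$, $\mathbf{r}\in\Omega$ and $\|\mathbf{h}\|\approx 0$ in $^*\mathbb{R}$. By (a), $\mathbf{h}\in\mathcal{M}^d(^*\mathbb{R}^d)$, so $\widehat{\mathbf{h}}=q_\mathcal{M}^d(\mathbf{h})$ is a legitimate element of $\widehat{^*\mathbb{R}^d}$, and by (b) one gets $\widehat{\mathbf{x}}=q_\mathcal{M}^d(\mathbf{r})+q_\mathcal{M}^d(\mathbf{h})=\mathbf{r}+\widehat{\mathbf{h}}$. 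It then remains to verify $\|\widehat{\mathbf{h}}\|\approx 0$: since $\|\mathbf{h}\|<1/n$ in $^*\mathbb{R}$ for every $n\in\mathbb{N}$ and $q_\mathcal{M}$ is weakly order preserving (immediate from Definition~\ref{D: Asymptotic Fields}), (c) gives $\|\widehat{\mathbf{h}}\|=\widehat{\|\mathbf{h}\|}\leq\widehat{1/n}=1/n$ for every $n$, hence $\|\widehat{\mathbf{h}}\|<1/m$ for every $m$ (apply the bound at $n=m+1$), i.e. $\|\widehat{\mathbf{h}}\|\in\mathcal{I}(\widehat{^*\mathbb{R}})$ and $\|\widehat{\mathbf{h}}\|\approx 0$, as required.

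For the reverse inclusion ``$\supseteq$'' I would start from $\mathbf{r}\in\Omega$ and $\widehat{\mathbf{h}}\in\widehat{^*\mathbb{R}^d}$ with $\|\widehat{\mathbf{h}}\|\approx 0$, pick any representative $\mathbf{h}\in\mathcal{M}^d(^*\mathbb{R}^d)$ of $\widehat{\mathbf{h}}$, and show that $\mathbf{h}$ is in fact infinitesimal in $^*\mathbb{R}^d$. This is the one step that needs a little care: a priori a representative of a ``small'' asymptotic vector might fail to be infinitesimal, and here the precise definition of strict inequality in $\widehat{^*\mathbb{R}}$ (Definition~\ref{D: Asymptotic Fields}: $\widehat{x}>0$ iff $\widehat{x}\neq 0$ and $x>0$ in $^*\mathbb{R}$) is exactly what rules this out. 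Indeed, for each $n\in\mathbb{N}$ the relation $\|\widehat{\mathbf{h}}\|<\widehat{1/n}$ means $\widehat{1/n-\|\mathbf{h}\|}=\widehat{1/n}-\widehat{\|\mathbf{h}\|}>0$ by (c), which forces $1/n-\|\mathbf{h}\|>0$ in $^*\mathbb{R}$; thus $\|\mathbf{h}\|<1/n$ for all $n$, so $\|\mathbf{h}\|\in\mathcal{I}(^*\mathbb{R})$ and $\mathbf{h}$ is infinitesimal. Consequently $\mathbf{r}+\mathbf{h}\in\mu(\Omega)$ by Definition~\ref{D:Monads}, and $q_\mathcal{M}^d(\mathbf{r}+\mathbf{h})=\mathbf{r}+\widehat{\mathbf{h}}$ by (b), which puts $\mathbf{r}+\widehat{\mathbf{h}}$ in $q_\mathcal{M}^d[\mu(\Omega)]=\widehat{\mu(\Omega)}$ and completes the proof. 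No deeper difficulty arises; apart from the point just discussed, the whole argument is bookkeeping with the quotient map.
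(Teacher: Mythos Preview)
Your proof is correct and, in fact, supplies exactly the details that the paper omits: the paper's own ``proof'' of this lemma reads in full ``The verification is straightforward and we leave it to the reader.'' Your double-inclusion argument is the natural way to carry out that verification, and the one genuinely nontrivial point you isolate---that a representative $\mathbf{h}$ of an infinitesimal $\widehat{\mathbf{h}}$ must itself be infinitesimal, which hinges on the convexity of $\mathcal{M}_0$ making the order on $\widehat{^*\mathbb{R}}$ well-defined at the level of representatives---is handled correctly.
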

\begin{proof} The verification is straightforward and we leave it to the reader.
\end{proof}

	We shall use $\widehat{\mu(\Omega)}$ mostly in the case when $\Omega$ is an open subset of
$\mathbb{R}^d$.
\section{$\mathcal{M}$-Asymptotic Functions}\label{S: M-Asymptotic Functions}

In this section we describe a variety of differential rings
$\widehat{\mathcal{M}}(\Omega)$ of generalized functions on an open set 
$\Omega$ of $\mathbb{R}^d$ in terms of a given convex subring $\mathcal{M}$ of $^*\mathbb{C}$
(Section~\ref{S: Asymptotic Fields: Definitions and Examples}). The elements of
$\widehat{\mathcal{M}}(\Omega)$ are named {\bf $\mathcal{M}$-asymptotic functions}
because  their values are in the field 
$\widehat{\mathcal{M}}$ of the $\mathcal{M}$-asymptotic numbers and
because, more importantly, each $\widehat{\mathcal{M}}(\Omega)$ is an algebra over
the field
$\widehat{\mathcal{M}}$ (Section~\ref{S: Asymptotic Fields: Definitions and Examples}). We
intend to convert some of $\widehat{\mathcal{M}}(\Omega)$ into {\bf algebras of
Colombeau's type} by supplying 
$\widehat{\mathcal{M}}(\Omega)$ with a copy of the space of Schwartz distributions
$\mathcal{D}^\prime(\Omega)$ in one of the next sections. In this section we generalize some 
of the results in (Oberguggenberger~and~T. Todorov~\cite{OberTod98}), where the algebra of
$\rho$-asymptotic functions $^\rho\mathcal{E}(\Omega)$ is introduced; within our more general
theory  the algebra $^\rho\mathcal{E}(\Omega)$ appears as a particular
example (Example~\ref{Ex: rho-Asymptotic Functions}). Similar to some of our results
appear in the H. Vernaeve Ph.D. Thesis~\cite{hVernaevePhD} (for comparison see the definition of
$\mathcal{E}_M(\Omega)$ on p. 90, Sec. 3.6).

	Here is the {\bf summary} of the basic definitions. The justification of the definitions will be presented
later in this section and some of the results will be worked out in detail in some of the next sections.

	In what follows $^*\mathbb{C}$ stands for a non-standard extension of the field of the
complex numbers $\mathbb{C}$. Let  $\mathcal{M}$ be a convex subring in
$^*\mathbb{C}$, $\mathcal{M}_0$ be the ideal of the non-invertible elements of 
$\mathcal{M}$. Let $\widehat{\mathcal{M}}$ be the field of
$\mathcal{M}$-asymptotic numbers (Section~\ref{S: Asymptotic Fields: Definitions and Examples}).  
Let $\Omega$ be an open set of $\mathbb{R}^d$ and let $\mu(\Omega)$ be the monad of $\Omega$ and
$\mu_\mathcal{M}(\Omega)$ denote the $\mathcal{M}$-monad of $\Omega$  (\ref{E:
F-Monad}).  
\begin{definition}[$\mathcal{M}$-Asymptotic Functions]\label{D: M-Asymptotic Functions} Then
\begin{enumerate}  
 \item  We define
the set of {\bf
$\mathcal{M}$-moderate functions}
$\mathcal{M}(\Omega)$ and the set of the {\bf $\mathcal{M}$-negligible functions}
in $^*\mathcal{E}(\Omega)$ by
\begin{align}
&\mathcal{M}(\Omega)=\{f\in{^*\mathcal{E}(\Omega)}\mid
(\forall\alpha\in\mathbb{N}_0^d)(\forall x\in\mu(\Omega)(\partial^\alpha
f(x)\in\mathcal{M})\},\notag\\
&\mathcal{M}_0(\Omega)=\{f\in{^*\mathcal{E}(\Omega)}\mid
(\forall\alpha\in\mathbb{N}_0^d)(\forall x\in\mu(\Omega)(\partial^\alpha
f(x)\in\mathcal{M}_0)\},\notag
\end{align}
respectively. Let  $\widehat{\mathcal{M}}(\Omega)=
\mathcal{M}(\Omega)/\mathcal{M}_0(\Omega)$ be the
corresponding factor ring. We say that $\widehat{\mathcal{M}}(\Omega)$ {\bf is 
generated by} $\mathcal{M}$. The elements of
$\widehat{\mathcal{M}}(\Omega)$ are named {\bf $\mathcal{M}$-asymptotic
functions on} $\Omega$. We denote by $Q_\Omega:
\mathcal{M}(\Omega)\to \widehat{\mathcal{M}}(\Omega)$ the corresponding
quotient mapping. However we shall often $\widehat{f}$ instead of $Q_\Omega(f)$ for the
equivalence class of $f\in\mathcal{M}(\Omega)$.

\item If $S\subseteq{^*\mathcal{E}}(\Omega)$, we shall sometimes suppress the dependence on
$\mathcal{M}$ and write simply 
$\widehat{S}$ instead of the more preices $Q_\Omega[S\cap\mathcal{M}]$. We observe that
$\widehat{\mathcal{M}}(\Omega)=\widehat{\mathcal{M}(\Omega)}=\widehat{^*\mathcal{E}(\Omega)}$,
where in the latter notation, $\widehat{^*\mathcal{E}(\Omega)}$, the dependence on $\mathcal{M}$
has been suppressed.
\item We define the {\bf embedding}
$\mathcal{E}(\Omega)\embed\widehat{\mathcal{M}}(\Omega)$, by the mapping
$f\to\widehat{^*f}$,  where $^*f$ is the {\em non-standard extension} of $f$. 

\item We define a {\bf pairing} between $\widehat{\mathcal{M}}(\Omega)$ and space of
test-functions $\mathcal{D}(\Omega)$ by the formula
\begin{equation}\label{E: Pairing}
\bra\widehat{f},\; \tau\ket= q_\mathcal{M}\left(\int_{^*\Omega}f(x)\, {^*\tau}(x)\,
dx\right),
\end{equation}
where $\widehat{f}\in\widehat{\mathcal{M}}(\Omega)$ and
$\tau\in\mathcal{D}(\Omega)$ and $q_\mathcal{M}: \mathcal{M}\to\widehat{\mathcal{M}}$ is the
quotient mapping (Definition~\ref{D: Asymptotic Fields}).

\item Let $\widehat{f},\,  \widehat{g}\in\widehat{\mathcal{M}}(\Omega)$.  We say that
$\widehat{f}$ and $\widehat{g}$ are {\bf weakly equal}, and write $\widehat{f}\cong\widehat{g}$, if
$\bra\widehat{f},\;
\tau\ket=\bra\widehat{g},\;
\tau\ket$ for all  $\tau\in\mathcal{D}(\mathbb{R}^d)$. We shall call $\cong$ a {\bf weak
equality} in $\widehat{\mathcal{M}}(\Omega)$. Similarly, we say that
$\widehat{f}$ and
$\widehat{g}$ are {\bf weakly infinitely close} (or simply {\em infinitely close} for short), and write
$\widehat{f}\approx\widehat{g}$, if
$\bra\widehat{f},\;
\tau\ket\approx\bra\widehat{g},\;
\tau\ket$ in $^*\mathbb{C}$ for all  $\tau\in\mathcal{D}(\mathbb{R}^d)$. We shall call $\approx$ a
{\bf weak infinitesimal relation} in $\widehat{\mathcal{M}}(\Omega)$.

\item Let $\widehat{f}\in\widehat{\mathcal{M}}(\Omega)$ and
$\widehat{x}\in\mu_\mathcal{M}(\Omega)$ (\ref{E: F-Monad}). We define the {\bf value of
$\widehat{f}$ at}
$\widehat{x}$ by the formula $\widehat{f}(\widehat{x})=\widehat{f(x)}$. We shall use the same notation,
$\widehat{f}$, for the corresponding graph
$\widehat{f}:\mu_\mathcal{M}(\Omega)\to\widehat{\mathcal{M}}$.
\item  Let $\Omega,
\mathcal{O}$ be two open sets of $\mathbb{R}^d$ such that
$\mathcal{O}\subseteq\Omega$. Let
$\widehat{f}\in\widehat{\mathcal{M}}(\Omega)$. We define the {\bf restriction}
$\widehat{f}\upharpoonright\mathcal{O}$ of
$\widehat{f}$ on $\mathcal{O}$ by the formula
\[
\widehat{f}\upharpoonright\mathcal{O}=\widehat{f|^*\mathcal{O}},
\]
where $^*\mathcal{O}$ is the non-standard extension of $\mathcal{O}$ and $f|^*\mathcal{O}$ is 
the usual (pointwise) restriction of $f$ on $^*\mathcal{O}$.

\end{enumerate}
\end{definition}
\begin{theorem}[Some Basic Results]\label{T: Some Basic Results} Let $\mathcal{M}$ be (as before) a convex
subring of ${^*\mathbb{C}}$. Then: 
\begin{description}
\item{\bf (i)} $\mathcal{M}(\Omega)$ is a differential subring of
$^*\mathcal{E}(\Omega)$ and $\mathcal{M}_0(\Omega)$ is a differential ideal in
$\mathcal{M}(\Omega)$. Consequently,
$\widehat{\mathcal{M}}(\Omega)$ is a \textbf{differential ring}.

\item{\bf (ii)} $\mathcal{E}(\Omega)$ is a \textbf{differential subring} of
$\widehat{\mathcal{M}}(\Omega)$ under the embedding
$f\to\widehat{^*f}$. We shall often write this simply as an inclusion
\[
\mathcal{E}(\Omega)\subset\widehat{\mathcal{M}}(\Omega).
\]

	\item{\bf (iii)} Let $\widehat{f}, \widehat{g}\in\widehat{\mathcal{M}}(\Omega)$. Then
$\widehat{f}=\widehat{g}\;\Rightarrow\; \widehat{f}\cong\widehat{g}\;\Rightarrow\;
\widehat{f}\approx\widehat{g}$.

\item{\bf (iv)} The embedding $f\to\widehat{^*f}$ {\bf preserves the pairing} between
$\mathcal{E}(\Omega)$ and $\mathcal{D}(\Omega)$ in the sense that for every
$f\in\mathcal{E}(\Omega)$ and every
$\tau\in\mathcal{D}(\Omega)$ we have
\[
\int_\Omega\, f(x)\, \tau(x)\, dx=\left<\widehat{^*f}, \tau\right>.
\]
Consequently, if $f, g\in\mathcal{E}(\Omega)$, then either of $\widehat{f}\cong\widehat{g}$ or 
$\widehat{f}\approx\widehat{g}$ implies $f=g$.

\item{\bf (v)} The embedding $\widehat{\mathcal{M}}(\Omega)\hookrightarrow
\widehat{\mathcal{M}}^{\; \mu_\mathcal{M}(\Omega)}$, defined by the pointwise values of
$\widehat{f}\in\widehat{\mathcal{M}}(\Omega)$, preserves the addition, multiplication
and partial differentiation in $\widehat{\mathcal{M}}(\Omega)$.

\item{\bf (vi)} For every \textbf{arcwise connected open set} $\Omega$ of $\mathbb{R}^d$ we have
\begin{equation}\label{E: Ring of Scalars}\notag
\widehat{\mathcal{M}}=
\left\{ \widehat{f}\in \widehat{\mathcal{E}}_\mathcal{M}(\Omega)\mid \nabla
\widehat{f}=0\right\}.
\end{equation}
 In particular,
\begin{equation}\label{E: Ring of Scalars}\notag
\widehat{\mathcal{M}}=
\left\{ \widehat{f}\in \widehat{\mathcal{E}}_\mathcal{M}(\mathbb{R}^d)\mid \nabla
\widehat{f}=0\right\}.
\end{equation}

\item{\bf (vii)} Let $c\in\mathcal{M}$ and $f_c\in{^*\mathcal{E}}(\Omega)$ denote the constant
function  $f_c(x)=c$ for all $x\in{^*\Omega}$. Then the  mapping
$\widehat{c}\to \widehat{f_c}$ from
$\widehat{\mathcal{M}}$ to $\widehat{\mathcal{M}}(\Omega)$ is a
differentail ring embedding. Consequently,
$\widehat{\mathcal{M}}(\Omega)$ is a \textbf{differential algebra over the field}
$\widehat{\mathcal{M}}$ under the ring operations in
$\widehat{\mathcal{M}}(\Omega)$. In particular the multiplication of functions in
$\widehat{\mathcal{M}}(\Omega)$ by scalars in $\widehat{\mathcal{M}}$ is defined by
$\widehat{c}\, \widehat{f}=\widehat{cf}$. Also $\mathcal{E}(\Omega)$ is a \textbf{differential
subalgebra of $\widehat{\mathcal{M}}(\Omega)$ over the field}\;  $\mathbb{C}$. We
shall often identify $\widehat{c}$ with its image $\widehat{f_c}$ and write simply
$\widehat{\mathcal{M}}\subset\widehat{\mathcal{M}}(\Omega)$ similarly to the
more conventional $\mathbb{C}\subset\mathcal{E}(\Omega)$.
\item{\bf (viii)} Let $\mathcal{T}_d$ stand for the usual topology on $\mathbb{R}^d$.
 The collection
$\mathcal{S}_\mathcal{M}=:
\{\widehat{\mathcal{M}}(\Omega)\}_{\Omega\in{\mathcal{T}_d}}$
is a {\bf sheaf} on the topological space
$(\mathbb{R}^d, \mathcal{T}_d)$ under the restriction $\restriction$. Consequently, every function
$\widehat{f}\in\widehat{\mathcal{M}}(\Omega)$ has a \textbf{support}\, 
$\supp(\widehat{f})$ which is a \textbf{closed set of} $\Omega$.
\end{description}
\end{theorem}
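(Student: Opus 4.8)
The plan is to prove the eight items essentially in order, each by passing to representatives and invoking one of three recurring tools: the Leibniz formula, the Transfer Principle (Theorem~\ref{T: Transfer Principle}), and the dictionary between the usual topology and monads (Theorems~\ref{T: The Usual Topology on Rd} and~\ref{T:Boolean Properties}). The workhorse is a \emph{sup-lemma} I would prove first: if $h\in\mathcal{M}(\Omega)$ and $K\subset\subset\Omega$, then the internal supremum $\sup_{x\in{^*K}}|h(x)|$ is attained at some $x_0\in{^*K}$ (transfer of ``a continuous function attains its supremum on a compact set''), and $x_0\in\mu(\Omega)$ because ${^*K}\subseteq\mu(K)\subseteq\mu(\Omega)$ for $K$ compact (Theorem~\ref{T: The Usual Topology on Rd}); hence $\sup_{x\in{^*K}}|h(x)|=|h(x_0)|\in\mathcal{M}$, and it lies in $\mathcal{M}_0$ when $h\in\mathcal{M}_0(\Omega)$. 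Combining this with the transferred mean value inequality and the fact that $\mathcal{M}_0$ is an ideal of $\mathcal{M}$ (Lemma~\ref{L: Convex Ideals}) gives the two estimates used repeatedly: for $f\in\mathcal{M}(\Omega)$ and $x,y$ in a common monad $\mu(r)$ one has $|f(x)-f(y)|\le\|x-y\|\,\sup_{{^*K}}|{^*\nabla}f|$, which lies in $\mathcal{M}_0$ either when $\|x-y\|\in\mathcal{M}_0$, or when $\nabla f$ takes values in $\mathcal{M}_0$ on $\mu(\Omega)$.

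For (i): $\mathcal{M}(\Omega)$ is closed under sums and $\partial^\alpha$ because $\mathcal{M}$ is a ring, and under products by the Leibniz formula (a finite sum of products in $\mathcal{M}$); the same formula together with $\mathcal{M}_0$ being an ideal of $\mathcal{M}$ shows $\mathcal{M}_0(\Omega)$ is a differential ideal, so $\widehat{\mathcal{M}}(\Omega)$ is a differential ring. For (ii) and (vii): for $f\in\mathcal{E}(\Omega)$ and $x=r+dx\in\mu(\Omega)\subseteq{^*\Omega}$ one has $\partial^\alpha({^*f})(x)={^*(\partial^\alpha f)}(x)\approx\partial^\alpha f(r)\in\mathbb{C}$ by continuity, so it is finite and lies in $\mathcal{F}({^*\mathbb{C}})\subseteq\mathcal{M}$ (Lemma~\ref{L: Convex Rings}); thus $f\mapsto\widehat{^*f}$ maps into $\widehat{\mathcal{M}}(\Omega)$, is a differential ring homomorphism, and is injective since $\widehat{^*f}=\widehat{^*g}$ forces $f(r)=g(r)$ at each standard $r$ (the difference lying in $\mathcal{M}_0\subseteq\mathcal{I}({^*\mathbb{C}})$). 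The constant embedding $\widehat c\mapsto\widehat{f_c}$ is handled identically, which makes $\widehat{\mathcal{M}}(\Omega)$ an algebra over $\widehat{\mathcal{M}}$ and proves (vii).

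For (iii) and (iv): if $\widehat f=\widehat g$ then $h:=f-g\in\mathcal{M}_0(\Omega)$, and for $\tau\in\mathcal{D}(\Omega)$ with $\supp\tau=K$ the sup-lemma gives $\bigl|\int_{{^*\Omega}}h\,{^*\tau}\,dx\bigr|\le\bigl(\sup_{{^*K}}|h|\bigr)\,{^*\!\bigl(\int_K|\tau|\,dx\bigr)}\in\mathcal{M}_0$, so $\widehat f\cong\widehat g$; and $\cong$ implies $\approx$ since $\mathcal{M}_0\subseteq\mathcal{I}({^*\mathbb{C}})$. For (iv), Transfer gives $\int_{{^*\Omega}}{^*f}\,{^*\tau}\,dx={^*\!\bigl(\int_\Omega f\tau\,dx\bigr)}$, a standard number $c$, so $\langle\widehat{^*f},\tau\rangle=\widehat c=c$ under the identification $\mathbb{C}=\widehat{\mathbb{C}}$ (Lemma~\ref{L: Isomorphic Fields}); the stated consequence is then the classical fundamental lemma of the calculus of variations applied to the standard numbers $\int_\Omega f\tau\,dx$. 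For (v), the point-value map $\widehat x\mapsto\widehat{f(x)}$ is well defined by the first estimate above (note $\widehat x=\widehat y$ forces $\|x-y\|\in\mathcal{M}_0$ and, being infinitesimal, $x,y$ into a common monad), and it respects $+$ and $\cdot$ termwise and $\partial^\alpha$ because the graph of $\partial^\alpha\widehat f=\widehat{\partial^\alpha f}$ is $\widehat x\mapsto\widehat{\partial^\alpha f(x)}$. For (vi), $\nabla\widehat f=0$ means $\partial^\beta f$ takes values in $\mathcal{M}_0$ on $\mu(\Omega)$ for every $|\beta|\ge1$; fixing $r_0\in\Omega$ and integrating $\nabla f$ along a polygonal path in the open arcwise connected set $\Omega$, together with the second estimate, gives $f(r)-f(r_0)\in\mathcal{M}_0$ for all $r\in\Omega$ and then $f(x)-f(r_0)\in\mathcal{M}_0$ for all $x\in\mu(\Omega)$; hence $f-f(r_0)\in\mathcal{M}_0(\Omega)$, $\widehat f=\widehat{f(r_0)}\in\widehat{\mathcal{M}}$, and the reverse inclusion is trivial.

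For (viii): the restriction maps are well defined ($\mathcal{O}\subseteq\Omega$ gives $\mu(\mathcal{O})\subseteq\mu(\Omega)$) and the presheaf axioms follow as in Theorem~\ref{T: Standard Presheaf}; separation is immediate from $\mu\bigl(\bigcup_\lambda\Omega_\lambda\bigr)=\bigcup_\lambda\mu(\Omega_\lambda)$ (Theorem~\ref{T:Boolean Properties}), since $\widehat f\upharpoonright\Omega_\lambda=0$ for all $\lambda$ forces $\partial^\alpha f(x)\in\mathcal{M}_0$ at each $x\in\mu(\Omega)$, hence $\widehat f=0$. For the gluing axiom I would, given compatible $\widehat{f_\lambda}\in\widehat{\mathcal{M}}(\Omega_\lambda)$ with representatives $f_\lambda\in\mathcal{M}(\Omega_\lambda)$, choose a locally finite refinement $\{U_j\}$ of the cover with $\overline{U_j}\subset\subset\Omega_{\lambda(j)}$ and a subordinate smooth partition of unity $\{\chi_j\}\subset\mathcal{D}(\Omega_{\lambda(j)})$, and let $f$ be the class of the net $\bigl(\sum_j\chi_j\,f_{\lambda(j),i}\bigr)_{i\in\mathcal{I}}$ (each $\chi_j f_{\lambda(j),i}$ extended by zero; the sum is locally finite hence smooth), then verify, using Leibniz, local finiteness, $\sum_j{^*\chi_j}=1$ near each standard point, and the compatibility relations $f_{\lambda(j)}-f_\lambda\in\mathcal{M}_0(\Omega_{\lambda(j)}\cap\Omega_\lambda)$, that $f\in\mathcal{M}(\Omega)$ and $f\upharpoonright\Omega_\lambda-f_\lambda\in\mathcal{M}_0(\Omega_\lambda)$. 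I expect this gluing step to be the main obstacle: unlike the presheaf and separation parts, it requires manufacturing a moderate representative from the local data and then tracking, term by term, which pieces are moderate and which are negligible. A secondary delicate point is that, if ``$\embed$'' in (v) is meant strictly (injective), one must also show that $h\in\mathcal{M}(\Omega)$ negligible throughout $\mu(\Omega)$ has all $\partial^\alpha h$ negligible on $\mu(\Omega)$ — which I would attack with a Landau--Kolmogorov interpolation inequality, using that $\mathcal{M}_0$ is closed under square roots for the rings generated by the generating sequences of Definition~\ref{D: Generating Sequences}.
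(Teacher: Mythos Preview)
Your proposal is correct and follows the paper's overall architecture: the paper too declares (i)--(v) routine, defers (vi)--(vii) to a ``Fundamental Theorem'' section, and proves (viii) via a locally finite refinement plus partition of unity. The tactical differences are instructive, though. For the key \emph{simplification} step --- that $f\in\mathcal{M}(\Omega)$ with $f(x)\in\mathcal{M}_0$ on $\mu(\Omega)$ forces all $\partial^\alpha f(x)\in\mathcal{M}_0$, which underlies both injectivity in (v) and separation in (viii) --- the paper does \emph{not} use Landau--Kolmogorov: it fixes a maximal field $\mathbb{M}\in\mathcal{M}ax(\mathcal{M})$, uses the characterization $\mathcal{M}_0=\{z:|z|<\varepsilon\ \forall\varepsilon\in\mathbb{M}_+\}$, and for given $\varepsilon\in\mathbb{M}_+$ chooses an infinitesimal increment $h\in\mathbb{M}^d$ of length $<\varepsilon/\delta$ (where $\delta\in\mathbb{M}_+$ bounds the second derivatives) in the direction of $\nabla f(x)$; a second-order Taylor expansion then yields $\|\nabla f(x)\|<\varepsilon$. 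Your interpolation route also works, and your caveat about square roots is unnecessary: for \emph{any} convex subring, $0<z\in\mathcal{M}_0$ implies $\sqrt{z}\in\mathcal{M}_0$, since $1/\sqrt{z}\in\mathcal{M}$ would give $1/z=(1/\sqrt{z})^2\in\mathcal{M}$.

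For (vi) the paper works directly in $\mu(\Omega)$: it transfers arcwise connectedness to $^*\Omega$, integrates $\nabla f$ along a $*$-curve, and then uses an underflow/spilling argument to conclude $\|\nabla f(x)\|\in\mathcal{M}_0$; your two-step route (standard points first, then monads) is a fine variant. For the gluing in (viii) the paper packages the partition-of-unity sum as an internal \emph{hyperfinite} sum $f(x)=\sum_{n\in H(x)}{^*\varphi_n}(x)\,f_{\lambda(n)}(x)$, so that $f\in{^*\mathcal{E}}(\Omega)$ is immediate by transfer; your net-level construction $\bigl(\sum_j\chi_j\,f_{\lambda(j),i}\bigr)_i$ is equivalent (each sum is locally finite, hence smooth for every $i$) and arguably more elementary. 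In both presentations the verification that the glued $f$ restricts correctly rests on the simplification theorem above, exactly as you anticipated.
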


\Proof The properties (i)-(v) follow easily from the definition of 
$\widehat{\mathcal{M}}(\Omega)$ and we shall leave to the reader to check the detail. 
We shall proof (vi) and (vii) in Section~\ref{S: Pointwise Values and Fundamental Theorem} and we
shall prove (viii) in Section~\ref{S: Local Properties of Asymptotic Functions}.


	  Here are {\bf several examples} algebras of asymptotic functions. 

\begin{example}[$\mathcal{C}^\infty$-Functions]\label{Ex: Cinfinity-Functions} Let 
$\mathcal{M}=\mathcal{F}(^*\mathbb{C})$. In this case we have
$\mathcal{M}_0=\mathcal{I}(^*\mathbb{C})$ and $\widehat{\mathcal{M}}={\mathbb{C}}$
(Example~\ref{Ex: Finite Numbers}). For the $\mathcal{M}$-moderate and
$\mathcal{M}$-negligible functions we
have $\mathcal{M}(\Omega)=\mathcal{F}(^*\mathcal{E}(\Omega))$ and 
$\mathcal{M}_0(\Omega)=\mathcal{I}(^*\mathcal{E}(\Omega))$, where 
\begin{align}
&\mathcal{F}(^*\mathcal{E}(\Omega))=:\{f\in{^*\mathcal{E}(\mathbb{R}^{d})}\mid
(\forall\alpha\in\mathbb{N}_0^d)(\forall x\in\mu(\Omega)(\partial^\alpha
f(x)\in\mathcal{F}(^*\mathbb{C})),\notag\\
&\mathcal{I}(^*\mathcal{E}(\Omega))=:\{f\in{^*\mathcal{E}(\mathbb{R}^{d})}\mid
(\forall\alpha\in\mathbb{N}_0^d)(\forall x\in\mu(\Omega)(\partial^\alpha
f(x)\in\mathcal{I}(^*\mathbb{C}))\},\notag
\end{align}
respectively. The corresponding ring of $\mathcal{M}$-asymptotic functions
\[
\widehat{\mathcal{F}}(\Omega)=\mathcal{F}(^*\mathcal{E}(\Omega))/\mathcal{I}(^*\mathcal{E}(\Omega)),
\]
 is isomorphic to the ring $\mathcal{E}(\Omega)=\mathcal{C}^\infty(\Omega)$
of the usual $\mathcal{C}^\infty$-functions on $\Omega$.
\end{example}


\begin{example}\label{Ex: F=Frho} Let 
$\rho$ be (as before) a positive infinitesimal in
${^*\mathbb{R}}$ and let $\mathcal{M}=\mathcal{F}_\rho$ and
$\mathcal{M}_0=\mathcal{I}_\rho$ (Example~\ref{Ex: Logarithmic Rings}).
For the $\mathcal{M}$-moderate and
$\mathcal{M}$-negligible functions we have
$\mathcal{M}(\Omega)=\mathcal{F}_\rho(^*\mathcal{E}(\Omega))$ and 
$\mathcal{M}_0(\Omega)=\mathcal{I}_\rho(^*\mathcal{E}(\Omega))$, where
\begin{align}
&\mathcal{F}_\rho(^*\mathcal{E}(\Omega))=:\left\{f\in{^*\mathcal{E}(\Omega)}
\mid(\forall\alpha\in\mathbb{N}_0^d)(\forall
x\in\mu(\Omega))\left[\partial^\alpha
f(x)\in\mathcal{F}_\rho\right]\right\},\notag\\
&\mathcal{I}_\rho(^*\mathcal{E}(\Omega))=:\left\{f\in{^*\mathcal{E}(\Omega)}
\mid(\forall\alpha\in\mathbb{N}_0^d)(\forall
x\in\mu(\Omega))\left[\partial^\alpha
f(x)\in\mathcal{I}_\rho\right]\right\},\notag
\end{align}
respectively.
The corresponding ring of  $\mathcal{M}$-asymptotic functions
\[
\widehat{\mathcal{F}_\rho}(\Omega)=
\mathcal{F}_\rho(^*\mathcal{E}(\Omega))/\mathcal{I}_\rho(^*\mathcal{E}(\Omega)),
\]
is an algebra over the logarithmic field $\widehat{\mathcal{F}_\rho}$
(Example~\ref{Ex: Examples of Asymptotic Fields}).
\end{example}
\begin{example}[$\rho$-Asymptotic Functions]\label{Ex: rho-Asymptotic Functions} Let  
$\rho$ be a positive infinitesimal in ${^*\mathbb{R}}$ and let
$\mathcal{M}=\mathcal{M}_\rho$ and
$\mathcal{M}_0=\mathcal{N}_\rho$ (Example~\ref{Ex: Robinson Rings}). For the
$\mathcal{M}$-moderate and $\mathcal{M}$-negligible functions we have
$\mathcal{M}(\Omega)=\mathcal{M}_\rho(^*\mathcal{E}(\Omega))$ and
$\mathcal{M}_0(\Omega)=\mathcal{N}_\rho(^*\mathcal{E}(\Omega))$, where
\begin{align}
&\mathcal{M}_\rho(^*\mathcal{E}(\Omega))=:\left\{f\in{^*\mathcal{E}(\Omega)}
\mid(\forall\alpha\in\mathbb{N}_0^d)(\forall
x\in\mu(\Omega))\left[\partial^\alpha
f(x)\in\mathcal{M}_\rho\right]\right\},\notag\\
&\mathcal{N}_\rho(^*\mathcal{E}(\Omega))=:\left\{f\in{^*\mathcal{E}(\Omega)}
\mid(\forall\alpha\in\mathbb{N}_0^d)(\forall
x\in\mu(\Omega))\left[\partial^\alpha
f(x)\in\mathcal{N}_\rho\right]\right\},\notag
\end{align}
respectively.
The corresponding ring of $\mathcal{M}$-asymptotic functions 
\[
\widehat{\mathcal{M}_\rho}(\Omega)=\mathcal{M}_\rho(^*\mathcal{E}(\Omega))/\mathcal{N}_\rho(^*\mathcal{E}(\Omega)),
\]
denoted also by $^\rho\mathcal{E}(\Omega)$, is an algebra over A. Robinson field $^\rho\mathbb{C}$
(Example~\ref{Ex: Examples of Asymptotic Fields}). The algebra
$^\rho\mathcal{E}(\Omega)$ is introduced in (M.
Oberguggenberger and T. Todorov\cite{OberTod98}) under the name {\bf $\rho$-asymptotic
functions}.  We shall follow this terminology. The reader will find a more detail about
$^\rho\mathcal{E}(\Omega)$ in  Chapter~\ref{Ch: Asymptotic Functions}. The algebra
$^\rho\mathcal{E}(\Omega)$ is, in a sense, a non-standard counterpart of a {\bf special 
Colombeau's algebra} (J. F. Colombeau~\cite{jCol84a}) with the important {\bf improvement of
the properties of the scalars:} The ring of the scalars $^\rho\mathbb{C}$ of
$^\rho\mathcal{E}(\Omega)$ constitutes an algebraically closed Cantor-complete field (Theorem~\ref{T:
Cantor Completeness}). In contrast, the  ring of the scalars $\widetilde{\mathbb{C}}$ of Colombeau
algebra
$\mathcal{G}(\Omega)$ is a ring with zero divisors. 
\end{example}
\begin{example}[Exponential Asymptotic Functions]\label{Ex: Exponential Asymptotic Functions}Let 
$\rho$ be (as before) a positive infinitesimal in
${^*\mathbb{R}}$ and let $\mathcal{M}=\mathcal{E}_\rho$ and
$\mathcal{M}_0=\mathcal{E}_{\rho,0}$ (Example~\ref{Ex: Logarithmic-Exponential
Rings}).
 The corresponding ring of
asymptotic functions
$\widehat{\mathcal{E}_\rho}(\Omega)$ is an algebra over the exponential field
$\widehat{\mathcal{E}_\rho}$ (Example~\ref{Ex: Examples of Asymptotic Fields}).
\end{example}
\begin{example}[The case $\mathcal{M}={^*\mathbb{C}}$]\label{Ex: The case F=*C} Let
$\mathcal{M}={^*\mathbb{C}}$. In this case $\mathcal{M}_0=\{0\}$ and
$\widehat{\mathcal{M}}={^*\mathbb{C}}$ (Example~\ref{Ex: The case F=*C}). For the 
$\mathcal{M}$-moderate and $\mathcal{M}$-negligible functions we have
$\mathcal{M}(\Omega)={^*\mathcal{E}}(\Omega)$ and
$\mathcal{M}_0(\Omega)={^*\mathcal{E}_0}(\Omega)$, respectively, where
\begin{align}
&{^*\mathcal{E}_0}(\Omega)=\{f\in{^*\mathcal{E}(\mathbb{R}^{d})}:
(\forall\alpha\in\mathbb{N}_0^d)(\forall x\in\mu(\Omega)(\partial^\alpha f(x)=0)\},\notag
\end{align}
The ring of the corresponding $\mathcal{M}$-asymptotic functions 
\[
\widehat{^*\mathbb{C}}(\Omega)={^*\mathcal{E}}(\Omega)/{^*\mathcal{E}_0}(\Omega).
\]
is an algebra over the field $^*\mathbb{C}$. The algebra $\widehat{^*\mathbb{C}}(\Omega)$ is, in
a sense, a {\bf non-standard counterpart of Egorov algebra}
(Yu. V. Egorov~\cite{yEgorov90a}-\cite{yEgorov90b}) with the important improvement of
the properties of the scalars:  The ring of the scalars $^*\mathbb{C}$ of
$\widehat{^*\mathbb{C}}(\Omega)$ constitutes an algebraically closed saturated field. In contrast,
the the scalars of Egorov's algebra are a ring with zero divisors.  The algebra
$\widehat{^*\mathbb{C}}(\Omega)$ will be studied in detail in Chapter~\ref{Ch: Non-Standard Smooth
functions}.
\end{example}

	\section{$\mathcal{M}$-Moderate and $\mathcal{M}$-Negligible Functions}
	In this section we present \textbf{several characterizations} of the
$\mathcal{M}$-moderate and $\mathcal{M}$-negligible functions (Section~\ref{S:  F-Asymptotic
Functions}).

	Through out this section $\mathcal{M}$ stands for a convex subring of $^*\mathbb{C}$
(Section~\ref{S: F-Asymptotic Numbers: Definitions and Examples}) and
$\mathbb{M}\in\mathcal{M}ax(\mathcal{M})$ stands for a maximal field within 
$\mathcal{M}$ (Definition~\ref{D: Maximal Fields}).

\begin{theorem}\label{T: A Characterization} Let
$f\in{^*\mathcal{E}}(\Omega)$. Then the following are equivalent:
\begin{description}
\item{\bf (i)}\;\,  $(\forall x\in\mu(\Omega))(f(x)\in\mathcal{M})$. 
\item{\bf (ii)}\;  $(\forall x\in\mu(\Omega))(\exists M\in\mathbb{M}_+)(|f(x)|\leq M)$. 
\item{\bf (iii)} $(\forall K\subset\subset\Omega)(\exists
M\in\mathbb{M}_+)(\sup_{x\in{^*K}} |f(x)|\leq M)$.

\item{\bf (iv)}\,  $(\forall x\in\mu(\Omega))(\exists A\in\mathcal{M}\setminus\mathcal{M}_0)(|f(x)|\leq
A)$. 
\item{\bf (v)}\, $(\forall K\subset\subset\Omega)(\exists A\in\mathcal{M}\setminus\mathcal{M}_0)
(\sup_{x\in{^*K}} |f(x)|\leq A)$.
\item{\bf (vi)} $(\forall x\in\mu(\Omega))(\forall
B\in{^*\mathbb{R}_+}\setminus\mathcal{M})(|f(x)|<B)$.  
\item{\bf (vii)} $(\forall K\subset\subset\Omega)(\forall
B\in{^*\mathbb{R}_+}\setminus\mathcal{M}) (\sup_{x\in{^*K}} |f(x)|< B)$. 
\end{description}
\end{theorem}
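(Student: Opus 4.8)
The plan is to prove the cycle of implications (i)$\Rightarrow$(ii)$\Rightarrow$(iii)$\Rightarrow$(i) and a parallel cycle (i)$\Leftrightarrow$(iv)$\Leftrightarrow$(v) and (i)$\Leftrightarrow$(vi)$\Leftrightarrow$(vii), exploiting the three-way ``disconnectedness'' of the sets $\mathcal{M}_0$, $\mathcal{M}\setminus\mathcal{M}_0$ and ${}^*\mathbb{C}\setminus\mathcal{M}$ (part (v) of Lemma~\ref{L: Convex Ideals}) together with the characterization of $\mathcal{M}$ in terms of $\mathbb{M}_+$ given in part (iii) of Theorem~\ref{T: Field of Representatives}, namely $\mathcal{M}=\{z\in{^*\mathbb{C}}\mid (\exists\varepsilon\in\mathbb{M}_+)(|z|\leq\varepsilon)\}$. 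The key observation that ties everything together is that, since $\mathbb{M}_+\subseteq\mathcal{M}\setminus\mathcal{M}_0$ and ${}^*\mathbb{R}_+\setminus\mathcal{M}$ is exactly the positive part of the ``too large'' region, the statements ``$|f(x)|\le M$ for some $M\in\mathbb{M}_+$'', ``$|f(x)|\le A$ for some $A\in\mathcal{M}\setminus\mathcal{M}_0$'', and ``$f(x)\in\mathcal{M}$'' are essentially restatements of one another once one knows the characterization of $\mathcal{M}$.

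First I would handle the pointwise equivalences (i)$\Leftrightarrow$(ii)$\Leftrightarrow$(iv)$\Leftrightarrow$(vi). The implication (ii)$\Rightarrow$(iv) is trivial since $\mathbb{M}_+\subseteq\mathcal{M}\setminus\mathcal{M}_0$; (iv)$\Rightarrow$(i) is the convexity of $\mathcal{M}$; (i)$\Rightarrow$(ii) is precisely formula (\ref{E: Characterization of M}) applied to $z=f(x)$. For the ``mega number'' version: (i)$\Rightarrow$(vi) follows because $f(x)\in\mathcal{M}$ and $B\notin\mathcal{M}$ with $B>0$ force $|f(x)|<B$ by part (v) of Lemma~\ref{L: Convex Ideals} (any element of $\mathcal{M}$ has strictly smaller absolute value than any element of ${}^*\mathbb{C}\setminus\mathcal{M}$); conversely (vi)$\Rightarrow$(i), for if $f(x)\notin\mathcal{M}$ then $|f(x)|\in{}^*\mathbb{R}_+\setminus\mathcal{M}$ and we could take $B=|f(x)|$ to violate (vi). This closes the pointwise block.

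Next I would pass from the pointwise conditions to the ``uniform on compacts'' conditions (iii), (v), (vii). The implication from a compact-$K$ statement to the corresponding monadic statement is immediate, because for $x\in\mu(\Omega)$ there is a standard $K\subset\subset\Omega$ with $x\in{}^*K$ (take a closed ball around the standard part of $x$ contained in $\Omega$, using that $\st(x)\in\Omega$ when $x\in\mu(\Omega)$ and $\Omega$ is open), so $|f(x)|\le\sup_{y\in{}^*K}|f(y)|$. The reverse direction is where the real work lies and is the step I expect to be the main obstacle: given, say, (i) I must produce a single bound valid uniformly over ${}^*K$. Here I would use that $\sup_{y\in{}^*K}|f(y)|$ exists in ${}^*\mathbb{R}$ (it is the non-standard extension of the standard supremum, using Definition~\ref{D: Sup and Support} / the proposition on internal sup), call it $s_K=\bra\sup_{y\in K}|f_i(y)|\ket$; then I claim $s_K\in\mathcal{M}$. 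Indeed, if $s_K\notin\mathcal{M}$ then $s_K\in{}^*\mathbb{R}_+\setminus\mathcal{M}$, and by a spilling/overflow argument (or directly: there is a point $x^\ast$ of ${}^*K$, via the internal maximum of $|f|$ on the internal compact ${}^*K$, where $|f(x^\ast)|$ is infinitely close to $s_K$, hence $|f(x^\ast)|\notin\mathcal{M}$; and $x^\ast\in{}^*K\subseteq\mu(\Omega)$ since $K\subset\subset\Omega$) we contradict (i). Once $s_K\in\mathcal{M}$, formula (\ref{E: Characterization of M}) yields $M\in\mathbb{M}_+$ with $s_K\le M$, giving (iii); (v) follows since $M\in\mathcal{M}\setminus\mathcal{M}_0$; and (vii) follows because $s_K\in\mathcal{M}$ forces $s_K<B$ for every $B\in{}^*\mathbb{R}_+\setminus\mathcal{M}$ by disconnectedness again.

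The one technical point requiring care is the existence of the near-maximizer $x^\ast\in{}^*K$ with $|f(x^\ast)|\approx s_K$: on each representative $f_i$ the continuous function $|f_i|$ attains its max on the compact set $K$ at some point $y_i\in K$, and $x^\ast=\bra y_i\ket\in{}^*K$ satisfies $|f(x^\ast)|=\bra|f_i(y_i)|\ket=\bra\sup_{y\in K}|f_i(y)|\ket=s_K$ exactly, so in fact the supremum is attained and no ``infinitely close'' argument is even needed. With this, all the implications are routine consequences of convexity, of Lemma~\ref{L: Convex Ideals}(v), and of Theorem~\ref{T: Field of Representatives}(iii), and I would simply remark that the remaining implications (e.g. (iii)$\Rightarrow$(v)$\Rightarrow$(vii) and back) are proved by the same three facts, leaving the bookkeeping to the reader in the style of the surrounding text.
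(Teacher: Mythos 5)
Your proof is correct, and it takes a genuinely different route from the paper's in the one step that carries real content. The paper runs a single cycle (i)$\Rightarrow$(ii)$\Rightarrow\dots\Rightarrow$(vii)$\Rightarrow$(i); you instead settle the three pointwise conditions (ii), (iv), (vi) as one block (correctly, via convexity, the disconnectedness in part (v) of Lemma~\ref{L: Convex Ideals}, and formula (\ref{E: Characterization of M})) and then reduce all three compact-set conditions to the single claim $\sup_{x\in{^*K}}|f(x)|\in\mathcal{M}$. More substantively, your handling of that claim avoids the machinery the paper leans on: the paper picks $y\in{^*K}$ with $\gamma/2<|f(y)|<\gamma$ to show $\gamma=\sup_{\xi\in{^*K}}|f(\xi)|\in\mathcal{M}$, then invokes the Underflow of ${^*\mathbb{C}}\setminus\mathcal{M}$ (Theorem~\ref{T: Spilling Principles}) to produce $A\in\mathcal{M}\setminus\mathcal{M}_0$ dominating the sup, and finally writes $A=M_1+dz$ with $M_1\in\mathbb{M}$ to land in $\mathbb{M}_+$. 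You observe instead that the internal supremum is attained at $x^\ast=\bra y_i\ket$ (extreme value theorem applied to each representative on the compact $K$), that $x^\ast\in{^*K}\subseteq\mu(\Omega)$ by the nonstandard characterization of compactness (Theorem~\ref{T: The Usual Topology on Rd}), and then read off $M\in\mathbb{M}_+$ in one stroke from Theorem~\ref{T: Field of Representatives}(iii). This is a legitimate simplification: it eliminates the spilling principles from the proof entirely and collapses the paper's three-stage bound improvement into a single application of the $\mathbb{M}_+$-characterization of $\mathcal{M}$, at the modest cost of invoking exact attainment of the internal sup where the paper makes do with the approximation $\gamma/2<|f(y)|$. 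Both arguments ultimately rest on the same two structural facts (convexity/disconnectedness and the characterization of $\mathcal{M}$ by $\mathbb{M}_+$), so nothing is lost.
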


\begin{remark}\label{R: Relaxation 1} {\em We should note that the above theorem remains true even if
the maximal field $\mathbb{M}$ is replaced by a set $S\subseteq\mathcal{M}\setminus\mathcal{M}_0$
such that $S$ contains arbitrarily large numbers.
}\end{remark}

\Proof (i)$\Leftrightarrow$(ii) follows immediately by part~(i) of Theorem~\ref{T:
Characterization}.

	(ii)$\Rightarrow$(iii): Let $K\subset\subset\Omega$ and recall that $^*K\subset\mu(\Omega)$ by
Theorem~\ref{T: The Usual Topology on Rd}.  We observe that $\sup_{\xi\in{^*K}}
|f(\xi)|\in\mathcal{M}$. Indeed, suppose  (on the contrary) that
$\gamma=:\sup_{\xi\in{^*K}} |f(\xi)|\notin\mathcal{M}$ which implies also
$\gamma/2\notin\mathcal{M}$. There exists $y\in{^*K}$ such that
$\gamma/2<|f(y)|<\gamma$ by the choice of
$\gamma$.  It follows $f(y)\notin\mathcal{M}$ which contradicts to (i) (hence it contradicts to (ii))
since
$y\in\mu(\Omega)$.  On the other hand, $\sup_{\xi\in{^*K}} |f(\xi)|\in\mathcal{M}$ implies that the
internal set
\[
\mathcal{A}=\{a\in{^*\mathbb{R}_+} : \sup_{\xi\in{^*K}} |f(\xi)|\leq a\},
\]
contains $^*\mathbb{R}_+\setminus\mathcal{M}$ by by part~(ii) of Theorem~\ref{T:
Characterization}. Thus
$\mathcal{A}$ contains  arbitrarily small numbers in ${^*\mathbb{C}}\setminus\mathcal{M}$. It
follows that
$\mathcal{A}\cap(\mathcal{M}\setminus\mathcal{M}_0)\not=\varnothing$ by the Underflow of
${^*\mathbb{C}}\setminus\mathcal{M}$ (Theorem~\ref{T: Spilling Principles}). Thus
$\sup_{x\in{^*K}}|f(x)|\leq A$ holds for any
$A\in\mathcal{A}\cap(\mathcal{M}\setminus\mathcal{M}_0)$. Also there exists
$M_1\in\mathbb{M}$ such that
$A-M_1\in\mathcal{M}_0$ by part~(i) of Theorem~\ref{T: Embeddings}. Let
$H\in\mathbb{M}_+$. Then  (iii) holds for $M=M_1+H$.

(iii)$\Rightarrow$(iv): Suppose that $x\in\mu(\Omega)$ and observe that $\st(x)\in\Omega$ by the
definition of $\mu(\Omega)$. Since $\Omega$ is an open set, there exists
$\varepsilon\in\mathbb{R}_+$ such that $K\subset\subset\Omega$, where $K=\{r\in\Omega\,
:\, |r-\st(x)|\leq\varepsilon\}$. There exists $M\in\mathbb{M}_+$ such that 
$\sup_{\xi\in{^*K}} |f(\xi)|\leq M$ by assumption which implies (iv) for $A=M$ since $x\in{^*K}$ and
$M\in\mathbb{M}_+\subset\mathcal{M}\setminus\mathcal{M}_0$.

The proof of (iv)$\Rightarrow$(v) is almost identical to the proof of (ii)$\Rightarrow$(iii) and we leave it
to the reader.

(v)$\Rightarrow$(vi) follows immediately by part~(ii) of Theorem~\ref{T: Characterization}.

 (vi)$\Rightarrow$(vii): Suppose (on the contrary) that  $\gamma=:\sup_{\xi\in{^*K}} |f(\xi)|\geq B$ for
some $K\subset\subset\Omega$ and some $B\in{^*\mathbb{R}_+}\setminus\mathcal{M}$. We
have $B/2\leq |f(y)|<\gamma$ for some $y\in{^*K}$ by the choice of $\gamma$. This contradicts (vi)
since  $y\in\mu(\Omega)$ and $B/2\in{^*\mathbb{R}_+}\setminus\mathcal{M}$.

 (vii)$\Rightarrow$(i): Suppose that $x\in\mu(\Omega)$ and observe that $\st(x)\in\Omega$ by the
definition of $\mu(\Omega)$. As before there exists $K\subset\subset\Omega$ such that $x\in{^*K}$. As
before the internal set $\mathcal{A}$ contains $^*\mathbb{R}_+\setminus\mathcal{M}$. Thus (as
before) $\mathcal{A}\cap(\mathcal{M}\setminus\mathcal{M}_0)\not=\varnothing$ by the Underflow
for $^*\mathbb{C}\setminus\mathcal{M}$ (Theorem~\ref{T: Spilling Principles}). Thus
$\sup_{\xi\in{^*K}} |f(\xi)|< A$ for any
$A\in\mathcal{A}\cap(\mathcal{M}\setminus\mathcal{M}_0)$. It follows that $|f(x)|< A$ since
$x\in{^*K}$ by the choice of $K$. Thus $f(x)\in\mathcal{M}$ (as required) by the convexity of
$\mathcal{M}$.

$\blacktriangle$

	Here is a {\bf list of characterizations of the $\mathcal{M}$-moderate functions}.

\begin{corollary}[$\mathcal{M}$-Moderate Functions]\label{C: F-Moderate Functions} Let
$f\in{^*\mathcal{E}}(\Omega)$. Then the following are equivalent:
\begin{description}

\item{\bf (i)}\quad  $f\in\mathcal{M}_\mathcal{M}(\Omega)$.
\item{\bf (ii)}\;  $(\forall
\alpha\in\mathbb{N}_0^d)(\forall x\in\mu(\Omega))(\exists M\in\mathbb{M}_+)(|\partial^\alpha f(x)|\leq
M)$. 
\item{\bf (iii)}\, $(\forall
\alpha\in\mathbb{N}_0^d)(\forall K\subset\subset\Omega)
(\exists
M\in\mathbb{M}_+) (\sup_{x\in{^*K}} |\partial^\alpha f(x)|\leq M)$. 
\item{\bf (iv)}\;  $(\forall
\alpha\in\mathbb{N}_0^d)(\forall x\in\mu(\Omega))(\exists
A\in\mathcal{M}\setminus\mathcal{M}_0)(|\partial^\alpha f(x)|\leq A)$. 
\item{\bf (v)}\;\,  $(\forall
\alpha\in\mathbb{N}_0^d)(\forall K\subset\subset\Omega)(\exists
A\in\mathcal{M}\setminus\mathcal{M}_0) (\sup_{x\in{^*K}} |\partial^\alpha f(x)|\leq A)$. 
\item{\bf (vi)}\; $(\forall
\alpha\in\mathbb{N}_0^d)(\forall x\in\mu(\Omega))(\forall
B\in{^*\mathbb{R}_+}\setminus\mathcal{M})(|\partial^\alpha f(x)|<B)$. 
\item{\bf (vii)}\, $(\forall
\alpha\in\mathbb{N}_0^d)(\forall K\subset\subset\Omega)(\forall
B\in{^*\mathbb{R}_+}\setminus\mathcal{M}) (\sup_{x\in{^*K}} |\partial^\alpha f(x)|< B)$. 
\end{description}
\end{corollary}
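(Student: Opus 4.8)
The plan is to obtain this corollary as an immediate consequence of Theorem~\ref{T: A Characterization} (the immediately preceding theorem, which characterizes the condition $(\forall x\in\mu(\Omega))(f(x)\in\mathcal{M})$ for $f\in{^*\mathcal{E}}(\Omega)$) by applying that theorem not to $f$ itself but to each of its partial derivatives $\partial^\alpha f$, and then prefixing the universal quantifier over multi-indices. First I would record the elementary fact that $^*\mathcal{E}(\Omega)$ is closed under partial differentiation: if $f=\bra f_i\ket\in{^*\mathcal{E}}(\Omega)$, then for every $\alpha\in\mathbb{N}_0^d$ we have $\partial^\alpha f=\bra\partial^\alpha f_i\ket\in{^*\mathcal{E}}(\Omega)$, since each $\partial^\alpha f_i$ lies in $\mathcal{E}(\Omega)=\mathcal{C}^\infty(\Omega)$. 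Thus $g_\alpha:=\partial^\alpha f$ is a legitimate input to Theorem~\ref{T: A Characterization}.

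Next I would invoke Theorem~\ref{T: A Characterization} with $f$ replaced by $g_\alpha$, for each fixed $\alpha\in\mathbb{N}_0^d$. For that fixed $\alpha$ the theorem yields the equivalence of: the membership statement $(\forall x\in\mu(\Omega))(\partial^\alpha f(x)\in\mathcal{M})$; the pointwise bound by an element of $\mathbb{M}_+$; the uniform bound on arbitrary $K\subset\subset\Omega$ by an element of $\mathbb{M}_+$; and the four further variants in which $\mathbb{M}_+$ is replaced by $\mathcal{M}\setminus\mathcal{M}_0$ (in the $\leq$ form) or by $^*\mathbb{R}_+\setminus\mathcal{M}$ (in the $<$ form). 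Reading these seven equivalent statements with $g_\alpha=\partial^\alpha f$ gives exactly the bodies of items (i)--(vii) of the corollary once the leading quantifier $(\forall\alpha\in\mathbb{N}_0^d)$ is stripped.

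The last step is purely propositional: if $P_\alpha\Leftrightarrow Q_\alpha$ holds for every $\alpha$, then the universal closures satisfy $(\forall\alpha)P_\alpha\Leftrightarrow(\forall\alpha)Q_\alpha$. Applying this to the seven statements from the previous paragraph shows that all seven of the assertions $(\forall\alpha\in\mathbb{N}_0^d)(\dots)$ are mutually equivalent; and by Definition~\ref{D: M-Asymptotic Functions} the first of them, namely $(\forall\alpha\in\mathbb{N}_0^d)(\forall x\in\mu(\Omega))(\partial^\alpha f(x)\in\mathcal{M})$, is by definition the statement $f\in\mathcal{M}_\mathcal{M}(\Omega)$ of item (i). This closes the chain of equivalences.

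I do not expect any genuine obstacle here: the only points requiring attention are the trivial one that differentiation keeps us inside $^*\mathcal{E}(\Omega)$, and the bookkeeping remark that the equivalences of Theorem~\ref{T: A Characterization} hold per multi-index, so that attaching $(\forall\alpha\in\mathbb{N}_0^d)$ to each side preserves them. All the substantive work — the overflow/underflow (spilling-principle) passages moving between bounds living in $\mathbb{M}_+$, in $\mathcal{M}\setminus\mathcal{M}_0$, and in $^*\mathbb{R}_+\setminus\mathcal{M}$ — has already been carried out in the proof of Theorem~\ref{T: A Characterization} and need not be repeated. One could equally note, following Remark~\ref{R: Relaxation 1}, that $\mathbb{M}_+$ may be replaced throughout by any subset of $\mathcal{M}\setminus\mathcal{M}_0$ containing arbitrarily large numbers, which leaves the argument unchanged.
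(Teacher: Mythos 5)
Your proof is correct and follows exactly the paper's own route: the paper's proof reads, in its entirety, ``An immediate after replacing $f$ by $\partial^\alpha f$ in Theorem~\ref{T: A Characterization}.'' You have merely made explicit the two bookkeeping points (closure of $^*\mathcal{E}(\Omega)$ under $\partial^\alpha$ and the commutation of the universal quantifier over $\alpha$ with the seven-fold equivalence) that the paper leaves tacit.
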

\begin{remark}\label{R: Relaxation 2} {\em We should note that the above corollary remains true
even if the maximal field $\mathbb{M}$ is replaced by a set
$S\subseteq\mathcal{M}\setminus\mathcal{M}_0$ such that $S$ contains arbitrarily large numbers.
}\end{remark}
\Proof An immediate after replacing $f$ by $\partial^\alpha f$ in Theorem~\ref{T: A
Characterization}. 

 $\blacktriangle$

	We turn to the $\mathcal{M}$-negligible functions.

\begin{theorem}\label{T: Another Characterization} Let
$f\in{^*\mathcal{E}}(\Omega)$. Then the following are equivalent:
\begin{description}
\item{\bf (i)}\quad $(\forall x\in\mu(\Omega))(f(x)\in\mathcal{M}_0)$. 

\item{\bf (ii)}\quad $(\forall x\in\mu(\Omega))(\forall M\in\mathbb{M}_+)(|f(x)|< M)$. 

\item{\bf (iii)}\; $(\forall K\subset\subset\Omega)(\forall
M\in\mathbb{M}_+) (\sup_{x\in{^*K}} |f(x)|< M)$. 

\item{\bf (iv)}\; $(\forall x\in\mu(\Omega))(\exists A\in\mathcal{M}_0)(|f(x)|\leq A)$. 

\item{\bf (v)}\quad $(\forall K\subset\subset\Omega)(\exists A\in\mathcal{M}_0)
(\sup_{x\in{^*K}} |f(x)|\leq A)$. 

\item{\bf (vi)}\, $(\forall x\in\mu(\Omega))(\forall
B\in\mathcal{M}\setminus\mathcal{M}_0)(|f(x)|<|B|)$. 

\item{\bf (vii)} $(\forall K\subset\subset\Omega)(\forall
B\in\mathcal{M}\setminus\mathcal{M}_0) (\sup_{x\in{^*K}} |f(x)|< |B|)$. 
\end{description}
\end{theorem}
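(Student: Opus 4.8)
## Proof Plan

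The plan is to prove the seven equivalences in a circular fashion that mirrors the structure of Theorem~\ref{T: A Characterization} (the characterization of $\mathcal{M}$-moderate-like conditions), but now adapted to the ideal $\mathcal{M}_0$ rather than to $\mathcal{M}$ itself. The key observation is that ``$f(x)\in\mathcal{M}_0$ for all $x$ in the monad'' should be equivalent to a family of uniform sup-bounds on compact subsets, and one passes between the pointwise (monadic) and the uniform (compact) formulations using exactly the same mechanism as before: the inclusion $^*K\subseteq\mu(\Omega)$ for $K\subset\subset\Omega$ (Theorem~\ref{T: The Usual Topology on Rd}), together with the Spilling Principles (Theorem~\ref{T: Spilling Principles}) and the characterization of $\mathcal{M}$ and $\mathcal{M}_0$ via a maximal field $\mathbb{M}$ (Theorem~\ref{T: Field of Representatives}, parts~(iii)--(iv)). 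So I would first record the ``bridge lemma'' that for $f\in{^*\mathcal{E}}(\Omega)$ and $K\subset\subset\Omega$, one has $\sup_{\xi\in{^*K}}|f(\xi)|\in\mathcal{M}_0$ whenever $f(x)\in\mathcal{M}_0$ for all $x\in\mu(\Omega)$: if $\gamma:=\sup_{\xi\in{^*K}}|f(\xi)|\notin\mathcal{M}_0$, pick $y\in{^*K}$ with $|f(y)|>\gamma/2$ (possible since $\gamma\notin\{0\}$ is attained up to a factor 2 by internality of the sup), and note $\gamma/2\notin\mathcal{M}_0$ because $\mathcal{M}_0$ is an ideal closed under the ``doubling'' map; then $f(y)\notin\mathcal{M}_0$ contradicts (i) since $y\in{^*K}\subseteq\mu(\Omega)$.

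Next I would carry out the implications. (i)$\Leftrightarrow$(ii): this is just the characterization $\mathcal{M}_0=\{z\mid(\forall\varepsilon\in\mathbb{M}_+)(|z|<\varepsilon)\}$ from \eqref{E: Characterization of M0} in Theorem~\ref{T: Field of Representatives}. (ii)$\Rightarrow$(iii): by the bridge lemma $\sup_{\xi\in{^*K}}|f(\xi)|\in\mathcal{M}_0$, so by \eqref{E: Characterization of M0} again it is $<M$ for every $M\in\mathbb{M}_+$. (iii)$\Rightarrow$(iv): given $x\in\mu(\Omega)$, choose $\varepsilon\in\mathbb{R}_+$ so that $K=\{r\in\Omega:|r-\st(x)|\leq\varepsilon\}\subset\subset\Omega$ and $x\in{^*K}$; the internal set $\mathcal{A}=\{a\in{^*\mathbb{R}_+}:\sup_{\xi\in{^*K}}|f(\xi)|\leq a\}$ then contains all of $\mathbb{M}_+$, hence contains arbitrarily small numbers in $\mathcal{M}\setminus\mathcal{M}_0$, so by the Underflow of $\mathcal{M}\setminus\mathcal{M}_0$ (Theorem~\ref{T: Spilling Principles}(ii)) $\mathcal{A}\cap\mathcal{M}_0\neq\varnothing$; any $A\in\mathcal{A}\cap\mathcal{M}_0$ gives $|f(x)|\leq A$ with $A\in\mathcal{M}_0$. (iv)$\Rightarrow$(v): identical to the passage from the monadic to the uniform form, applying the bridge lemma and Underflow once more. (v)$\Rightarrow$(vi): if $|f(x)|\leq A$ with $A\in\mathcal{M}_0$ and $B\in\mathcal{M}\setminus\mathcal{M}_0$, then $|A|<|B|$ by the ``disconnectedness'' of $\mathcal{M}_0$ and $\mathcal{M}\setminus\mathcal{M}_0$ (Lemma~\ref{L: Convex Ideals}(v)), so $|f(x)|<|B|$. (vi)$\Rightarrow$(vii): same sup-attainment argument as in Theorem~\ref{T: A Characterization}, contradicting (vi) with a witness $y\in{^*K}$ and noting $|B|/2\in\mathcal{M}\setminus\mathcal{M}_0$ whenever $B\in\mathcal{M}\setminus\mathcal{M}_0$. (vii)$\Rightarrow$(i): given $x\in\mu(\Omega)$, again enclose it in $^*K$ for suitable $K\subset\subset\Omega$; then $\sup_{\xi\in{^*K}}|f(\xi)|<|B|$ for all $B\in\mathcal{M}\setminus\mathcal{M}_0$, which forces $\sup_{\xi\in{^*K}}|f(\xi)|\in\mathcal{M}_0$ (anything that is $<|B|$ for all such $B$ lies in the ideal, by Lemma~\ref{L: Convex Ideals}(iii) convexity), so $|f(x)|\in\mathcal{M}_0$.

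The main obstacle, as in the analogous Theorem~\ref{T: A Characterization}, is organizing the interplay between the three ``sizes'' $\mathbb{M}_+$, $\mathcal{M}_0$, and $\mathcal{M}\setminus\mathcal{M}_0$ correctly in each direction: one must consistently use that $\mathcal{M}_0$ is a convex ideal strictly below $\mathcal{M}\setminus\mathcal{M}_0$ (Lemma~\ref{L: Convex Ideals}), that it is closed under finite sums and under the doubling map (so $\gamma\notin\mathcal{M}_0\Rightarrow\gamma/2\notin\mathcal{M}_0$), and that the relevant auxiliary sets $\mathcal{A}$ are internal so the Spilling Principles apply. I expect the reversal of direction in (iii)$\Rightarrow$(iv) and (vii)$\Rightarrow$(i), where one goes from a uniform bound back to a monadic one and needs the Underflow principle to produce a witness inside the ideal, to require the most care; everything else is a routine adaptation of the argument already given for Theorem~\ref{T: A Characterization}, and I would explicitly note the parallel to that proof to keep the exposition short. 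As in Remark~\ref{R: Relaxation 1}, the maximal field $\mathbb{M}$ may throughout be replaced by any set $S\subseteq\mathcal{M}\setminus\mathcal{M}_0$ containing arbitrarily large numbers, and I would record this remark after the proof.
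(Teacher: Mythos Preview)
Your proposal is correct and follows essentially the same approach as the paper, which in fact only spells out the equivalence (i)$\Leftrightarrow$(v) (via the ``bridge lemma'' $\sup_{\xi\in{^*K}}|f(\xi)|\in\mathcal{M}_0$ together with Underflow of $\mathcal{M}\setminus\mathcal{M}_0$, and convexity of $\mathcal{M}_0$ for the converse) and explicitly leaves the remaining implications to the reader as adaptations of Theorem~\ref{T: A Characterization}; your circular chain is precisely such an adaptation. One small correction: in your closing remark you invoke Remark~\ref{R: Relaxation 1} and say $S$ should contain arbitrarily \emph{large} numbers, but for the $\mathcal{M}_0$-characterization the relevant condition is that $S\subseteq\mathcal{M}\setminus\mathcal{M}_0$ contain arbitrarily \emph{small} numbers (this is the paper's Remark~\ref{R: Relaxation 3}), since (ii) and (iii) require bounding $|f(x)|$ below every element of $S$.
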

\begin{remark}\label{R: Relaxation 3} {\em We should note that the above theorem remains true even if
the maximal field $\mathbb{M}$ is replaced by a set $S\subseteq\mathcal{M}\setminus\mathcal{M}_0$
such that $S$ contains arbitrarily small numbers.
}\end{remark}
\Proof We shall prove the equivalence of (i) and (v) only and leave the rest of the proof to
the reader (who might decide to adapt the arguments used in the proof of
the previous lemma). 

	(i)$\Rightarrow$(v) Suppose that
$K$ is a compact subset of
$\Omega$ and recall that $^*K\subset\mu(\Omega)$ by Theorem~\ref{T: The Usual Topology on
Rd}. Notice that $\sup_{x\in{^*K}}|f(x)|\in\mathcal{M}_0$. Indeed, suppose (on the contrary) that
$\gamma=:\sup_{x\in{^*K}}|f(x)|\notin\mathcal{M}_0$ which implies
$\gamma/2\notin\mathcal{M}_0$. Also  there exists $y\in{^*K}$ such that $\gamma/2<|f(y)|<\gamma$
by the choice of $\gamma$. Thus
$|f(y)|\notin\mathcal{M}_0$ contradicting to our assumption (i) since $y\in\mu(\Omega)$. On the
other hand,
$\sup_{x\in{^*K}}|f(x)|\in\mathcal{M}_0$ implies that the internal set
\[
\mathcal{A}=\{c\in{^*\mathbb{C}} : \sup_{x\in{^*K}}|f(x)|\leq |c|\, \},
\] 
contains $\mathcal{M}\setminus\mathcal{M}_0$ by by part~(ii) of Theorem~\ref{T: Characterization}.
It follows that
$\mathcal{A}\cap\mathcal{M}_0\not=\varnothing$ by the Underflow of
$\mathcal{M}\setminus\mathcal{M}_0$ (Theorem~\ref{T: Spilling Principles}). Thus 
$\sup_{x\in{^*K}} |f(x)|\leq A$ holds (as required) for any
$c\in\mathcal{A}\cap\mathcal{M}_0$ and $A=|c|$. 

	(i)$\Leftarrow$(v): Suppose that $x\in\mu(\Omega)$. As in the previous lemma, there exists
$\varepsilon\in\mathbb{R}_+$ such that 
$K=\{r\in\Omega\, :\, |r-\st(x)|\leq\varepsilon\}\subset\subset\Omega$.
Observe that there exists $A\in\mathcal{M}_0$ such that $\sup_{\xi\in{^*K}}|f(\xi)|\leq A$ by
assumption. Thus $f(\xi)\in\mathcal{M}_0$ for all $\xi\in{^*K}$ (as required) by the 
convexity of $\mathcal{M}_0$.

$\blacktriangle$

	Here is a {\bf list of  characterizations of the $\mathcal{M}$-negligible functions}.

\begin{corollary}[$\mathcal{M}$-Negligible Functions]\label{C: F-Negligible Functions} Let
$f\in{^*\mathcal{E}}(\Omega)$. Then the following are equivalent:
\begin{description}
\item{\bf (i)}\quad  $f\in\mathcal{N}_\mathcal{M}(\Omega)$.

\item{\bf (ii)}\quad $(\forall
\alpha\in\mathbb{N}_0^d)(\forall x\in\mu(\Omega))(\forall M\in\mathbb{M}_+)(|f(x)|< M)$. 

\item{\bf (iii)}\; $(\forall
\alpha\in\mathbb{N}_0^d)(\forall K\subset\subset\Omega)(\forall
M\in\mathbb{M}_+) (\sup_{x\in{^*K}} |f(x)|< M)$. 

\item{\bf (iv)}\; $(\forall
\alpha\in\mathbb{N}_0^d)(\forall x\in\mu(\Omega))(\exists A\in\mathcal{M}_0)(|f(x)|\leq A)$. 

\item{\bf (v)}\quad $(\forall
\alpha\in\mathbb{N}_0^d)(\forall K\subset\subset\Omega)(\exists A\in\mathcal{M}_0)
(\sup_{x\in{^*K}} |f(x)|\leq A)$. 

\item{\bf (vi)}\, $(\forall
\alpha\in\mathbb{N}_0^d)(\forall x\in\mu(\Omega))(\forall
B\in\mathcal{M}\setminus\mathcal{M}_0)(|f(x)|<|B|)$. 

\item{\bf (vii)} $(\forall
\alpha\in\mathbb{N}_0^d)(\forall K\subset\subset\Omega)(\forall
B\in\mathcal{M}\setminus\mathcal{M}_0) (\sup_{x\in{^*K}} |f(x)|< |B|)$. 
\end{description}
\end{corollary}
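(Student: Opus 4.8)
The plan is to derive this corollary from Theorem~\ref{T: Another Characterization} in exactly the manner in which Corollary~\ref{C: F-Moderate Functions} was derived from the corresponding characterization theorem for a single function. The key observation is that, by Definition~\ref{D: M-Asymptotic Functions}, the statement $f\in\mathcal{N}_\mathcal{M}(\Omega)=\mathcal{M}_0(\Omega)$ unfolds as $(\forall\alpha\in\mathbb{N}_0^d)(\forall x\in\mu(\Omega))(\partial^\alpha f(x)\in\mathcal{M}_0)$; that is, condition~(i) of the corollary asserts precisely that the function $\partial^\alpha f$ satisfies condition~(i) of Theorem~\ref{T: Another Characterization} for every multi-index $\alpha$.

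First I would note that $^*\mathcal{E}(\Omega)$ is closed under partial differentiation, so $g=\partial^\alpha f\in{^*\mathcal{E}}(\Omega)$ for each $\alpha\in\mathbb{N}_0^d$, and hence Theorem~\ref{T: Another Characterization} applies verbatim to $g$, yielding the mutual equivalence of its seven conditions for each fixed $g=\partial^\alpha f$. The second step is purely formal: conjoining these equivalences over all $\alpha\in\mathbb{N}_0^d$, using that $(\forall\alpha)\,[P_\alpha\Leftrightarrow Q_\alpha]$ implies $[(\forall\alpha)P_\alpha]\Leftrightarrow[(\forall\alpha)Q_\alpha]$, one obtains that the $\alpha$-quantified versions of conditions (i)--(vii) of Theorem~\ref{T: Another Characterization} applied to $\partial^\alpha f$ are mutually equivalent; these are exactly conditions (i)--(vii) of the corollary, where the symbol $|f(x)|$ appearing in items (ii)--(vii) is understood as shorthand for $|\partial^\alpha f(x)|$ inside the scope of the $\alpha$-quantifier, precisely as in Corollary~\ref{C: F-Moderate Functions}.

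As for Remark~\ref{R: Relaxation 3}, I would simply observe that the relaxation of $\mathbb{M}$ to any $S\subseteq\mathcal{M}\setminus\mathcal{M}_0$ containing arbitrarily small numbers is inherited automatically, since that relaxation already holds at the level of Theorem~\ref{T: Another Characterization}: its proof, via the Underflow principle of Theorem~\ref{T: Spilling Principles}, uses only that the minorizing set contains arbitrarily small positive elements. I do not anticipate any genuine obstacle here --- the entire substance of the corollary is contained in Theorem~\ref{T: Another Characterization}, and the passage to the corollary is the same routine ``$\forall\alpha$'' step used for the moderate functions. The only point demanding a moment's care is the bookkeeping of the typographical convention by which $|f|$ abbreviates $|\partial^\alpha f|$ under the $\alpha$-quantifier, so that the reader does not mistake items (ii)--(vii) for statements about $f$ itself.
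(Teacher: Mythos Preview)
Your proposal is correct and follows exactly the paper's approach: the paper's proof is the single line ``An immediate after replacing $f$ by $\partial^\alpha f$ in Theorem~\ref{T: Another Characterization},'' which is precisely the substitution-and-quantify step you describe. Your added remarks about the logical bookkeeping and the typographical convention for $|f|$ versus $|\partial^\alpha f|$ are helpful clarifications the paper leaves implicit.
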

\begin{remark}\label{R: Relaxation 4} {\em We should note that the above corollary remains true even
if the maximal field $\mathbb{M}$ is replaced by a set
$S\subseteq\mathcal{M}\setminus\mathcal{M}_0$ such that $S$ contains 
arbitrarily small numbers. }\end{remark}
\Proof An immediate after replacing $f$ by $\partial^\alpha f$ in Theorem~\ref{T: Another
Characterization}.  

$\blacktriangle$

		In the next theorem we present several more characterizations of the
$\mathcal{M}$-negligible functions (in addition to the presented above), 
where the quantifier $\forall \alpha\in\mathbb{N}^d_0$ is replaced simply by $\alpha=0$.

\begin{theorem}[A Simplification]\label{T: A Simplification} Let
$f\in\mathcal{M}_\mathcal{M}(\Omega)$. Then $f\in\mathcal{N}_\mathcal{M}(\Omega)$
\ifff  $f(x)\in\mathcal{M}_0$ for all $x\in\mu(\Omega)$. Consequently, we have the following
several formulas for $\mathcal{N}_\mathcal{M}(\Omega)$:
\begin{align}
&\mathcal{N}_\mathcal{M}(\Omega)=\{f\in\mathcal{M}_\mathcal{M}(\Omega)\mid
(\forall x\in\mu(\Omega)(f(x)\in\mathcal{M}_0)\},\notag\\
&\mathcal{N}_\mathcal{M}(\Omega)=\{f\in\mathcal{M}_\mathcal{M}(\Omega)\mid
(\forall x\in\mu(\Omega))(\forall
M\in\mathbb{M}_+)(|f(x)|<M)\},\notag\\
&\mathcal{N}_\mathcal{M}(\Omega)=\{f\in\mathcal{M}_\mathcal{M}(\Omega)\mid
(\forall K\subset\subset\Omega)(\forall
M\in\mathbb{M}_+) (\sup_{x\in{^*K}} |f(x)|< M)\},\notag\\
&\mathcal{N}_\mathcal{M}(\Omega)=\{f\in\mathcal{M}_\mathcal{M}(\Omega)\mid
(\forall x\in\mu(\Omega))(\exists A\in\mathcal{M}_0)
( |f(x)|\leq A)\},\notag\\
&\mathcal{N}_\mathcal{M}(\Omega)=\{f\in\mathcal{M}_\mathcal{M}(\Omega)\mid
(\forall K\subset\subset\Omega)(\exists A\in\mathcal{M}_0)
(\sup_{x\in{^*K}} |f(x)|\leq A)\},\notag\\
&\mathcal{N}_\mathcal{M}(\Omega)=\{f\in\mathcal{M}_\mathcal{M}(\Omega)\mid
(\forall x\in\mu(\Omega))(\forall
B\in\mathcal{M}\setminus\mathcal{M}_0)(|f(x)|<|B|)\},\notag\\
&\mathcal{N}_\mathcal{M}(\Omega)=\{f\in\mathcal{M}_\mathcal{M}(\Omega)\mid
(\forall K\subset\subset\Omega)(\forall
B\in\mathcal{M}\setminus\mathcal{M}_0) (\sup_{x\in{^*K}} |f(x)|< |B|)\}.\notag
\end{align}
\end{theorem}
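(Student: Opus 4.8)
The plan is as follows. The implication $f\in\mathcal{N}_\mathcal{M}(\Omega)\Rightarrow f(x)\in\mathcal{M}_0$ for all $x\in\mu(\Omega)$ is trivial (specialize the multi-index to $\alpha=0$ in the definition of $\mathcal{N}_\mathcal{M}(\Omega)$), so the content is the converse: assuming $f\in\mathcal{M}_\mathcal{M}(\Omega)$ and $f(x)\in\mathcal{M}_0$ for all $x\in\mu(\Omega)$, show $\partial^\alpha f(x)\in\mathcal{M}_0$ for all $x\in\mu(\Omega)$ and every $\alpha$. I would do this by induction on $|\alpha|$, reducing the passage from length $m$ to length $m+1$ to a single one-variable estimate. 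Once the equivalence ``$f\in\mathcal{N}_\mathcal{M}(\Omega)\iff f(x)\in\mathcal{M}_0$ for all $x\in\mu(\Omega)$'' is in hand, the remaining displayed formulas for $\mathcal{N}_\mathcal{M}(\Omega)$ follow immediately: for $f\in\mathcal{M}_\mathcal{M}(\Omega)$, conditions (ii)--(vii) of Theorem~\ref{T: Another Characterization} (applied to the function $f$ itself) are each equivalent to ``$f(x)\in\mathcal{M}_0$ for all $x\in\mu(\Omega)$'', hence to $f\in\mathcal{N}_\mathcal{M}(\Omega)$.

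The heart of the argument is a Landau--Kolmogorov type estimate obtained from the Transfer of the one-variable Taylor formula. I would isolate it as: if $g\in\mathcal{M}_\mathcal{M}(\Omega)$ and $g(x)\in\mathcal{M}_0$ for all $x\in\mu(\Omega)$, then $\partial_j g(x)\in\mathcal{M}_0$ for all $x\in\mu(\Omega)$ and every coordinate $j$. To prove it, fix $K\subset\subset\Omega$, choose a compact $K'\subset\subset\Omega$ with $K$ in its interior, and let $\delta\in\mathbb{R}_+$ be such that the closed $\delta$-neighbourhood of $K$ is contained in $K'$; by Transfer the closed $\delta$-neighbourhood of ${^*K}$ lies in ${^*K'}$. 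Set $M_0=\sup_{{^*K'}}|g|$ and $M_2=1+\sup_{{^*K'}}|\partial_j^2 g|$. Using $g(x)\in\mathcal{M}_0$ on $\mu(\Omega)$ with Theorem~\ref{T: Another Characterization} and $\partial_j^2 g(x)\in\mathcal{M}$ on $\mu(\Omega)$ with Theorem~\ref{T: A Characterization}, together with the convexity of $\mathcal{M}_0$ and of $\mathcal{M}$, one gets $M_0\in\mathcal{M}_0$ and $M_2\in\mathcal{M}$ with $M_2\ge 1$, whence $M_0 M_2\in\mathcal{M}_0$ by the ideal property of $\mathcal{M}_0$. If $M_0=0$ then $g$ vanishes on a ${^*}$-open neighbourhood of ${^*K}$, so $\partial_j g\equiv 0$ there; otherwise $M_0$ is a positive infinitesimal, so $h:=\sqrt{M_0/M_2}$ is a positive infinitesimal, in particular $h<\delta$, and for every $x\in{^*K}$ the points $x\pm h e_j$ lie in ${^*K'}$. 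The transferred Taylor formula with remainder then yields, for $x\in{^*K}$,
\[
|\partial_j g(x)|\ \le\ \frac{2M_0}{h}+\frac{h}{2}\,\sup_{{^*K'}}|\partial_j^2 g|\ \le\ \frac{2M_0}{h}+\frac{h}{2}M_2\ =\ \tfrac{5}{2}\sqrt{M_0 M_2}.
\]
Since $\mathcal{M}_0$ is closed under square roots — if $w\neq 0$ and $w\notin\mathcal{M}_0$ then $1/w\in\mathcal{M}$, so $1/w^2\in\mathcal{M}$, contradicting $w^2\in\mathcal{M}_0$ — we have $\sqrt{M_0 M_2}\in\mathcal{M}_0$; by convexity $\sup_{{^*K}}|\partial_j g|\le\tfrac52\sqrt{M_0 M_2}\in\mathcal{M}_0$. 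As $K\subset\subset\Omega$ was arbitrary, $\partial_j g(x)\in\mathcal{M}_0$ for all $x\in\mu(\Omega)$ by the implication $\mathrm{(v)}\Rightarrow\mathrm{(i)}$ of Theorem~\ref{T: Another Characterization}.

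With this lemma the induction is routine. For $f$ as in the statement the base case $|\alpha|=0$ is the hypothesis. If the claim holds for all multi-indices of length $m$ and $|\alpha|=m+1$, write $\alpha=\beta+e_j$ with $|\beta|=m$ and apply the lemma to $g:=\partial^\beta f$: indeed $g\in{^*\mathcal{E}}(\Omega)$ and all its derivatives $\partial^\gamma g=\partial^{\gamma+\beta}f$ are $\mathcal{M}$-valued on $\mu(\Omega)$ (so $g\in\mathcal{M}_\mathcal{M}(\Omega)$), while $g(x)=\partial^\beta f(x)\in\mathcal{M}_0$ on $\mu(\Omega)$ by the inductive hypothesis; hence $\partial^\alpha f(x)=\partial_j g(x)\in\mathcal{M}_0$ for all $x\in\mu(\Omega)$. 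This completes the induction, so $f\in\mathcal{N}_\mathcal{M}(\Omega)$, and the stated equivalence — hence all the displayed formulas — follows.

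I expect the only real obstacle to be the choice of the increment $h$. A fixed real $h$ would leave the remainder term $\tfrac{h}{2}M_2$ merely $\mathcal{M}$-moderate, not $\mathcal{M}_0$-negligible, so one is forced to take $h$ to be the scale-matched infinitesimal $\sqrt{M_0/M_2}$; checking that this $h$ is simultaneously admissible (being infinitesimal, it is $<\delta$, so the segment through $x$ in the $j$-direction of half-length $h$ stays inside ${^*K'}$) and effective (the resulting bound $\tfrac52\sqrt{M_0 M_2}$ lands in $\mathcal{M}_0$) is the crux. The remaining ingredients — the equivalence between ``valued in $\mathcal{M}$ (resp.\ $\mathcal{M}_0$) on the monad'' and ``bounded by an element of $\mathcal{M}$ (resp.\ $\mathcal{M}_0$) on each ${^*K}$'', the ideal and convexity properties of $\mathcal{M}_0$, and its closure under roots — are all available from the earlier sections.
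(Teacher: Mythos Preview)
Your proof is correct, and while both you and the paper reduce to an induction on $|\alpha|$ driven by the second-order Taylor formula, the executions are genuinely different. The paper argues pointwise through the maximal subfield $\mathbb{M}\in\mathcal{M}ax(\mathcal{M})$: fixing $x\in\mu(\Omega)$ and an arbitrary $\varepsilon\in\mathbb{M}_+$, it bounds the second derivatives near $x$ by some $\delta\in\mathbb{M}_+$, chooses an infinitesimal increment $h$ \emph{in the direction of} $\nabla f(x)$ with $\|h\|\in\mathbb{M}_+$ and $\|h\|<\varepsilon/\delta$, and reads off $\|\nabla f(x)\|<\varepsilon$, invoking the characterization $\mathcal{M}_0=\{z:|z|<\varepsilon\text{ for all }\varepsilon\in\mathbb{M}_+\}$. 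You instead run a Landau--Kolmogorov estimate in sup-norm over compacta, coordinate by coordinate: with $M_0=\sup_{{^*K'}}|g|\in\mathcal{M}_0$ and $M_2\in\mathcal{M}$ controlling $\partial_j^2 g$, the scale-matched step $h=\sqrt{M_0/M_2}$ yields $\sup_{{^*K}}|\partial_j g|\le\tfrac{5}{2}\sqrt{M_0 M_2}\in\mathcal{M}_0$. Your approach bypasses the maximal-field machinery altogether, trading it for the elementary closure of $\mathcal{M}_0$ under square roots; working one coordinate at a time also makes the induction step a touch cleaner than the paper's gradient-direction argument. The paper's version, on the other hand, ties the proof more tightly to the structural results on $\mathbb{M}$ developed earlier and gives a direct $\varepsilon$-bound rather than an explicit quantitative inequality.
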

\Proof ($\Rightarrow$) follows immediately after letting $\alpha=0$. 

($\Leftarrow$) Suppose that $x\in\mu(\Omega)$.  We have to show that
$\partial^\alpha f(x)\in\mathcal{M}_0$ for all multi-indexes $\alpha\in\mathbb{N}_0^d,\;
|\alpha|\geq 1$. We start with $|\alpha|=1$. If $\nabla f(x)=0$, there is nothing to prove. 
Suppose that $\nabla f(x)\not=0$ and let $\varepsilon\in\mathbb{M}_+$. It suffices to show that 
$||\nabla f(x)||<\varepsilon$ in view of Theorem~\ref{T: Characterization}. Since $\Omega$ is an open
set, there exists an open relatively compact set $\mathcal{O}$ of $\Omega$ such that
$\st(x)\in\mathcal{O}\subset\subset\Omega$. Now
$f\in\mathcal{M}_\mathcal{M}(\Omega)$  implies 
$\left|\sum_{|\alpha|=2}\partial^\alpha f(\xi)\right|<\delta$ for some $\delta\in\mathbb{M}_+$ and
all $\xi\in{^*\mathcal{O}}$ by Corollary~\ref{C: F-Moderate Functions} since
$^*\mathcal{O}\subset\mu(\Omega)$. Let
$h\in\mathcal{I}(\mathbb{M}^d)$ be an infinitesimal vector with the direction of $\nabla f(x)$
and of length $||h||<\varepsilon/\delta$. Notice that $||h||\in\mathbb{M}_+$ thus
$||h||\in\mathcal{M}\setminus \mathcal{M}_0$ which is important for what follows. We have
$|f(x+h)-f(x)|<\delta||h||^2/2$ by part~(vi) of Theorem~\ref{T: Characterization} since
$f(x+h)-f(x)\in\mathcal{M}_0$ by assumption and
$x+h\in\mu(\Omega)$. Next we observe that the Taylor formula:
\[
\nabla f(x)\cdot h= f(x+h)-f(x)-\frac{1}{2}\sum_{|\alpha|=2}\partial^\alpha f(x+\theta h)\, h^\alpha.
\]
holds for some $\theta\in{^*\mathbb{R}},\; 0<\theta<1$, by Transfer Principle (Theorem~\ref{T:
Transfer Principle}). Thus $x+\theta h\approx x\approx \st(x)$  implying
$x+\theta h\in{^*\mathcal{O}}$. We have 
\[
\left|\nabla f(x)\cdot
h\right|<\delta||h||^2/2+\delta||h||^2/2<\delta ||h||^2.
\] 
Also we have $|\nabla f(x)\cdot h|= ||\nabla f(x)||\, ||h||$ by the choice of the direction of $h$. It follows 
$||\nabla f(x)||=\delta ||h||<\varepsilon$ as required. We generalize this result for $|\alpha|=2,
3,\dots$ by induction.  The different formulas for $\mathcal{N}_\mathcal{M}(\Omega)$ follow
immediately by Theorem~\ref{T: Another Characterization}.
$\blacktriangle$
\section{Pointwise Values and Fundamental Theorem}\label{S: Pointwise Values and 
Fundamental Theorem} 

	Recall that every non-standard smooth function $f\in{^*\mathcal{E}}(\Omega)$ can be 
characterized as a pointwise function of the form $f:{^*\Omega}\to{^*\mathbb{C}}$ in the sense that
there exists an embedding ${^*\mathcal{E}}(\Omega)\embed{^*\mathbb{C}}\, ^{^*\Omega}$ which
preserves the ring operations and the partial differentiation of any order (Section~\ref{S:
Non-Standard Smooth Functions}). Among other things the purpose of this section is to  show that
every asymptotic function $\widehat{f}\in\widehat{\mathcal{E}_\mathcal{M}}(\Omega)$  
(Section~\ref{S: F-Asymptotic Functions}) can be characterized as a pointwise function of the form
$\widehat{f}: \mu_\mathcal{M}(\Omega)\to\widehat{\mathcal{M}}$ in the sense that there exists an
embedding $\widehat{\mathcal{E}_\mathcal{M}}(\Omega)\embed
\widehat{\mathcal{M}}\,^{\mu_\mathcal{M}(\Omega)}$ which preserves the ring operations and the
partial differentiation of any order. We also prove a fundamental
theorem of calculus in $\widehat{\mathcal{E}_\mathcal{M}}(\Omega)$.

	We shall use the notation introduced in the first several pages in (Section~\ref{S: F-Asymptotic
Numbers: Definitions and Examples}) and (Section~\ref{S: F-Asymptotic Functions}). In particular, 
$\mathcal{M}$ stands for a convex subring of $^*\mathbb{C}$ (Section~\ref{S: F-Asymptotic
Numbers: Definitions and Examples}). If
$\Omega\subseteq\mathbb{R}^d$ is an open set of $\mathbb{R}^d$, then

\begin{equation}\label{E: F-Monad}
\mu_\mathcal{M}(\Omega)=\{r+dx \mid r\in \Omega,\,  dx\in\Re(\widehat{\mathcal{M}}^d),\; 
||dx||\approx 0
\},
\end{equation}
is the $\mathcal{M}$-monad of $\Omega$. Here $\Re(\widehat{\mathcal{M}}^d)$ stands for the 
real part of
the vector space $\widehat{\mathcal{M}}^d$. We denote by
$\widehat{\mathcal{M}}\,^{\mu_\mathcal{M}(\Omega)}$ the ring of the functions $F$ of the
form $F:\mu_\mathcal{M}(\Omega)\to\widehat{\mathcal{M}}$ (Section~\ref{S: F-Asymptotic
Functions}). 

	 In this section we generalize
some of the results in  Todor Todorov~\cite{tdTod99} where the particular case
$\mathcal{M}=\mathcal{M}_\rho(^*\mathbb{C})$ (Example~\ref{Ex: A. Robinson's
Asymptotic Numbers}) is discussed only. The closest counterpart in J.F. Colombeau's theory can be
found in M. Kunzinger and M. Oberguggenberger's article~\cite{KunzOber99}, where a
characterization of Colombeau's generalized functions in
$\mathcal{G}(\Omega)$ in the ring of generalized scalars $\widetilde{\mathbb{C}}$ is established.

	For convenience
of the reader we shall recall the definition pointwise values presented in 
(Section~\ref{S: F-Asymptotic Functions}).

\begin{definition}[Pointwise Values]\label{D: Pointwise Values}
 Let
$\widehat{f}\in\widehat{\mathcal{E}_\mathcal{M}}(\Omega)$ be a $\mathcal{M}$-asymptotic function
(Section~\ref{S: F-Asymptotic Functions}) and
$\widehat{x}\in\mu_\mathcal{M}(\Omega)$ be a $\mathcal{M}$-asymptotic point. We define the {\bf value
of
$\widehat{f}$ at}
$\widehat{x}$ by the formula 
\[
\widehat{f}(\widehat{x})=\widehat{f(x)}.
\]
We shall use the same notation,
$\widehat{f}$, for the asymptotic function $\widehat{f}\in\widehat{\mathcal{E}_\mathcal{M}}(\Omega)$
and its graph $\widehat{f}\in{\widehat{\mathcal{M}}}^{\mu_\mathcal{M}(\Omega)}$ given by the mapping 
$\widehat{f}:\mu_\mathcal{M}(\Omega)\to\widehat{\mathcal{M}}$.
\end{definition}

	The correctness of the above definition is justified by the following result.
\begin{lemma}[Correctness]\label{L: Correctness} Let $x, y\in\mu(\Omega)$ and 
$f, g\in\mathcal{M}_\mathcal{M}(\Omega)$. Then $x-y\in\mathcal{M}_0$ and
$f-g\in\mathcal{N}_\mathcal{M}(\Omega)$ implies $f(x)-g(y)\in\mathcal{M}_0$.
\end{lemma}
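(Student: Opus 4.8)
The plan is to decompose the difference $f(x) - g(y)$ into two pieces, each of which is controlled separately, and then invoke the fact that $\mathcal{M}_0$ is an ideal (hence closed under addition) by Lemma~\ref{L: Convex Ideals}. Writing
\[
f(x) - g(y) = \bigl(f(x) - g(x)\bigr) + \bigl(g(x) - g(y)\bigr),
\]
it suffices to show that each summand lies in $\mathcal{M}_0$. The first summand is the easy one: since $f - g \in \mathcal{N}_\mathcal{M}(\Omega)$ and $x \in \mu(\Omega)$, the definition of the $\mathcal{M}$-negligible functions (together with Theorem~\ref{T: Another Characterization} applied with $\alpha = 0$, or directly Definition~\ref{D: M-Asymptotic Functions}) gives $(f-g)(x) \in \mathcal{M}_0$ immediately.

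The second summand, $g(x) - g(y)$, is the substantive step and I expect it to be the main obstacle. The idea is to estimate $g(x) - g(y)$ by the mean value theorem (or a first-order Taylor expansion with remainder), transferred to $^*\mathbb{R}^d$ via the Transfer Principle (Theorem~\ref{T: Transfer Principle}): there is a point $\xi$ on the segment joining $x$ and $y$ with
\[
g(x) - g(y) = \nabla g(\xi)\cdot(x - y).
\]
Here one must check that $\xi \in \mu(\Omega)$: since $x, y \in \mu(\Omega)$ we have $\st(x) = \st(y) \in \Omega$ (indeed $x - y \in \mathcal{M}_0 \subseteq \mathcal{I}(^*\mathbb{C})$ forces $x \approx y$, so they share a standard part in $\Omega$), and the whole segment between them is infinitely close to that common standard point, hence lies in $\mu(\Omega)$. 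Because $g \in \mathcal{M}_\mathcal{M}(\Omega)$, each component of $\nabla g(\xi)$ lies in $\mathcal{M}$ by definition of the $\mathcal{M}$-moderate functions, and $\|x - y\| \in \mathcal{M}_0$ by hypothesis (the Euclidean norm of a vector each of whose components is in $\mathcal{M}_0$ is again in $\mathcal{M}_0$, using convexity of $\mathcal{M}_0$ from Lemma~\ref{L: Convex Ideals}). Since $\mathcal{M}_0$ is an ideal in $\mathcal{M}$, the product $\nabla g(\xi)\cdot(x-y)$ — a sum of $d$ terms each of the form (element of $\mathcal{M}$)$\times$(element of $\mathcal{M}_0$) — lies in $\mathcal{M}_0$.

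Combining the two parts, $f(x) - g(y) \in \mathcal{M}_0 + \mathcal{M}_0 = \mathcal{M}_0$, which is the claim. The only delicate points to get right in the full write-up are: (a) justifying via Transfer that the mean value / Taylor formula holds at a genuine intermediate point $\xi \in {^*\mathbb{R}^d}$ with $0 < \theta < 1$ for the parametrization $\xi = y + \theta(x-y)$; (b) confirming that this $\xi$ is in the monad $\mu(\Omega)$, which follows from $x \approx y$ and $\st(x)\in\Omega$ together with the openness of $\Omega$; and (c) the routine observation that norms and finite sums behave well with respect to the ideal $\mathcal{M}_0$. An alternative to the mean value theorem, avoiding $\xi$ entirely, is to note that $g \in \mathcal{M}_\mathcal{M}(\Omega)$ makes $g$ locally Lipschitz on $^*K$ for any $K \subset\subset \Omega$ with an $\mathcal{M}$-valued Lipschitz constant (again by Corollary~\ref{C: F-Moderate Functions} applied to $\nabla g$), and then $|g(x) - g(y)| \le L\|x-y\| \in \mathcal{M}_0$ directly; either route works.
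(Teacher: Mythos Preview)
Your proof is correct and follows essentially the same approach as the paper: decompose $f(x)-g(y)$ into a ``negligible'' piece and a ``mean value'' piece, use the Transfer Principle to get an intermediate point in $\mu(\Omega)$, and conclude via the ideal property of $\mathcal{M}_0$. The only cosmetic difference is that the paper splits as $(f(x)-f(y))+(f(y)-g(y))$ (applying the mean value theorem to $f$ and evaluating $f-g$ at $y$) whereas you use the mirror-image decomposition with $g$ and $x$; the arguments are otherwise identical.
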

\Proof  We have $f(x)-f(y)=\nabla f(t)\cdot (x-y)$ by
Transfer Principle (Theorem~\ref{T: Transfer Principle}) for some $t\in{^*\mathbb{R}^d}$
between 
$x$ and $y$ (in the sense that $t=x+\theta(y-x)$ for some $\theta\in{^*\mathbb{R}},\;  0<\theta<1$). 
 Also 
\begin{align}
|f(x)-g(y)|=&|f(x)-f(y)+f(y)-g(y)|\leq |f(x)-f(y)|+|f(y)-g(y)|\leq\notag\\
&\leq ||\nabla f(t)||\,||x-y||+|f(y)-g(y)|. \notag
\end{align}
Observe that $x-y\in\mathcal{M}_0$ implies $x-y\approx 0$ by part~(iii) of Theorem~\ref{T:
Characterization}) implying
$\st(x)=\st(y)=\st(t)$. It follows  $t\in\mu(\Omega)$ since $x, y\in\mu(\Omega)$ by assumption. Thus
$f\in\mathcal{M}_\mathcal{M}(\Omega)$ implies $||\nabla f(t)||\in\mathcal{M}$.  For the first term 
we have $||\nabla f(t)||\,||x-y||\in\mathcal{M}_0$ since 
$||x-y||\in\mathcal{M}_0$ by assumption  and $\mathcal{M}_0$ is an ideal in $\mathcal{M}$. Also
$f-g\in\mathcal{N}_\mathcal{M}(\Omega)$  implies 
$|f(y)-g(y)|\in\mathcal{M}_0$ since $y\in\mu(\Omega)$ by assumption. Thus
$|f(x)-g(y)|\in\mathcal{M}_0$ as required. 
$\blacktriangle$

	Here is another similar result which plays some role in what follows.
\begin{lemma}\label{L: F0} Let $x\in\mu(\Omega)$ and $f\in\mathcal{M}_\mathcal{M}(\Omega)$.
Then:
\begin{description}
\item{\bf (i)} $h\in\mathcal{M}_0$ implies $f(x+h)-f(x)\in\mathcal{M}_0$.
\item{\bf (ii)} $h\in\mathcal{M}_0$ and $h\not=0$ implies 
$\frac{|f(x+h)-f(x)-\nabla f(x)\cdot h|}{||h||}\in\mathcal{M}_0$.
\end{description}
\end{lemma}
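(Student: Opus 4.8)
The plan is to reduce both statements to the Transfer Principle (Theorem~\ref{T: Transfer Principle}) applied to the classical mean value inequality and to the second-order Taylor estimate, and then to combine this with the two structural facts that $\mathcal{M}_0$ is a convex ideal of $\mathcal{M}$ (Lemma~\ref{L: Convex Ideals}) and that $f$ being $\mathcal{M}$-moderate makes its derivatives lie in $\mathcal{M}$ on the monad (Corollary~\ref{C: F-Moderate Functions}). Throughout I read ``$h\in\mathcal{M}_0$'' as $||h||\in\mathcal{M}_0$, so in particular $h\approx 0$. The first observation I would record is then that $x+h\approx x$, hence $\st(x+h)=\st(x)\in\Omega$ and $x+h\in\mu(\Omega)\subset{^*\Omega}$, so $f(x+h)$ is defined. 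Since $\Omega$ is open I would fix an open ball $\mathcal{O}$ with $\st(x)\in\mathcal{O}\subset\subset\Omega$; as $x$ and $x+h$ both lie in $\mu(\st(x))\subset{^*\mathcal{O}}$ and $\mathcal{O}$ is convex, the whole non-standard segment $[x,x+h]$ lies in ${^*\mathcal{O}}\subset\mu(\Omega)$. This is the neighborhood on which the classical inequalities will be transferred, exactly as in the proofs of Lemma~\ref{L: Correctness} and Theorem~\ref{T: A Simplification}.

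For part (i): transferring the mean value inequality over $\mathcal{O}$ yields a point $t$ on the segment $[x,x+h]$ with $|f(x+h)-f(x)|\leq ||\nabla f(t)||\,||h||$. Since $t\in\mu(\Omega)$ and $f$ is $\mathcal{M}$-moderate, each $\partial_j f(t)\in\mathcal{M}$, hence $||\nabla f(t)||\in\mathcal{M}$ by convexity of $\mathcal{M}$. Because $||h||\in\mathcal{M}_0$ and $\mathcal{M}_0$ is an ideal, the product $||\nabla f(t)||\,||h||$ lies in $\mathcal{M}_0$, and then $f(x+h)-f(x)\in\mathcal{M}_0$ by the convexity of $\mathcal{M}_0$ (Lemma~\ref{L: Convex Ideals}).

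For part (ii): keeping the same $\mathcal{O}$, I would first use $\mathcal{M}$-moderateness of $f$ (Corollary~\ref{C: F-Moderate Functions}) to pick a single $A\in\mathcal{M}$ with $\sum_{|\alpha|=2}|\partial^\alpha f(\xi)|\leq A$ for all $\xi\in{^*\mathcal{O}}$, and then transfer Taylor's formula with second-order remainder over $\mathcal{O}$: there is $\theta\in{^*\mathbb{R}}$, $0<\theta<1$, with $f(x+h)-f(x)-\nabla f(x)\cdot h=\tfrac{1}{2}\sum_{|\alpha|=2}\partial^\alpha f(x+\theta h)\,h^\alpha$. Since $x+\theta h\in[x,x+h]\subset{^*\mathcal{O}}$ and $|h^\alpha|\leq||h||^2$ for $|\alpha|=2$, we get $|f(x+h)-f(x)-\nabla f(x)\cdot h|\leq A\,||h||^2$; dividing by $||h||\neq0$ the quotient is bounded by $A\,||h||\in\mathcal{M}_0$, and one final appeal to convexity of $\mathcal{M}_0$ shows the quotient itself is $\mathcal{M}_0$-infinitesimal.

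The only genuinely delicate point is the book-keeping needed to legitimately invoke Transfer — namely ensuring that the segments $[x,x+h]$ and the intermediate point $x+\theta h$ remain inside $^*\Omega$ even though $\Omega$ need not be convex; passing to a small standard ball $\mathcal{O}\subset\subset\Omega$ around $\st(x)$ disposes of this. Everything else is a routine instance of ``moderate $\times$ $\mathcal{M}_0$-small $=$ $\mathcal{M}_0$-small'' together with the convexity of the ideal $\mathcal{M}_0$, so I expect no further obstacle.
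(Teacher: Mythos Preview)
Your proof is correct and essentially the same as the paper's: for (ii) both use the second-order Taylor remainder via Transfer and the ideal/convexity properties of $\mathcal{M}_0$, while for (i) the paper simply invokes Lemma~\ref{L: Correctness} with $y=x+h$ and $g=f$, whose proof is precisely the mean-value argument you spell out. Your explicit localization to a convex ball $\mathcal{O}\subset\subset\Omega$ to keep the segment $[x,x+h]$ inside $^*\Omega$ is a nice point the paper leaves implicit.
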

\Proof (i) follows directly from the previous lemma for $y=x+h$ and $f=g$.

	(ii)  By the Mean Value Theorem applied by Transfer Principle (Theorem~\ref{T:
Transfer Principle}), we have $\nabla f(x)\cdot
h=f(x+h)-f(x)-\frac{1}{2}\sum_{|\alpha|=2}\partial^\alpha f(x+\theta h)\, h^\alpha$
for some $\theta\in{^*\mathbb{R}},\; 0<\theta<1$. Thus we have
\[
\frac{|f(x+h)-f(x)-\nabla f(x)\cdot h|}{||h||}\leq
\frac{1}{2}\sum_{|\alpha|=2}|\partial^\alpha f(x+\theta h)|\, ||h||\in\mathcal{M}_0,
\]
as required, because $\mathcal{M}_0$ is an ideal in $\mathcal{M}$ and
$\partial^\alpha f(x+\theta h)\in\mathcal{M}$ by assumption since $x+\theta h\in\mu(\Omega)$.

$\blacktriangle$

	 Recall that we have the embedding $\mathcal{E}(\Omega)\embed\widehat{\mathcal{E}}(\Omega)$
under the mapping
$f\to\widehat{^*f}$ (Section~\ref{S: F-Asymptotic Functions}). The next result shows that
the evaluation in
$\widehat{\mathcal{E}_\mathcal{M}}(\Omega)$ reduces to the usual evaluation in $\mathcal{E}(\Omega)$.
Recall that \label{Item: Embeddings}

\begin{proposition}[The Usual Evaluation]\label{P: The Usual Evaluation} Let 
$f\in\mathcal{E}(\Omega)$. Then $\widehat{^*f}$ is an extension
of $f$, i.e. $\widehat{^*f}\,|\, \Omega=f$.
\end{proposition}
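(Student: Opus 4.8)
The plan is to unwind the definitions on both sides and check that evaluating $\widehat{{}^*f}$ at a standard point $r\in\Omega$ gives back $f(r)$. First I would recall that the embedding $\mathcal{E}(\Omega)\hookrightarrow\widehat{\mathcal{M}}(\Omega)$ sends $f$ to $\widehat{{}^*f}=Q_\Omega({}^*f)$, where ${}^*f$ is the constant-family non-standard extension of $f$, which genuinely lies in $\mathcal{M}(\Omega)$ because all derivatives $\partial^\alpha({}^*f)={}^*(\partial^\alpha f)$ take values in $\mathbb{C}\subseteq\mathcal{M}$ on $\mu(\Omega)$ (using $\mathbb{C}\subset\mathcal{F}({}^*\mathbb{C})\subseteq\mathcal{M}$ from Lemma~\ref{L: Convex Rings}). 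Next I would recall the pointwise-value definition (Definition~\ref{D: Pointwise Values}): for $\widehat{x}\in\mu_\mathcal{M}(\Omega)$ we set $\widehat{{}^*f}(\widehat{x})=\widehat{({}^*f)(x)}=q_\mathcal{M}\big(({}^*f)(x)\big)$, where $x\in\mu(\Omega)$ is any representative of $\widehat{x}$.

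The key step is to identify $\Omega$ inside $\mu_\mathcal{M}(\Omega)$ correctly: a standard point $r\in\Omega$ is identified with the asymptotic point $\widehat{r}=q_\mathcal{M}^d(r)\in\mu_\mathcal{M}(\Omega)$, taking the representative $r\in{}^*\mathbb{R}^d$ itself (with $dx=0$ in the decomposition~(\ref{E: F-Monad})). Then $\widehat{{}^*f}(\widehat{r})=q_\mathcal{M}\big(({}^*f)(r)\big)$. Since ${}^*f$ is the constant family $f_i=f$, its pointwise value at the standard point $r$ is $({}^*f)(r)={}^*(f(r))$, the constant-family non-standard number determined by $f(r)\in\mathbb{C}$. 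Finally, under the standing identification of $\mathbb{C}$ with its image $\widehat{\mathbb{C}}=q_\mathcal{M}[\mathbb{C}]$ (Lemma~\ref{L: Isomorphic Fields}, part~(i), and part~(v) of Notation~\ref{N: Suppressing M}), we have $q_\mathcal{M}\big({}^*(f(r))\big)=f(r)$, which is exactly the assertion $\widehat{{}^*f}\,|\,\Omega=f$.

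I expect the only subtlety — not really an obstacle — to be bookkeeping about which identifications are in force: one must be careful that the restriction $\widehat{{}^*f}\,|\,\Omega$ means the graph $\widehat{{}^*f}:\mu_\mathcal{M}(\Omega)\to\widehat{\mathcal{M}}$ restricted to the copy of $\Omega$ sitting inside $\mu_\mathcal{M}(\Omega)$, and that the target value $f(r)\in\mathbb{C}$ is being read inside $\widehat{\mathcal{M}}$ via the canonical field embedding $\mathbb{C}\hookrightarrow\widehat{\mathcal{M}}$ (Corollary~\ref{C: Embedding of Complex Numbers}). Once these identifications are made explicit, the proof is a one-line computation $\widehat{{}^*f}(r)=q_\mathcal{M}(({}^*f)(r))=q_\mathcal{M}({}^*(f(r)))=f(r)$, with correctness of the pointwise value guaranteed by Lemma~\ref{L: Correctness}. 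I would also remark that the same argument shows more generally $\partial^\alpha\widehat{{}^*f}\,|\,\Omega=\partial^\alpha f$, so the embedding is compatible with differentiation on standard points, though only the stated equality $\widehat{{}^*f}\,|\,\Omega=f$ is needed here.
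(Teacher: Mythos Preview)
Your proposal is correct and follows essentially the same route as the paper's proof: unwind the pointwise-value definition to get $\widehat{{}^*f}(\widehat{r})=\widehat{({}^*f)(r)}=\widehat{f(r)}=f(r)$, using that ${}^*f$ extends $f$ and the standing identifications $r=\widehat{r}$ (of $\Omega$ inside $\mu_\mathcal{M}(\Omega)$) and $\mathbb{C}=\widehat{\mathbb{C}}$. Your version is simply more explicit about the bookkeeping of these identifications than the paper's one-line computation.
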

\Proof  $\widehat{^*f}(\widehat{x})=\widehat{^*f(x)}=\widehat{f(x)}=f(x)$ since $^*f$ is an
extension of $f$. We also have $x=\widehat{x}$ for all $x\in\Omega$ by the 
identification $\Omega$ with
its image in $\Re(\mathcal{M}^d)$ (\#~\ref{Item: Embeddings}, 
Section~\ref{S: F-Asymptotic Numbers: Definitions and Examples}).
Thus $\widehat{^*f}(x)=f(x)$ as required.
 $\blacktriangle$

	 In what follows the cardinal number $\kappa$ stands for the saturation of
$^*\mathbb{C}$ (Section~\ref{S: kappa-Good Two Valued
Measures}). Recall that $\kappa=\card(\mathcal{I})$, where $\mathcal{I}$ is the index set 
used in the construction $^*\mathbb{C}$ (Section~\ref{S: A Non-Standard Analysis: The General
Theory}).
\begin{theorem}[Differential Ring Embedding]\label{T: Differential Ring Embedding} The mapping
\[
\widehat{\mathcal{E}_\mathcal{M}}(\Omega)\ni \widehat{f}\to
\widehat{f}\in\widehat{\mathcal{M}}\,^{\mu_\mathcal{M}(\Omega)},
\]
from\,  $\widehat{\mathcal{E}_\mathcal{M}}(\Omega)$ into
$\widehat{\mathcal{M}}\,^{\mu_\mathcal{M}(\Omega)}$ is a \textbf{differential ring embedding}
in the sense that it is injective and preserves the ring operations and partial differentiation 
of any order.
\end{theorem}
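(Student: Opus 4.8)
The plan is to verify, one at a time, the three properties a differential ring embedding must have — preservation of the ring operations, injectivity, and preservation of partial differentiation — relying on Lemma~\ref{L: Correctness}, Theorem~\ref{T: A Simplification} and Lemma~\ref{L: F0}. First I would record that the assignment is well defined: for $\widehat{f}\in\widehat{\mathcal{E}_\mathcal{M}}(\Omega)$ and $\widehat{x}\in\mu_\mathcal{M}(\Omega)$ the quantity $\widehat{f}(\widehat{x})=\widehat{f(x)}$ is independent of the representatives $f\in\mathcal{M}_\mathcal{M}(\Omega)$ and $x\in\mu(\Omega)$ by Lemma~\ref{L: Correctness}, so each $\widehat{f}$ genuinely determines an element of $\widehat{\mathcal{M}}\,^{\mu_\mathcal{M}(\Omega)}$. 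Preservation of the ring operations is then immediate: $^*\mathcal{E}(\Omega)$ acts pointwise on $^*\Omega$ (Definition~\ref{D: Non-Standard Smooth Functions}) and $q_\mathcal{M}\colon\mathcal{M}\to\widehat{\mathcal{M}}$ is a ring homomorphism (Definition~\ref{D: Asymptotic Fields}), so $(\widehat{f}\pm\widehat{g})(\widehat{x})=\widehat{(f\pm g)(x)}=\widehat{f(x)}\pm\widehat{g(x)}=\widehat{f}(\widehat{x})\pm\widehat{g}(\widehat{x})$, and similarly for products, while the unit maps to the constant graph $1$.

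For injectivity, since the map is additive it suffices to show that a function whose graph vanishes identically on $\mu_\mathcal{M}(\Omega)$ is zero in $\widehat{\mathcal{E}_\mathcal{M}}(\Omega)$. If $\widehat{f}(\widehat{x})=0$ for all $\widehat{x}\in\mu_\mathcal{M}(\Omega)$, then by the definition of the quotient $\widehat{\mathcal{M}}=\mathcal{M}/\mathcal{M}_0$ we have $f(x)\in\mathcal{M}_0$ for every $x\in\mu(\Omega)$; since $f\in\mathcal{M}_\mathcal{M}(\Omega)$, Theorem~\ref{T: A Simplification} gives $f\in\mathcal{N}_\mathcal{M}(\Omega)$, i.e.\ $\widehat{f}=0$. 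In particular, injectivity tells us that $\widehat{\partial^\alpha f}$ depends only on the graph of $\widehat{f}$, not on the chosen representative.

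It remains to see that partial differentiation is preserved, which is the substantial point because $\widehat{\mathcal{M}}\,^{\mu_\mathcal{M}(\Omega)}$ carries no differential structure a priori: one must exhibit the graph of $\partial_j\widehat{f}=\widehat{\partial_j f}$ as recovered intrinsically from the graph of $\widehat{f}$, and then induct on the length of $\alpha$. I would fix $\widehat{x}\in\mu_\mathcal{M}(\Omega)$ and a nonzero asymptotically infinitesimal increment $\widehat{t}\in\Re(\widehat{\mathcal{M}})$, lift it to $t$ with $t\approx 0$, $t\notin\mathcal{M}_0$, and apply the second-order Taylor formula to $f$ via the Transfer Principle (Theorem~\ref{T: Transfer Principle}) — essentially the computation behind Lemma~\ref{L: F0}(ii), carried out now for increments $t\notin\mathcal{M}_0$. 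Using $f\in\mathcal{M}_\mathcal{M}(\Omega)$ and Corollary~\ref{C: F-Moderate Functions} to bound $\partial_j^2 f$ by some fixed $\widehat{M}\in\widehat{\mathcal{M}}$ on a compact neighbourhood of the standard point of $\widehat{x}$, one obtains
\[
\frac{\widehat{f}(\widehat{x}+\widehat{t}\,e_j)-\widehat{f}(\widehat{x})}{\widehat{t}}
= \widehat{\partial_j f(x)} + \widehat{r}, \qquad |\widehat{r}|\leq \tfrac12\,\widehat{M}\,|\widehat{t}|,
\]
for all sufficiently small $\widehat{t}$. Since $\widehat{\mathcal{M}}$ is a real-closed, Cantor $\kappa$-complete ordered field (Theorems~\ref{T: Algebraically Closed Field} and \ref{T: Cantor Completeness}), $\tfrac12\widehat{M}|\widehat{t}|\to 0$ in its order topology as $\widehat{t}\to 0$, so the difference quotient converges to $\widehat{\partial_j f(x)}$; hence $\partial_j$ on $\widehat{\mathcal{E}_\mathcal{M}}(\Omega)$ corresponds under the embedding to the pointwise partial-derivative operator on graphs, and iterating over the letters of $\alpha$ — legitimate because the embedding is an injective ring homomorphism — yields preservation of $\partial^\alpha$ for all $\alpha\in\mathbb{N}_0^d$.

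The algebraic part and the injectivity reduce cleanly to Lemma~\ref{L: Correctness} and Theorem~\ref{T: A Simplification}; the real obstacle is the differentiation step. There one has to pin down the correct notion of ``$j$-th partial derivative of a graph $F\in\widehat{\mathcal{M}}\,^{\mu_\mathcal{M}(\Omega)}$'' (difference quotients along $e_j$ with asymptotically infinitesimal increments, limit in the order topology of $\widehat{\mathcal{M}}$), verify that enough such increments exist and that the limit does not depend on the lift $t$ of $\widehat{t}$, and — since Lemma~\ref{L: F0}(ii) is phrased for increments in $\mathcal{M}_0$ — re-derive the needed Taylor estimate directly from Transfer for increments outside $\mathcal{M}_0$. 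Everything after that is bookkeeping.
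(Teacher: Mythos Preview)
Your proof is correct; the treatment of well-definedness, ring operations, and injectivity matches the paper's argument essentially verbatim. The differentiation step, however, follows a genuinely different path. You give a direct Taylor-remainder estimate: by Transfer and Corollary~\ref{C: F-Moderate Functions} you bound the second derivatives uniformly by some $\widehat{M}$ on a compact neighbourhood, obtain $\bigl|\text{(difference quotient)}-\widehat{\partial_j f}(\widehat{x})\bigr|\le\tfrac12\widehat{M}\,|\widehat{t}|$, and read off convergence in the order topology immediately (your invocation of Cantor completeness is superfluous here --- the estimate already furnishes an explicit $\widehat{\delta}=2\widehat{\varepsilon}/\widehat{M}$ for each $\widehat{\varepsilon}$, valid in any ordered field). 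The paper instead argues by contradiction via saturation: assuming the $\varepsilon$--$\delta$ condition fails, it forms the internal families $X_\delta$, $Y_\delta$ of ``bad'' increments indexed by a neighbourhood base $\Gamma$ of $0$ in $\mathbb{M}_+$, invokes the Saturation Principle (Theorem~\ref{T: Saturation Principle in *C}) to produce a common element $h$ with $\|h\|\in\mathcal{M}_0$, and then appeals to Lemma~\ref{L: F0} for the contradiction; it also splits into two cases according to whether $\widehat{\mathcal{M}}$ admits a base of cardinality below $\kappa$. Your route is shorter, more elementary, and uniform (no saturation, no case split), while the paper's route showcases the saturation machinery it has been building throughout. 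One small remark: you restrict attention to increments $\widehat{t}$ that are \emph{infinitesimal} in $\widehat{\mathcal{M}}$, but your Taylor estimate holds for every $\widehat{t}$ in a fixed order-neighbourhood of $0$, so the order-topology limit follows without that restriction.
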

\begin{remark}[Interpretation] {\em Recall that $\mu_\mathcal{M}(\Omega)\subset
\Re(\widehat{\mathcal{M}}^d)$ and thus  $(\mu_\mathcal{M}(\Omega), T_<)$ is a topological space. 
Similarly,  $(\widehat{\mathcal{M}}, T_<)$ is a topological space (Section~\ref{S: F-Asymptotic
Numbers: Definitions and Examples}). With this in mind, let 
$\mathcal{C}^\infty(\mu_\mathcal{M}(\Omega), \widehat{\mathcal{M}})$ denote the space
of the $\mathcal{C}^\infty$-functions from $\mu_\mathcal{M}(\Omega)$ into
$\widehat{\mathcal{M}}$. The above theorem shows that
$\widehat{\mathcal{E}_\mathcal{M}}(\Omega)$ is isomorphic to
$\mathcal{C}^\infty(\mu_\mathcal{M}(\Omega), \widehat{\mathcal{M}})$. Based on this result
we shall sometimes identify a given asymptotic function with its graph and write simply
$\widehat{\mathcal{E}_\mathcal{M}}(\Omega)=\mathcal{C}^\infty(\mu_\mathcal{M}(\Omega),
\widehat{\mathcal{M}})$ or
$\widehat{\mathcal{E}_\mathcal{M}}(\Omega)\subset
\widehat{\mathcal{M}}\,^{\mu_\mathcal{M}(\Omega)}$ instead of the more precise
$\widehat{\mathcal{E}_\mathcal{M}}(\Omega)\embed
\widehat{\mathcal{M}}\,^{\mu_\mathcal{M}(\Omega)}$. We should note that
$\widehat{\mathcal{M}}\,^{\mu_\mathcal{M}(\Omega)}\setminus
\widehat{\mathcal{E}_\mathcal{M}}(\Omega)\not=\varnothing$.
}\end{remark}

\Proof  To show that the mapping is injective, observe that 
$\widehat{f}(\widehat{x})=0$ for all $\widehat{x}\in\mu_\mathcal{M}(\Omega)$ is equivalent to
$f(x)\in\mathcal{M}_0$ for all $\forall x\in\mu(\Omega)$. The latter implies
$f\in\mathcal{N}_\mathcal{M}(\Omega))$ by Theorem~\ref{T: A Simplification}. Thus
$\widehat{f}=0$ as required. The mapping preserves the addition because
$(\widehat{f}+\widehat{g})(\widehat{x})=\widehat{f}(\widehat{x})+\widehat{g}(\widehat{x})=
\widehat{f(x)+g(x)}$ and similarly for the multiplication. We turn to the preserving of the partial
differentiation. Let $x\in\mu(\Omega)$ and $f\in\mathcal{M}_\mathcal{M}(\Omega)$. In view of the
fact that every maximal field
$\mathbb{M}\in\mathcal{M}ax(\mathcal{M})$ (Definition~\ref{D: Maximal Fields}) is isomorphic to
$\widehat{\mathcal{M}}$ (Lemma~\ref{L: Isomorphic Fields}), it suffices
to show that for every $\varepsilon\in\mathbb{M}_+$ there exists $\delta\in\mathbb{M}_+$ such
that for every $h\in{^*\mathbb{R}^d}$ we have:
\begin{description}
\item{\bf (a)} $||h||<\delta$ implies $|f(x+h)-f(x)|<\varepsilon$.
\item{\bf (b)} $0< ||h||<\delta$ implies $\frac{|f(x+h)-f(x)-\nabla f(x)\cdot h|}{||h||}<\varepsilon$
\end{description}
We have to consider separately two differently cases: Suppose first, that
$\widehat{\mathcal{M}}$ has a base for the open neighborhoods of the zero of cardinality less than
$\kappa$. Since  $\widehat{\mathcal{M}}$ and $\mathbb{M}$ are isomorphic, it follows that there
exists a set  $\Gamma\subseteq\mathbb{M}_+$ of cardinality less than $\kappa$ such that the
collection of open intervals $(0, \gamma),
\gamma\in\Gamma$, is a base for the open neighborhoods of the zero in $\mathbb{M}_+$. 
Now, suppose (on the contrary) that (a) and (b) fail, i.e. there exists $\varepsilon\in\mathbb{M}_+$ such
that for every
$\delta\in \Gamma$ we have $X_\delta\not=\varnothing$ and $Y_\delta\not=\varnothing$, where 
\begin{align}
&X_\delta=\left\{h\in{^*\mathbb{R}^d} : ||h||<\delta \; \text{and}\;
|f(x+h)-f(x)|>\varepsilon\right\},\notag\\ 
&Y_\delta=\left\{h\in{^*\mathbb{R}^d} : 0< ||h||<\delta \;
\text{and}\;\frac{|f(x+h)-f(x)-\nabla f(x)\cdot h|}{||h||}>\varepsilon\right\}.\notag
\end{align}
We observe that the families $\{X_\delta\}_{\delta\in \Gamma}$ and $\{Y_\delta\}_{\delta\in
\Gamma}$ have the finite intersection properties. Thus there exist $h_1, h_2\in{^*\mathbb{R}^d}$ such
that
$h_1\in  X_\delta$ and $h_2\in  Y_\delta$ for all $\delta\in \Gamma$ by the Saturation Principle
(Theorem~\ref{T: Saturation Principle in *C}). It follows that
$||h_1||, ||h_1||\in\mathcal{M}_0$ and
$f(x+h_1)-f(x)\notin\mathcal{M}_0$ and $\frac{|f(x+h)-f(x)-\nabla f(x)\cdot
h|}{||h||}\notin\mathcal{M}_0$ by Theorem~\ref{T: Characterization} contradicting the result of
Lemma~\ref{L: F0}. This proves the preservation of the partial derivatives
$\partial^\alpha$ for
$|\alpha|\leq 1$. The generalization of the result to all multi-indices $\alpha$ follow by induction.
Suppose  now that $\widehat{\mathcal{M}}$ does not have a base for the open neighborhoods of the
zero of cardinality less than $\kappa$. In this case we have
$\widehat{\mathcal{E}_\mathcal{M}}(\Omega)={^*\mathcal{E}}(\Omega)$ (Example~\ref{Ex: The
Case F=*C}). Thus the preservation of the partial differentiation follows by default since
${^*\mathcal{E}}(\Omega)$ consists exactly of the $\mathcal{C}^\infty$-functions from $^*\Omega$
into $^*\mathbb{C}$.
$\blacktriangle$
\begin{theorem}[Fundamental Theorem]\label{T: Fundamental Theorem} Let $\Omega$ be an arcwise
connected open set of $\mathbb{R}^d$ and let $f\in\mathcal{M}_\mathcal{M}(\Omega)$. Then the
following are equivalent:
\begin{description}
\item{\bf (i)}\quad $(\exists\,
\widehat{c}\in\widehat{\mathcal{M}})(\forall\,\widehat{x}\in\mu_\mathcal{M}(\Omega))(\widehat{f}(\widehat{x})=
\widehat{c})$.
\item{\bf (ii)}\; $(\exists\,
c\in\mathcal{M})(\forall\,x\in\mu(\Omega))(f(x)-c\in\mathcal{M}_0)$.
\item{\bf (iii)} $(\forall\,x\in\mu(\Omega))(||\nabla f(x)||\in\mathcal{M}_0)$.
\item{\bf (iv)} $(\forall\,\widehat{x}\in\mu_\mathcal{M}(\Omega))(\nabla\widehat{f}(\widehat{x})=0)$.
\item{\bf (v)}\, $\nabla\widehat{f}=0$ in $\widehat{\mathcal{E}_\mathcal{M}}(\Omega)$.
\end{description}
\end{theorem}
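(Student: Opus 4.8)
The plan is to prove the five statements equivalent through the chain (i)$\Leftrightarrow$(ii), (ii)$\Leftrightarrow$(iii), (iii)$\Leftrightarrow$(iv), (iv)$\Leftrightarrow$(v), in which every step but one is essentially bookkeeping against results already established; the sole substantial implication is (iii)$\Rightarrow$(ii).

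First the routine equivalences. For (i)$\Leftrightarrow$(ii): every $\widehat c\in\widehat{\mathcal{M}}$ equals $q_\mathcal{M}(c)$ for some $c\in\mathcal{M}$, every $\widehat x\in\mu_\mathcal{M}(\Omega)$ equals $q_\mathcal{M}^d(x)$ for some $x\in\mu(\Omega)$, and $\widehat f(\widehat x)=\widehat{f(x)}$; hence "$\widehat f(\widehat x)=\widehat c$ for all $\widehat x$'' unwinds exactly to "$f(x)-c\in\mathcal{M}_0$ for all $x\in\mu(\Omega)$''. For (iii)$\Leftrightarrow$(iv): $\nabla\widehat f(\widehat x)$ has components $\widehat{\partial_j f(x)}$, so it vanishes iff $\partial_j f(x)\in\mathcal{M}_0$ for every $j$; since $|\partial_j f(x)|\le\|\nabla f(x)\|\le\sum_k|\partial_k f(x)|$ and $\mathcal{M}_0$ is a convex ideal (Lemma~\ref{L: Convex Ideals}), this is equivalent to $\|\nabla f(x)\|\in\mathcal{M}_0$ (well-definedness of the pointwise values being Lemma~\ref{L: Correctness}). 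For (iv)$\Leftrightarrow$(v): by Theorem~\ref{T: Differential Ring Embedding} the assignment $\widehat g\mapsto\widehat g\in\widehat{\mathcal{M}}^{\mu_\mathcal{M}(\Omega)}$ is injective and preserves $\partial^\alpha$, so $\nabla\widehat f=0$ in $\widehat{\mathcal{E}_\mathcal{M}}(\Omega)$ iff the graph of each $\partial_j\widehat f$ is identically zero, i.e. iff $\nabla\widehat f(\widehat x)=0$ for all $\widehat x$. Finally (ii)$\Rightarrow$(iii): under (ii) the function $f-c$ lies in $\mathcal{M}_\mathcal{M}(\Omega)$ (Theorem~\ref{T: Some Basic Results}) and takes values in $\mathcal{M}_0$ at every point of $\mu(\Omega)$, so $f-c\in\mathcal{N}_\mathcal{M}(\Omega)$ by Theorem~\ref{T: A Simplification}; in particular each $\partial_j f=\partial_j(f-c)\in\mathcal{M}_0$ pointwise on $\mu(\Omega)$, whence $\|\nabla f(x)\|\le\sum_j|\partial_j f(x)|\in\mathcal{M}_0$.

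The core is (iii)$\Rightarrow$(ii). Assume $\|\nabla f(x)\|\in\mathcal{M}_0$ for all $x\in\mu(\Omega)$. Then each $\partial_j f\in\mathcal{M}_\mathcal{M}(\Omega)$ has $\partial_j f(x)\in\mathcal{M}_0$ on $\mu(\Omega)$ (convexity of $\mathcal{M}_0$ and $|\partial_j f|\le\|\nabla f\|$), so $\partial_j f\in\mathcal{N}_\mathcal{M}(\Omega)$ by Theorem~\ref{T: A Simplification}, and consequently $\sup_{\xi\in{}^{*}K}|\partial_j f(\xi)|\le A_{j}$ for some $A_{j}\in\mathcal{M}_0$, for every $K\subset\subset\Omega$, by Theorem~\ref{T: Another Characterization}. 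The key lemma is then: if $K\subseteq\Omega$ is compact and $a,b\in{}^{*}\mathbb{R}^d$ satisfy $a+t(b-a)\in{}^{*}K$ for all $t\in{}^{*}[0,1]$ and $\|b-a\|$ is finite, then $f(b)-f(a)\in\mathcal{M}_0$; this follows by transferring the fundamental theorem of calculus (Theorem~\ref{T: Transfer Principle}) to write $f(b)-f(a)=\int_0^1\nabla f(a+t(b-a))\cdot(b-a)\,dt$, estimating $|f(b)-f(a)|\le\sum_j|b_j-a_j|\,A_j$, a finite combination of elements of the ideal $\mathcal{M}_0$ with finite coefficients (recall $\mathcal{F}(^*\mathbb{C})\subseteq\mathcal{M}$ by Lemma~\ref{L: Convex Rings}), hence in $\mathcal{M}_0$, and invoking convexity of $\mathcal{M}_0$ together with $f(a),f(b)\in\mathcal{M}$. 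To conclude, fix $r_0\in\Omega$ and set $c=f(r_0)\in\mathcal{M}$ (note $r_0\in\mu(\Omega)$). Given $x\in\mu(\Omega)$ with $r=\st(x)\in\Omega$, pick a standard closed ball $\bar B(r,\varepsilon)\subseteq\Omega$ and, using that the open arcwise connected set $\Omega$ is polygonally connected, a polygonal path $r=p_0,p_1,\dots,p_N=r_0$ in $\Omega$ with each $[p_k,p_{k+1}]\subseteq\Omega$. Writing $f(x)-c=(f(x)-f(r))+\sum_{k=0}^{N-1}\bigl(f(p_k)-f(p_{k+1})\bigr)$, every term lies in $\mathcal{M}_0$ by the key lemma (with $K=\bar B(r,\varepsilon)$ for the first term, since $\|x-r\|\approx 0<\varepsilon$, and $K=[p_k,p_{k+1}]$ for the rest). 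Hence $f(x)-c\in\mathcal{M}_0$ for all $x\in\mu(\Omega)$, which is (ii).

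The main obstacle I expect is the key lemma's estimate: phrasing the Transfer of the integral form of the fundamental theorem of calculus for a segment one of whose endpoints may be nonstandard, and, crucially, showing that $\sup_{\xi\in{}^{*}K}|\partial_j f(\xi)|$ is genuinely $\mathcal{M}_0$-negligible — this is precisely where Theorem~\ref{T: A Simplification} and Theorem~\ref{T: Another Characterization} convert the pointwise condition on $\mu(\Omega)$ into a uniform bound over ${}^{*}K$, and without that uniformity the integral cannot be controlled. A secondary point needing a sentence of justification is the reduction of arcwise connectedness of the open set $\Omega\subseteq\mathbb{R}^d$ to polygonal connectedness, which is a standard fact about open subsets of Euclidean space.
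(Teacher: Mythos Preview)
Your proof is correct and, in the two substantive implications, takes a somewhat different route from the paper. For (ii)$\Rightarrow$(iii) the paper argues directly via a Taylor expansion at an infinitesimal increment $h$ in the direction of $\nabla f(x)$ together with an underflow argument; you instead observe that $f-c\in\mathcal{N}_\mathcal{M}(\Omega)$ by Theorem~\ref{T: A Simplification} and read off $\partial_j f(x)\in\mathcal{M}_0$ from the definition of $\mathcal{N}_\mathcal{M}(\Omega)$. This is shorter and reuses machinery already in place (the Taylor/underflow work is hidden inside the proof of Theorem~\ref{T: A Simplification}). For (iii)$\Rightarrow$(ii) the paper transfers arcwise connectedness to $^*\Omega$, takes a $*$-continuous curve $L\subset\mu(\Omega)$ joining $x$ to $y$, and applies a Mean Value Theorem along $L$ to get $f(x)-f(y)=\nabla f(t)\cdot(x-y)$ with $t\in L$, whence the product lies in $\mathcal{M}_0$. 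Your approach instead upgrades the pointwise hypothesis to a uniform bound $\sup_{\xi\in{^*K}}|\partial_j f(\xi)|\le A_j\in\mathcal{M}_0$ via Theorem~\ref{T: Another Characterization}, then integrates along standard polygonal paths (plus one infinitesimal segment from $x$ to $\st(x)$) using the transferred fundamental theorem of calculus. Your version is more explicit about why the connecting path stays in $\mu(\Omega)$ and avoids the somewhat delicate MVT-along-a-curve step; the paper's version, on the other hand, never needs the reduction to polygonal connectedness and works with the pointwise hypothesis directly rather than passing to a sup-bound.
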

\Proof  (i)$\Leftrightarrow$(ii), (iii)$\Leftrightarrow$(iv) and (iv)$\Leftrightarrow$(v) follow directly from
Theorem~\ref{T: Differential Ring Embedding}.

	(ii)$\Rightarrow$(iii): Suppose that $x\in\mu(\Omega)$. If $\nabla f(x)=0$, there is nothing to prove.
Suppose that $\nabla f(x)\not=0$ and let $h\in\mathcal{I}(\mathbb{M}^d)$ be an infinitesimal vector in the
direction of $\nabla f(x)$. By the Mean Value Theorem applied by Transfer Principle (Theorem~\ref{T:
Transfer Principle}), we have
\[
\nabla f(x)\cdot h=f(x+h)-f(x)-\frac{1}{2}\sum_{|\alpha|=2}\partial^\alpha f(x+\theta h)\, h^\alpha,
\]
for some $\theta\in{^*\mathbb{R}},\; 0<\theta<1$. We have
$\left|\frac{1}{2}\sum_{|\alpha|=2}\partial^\alpha f(x+\theta h)\right|\leq\delta$ for some
$\delta\in\mathbb{M}_+$ by Theorem~\ref{T: Characterization} since $x+\theta h\in\mu(\Omega)$ and
$f\in\mathcal{M}_\mathcal{M}(\Omega)$ by assumption. Also $|\nabla f(x)\cdot h|=||\nabla f(x)||\, ||h||$ by
the choice of the direction of $h$. Thus
\[
||\nabla f(x)||\leq \left(\frac{f(x+h)-f(x)}{||h||^2}+\delta\right)\, ||h||,
\]
Observe that $f(x+h)-f(x)\in\mathcal{M}_0$ by assumption since $x+h\in\mu(\Omega)$. Thus
$\frac{f(x+h)-f(x)}{||h||^2}+\delta\in\mathbb{M}_+$. Consequently, there exists
$M\in\mathbb{M}_+$ such that the internal set 
\[
\mathcal{A}=\left\{\, ||h||\; : \; h\in{^*\mathbb{R}^d},\;  \frac{\nabla f(x)}{||\nabla
f(x)||}=\frac{h}{||h||},\;  ||\nabla f (x)||\leq M\, ||h||\; \right\},
\]
contains $\mathcal{I}(\mathbb{M}_+)$. Thus $\mathcal{A}$ contains arbitrarily small numbers in
$\mathcal{M}\setminus\mathcal{M}_0$ since $\mathbb{M}_+\subset\mathcal{M}\setminus\mathcal{M}_0$. It
follows that $\mathcal{A}$ contains arbitrarily large numbers $\mathcal{M}_0$ by the Underflow of
$\mathcal{M}\setminus\mathcal{M}_0$ (Theorem~\ref{T: Spilling Principles}). Thus there
exists
$h\in{^*\mathbb{R}^d}$ such that $ ||\nabla f (x)||\leq M\, ||h||$ and
$||h||\in\mathcal{M}_0$. It follows that $ ||\nabla f (x)||\in\mathcal{M}_0$ (as required) since
$\mathcal{M}_0$ is an ideal in 
$\mathcal{M}$.

	(ii)$\Leftarrow$(iii): Suppose that $x, y\in\mu(\Omega)$. Since $\Omega$ is arcwise connected by
assumption, it follows that
$^*\Omega$ is $*$-arcwise connected by Transfer Principle (Theorem~\ref{T: Transfer Principle}). Thus there
exists a
$*$-continuous curve $L\subset\mu(\Omega)$ which connects $x$ and $y$. We have 
\[
f(x)-f(y)=\int_{L}\nabla f(t)\cdot dl,
\]
(again, by Transfer Principle).  It follows that 
\[
f(x)-f(y)= \nabla f(t)\cdot (x-y),
\]
for some $t\in {L}$ by the Mean Value Theorem (and Transfer Principle).  Thus $|f(x)-f(y)|\leq ||\nabla
f(t)||\, ||x-y||\in\mathcal{M}_0,$ since (as before) $\mathcal{M}_0$ is an ideal in $\mathcal{M}$ and we
have $||\nabla f(t)||\in\mathcal{M}_0$ by assumption and
$||x-y||\in\mathcal{M}(^*\mathbb{R})\subset\mathcal{M}$. Let
$c=f(y)$ for some (any) $y\in\mu(\Omega)$. The result is
$f(x)-c\in\mathcal{M}_0$ for all $x\in\mu(\Omega)$ as required.
$\blacktriangle$
\begin{corollary}[Constant Functions]\label{C: Constant Functions} Let $\Omega$ be an arcwise
connected open set of $\mathbb{R}^d$. Then
\begin{equation}\label{E: Ring of Scalars}
\widehat{\mathcal{M}}=
\left\{ \widehat{f}\in \widehat{\mathcal{E}}_\mathcal{M}(\Omega)\mid \nabla
\widehat{f}=0\right\},
\end{equation}
In particular,
\begin{equation}\label{E: Ring of Scalars}
\widehat{\mathcal{M}}=
\left\{ \widehat{f}\in \widehat{\mathcal{E}}_\mathcal{M}(\mathbb{R}^d)\mid \nabla
\widehat{f}=0\right\}.
\end{equation}
\end{corollary}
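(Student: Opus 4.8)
The plan is to deduce the corollary directly from the Fundamental Theorem (Theorem~\ref{T: Fundamental Theorem}) together with the constant-function embedding $\widehat{\mathcal{M}}\embed\widehat{\mathcal{M}}(\Omega)$ established in part~(vii) of Theorem~\ref{T: Some Basic Results}. Recall that under this embedding a scalar $\widehat{c}\in\widehat{\mathcal{M}}$ is identified with the class $\widehat{f_c}$ of the constant function $f_c(x)=c$, where $c\in\mathcal{M}$; so the identity to be proved should be read as
\[
\{\widehat{f_c}\mid c\in\mathcal{M}\}=\{\widehat{f}\in\widehat{\mathcal{E}}_\mathcal{M}(\Omega)\mid \nabla\widehat{f}=0\}.
\]

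For the inclusion $\subseteq$: if $c\in\mathcal{M}$, then $\partial^\alpha f_c=0$ in ${^*\mathcal{E}}(\Omega)$ for every multi-index $\alpha$ with $|\alpha|\geq 1$; since partial differentiation in $\widehat{\mathcal{E}}_\mathcal{M}(\Omega)$ is inherited from ${^*\mathcal{E}}(\Omega)$ (Definition~\ref{D: M-Asymptotic Functions}), it follows that $\nabla\widehat{f_c}=0$. Hence every scalar, viewed as an asymptotic function, has vanishing gradient. (Equivalently, this is the implication (i)$\Rightarrow$(v) in Theorem~\ref{T: Fundamental Theorem}.)

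For the inclusion $\supseteq$: suppose $\widehat{f}\in\widehat{\mathcal{E}}_\mathcal{M}(\Omega)$ satisfies $\nabla\widehat{f}=0$. Since $\Omega$ is arcwise connected, the Fundamental Theorem (Theorem~\ref{T: Fundamental Theorem}) applies, and condition~(v) there implies condition~(i): there exists $\widehat{c}\in\widehat{\mathcal{M}}$, i.e. some $c\in\mathcal{M}$, such that $\widehat{f}(\widehat{x})=\widehat{c}=\widehat{f_c}(\widehat{x})$ for all $\widehat{x}\in\mu_\mathcal{M}(\Omega)$. By the Differential Ring Embedding (Theorem~\ref{T: Differential Ring Embedding}), an asymptotic function is completely determined by its graph on $\mu_\mathcal{M}(\Omega)$; therefore $\widehat{f}=\widehat{f_c}$, that is, $\widehat{f}$ lies in the image of $\widehat{\mathcal{M}}$. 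This proves the first displayed identity; the second one is the special case $\Omega=\mathbb{R}^d$, which is arcwise connected.

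The argument is essentially bookkeeping; the only point requiring care is to keep the two distinct identifications separate — the scalar $\widehat{c}$ with the constant asymptotic function $\widehat{f_c}$ (part~(vii) of Theorem~\ref{T: Some Basic Results}), and an asymptotic function with its pointwise graph (Theorem~\ref{T: Differential Ring Embedding}) — so that ``$\nabla\widehat{f}=0$'' genuinely forces $\widehat{f}$ to be a constant element of $\widehat{\mathcal{M}}(\Omega)$, rather than merely a function that is locally constant on the monad. Arcwise connectedness of $\Omega$ is exactly what excludes the latter possibility, and it enters the proof only through Theorem~\ref{T: Fundamental Theorem}.
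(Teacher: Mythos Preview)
Your proof is correct and follows essentially the same route as the paper's: both inclusions are read off from the Fundamental Theorem (Theorem~\ref{T: Fundamental Theorem}), with the constant-function embedding of Theorem~\ref{T: Some Basic Results}(vii) supplying the $\subseteq$ direction. Your explicit appeal to the injectivity in Theorem~\ref{T: Differential Ring Embedding} to pass from ``equal pointwise values'' to ``equal asymptotic functions'' is a welcome clarification of what the paper abbreviates as ``identification of the constant functions with their values.''
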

\Proof The inclusion $\widehat{\mathcal{M}}\subseteq
\left\{ \widehat{f}\in \widehat{\mathcal{E}}_\mathcal{M}(\Omega)\mid \nabla
\widehat{f}=0\right\}$ follows directly from
Theorem~\ref{T: Fundamental Theorem} in view of the embedding
$\widehat{\mathcal{M}}\embed\widehat{\mathcal{E}}_\mathcal{M}(\Omega)$ (through constant
functions) descussed in part~(v) of Theorem~\ref{T: Some Basic Results}. The
inclusion $\left\{ \widehat{f}\in
\widehat{\mathcal{E}}_\mathcal{M}(\Omega)\mid \nabla
\widehat{f}=0\right\}\newline
\subseteq\widehat{\mathcal{M}}$ follows also from
Theorem~\ref{T: Fundamental Theorem} and the identification of the constant functions in 
$\widehat{\mathcal{E}}_\mathcal{M}(\Omega)$ with their values. $\blacktriangle$


\section{Local Properties of Asymptotic Functions}\label{S: Local Properties of Asymptotic Functions}

\begin{definition}[Restriction]\label{D: Restriction}  Let $\Omega,
\mathcal{O}$ be two open sets of $\mathbb{R}^d$ such that
$\mathcal{O}\subseteq\Omega$. Let
$\widehat{f}\in\widehat{\mathcal{E}}_\mathcal{M}(\Omega)$. We define the {\bf restriction}
$\widehat{f}\upharpoonright\mathcal{O}$ of
$\widehat{f}$ on $\mathcal{O}$ by the formula
\[
\widehat{f}\upharpoonright\mathcal{O}=\widehat{f\, |{^*\mathcal{O}}},
\]
where $^*\mathcal{O}$ is the non-standard extension of $\mathcal{O}$ and $f\, |{^*\mathcal{O}}$ is
the usual (pointwise) restriction of
$f$ on $^*\mathcal{O}$ (Section~\ref{S: A Non-Standard Analysis: The General Theory}).
\end{definition}

	The above definition is justified by the following result.
\begin{lemma}[Justification]\label{L: Justification of Restriction} Let $f,
g\in\mathcal{M}_\mathcal{M}(\Omega)$ and $f-g\in\mathcal{N}_\mathcal{M}(\Omega)$. Then
$f\, |{^*\mathcal{O}}-g\, |{^*\mathcal{O}}\in\mathcal{N}_\mathcal{M}(\mathcal{O})$.
\end{lemma}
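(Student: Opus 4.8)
The plan is to unwind the definitions of $\mathcal{M}$-moderate and $\mathcal{M}$-negligible functions and observe that the restriction of a function to a smaller monad only imposes fewer pointwise conditions. Concretely, recall from Definition~\ref{D: M-Asymptotic Functions} that $g\in\mathcal{M}_\mathcal{M}(\mathcal{O})$ means $\partial^\alpha g(x)\in\mathcal{M}$ for all $\alpha\in\mathbb{N}_0^d$ and all $x\in\mu(\mathcal{O})$, and similarly $g\in\mathcal{N}_\mathcal{M}(\mathcal{O})$ means $\partial^\alpha g(x)\in\mathcal{M}_0$ for all such $\alpha$ and $x$. The key geometric fact is that $\mathcal{O}\subseteq\Omega$ implies $\mu(\mathcal{O})\subseteq\mu(\Omega)$: indeed, if $r+dx\in\mu(\mathcal{O})$ with $r\in\mathcal{O}$ and $\|dx\|\approx 0$, then $r\in\Omega$ as well, so $r+dx\in\mu(\Omega)$.

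First I would note that $f\,|{^*\mathcal{O}}$ and $g\,|{^*\mathcal{O}}$ genuinely lie in ${^*\mathcal{E}}(\mathcal{O})$: this is exactly the presheaf property of $\{{^*\mathcal{E}}(\Omega)\}$ established in Theorem~\ref{T: Standard Presheaf}, and moreover pointwise restriction commutes with $\partial^\alpha$ by part~(5) of that theorem. So for $x\in\mu(\mathcal{O})$ we have $\partial^\alpha\big(f\,|{^*\mathcal{O}}-g\,|{^*\mathcal{O}}\big)(x)=\partial^\alpha(f-g)(x)$, where the right-hand side is evaluated using $f-g$ as a function on all of ${^*\Omega}$. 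Since $x\in\mu(\mathcal{O})\subseteq\mu(\Omega)$ and $f-g\in\mathcal{N}_\mathcal{M}(\Omega)$ by hypothesis, we get $\partial^\alpha(f-g)(x)\in\mathcal{M}_0$. As $\alpha$ and $x\in\mu(\mathcal{O})$ were arbitrary, this is precisely the statement that $f\,|{^*\mathcal{O}}-g\,|{^*\mathcal{O}}\in\mathcal{N}_\mathcal{M}(\mathcal{O})$.

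I would also record, for completeness (it is needed for the restriction map $\widehat{f}\upharpoonright\mathcal{O}$ in Definition~\ref{D: Restriction} to land where claimed), that $f,g\in\mathcal{M}_\mathcal{M}(\Omega)$ likewise restrict into $\mathcal{M}_\mathcal{M}(\mathcal{O})$ by the identical argument with $\mathcal{M}$ in place of $\mathcal{M}_0$. Then $\widehat{f\,|{^*\mathcal{O}}}$ is a well-defined element of $\widehat{\mathcal{M}}(\mathcal{O})$ depending only on $\widehat{f}\in\widehat{\mathcal{M}}(\Omega)$, which is what justifies Definition~\ref{D: Restriction}.

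Honestly there is no real obstacle here; the only point requiring a word of care is the commutation of pointwise restriction with $\partial^\alpha$ and the inclusion $\mu(\mathcal{O})\subseteq\mu(\Omega)$, both of which are immediate (the former from Theorem~\ref{T: Standard Presheaf}, the latter from the definition of the monad in Definition~\ref{D:Monads}). Everything else is a direct transcription of the definitions of $\mathcal{M}_\mathcal{M}$ and $\mathcal{N}_\mathcal{M}$. Accordingly I expect the written proof to be only a few lines, of the form: ``By Theorem~\ref{T: Standard Presheaf}, $\partial^\alpha(f\,|{^*\mathcal{O}})= (\partial^\alpha f)\,|{^*\mathcal{O}}$; since $\mu(\mathcal{O})\subseteq\mu(\Omega)$, the moderateness/negligibility conditions for $f-g$ on $\mu(\Omega)$ restrict to the corresponding conditions on $\mu(\mathcal{O})$.''
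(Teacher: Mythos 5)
Your proof is correct, but it takes a slightly different route than the paper's. The paper's proof checks only the $\alpha=0$ case, i.e.\ that $f(x)-g(x)\in\mathcal{M}_0$ for all $x\in\mu(\mathcal{O})\subseteq\mu(\Omega)$, and then invokes Theorem~\ref{T: A Simplification} (which says that a moderate function whose \emph{values} lie in $\mathcal{M}_0$ on the monad is automatically negligible) to conclude. You instead verify the negligibility condition directly for every multi-index $\alpha$, using the commutation of $\partial^\alpha$ with pointwise restriction from Theorem~\ref{T: Standard Presheaf}. Both arguments are valid. The paper's version is shorter and avoids having to invoke the commutation of $\partial^\alpha$ with restriction at all, since for $\alpha=0$ restriction obviously leaves pointwise values unchanged; the cost is leaning on Theorem~\ref{T: A Simplification}, whose proof is genuinely nontrivial (it uses Taylor's formula and the spilling principles). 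Your version is more elementary and self-contained — it only needs the trivial inclusion $\mu(\mathcal{O})\subseteq\mu(\Omega)$ and the presheaf commutation property — at the cost of a little more bookkeeping over $\alpha$. Your additional remark that $f,g$ restrict into $\mathcal{M}_\mathcal{M}(\mathcal{O})$ is also worth recording, since the paper's appeal to Theorem~\ref{T: A Simplification} implicitly requires this moderateness hypothesis as well.
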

\Proof For every  $x\in\mu(\mathcal{O})$ we have $f(x)-g(x)\in\mathcal{M}_0$ by assumption since 
$\mu(\mathcal{O})\subseteq\mu(\Omega)$. It follows that
$f-g\in\mathcal{N}_\mathcal{M}(\mathcal{O})$ as required by Theorem~\ref{T: A Simplification}. 
$\blacktriangle$

 If $S$ is a set,
then $\mathcal{P}_\omega(S)$ denotes the set of all finite subsets of $S$. In particular,
$\mathcal{P}_\omega(\mathbb{N})$ denotes the set of the finite sets of natural numbers. The
elements of the non-standard extension $^*\mathcal{P}_\omega(\mathbb{N})$ are called
\textbf{hyperfinite sets}. They are, in general, infinite sets which are in one-to-one correspondence
with sets of the form $\{1, 2,\dots, \nu\}$ for some $\nu\in{^*\mathbb{N}}$. Let
$\mathcal{P}_\omega(\mathbb{N})^\Omega$ be the set of the functions of the form $F:
\Omega\to\mathcal{P}_\omega(\mathbb{N})$ and, similarly, let
$^*\mathcal{P}_\omega(\mathbb{N})^{^*\Omega}$ denote the set of the functions of the form $F:
{^*\Omega}\to{^*\mathcal{P}_\omega}(\mathbb{N})$. For the non-standard extension
$^*\!\left(\mathcal{P}_\omega(\mathbb{N})^\Omega\right)$ we have a strict inclusion
$^*\!\left(\mathcal{P}_\omega(\mathbb{N})^\Omega\right)
\subsetneqq{^*\mathcal{P}_\omega(\mathbb{N})^{^*\Omega}}$. We say that
$^*\!\left(\mathcal{P}_\omega(\mathbb{N})^\Omega\right)$ consists exactly of the {\em internal}
functions in ${^*\mathcal{P}_\omega(\mathbb{N})^{^*\Omega}}$. We should note that a fluent
knowledge on internal hyperfinite functions is not necessary for the understanding of what follows.

	We denote by  $\mathcal{T}_d$ the usual topology on $\mathbb{R}^d$ and by
$(\mathbb{R}^d,\mathcal{T}_d)$ the corresponding topological space. 
\begin{theorem} The collection
$\{\widehat{\mathcal{E}}_\mathcal{M}(\Omega)\}_{\Omega\in{\mathcal{T}_d}}$ 
is a {\bf sheaf of differential rings} on $(\mathbb{R}^d, \mathcal{T}_d)$ under the restriction
$\rest$ in the sense that:
\begin{description}
\item{\bf (i)}\quad
$(\forall\Omega\in\mathcal{T}_d)(\forall\widehat{f}\in\widehat{\mathcal{E}}_\mathcal{M}(\Omega))(\widehat{f}
\rest\Omega=\widehat{f})$.
\item{\bf (ii)}\;
$(\forall\Omega_1,
\Omega_2,
\Omega\in\mathcal{T}_d)(\forall\widehat{f}\in
\widehat{\mathcal{E}}_\mathcal{M}(\Omega))(\Omega_1\subseteq\Omega_2\subseteq\Omega$
implies $(\widehat{f}\rest\Omega_2)\rest\Omega_1=\widehat{f}\rest\Omega_1$.
\end{description}

	Let $\Omega=\bigcup_{\lambda\in\Lambda}\Omega_\lambda$ be an \textbf{open covering}
of $\Omega\in\mathcal{T}_d$ (for some  index set
$\Lambda$ and some open sets $\Omega_\lambda\in\mathcal{T}_d$).  Then:
\begin{description}

\item{\bf (iii)} $(\forall\lambda\in\Lambda)(\widehat{f}\rest\Omega_\lambda=0)$ implies
$\widehat{f}=0$.

\item{\bf (iv)} Let $\{\widehat{f}_\lambda\}_{\lambda\in\Lambda}$, 
$\widehat{f}_\lambda\in\widehat{\mathcal{E}}_\mathcal{M}(\Omega_\lambda)$, be a
\textbf{coherent family} of asymptotic functions in the sense that it satisfies the compatibility
condition
\[
(\forall
\lambda_1,
\lambda_2\in\Lambda)\left[\Omega_{\lambda_1}\cap\Omega_{\lambda_2}\not=
\varnothing\Rightarrow 
\widehat{f}_{\lambda_1}\rest(\Omega_{\lambda_1}\cap\Omega_{\lambda_2})=
\widehat{f}_{\lambda_2}\rest(\Omega_{\lambda_1}\cap\Omega_{\lambda_2})\right].
\]
Then there exists $\widehat{f}\in\widehat{\mathcal{E}}_\mathcal{M}(\Omega)$ such that
$\widehat{f}\rest\Omega_\lambda=\widehat{f}_\lambda$ for all $\lambda\in\Lambda$.

	\item{\bf (v)} The restriction $\rest$ agrees with the {\bf differential ring operations} in the sense
that
\item $(\forall\Omega, \mathcal{O}\in\mathcal{T}_d)(\forall
\widehat{f}\in{\widehat{\mathcal{E}_\mathcal{M}}}(\Omega))(\forall\alpha\in\mathbb{N}_0^d)\left(\mathcal{O}\subseteq\Omega
\Rightarrow (\partial^\alpha \widehat{f})
\rest\mathcal{O}=\partial^\alpha (\widehat{f}\rest\mathcal{O})\right)$.
\item $(\forall\Omega, \mathcal{O}\in\mathcal{T}_d)(\forall
\widehat{f},\,
\widehat{g}\in{\widehat{\mathcal{E}_\mathcal{M}}}(\Omega))\left(\mathcal{O}\subseteq\Omega
\Rightarrow (\widehat{f}+\widehat{g})
\rest\mathcal{O}=\widehat{f}\rest\mathcal{O}+\widehat{g}\rest\mathcal{O}\right)$.

\item $(\forall\Omega, \mathcal{O}\in\mathcal{T}_d)(\forall
\widehat{f},\,
\widehat{g}\in{\widehat{\mathcal{E}_\mathcal{M}}}(\Omega))\left(\mathcal{O}\subseteq\Omega
\Rightarrow (\widehat{f}\; \widehat{g})
\rest\mathcal{O}=(\widehat{f}\rest\mathcal{O})(\widehat{g}\rest\mathcal{O})\right)$.
\end{description}
\end{theorem}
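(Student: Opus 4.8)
The plan is to verify the presheaf axioms (i)--(ii) and the compatibility clause (v) by transporting the corresponding statements for $\{{^*\mathcal{E}}(\Omega)\}_{\Omega\in\mathcal{T}_d}$ through the quotient maps $Q_\Omega$, and then to establish the two genuine sheaf axioms, separation (iii) and gluing (iv). For (i), $\widehat{f}\rest\Omega=\widehat{f\,|\,{^*\Omega}}=\widehat{f}$ since $^*\Omega$ is the domain of $f$. For (ii), $^*\Omega_1\subseteq{^*\Omega_2}$ gives $(f\,|\,{^*\Omega_2})\,|\,{^*\Omega_1}=f\,|\,{^*\Omega_1}$ exactly as in Theorem~\ref{T: Standard Presheaf}, and applying $Q_{\Omega_1}$ yields the claim; the operation $\rest$ is well defined on classes by Lemma~\ref{L: Justification of Restriction}, and it lands in $\widehat{\mathcal{E}}_\mathcal{M}(\mathcal{O})$ because for $x\in\mu(\mathcal{O})$ one has $\st(x)\in\mathcal{O}\subseteq\Omega$, hence $x\in\mu(\Omega)$ and $\partial^\alpha(f\,|\,{^*\mathcal{O}})(x)=\partial^\alpha f(x)\in\mathcal{M}$. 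Clause (v) is the same bookkeeping: $\partial^\alpha$, $+$ and $\cdot$ all commute with pointwise restriction in $^*\mathcal{E}(\Omega)$ and with $Q$, hence with $\rest$.

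For separation (iii), fix a representative $f\in\mathcal{M}_\mathcal{M}(\Omega)$ of $\widehat{f}$. The hypothesis says each $f\,|\,{^*\Omega_\lambda}$ is $\mathcal{M}$-negligible, so by Theorem~\ref{T: A Simplification} $f(x)\in\mathcal{M}_0$ for every $x\in\mu(\Omega_\lambda)$. Since $\mu(\Omega)=\bigcup_{r\in\Omega}\mu(r)$ and $\Omega=\bigcup_\lambda\Omega_\lambda$, we have $\mu(\Omega)=\bigcup_\lambda\mu(\Omega_\lambda)$, so $f(x)\in\mathcal{M}_0$ for all $x\in\mu(\Omega)$, and Theorem~\ref{T: A Simplification} again gives $f\in\mathcal{N}_\mathcal{M}(\Omega)$, i.e. $\widehat{f}=0$.

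The heart is the gluing axiom (iv). Using second countability and paracompactness of $\mathbb{R}^d$, I would first choose a standard smooth partition of unity $(\chi_j)_{j\in\mathbb{N}}$ with $\chi_j\in\mathcal{D}(\mathbb{R}^d)$, $\supp\chi_j\subset\subset\Omega_{\lambda(j)}$ for a suitable $\lambda(j)\in\Lambda$, with $\{\supp\chi_j\}_j$ locally finite in $\Omega$ and $\sum_j\chi_j=1$ on $\Omega$. Picking a representative $f_\lambda\in\mathcal{M}_\mathcal{M}(\Omega_\lambda)$ of each $\widehat{f}_\lambda$, I would set $g=\sum_j{^*\chi_j}\cdot\bigl(f_{\lambda(j)}\,|\,{^*\Omega_{\lambda(j)}}\bigr)$, each summand extended by $0$ outside $^*\Omega_{\lambda(j)}$. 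Local finiteness makes this a genuine element of $^*\mathcal{E}(\Omega)$, and it is $\mathcal{M}$-moderate: if $\partial^\beta{^*\chi_j}(x)\neq0$ for some $\beta$, then $\chi_j$ does not vanish identically near $\st(x)$, so $\st(x)\in\supp\chi_j\subseteq\Omega_{\lambda(j)}$, whence $x\in\mu(\Omega_{\lambda(j)})$ and every $\partial^\gamma f_{\lambda(j)}(x)\in\mathcal{M}$; as each $\partial^\beta\chi_j$ is bounded, $\partial^\beta{^*\chi_j}(x)\in\mathcal{F}({^*\mathbb{C}})\subseteq\mathcal{M}$ by Lemma~\ref{L: Convex Rings}, so the Leibniz rule puts each $\partial^\alpha g(x)$ in $\mathcal{M}$. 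Put $\widehat{f}=Q_\Omega(g)$. To see $\widehat{f}\rest\Omega_m=\widehat{f}_m$, by Theorem~\ref{T: A Simplification} it suffices that $g(x)-f_m(x)\in\mathcal{M}_0$ for every $x\in\mu(\Omega_m)$; using $\sum_j{^*\chi_j}(x)=1$ I write
\[
g(x)-f_m(x)=\sum_j{^*\chi_j}(x)\,\bigl(f_{\lambda(j)}(x)-f_m(x)\bigr),
\]
a finite sum, and for each $j$ with ${^*\chi_j}(x)\neq0$ we have $\st(x)\in\Omega_{\lambda(j)}\cap\Omega_m\neq\varnothing$, so the compatibility hypothesis together with Theorem~\ref{T: A Simplification} gives $f_{\lambda(j)}(x)-f_m(x)\in\mathcal{M}_0$; since $\mathcal{M}_0$ is an ideal in $\mathcal{M}$, the finite sum is in $\mathcal{M}_0$, as required.

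The main obstacle is this gluing step, and inside it two points need care: first, that the infinite sum defining $g$ is genuinely an internal $\mathcal{C}^\infty$-function which is $\mathcal{M}$-moderate --- this is exactly where one exploits that a nonvanishing derivative of ${^*\chi_j}$ at $x$ already forces $\st(x)\in\Omega_{\lambda(j)}$, so the moderateness of $f_{\lambda(j)}$ is available precisely where it is used; and second, the bookkeeping needed so that the pointwise criterion of Theorem~\ref{T: A Simplification} can be invoked both to produce the section $\widehat{f}$ and to match it against each $\widehat{f}_m$ over $\Omega_m$. No new analytic estimate is required: once the Simplification theorem and the elementary monad calculus ($\mu(\Omega)=\bigcup_{r\in\Omega}\mu(r)$ and $x\approx\st(x)$ for $x\in\mu(\Omega)$) are in hand, the remainder of the argument is soft.
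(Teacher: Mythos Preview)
Your argument is correct and matches the paper's route: (i), (ii), (v) by direct transport through the quotient maps, (iii) via Theorem~\ref{T: A Simplification} together with $\mu(\Omega)=\bigcup_\lambda\mu(\Omega_\lambda)$, and (iv) by gluing with a standard locally finite partition of unity subordinate to a countable refinement, followed by the pointwise check $g(x)-f_m(x)\in\mathcal{M}_0$ on $\mu(\Omega_m)$. The one spot where the paper is more explicit than you is the assertion that $g=\sum_j{^*\chi_j}\,f_{\lambda(j)}$ lies in ${^*\mathcal{E}}(\Omega)$: an \emph{external} countable sum of internal functions is not automatically internal, so the phrase ``local finiteness makes this a genuine element of ${^*\mathcal{E}}(\Omega)$'' is a little quick. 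The paper handles this by Transfer, rewriting the partition-of-unity identity as a hyperfinite sum $\sum_{n\in H(x)}{^*\varphi_n}(x)=1$ valid for \emph{all} $x\in{^*\Omega}$ and then defining $f(x)=\sum_{n\in H(x)}{^*\varphi_n}(x)\,f_{\lambda(n)}(x)$; equivalently, and closer to your constructive intention, one may pass to representatives and put $g=\bigl\langle\sum_j\chi_j\, f_{\lambda(j),i}\bigr\rangle_i$, which is manifestly in ${^*\mathcal{E}}(\Omega)$ and coincides on each ${^*K}$, $K\subset\subset\Omega$, with the finite partial sum you use in the moderateness and restriction checks. With that understood, the remainder of your verification on $\mu(\Omega)$ goes through verbatim.
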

\begin{remark}[Sheaf Terminology]\label{R: Sheaf Terminology} {\em A collection
$\{\widehat{\mathcal{E}}_\mathcal{M}(\Omega)\}_{\Omega\in{\mathcal{T}_d}}$ which satisfies the
properties (i) and (ii) is called {\bf presheaf} on $(\mathbb{R}^d, \mathcal{T}_d)$. A presheaf
$\{\widehat{\mathcal{E}}_\mathcal{M}(\Omega)\}_{\Omega\in{\mathcal{T}_d}}$ which satisfies (iii)
and (iv) is called a {\bf sheaf} on $(\mathbb{R}^d, \mathcal{T}_d)$. A sheaf is called a {\bf differential
ring sheaf} it it satisfies (v). For the relevant
terminology we refer to A. Kaneko~\cite{aKan88}.  }\end{remark}
\Proof (i)  $\widehat{f}\rest\Omega=\widehat{f\, |\, {^*\Omega}}=\widehat{f}$\,  because
$^*\Omega$ is the domain of $f$.

	(ii) $(\widehat{f}\rest\Omega_2)\rest\Omega_1=(\widehat{f\,
|{^*\Omega_2}})\rest\Omega_1=
\widehat{(f\, |{^*\Omega_2})\, | {^*\Omega_1}}=\widehat{f\, |\, {^*\Omega_1}}=f\rest\Omega_1$
(as required)  since $^*\Omega_1\subseteq{^*\Omega_2}\subseteq{^*\Omega}$ and $\Omega$ is the
domain of $f$. 

	(iii) Suppose that $\lambda\in\Lambda$. We have $\widehat{f}\rest
\Omega_\lambda=0$ \ifff $\widehat{f\, |\, {^*\Omega_\lambda}}=0$ \ifff $ f\, |\,
{^*\Omega_\lambda}\in\mathcal{N}(\Omega_\lambda)$ \ifff $(\forall
x\in\mu(\Omega_\lambda)(f(x)\in\mathcal{M}_0)$ by Theorem~\ref{T: A Simplification}. On the
other hand $(\forall \lambda\in\Lambda)(\forall x\in\mu(\Omega_\lambda)(f(x)\in\mathcal{M}_0)$
\ifff $(\forall x\in\mu(\Omega)(f(x)\in\mathcal{M}_0)$ since
$\bigcup_{\lambda\in\Lambda}\, \mu(\Omega_\lambda)=\mu(\Omega)$. Thus it follows $ f\, |\,
{^*\Omega}\in\mathcal{N}(\Omega)$ by Theorem~\ref{T: A Simplification} implying
$\widehat{f}\rest\Omega=0$ (as required).

	(iv) Let $\Omega=\bigcup_{n=1}^\infty\mathcal{O}_n$ be a {\em locally finite countable
covering} of $\Omega$ which is a {\em refinement} of
$\Omega=\bigcup_{\lambda\in\Lambda}\Omega_\lambda$ in the sense that
\begin{description}
\item{\bf (a)} $\mathcal{O}_n\in\mathcal{T}_d$ and
$\overline{\mathcal{O}}_n\subset\subset\Omega$.
\item{\bf (b)} For every $K\subset\subset\Omega$ the set $\{n\in\mathbb{N}\mid
K\cap\mathcal{O}_n\not=\varnothing\}$ is finite.
\item{\bf (c)} There exists a sequence $\lambda\in\Lambda^\mathbb{N}$ such that 
$\overline{\mathcal{O}}_n\subset\Omega_{\lambda(n)}$ for all $n\in\mathbb{N}$.
\end{description}
 Let  $\{\varphi_n\}_{n\in\mathbb{N}}$ be a {\em smooth partition of unity}
subordinate to $\{\mathcal{O}_n\}_{n\in\mathbb{N}}$ in the sense that: 
\begin{description}
\item{\bf  (d)} $\varphi_n\in\mathcal{D}(\mathcal{O}_n)$ for all $n\in\mathbb{N}$.
\item{\bf  (e)} $0\leq\varphi_n(x)\leq 1$ for all $x\in\mathcal{O}_n$.
 \item{\bf  (f)} $1=\sum_{n=1}^\infty\, \varphi_n(x)$ for all $x\in\Omega$.
\end{description}
We recall that {\em every open
covering has a locally finite countable covering refinement} and that {\em every locally finite
countable covering has a smooth partition of unity} (A. Kaneko~\cite{aKan88}). Notice that
there exists
$F\in{\mathcal{P}_\omega(\mathbb{N})^\Omega}$ such that 
\[
\sum_{n\in F(x)}\, \varphi_n(x)=1,
\]
for all  $x\in\Omega$ and the function $f:\Omega\to\mathbb{C}$, defined by the formula 
\[
f(x)=\sum_{n\in F(x)}\, {\varphi_n}(x)\, f_{\lambda(n)}(x),
\]
is in $\mathcal{E}(\Omega)$.  Thus there exists $
H\in{^*\!\left(\mathcal{P}_\omega(\mathbb{N})^\Omega\right)}$ such that
\[
\sum_{n\in H(x)}\, {^*\varphi_n(x)}=1,
\]
for all $x\in{^*\Omega}$, by Transfer Principle (Theorem~\ref{T: Transfer Principle}) which
implies (trivially)
\begin{equation}\label{E: flambda}
f_\lambda(x)=f_{\lambda}(x)\sum_{n\in H(x)}\, {^*\varphi_n}(x),
\end{equation}
for all $x\in{^*\Omega}$. We define the function $f: {^*\Omega}\to{^*\mathbb{C}}$
by the formula
\begin{equation}\label{E: f}
f(x)=\sum_{n\in H(x)}\, {^*\varphi_n}(x)\, f_{\lambda(n)}(x).
\end{equation}
This is the function  we are looking for. Indeed, we have $f\in{^*\mathcal{E}}(\Omega)$ by Transfer
Principle because $f$ is a hyperfinite sum (\ref{E: f}) of functions in
${^*\mathcal{E}}(\Omega)$. Also
$f\in\mathcal{M}_\mathcal{M}(\Omega)$ (by Transfer Principle again) because $f$ is a
hyperfinite sum (\ref{E: f}) of functions in
$\mathcal{M}_\mathcal{M}(\Omega)$. Suppose that
$x\in\mu(\Omega_\lambda)$. After subtracting (\ref{E: flambda}) from (\ref{E: f}) we obtain:
\[
f(x)-f_\lambda(x)=\sum_{n\in H(x)}\, {^*\varphi_n}(x)\,
\left(f_{\lambda(n)}(x)-f_\lambda(x)\right).
\]
This formula implies $f(x)-f_\lambda(x)\in\mathcal{M}_0$ since $^*\varphi_n(x)$ is a finite
number and 
$f_{\lambda(n)}(x)-f_\lambda(x)\in\mathcal{M}_0$ by the compatibility condition. On the
other hand, $f(x)-f_\lambda(x)\in\mathcal{M}_0$ implies 
$f - f_\lambda\in\mathcal{N}_\mathcal{M}(\Omega_\lambda)$ by Theorem~\ref{T: A
Simplification}.  Thus $\widehat{f}\rest\Omega_\lambda$$=\widehat{f\, |\,
{^*\Omega_\lambda}}=\widehat{f_\lambda\, |\,
{^*\Omega_\lambda}}=\widehat{f}_\lambda$ (as required) by Lemma~\ref{L:
Justification of Restriction} since
$^*\Omega_\lambda\subseteq{^*\Omega}$ and $^*\Omega_\lambda$ is the domain of
$f_\lambda$.

	(v) We have $(\partial^\alpha \widehat{f})\rest\mathcal{O}=\widehat{\partial^\alpha
f}\rest\mathcal{O}
=\widehat{\partial^\alpha f\, |\, {^*\mathcal{O}}}=
\partial^\alpha (\widehat{f\, |\, {^*\mathcal{O}}})$=
$\partial^\alpha (\widehat{f}\rest\mathcal{O})$ as required. The verification of the sum and
multiplication is similar and we leave it to the reader.

$\blacktriangle$

	The above result justifies the following definition. 

\begin{definition}[Standard Support]\label{D: Standard Support}{\em  Let $\Omega$ be two
open sets of
$\mathbb{R}^d$ and let $\widehat{f}\in\widehat{\mathcal{E}}_\mathcal{M}(\Omega)$.
Let $\mathcal{O}$ be the maximal open subset of $\mathbb{R}^d$  such that
$\widehat{f}\restriction\mathcal{O}=0$ in
$\widehat{\mathcal{E}}_\mathcal{M}(\mathcal{O})$. Then the set
$\supp(\widehat{f})={\mathbb{R}^d}\setminus\mathcal{O}$ is called {\bf standard support}
(or simply {\bf support} if no confusion can arise) of $\widehat{f}$.
}\end{definition}

	Te next result follows immediately from the above definition. 

\begin{proposition}\label{P: Standard Support} Every asymptotic function
$\widehat{f}\in\widehat{\mathcal{E}_\mathcal{M}}(\Omega)$ has a (standard) support\, 
$\supp(\widehat{f})$ which is a \textbf{closed set of}\;  $\Omega$ in the usual topology on
$\mathbb{R}^d$. 
\end{proposition}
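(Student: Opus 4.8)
The plan is to deduce the statement directly from Definition~\ref{D: Standard Support}; the only thing that actually needs an argument is that the open set $\mathcal{O}$ invoked there is well defined, i.e. that among all open subsets of $\Omega$ on which $\widehat{f}$ restricts to zero there is a largest one. Once this is granted, the closedness is immediate.

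First I would set $\mathcal{O}=\bigcup\{\,\mathcal{U}\in\mathcal{T}_d : \mathcal{U}\subseteq\Omega \text{ and } \widehat{f}\restriction\mathcal{U}=0 \text{ in } \widehat{\mathcal{E}}_\mathcal{M}(\mathcal{U})\,\}$. This is an open subset of $\mathbb{R}^d$ contained in $\Omega$, and the sets $\mathcal{U}$ entering the union form an open covering of $\mathcal{O}$. Restricting $\widehat{f}$ to $\mathcal{O}$ and using the presheaf composition law (part~(ii) of the preceding sheaf theorem), one has $(\widehat{f}\restriction\mathcal{O})\restriction\mathcal{U}=\widehat{f}\restriction\mathcal{U}=0$ for every such $\mathcal{U}$; hence $\widehat{f}\restriction\mathcal{O}=0$ by the locality (separation) axiom of the sheaf $\{\widehat{\mathcal{E}}_\mathcal{M}(\Omega)\}_{\Omega\in\mathcal{T}_d}$ (part~(iii) of that theorem). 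Thus $\mathcal{O}$ itself is a member of the family, and by construction its largest member, so it is exactly the maximal open set of Definition~\ref{D: Standard Support}.

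Finally, since $\mathcal{O}$ is open in $\mathbb{R}^d$ and $\mathcal{O}\subseteq\Omega$, the set $\supp(\widehat{f})=\mathbb{R}^d\setminus\mathcal{O}$ meets $\Omega$ in $\Omega\setminus\mathcal{O}$, which is relatively closed in $\Omega$ because $\mathcal{O}$ is relatively open in $\Omega$; regarding the support as a subset of $\Omega$ (the usual convention for functions defined on $\Omega$, in keeping with Definition~\ref{D: Sup and Support}), this is precisely the assertion. I do not expect any genuine obstacle here: the one delicate point is the well-definedness of $\mathcal{O}$, and that is exactly where the full sheaf property — not merely the presheaf property — is used, which is why the proposition is placed right after the sheaf theorem.
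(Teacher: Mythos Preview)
Your argument is correct and is exactly the justification the paper has in mind: the paper simply declares that the result ``follows immediately from the above definition'' and writes nothing further, whereas you have spelled out why the maximal open set $\mathcal{O}$ in Definition~\ref{D: Standard Support} actually exists (via the union of all vanishing opens together with the locality axiom~(iii) of the sheaf theorem) and why the complement is closed in $\Omega$. So there is no difference in approach, only in the level of detail --- you have supplied the one-line verification that the paper leaves to the reader.
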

\begin{theorem}[Usual Support]\label{T: Usual Support} The embedding $f\to\widehat{^*f}$
from $\mathcal{E}(\Omega)$ into $\widehat{\mathcal{E}_\mathcal{M}}(\Omega)$ is a
sheaf homomorphism in the sense that $\widehat{{^*(f\, |\, \mathcal{O})}}=
\widehat{^*f}\restriction\mathcal{O}$. Consequently, $\supp (f)=\supp(\widehat{^*f})$, where
$\supp(f)$ stands for the usual support of $f$ in $\mathcal{E}(\Omega)$.
\end{theorem}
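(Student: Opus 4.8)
The plan is to verify the two assertions of Theorem~\ref{T: Usual Support} directly from the definitions, using the sheaf structure established in the preceding theorem and the commutativity of non-standard extension with pointwise restriction. First I would unwind what the claimed sheaf-homomorphism identity says: given $f\in\mathcal{E}(\Omega)$ and an open $\mathcal{O}\subseteq\Omega$, we must show
\[
\widehat{{^*(f\, |\, \mathcal{O})}}=\widehat{^*f}\restriction\mathcal{O}.
\]
By Definition~\ref{D: Restriction}, the right-hand side equals $\widehat{{^*f}\, |\, {^*\mathcal{O}}}$. So the identity reduces to the purely non-standard fact that $^*(f\, |\, \mathcal{O})={^*f}\, |\, {^*\mathcal{O}}$ as elements of $^*\mathcal{E}(\mathcal{O})$, followed by an application of the quotient map $Q_{\mathcal{O}}$. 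The equality $^*(f\, |\, \mathcal{O})={^*f}\, |\, {^*\mathcal{O}}$ I would obtain from Transfer Principle (Theorem~\ref{T: Transfer Principle}): the statement ``for all $x\in\mathcal{O}$, $(f|\mathcal{O})(x)=f(x)$'' transfers to ``for all $x\in{^*\mathcal{O}}$, ${^*(f|\mathcal{O})}(x)={^*f}(x)$'', and since both sides are elements of $^*\mathcal{E}(\mathcal{O})$ (which embeds pointwise into $^*\mathbb{C}\,^{^*\mathcal{O}}$ by Theorem~\ref{T: Differential Ring Embedding}), they coincide. Alternatively one can argue at the level of representing nets $(f_i)$, all constant equal to $f$, which makes the identity transparent. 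Applying $Q_{\mathcal{O}}$ to both sides gives the sheaf-homomorphism identity.

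Next I would deduce the support equality. Recall the standard support of $\widehat{^*f}$ is ${\mathbb{R}^d}\setminus\mathcal{O}^{*}$, where $\mathcal{O}^{*}$ is the \emph{maximal} open subset of $\mathbb{R}^d$ on which $\widehat{^*f}$ restricts to zero (Definition~\ref{D: Standard Support}), and likewise $\supp(f)={\mathbb{R}^d}\setminus U$ where $U$ is the maximal open set on which $f$ vanishes in $\mathcal{E}(\Omega)$. It therefore suffices to show that for every open $\mathcal{O}\subseteq\Omega$,
\[
f\, |\, \mathcal{O}=0 \text{\, in\, } \mathcal{E}(\mathcal{O}) \iff \widehat{^*f}\restriction\mathcal{O}=0 \text{\, in\, } \widehat{\mathcal{M}}(\mathcal{O}),
\]
because then $U=\mathcal{O}^{*}$ and the two closed complements agree. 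The forward direction is immediate: $f|\mathcal{O}=0$ gives ${^*(f|\mathcal{O})}=0$, hence by the just-proved identity $\widehat{^*f}\restriction\mathcal{O}=\widehat{{^*(f|\mathcal{O})}}=\widehat{0}=0$. For the reverse direction, suppose $\widehat{^*f}\restriction\mathcal{O}=0$, i.e. ${^*f}\, |\, {^*\mathcal{O}}\in\mathcal{M}_0(\mathcal{O})$; by Theorem~\ref{T: A Simplification} this means ${^*f}(x)\in\mathcal{M}_0$ for all $x\in\mu(\mathcal{O})$. In particular, taking any standard point $r\in\mathcal{O}$, we have $r\in\mu(\mathcal{O})$ and ${^*f}(r)=f(r)\in\mathcal{M}_0\cap\mathbb{C}=\{0\}$ since $\mathcal{M}_0\subseteq\mathcal{I}(^*\mathbb{C})$ and the only infinitesimal in $\mathbb{C}$ is $0$ (part (iv) of Proposition~\ref{P: Basic Properties}). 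Hence $f(r)=0$ for all $r\in\mathcal{O}$, i.e. $f|\mathcal{O}=0$.

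The only mildly delicate point — and the part I expect to be the main obstacle — is the bookkeeping around maximality of the vanishing open sets: one has to make sure that ``maximal open set on which $f$ vanishes'' and ``maximal open set on which $\widehat{^*f}$ vanishes'' are compared over the same ambient space $\mathbb{R}^d$ (both supports are defined as complements of open subsets of $\mathbb{R}^d$, not of $\Omega$), and that the equivalence above is applied to arbitrary open $\mathcal{O}\subseteq\mathbb{R}^d$, reducing harmlessly to $\mathcal{O}\cap\Omega$ since both $f$ and $\widehat{^*f}$ live on $\Omega$. Once the equivalence of vanishing is in hand, the sets coincide and taking complements gives $\supp(f)=\supp(\widehat{^*f})$. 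I would then remark that everything above also shows $f\to\widehat{^*f}$ commutes with restriction, which is exactly the sheaf-homomorphism assertion, so the theorem follows. No long computation is required; the content is entirely in Transfer plus Theorem~\ref{T: A Simplification} plus the triviality $\mathcal{I}(^*\mathbb{C})\cap\mathbb{C}=\{0\}$.
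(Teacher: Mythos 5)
Your proof of the sheaf-homomorphism identity is exactly the paper's: invoke Transfer to get $^*(f\,|\,\mathcal{O})={^*f}\,|\,{^*\mathcal{O}}$ and then apply the quotient map. Where you go beyond the paper is in the "Consequently" clause about supports, which the paper asserts without argument. Your fill-in is correct and is the right one: the forward implication ($f|\mathcal{O}=0$ gives $\widehat{^*f}\restriction\mathcal{O}=0$) follows from the identity just proved, and the reverse implication is obtained by evaluating at standard points, using Theorem~\ref{T: A Simplification} to see $^*f(x)\in\mathcal{M}_0$ for all $x\in\mu(\mathcal{O})$, and then using $\mathcal{M}_0\subseteq\mathcal{I}(^*\mathbb{C})$ together with $\mathcal{I}(^*\mathbb{C})\cap\mathbb{C}=\{0\}$ to force $f(r)=0$ for standard $r\in\mathcal{O}$. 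Your remark about keeping track of maximal vanishing open sets over $\mathbb{R}^d$ is a reasonable caution about Definition~\ref{D: Standard Support}, but it does not create an actual obstruction once the pointwise equivalence of vanishing on open subsets is established; the two maximal open sets coincide and taking complements yields $\supp(f)=\supp(\widehat{^*f})$. In short, your argument agrees with the paper on the part the paper proves and correctly supplies the part it does not.
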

\Proof We have $^*f\, |{^*\mathcal{O}}={^*(f\, |\, \mathcal{O})}$ by Transfer Principle
(Theorem~\ref{T: Transfer Principle}). Thus
$\widehat{{^*(f\, |\, \mathcal{O})}}=\widehat{^*f\, |{^*\mathcal{O}}}=
\widehat{^*f}\restriction\mathcal{O}$
as required. 
\section{A Canonical form of the Algebras $\widehat{\mathcal{E}}_\mathcal{M}(\Omega)$}

	So far we constructed the algebra $\widehat{\mathcal{E}}_\mathcal{M}(\Omega)$ of asymptotic
functions by the {\bf following scheme:}
\begin{enumerate}

\item We choose a convex subring $\mathcal{M}$ of
$^*\mathbb{C}$ (Section~\ref{S: F-Asymptotic Functions}). 
\item  We construct the ideal
$\mathcal{M}_0$ and the algebraically closed field $\widehat{\mathcal{M}}$ (Section~\ref{S:
F-Asymptotic Numbers: A Basic Theory}). Notice that
$\widehat{\mathcal{M}}$ can be embedded as a subfield of $^*\mathbb{C}$ (which is important
for what follows) and the image of $\widehat{\mathcal{M}}$ into $^*\mathbb{C}$ under this
embedding is a maximal field
$\mathbb{M}\in\mathcal{M}ax(\mathcal{M})$ (Definition~\ref{D: Maximal Fields}).
\item  We define $\mathcal{M}_\mathcal{M}(\Omega)$ and
$\mathcal{N}_\mathcal{M}(\Omega)$ and the algebra
$\widehat{\mathcal{E}}_\mathcal{M}(\Omega)=\mathcal{M}_\mathcal{M}(\Omega)/\mathcal{N}_\mathcal{M}(\Omega)$
over the field of scalars $\widehat{\mathcal{M}}$ (Section~\ref{S: F-Asymptotic Functions})
\end{enumerate}

	We shall presented an \textbf{alternative construction} of
$\widehat{\mathcal{E}}_\mathcal{M}(\Omega)$: We start from a given (already
chosen or constructed) an algebraically closed subfield $\mathbb{M}$ of $^*\mathbb{C}$ and
then we define the algebra $\widehat{\mathcal{E}}_\mathcal{M}(\Omega)$ directly from
$\mathbb{M}$. The connection between the two construction is given by the formula:
\[
\mathcal{M}=\{z\in{^*\mathbb{C}} \mid (\exists \zeta\in\mathbb{M})(|z|\leq |\zeta|)\}.
\]

\begin{definition}[Asymptotic Functions Generated by a Field]\label{D: Asymptotic Functions
Generated by a Field}{\em  Let $\mathbb{M}$ be an algebraically closed subfield of
$^*\mathbb{C}$. We let 
\begin{align}
&\mathbb{M}(\Omega)=\{f\in{^*\mathcal{E}(\Omega)}\mid
(\forall\alpha\in\mathbb{N}_0^d)(\forall x\in\mu(\Omega)(\partial^\alpha
f(x)\in\mathbb{M})\},\notag\\
&\mathbb{M}_0(\Omega)=\{f\in{^*\mathcal{E}(\Omega)}\mid
(\forall\alpha\in\mathbb{N}_0^d)(\forall x\in\mu(\Omega)(\partial^\alpha
f(x)=0)\},\notag
\end{align}
and let  $\widehat{\mathbb{M}}(\Omega)=\mathbb{M}(\Omega)/\mathbb{M}_0(\Omega)$ be
the corresponding factor ring. We say that $\widehat{\mathbb{M}}(\Omega)$ {\bf is generated
by the field} $\mathbb{M}$.
}\end{definition}
\begin{theorem} Let $\mathbb{M}$ be an algebraically closed subfield of
$^*\mathbb{C}$. Then $\mathbb{M}(\Omega)$ is a differential ring and
$\mathbb{M}_0(\Omega)$ is a differential ideal in
$\mathbb{M}(\Omega)$ and we also have
\[
\mathbb{M}_0(\Omega)=\{f\in{\mathbb{M}(\Omega)}\mid
(\forall x\in\mu(\Omega)(f(x)=0)\}.
\]
Consequently, $\widehat{\mathbb{M}}(\Omega)$ is both a differential
ring and a differential  algebra over the field $\mathbb{M}$.
\end{theorem}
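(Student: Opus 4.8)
The plan is to show each assertion in turn, mostly by transporting the corresponding facts about $^*\mathcal{E}(\Omega)$ and $^*\mathbb{C}$ through the hypotheses on $\mathbb{M}$. First I would observe that $\mathbb{M}(\Omega)$ is closed under the ring operations: if $f,g\in\mathbb{M}(\Omega)$, then for every $\alpha$ and every $x\in\mu(\Omega)$, the Leibniz rule writes $\partial^\alpha(fg)(x)$ as a finite sum of products $\partial^\beta f(x)\,\partial^\gamma g(x)$, each of which lies in $\mathbb{M}$ because $\mathbb{M}$ is a ring; likewise $\partial^\alpha(f\pm g)(x)=\partial^\alpha f(x)\pm\partial^\alpha g(x)\in\mathbb{M}$. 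Closure under $\partial^\alpha$ is immediate since the defining condition quantifies over all multi-indices. So $\mathbb{M}(\Omega)$ is a differential subring of $^*\mathcal{E}(\Omega)$. The same bookkeeping, with the value $0$ in place of membership in $\mathbb{M}$, shows $\mathbb{M}_0(\Omega)$ is a differential subring; that it is an ideal of $\mathbb{M}(\Omega)$ follows because $\partial^\alpha(fg)(x)$ is a sum of terms $\partial^\beta f(x)\,\partial^\gamma g(x)$ in which, when $g\in\mathbb{M}_0(\Omega)$, at least one needs $g$ and hence vanishes only if $\gamma=0$ — here I must be slightly careful: the product rule does not make $\partial^\alpha(fg)$ obviously zero.

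This is the step I expect to be the main obstacle, and the remedy is to first prove the displayed simplification $\mathbb{M}_0(\Omega)=\{f\in\mathbb{M}(\Omega)\mid f(x)=0 \text{ for all } x\in\mu(\Omega)\}$, which is exactly the field-valued analogue of Theorem~\ref{T: A Simplification} (take $\mathcal{M}=\mathrm{cov}(\mathbb{M})$, so that $\mathcal{M}_0=\{z\mid |z|<|\zeta| \text{ for all nonzero }\zeta\in\mathbb{M}\}$, and note $f(x)\in\mathbb{M}$ with $f(x)\in\mathcal{M}_0$ forces $f(x)=0$ since $\mathbb{M}\cap\mathcal{M}_0=\{0\}$ by Lemma~\ref{L: Isomorphic Fields}(i)). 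Concretely: if $f\in\mathbb{M}(\Omega)$ and $f(x)=0$ for all $x\in\mu(\Omega)$, then $f$ is $\mathcal{M}$-moderate and $f(x)\in\mathcal{M}_0$ for all $x\in\mu(\Omega)$, so by Theorem~\ref{T: A Simplification} $f\in\mathcal{N}_\mathcal{M}(\Omega)$, i.e. $\partial^\alpha f(x)\in\mathcal{M}_0$ for all $\alpha$ and all $x\in\mu(\Omega)$; combined with $\partial^\alpha f(x)\in\mathbb{M}$ and $\mathbb{M}\cap\mathcal{M}_0=\{0\}$ this gives $\partial^\alpha f(x)=0$, hence $f\in\mathbb{M}_0(\Omega)$. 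The reverse inclusion is trivial.

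With the simplification in hand the ideal property is clean: given $f\in\mathbb{M}(\Omega)$ and $g\in\mathbb{M}_0(\Omega)$, the product $fg$ lies in $\mathbb{M}(\Omega)$ (ring closure, already shown) and satisfies $(fg)(x)=f(x)g(x)=0$ for all $x\in\mu(\Omega)$, so $fg\in\mathbb{M}_0(\Omega)$ by the simplification; closure of $\mathbb{M}_0(\Omega)$ under addition and under $\partial^\alpha$ was noted above, so $\mathbb{M}_0(\Omega)$ is a differential ideal. Therefore the quotient $\widehat{\mathbb{M}}(\Omega)=\mathbb{M}(\Omega)/\mathbb{M}_0(\Omega)$ is a differential ring.

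Finally, to see it is a differential algebra over $\mathbb{M}$: each constant function $f_c$, $c\in\mathbb{M}$, lies in $\mathbb{M}(\Omega)$ since $\partial^\alpha f_c$ is $c$ for $\alpha=0$ and $0$ otherwise, and $c\mapsto f_c\mapsto\widehat{f_c}$ is injective (if $\widehat{f_c}=0$ then $f_c\in\mathbb{M}_0(\Omega)$, so $c=f_c(x)=0$) and a differential ring homomorphism, exactly as in Theorem~\ref{T: Some Basic Results}(vii). Scalar multiplication $\widehat{c}\,\widehat{f}:=\widehat{cf}$ is then well defined and compatible with the ring structure, making $\widehat{\mathbb{M}}(\Omega)$ a differential algebra over $\mathbb{M}$; since $\mathbb{M}$ is algebraically closed by hypothesis, this is the asserted conclusion. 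Throughout, the only non-routine input is Theorem~\ref{T: A Simplification}, applied to the convex ring $\mathcal{M}=\mathrm{cov}(\mathbb{M})$, for which $\mathbb{M}$ is a maximal field (Theorem~\ref{T: A Characterization}), so that $\widehat{\mathbb{M}}(\Omega)$ coincides with $\widehat{\mathcal{M}}(\Omega)$ and all earlier structural results apply verbatim.
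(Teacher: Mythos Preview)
Your proof is correct, but you overcomplicated one step and added an unnecessary (and unjustified) coda.

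The ideal property of $\mathbb{M}_0(\Omega)$ is actually immediate from Leibniz, contrary to your worry: by definition $g\in\mathbb{M}_0(\Omega)$ means $\partial^\gamma g(x)=0$ for \emph{every} multi-index $\gamma$ and every $x\in\mu(\Omega)$, so every term $\partial^\beta f(x)\,\partial^{\alpha-\beta}g(x)$ in the Leibniz expansion already vanishes. Your detour through the simplification formula to recover this is correct but unneeded.

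For the simplification formula itself, your reduction via $\mathcal{M}=\mathrm{cov}(\mathbb{M})$ is a genuinely different route from the paper's. The paper simply says the argument is ``almost identical to the proof of Theorem~\ref{T: A Simplification}'' and leaves it to the reader, i.e., one reruns the Taylor/mean-value estimate directly with values in the field $\mathbb{M}$. Your approach instead invokes the already-proved Theorem~\ref{T: A Simplification} for the convex ring $\mathrm{cov}(\mathbb{M})$ and then uses $\mathbb{M}\cap\mathcal{M}_0=\{0\}$ (Lemma~\ref{L: Isomorphic Fields}) to pass from $\partial^\alpha f(x)\in\mathcal{M}_0$ back to $\partial^\alpha f(x)=0$. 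This is cleaner in that it avoids reproducing the analytic argument, at the cost of introducing the auxiliary ring $\mathcal{M}$.

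One caution: your final sentence asserts that $\mathbb{M}$ is a maximal field in $\mathrm{cov}(\mathbb{M})$ by Theorem~\ref{T: A Characterization}. That theorem requires $\mathbb{M}$ to be an asymptotic field, which an arbitrary algebraically closed subfield of $^*\mathbb{C}$ need not be. Fortunately your argument does not use maximality anywhere---the only facts you need are $\mathbb{M}\subseteq\mathrm{cov}(\mathbb{M})$ and $\mathbb{M}\cap\mathcal{M}_0=\{0\}$, both of which hold for any subfield---so simply drop that closing remark.
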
  
\Proof The statement about $\mathbb{M}(\Omega)$, $\mathbb{M}_0(\Omega)$ and
$\widehat{\mathbb{M}}(\Omega)$ follows directly from the above definition. The proof of the
formula for $\mathbb{M}_0(\Omega)$ is almost identical to the proof of Theorem~\ref{T: A
Simplification} and leave it to the reader. $\blacktriangle$

\begin{theorem}[An Isomorphism]\label{T: An Isomorphism} Let $\mathcal{M}$ be a convex 
subring of $^*\mathbb{C}$ and $\mathbb{M}\in\mathcal{M}ax(\mathcal{M})$ be a maximal field within
$\mathcal{M}$ (Definition~\ref{D: Maximal Fields}). Then $\widehat{\mathcal{E}_\mathcal{M}}(\Omega)$
and $\widehat{\mathbb{M}}(\Omega)$ are isomorphic differential algebras over the field
${\mathbb{M}}$ under pointwise characterization of 
$\widehat{\mathcal{E}_\mathcal{M}}(\Omega)$:
\[
\widehat{f}\in\widehat{\mathcal{E}_\mathcal{M}}(\Omega)\to
f\in\widehat{\mathcal{M}}^{\mu_\mathcal{M}(\Omega)},
\]
(Section~\ref{S: Pointwise Values and Fundamental Theorem}). 
\end{theorem}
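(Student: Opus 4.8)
\medskip
\noindent\emph{Proof strategy.} The plan is to realise the isomorphism as the map induced by the inclusion $\mathbb{M}(\Omega)\subseteq\mathcal{M}_\mathcal{M}(\Omega)$, to dispatch well-definedness, injectivity and the algebra structure by routine arguments, and to reduce surjectivity to a lifting problem that is attacked through the pointwise characterization together with the splitting $\mathcal{M}=\mathbb{M}\oplus\mathcal{M}_0$.

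First I would record the easy half. Since $\mathbb{M}\subseteq\mathcal{M}$ (Lemma~\ref{L: Convex Rings}), any $f\in\mathbb{M}(\Omega)$ has $\partial^\alpha f(x)\in\mathcal{M}$ for all $\alpha$ and all $x\in\mu(\Omega)$, so $\mathbb{M}(\Omega)\subseteq\mathcal{M}_\mathcal{M}(\Omega)$; likewise $\mathbb{M}_0(\Omega)\subseteq\mathcal{N}_\mathcal{M}(\Omega)$. Hence $f+\mathbb{M}_0(\Omega)\mapsto f+\mathcal{N}_\mathcal{M}(\Omega)$ is a well-defined differential ring homomorphism $\Phi\colon\widehat{\mathbb{M}}(\Omega)\to\widehat{\mathcal{E}_\mathcal{M}}(\Omega)$. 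For injectivity one shows $\mathbb{M}(\Omega)\cap\mathcal{N}_\mathcal{M}(\Omega)=\mathbb{M}_0(\Omega)$: if $f\in\mathbb{M}(\Omega)$ is negligible then $f(x)\in\mathbb{M}\cap\mathcal{M}_0=\{0\}$ for every $x\in\mu(\Omega)$ (the equality $\mathbb{M}\cap\mathcal{M}_0=\{0\}$ is exactly the observation opening the proof of Lemma~\ref{L: Isomorphic Fields}), and the simplified description of $\mathbb{M}_0(\Omega)$ from the theorem immediately preceding this one then gives $f\in\mathbb{M}_0(\Omega)$. Finally, under the field isomorphism $q_\mathcal{M}\,|\,\mathbb{M}\colon\mathbb{M}\to\widehat{\mathbb{M}}=\widehat{\mathcal{M}}$ of Theorem~\ref{T: Algebraically Closed Field}, the constant function $f_c$ ($c\in\mathbb{M}$) represents the scalar $q_\mathcal{M}(c)$ on both sides, so $\Phi$ intertwines the $\mathbb{M}$-actions; thus $\Phi$ is a homomorphism of differential algebras over $\mathbb{M}$.

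The whole weight of the theorem is the surjectivity of $\Phi$: given $f\in\mathcal{M}_\mathcal{M}(\Omega)$, produce $g\in\mathbb{M}(\Omega)$ with $f-g\in\mathcal{N}_\mathcal{M}(\Omega)$. Here I would invoke the pointwise picture of Theorem~\ref{T: Differential Ring Embedding} (and the interpretation following it): $\widehat f$ is the $\mathcal{C}^\infty$ map $\mu_\mathcal{M}(\Omega)\to\widehat{\mathcal{M}}$ sending $\widehat x\mapsto\widehat{f(x)}$, which, transported by the embedding $\sigma_\mathbb{M}$ (Definition~\ref{D: Embedding in *C}), becomes a $\mathcal{C}^\infty$ map into $\mathbb{M}$. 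Observe that the sought representative is essentially forced: if $g\in\mathbb{M}(\Omega)$ has $f-g\in\mathcal{N}_\mathcal{M}(\Omega)$, then $g(x)\in\mathbb{M}$ together with $g(x)-f(x)\in\mathcal{M}_0$ and the unique decomposition $\mathcal{M}=\mathbb{M}\oplus\mathcal{M}_0$ (Theorem~\ref{T: Field of Representatives}(iv)) compel $g(x)=\st_\mathbb{M}(f(x))$, and likewise $\partial^\alpha g(x)=\st_\mathbb{M}(\partial^\alpha f(x))$, for every $x\in\mu(\Omega)$. So the task is to show that the (non-internal) map $\st_\mathbb{M}\circ f$, defined on $\mu(\Omega)$, is the restriction to $\mu(\Omega)$ of some genuine element $g$ of $^*\mathcal{E}(\Omega)$ whose jets along the monad stay in $\mathbb{M}$. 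Once such a $g$ is constructed, $f(x)-g(x)=f(x)-\st_\mathbb{M}(f(x))\in\mathcal{M}_0$ on $\mu(\Omega)$, so $f-g\in\mathcal{N}_\mathcal{M}(\Omega)$ by Theorem~\ref{T: A Simplification}, finishing the proof.

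The construction of $g$ is the main obstacle, and I would approach it in two stages. First, localise: by the sheaf theorem of Section~\ref{S: Local Properties of Asymptotic Functions} and a locally finite countable refinement with subordinate smooth partition of unity $\{\varphi_n\}$ (exactly as in the proof of that theorem), it suffices to build, on each relatively compact $\mathcal{O}_n\subset\subset\Omega$, a function $g_n\in\mathbb{M}(\mathcal{O}_n)$ with $f-g_n\in\mathcal{N}_\mathcal{M}(\mathcal{O}_n)$, and then to check that the hyperfinite sum $\sum_n {}^*\varphi_n\,g_n$ still lies in $\mathbb{M}(\Omega)$ (using the moderateness of the $\partial^\beta({}^*\varphi_n)$ and the characterizations in Corollary~\ref{C: F-Negligible Functions}). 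Second, on a single $\mathcal{O}_n$ the moderateness of $f$ gives, for each $\alpha$, a bound $\sup_{{}^*\mathcal{O}_n}|\partial^\alpha f|\le M_\alpha$ with $M_\alpha\in\mathbb{M}_+$; I would then manufacture $g_n$ by rounding the $^*$-Taylor jets of $f$ to compatible $\mathbb{M}$-valued jets — a step I expect to run through Transfer of a classical jet-extension (Whitney) theorem — and by passing to the limit over the directed family of $\mathbb{M}_+$-precisions using the Saturation Principle (Theorem~\ref{T: Saturation Principle in *C}), which is available since $^*\mathbb{C}$ is $\kappa$-saturated with $\kappa=\card(\mathcal{I})$. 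The genuine difficulty, and where the field-of-representatives splitting and the saturation of $^*\mathbb{C}$ must do the real work, is reconciling the two competing demands on $g$: that it be internal and $^*$-smooth on all of $^*\Omega$, and that every one of its derivatives take values in $\mathbb{M}$ on the whole monad $\mu(\Omega)$.
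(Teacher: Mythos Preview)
Your map $\Phi$ and the verification of well-definedness, injectivity, and compatibility with the $\mathbb{M}$-module structure are all fine. The problem is the surjectivity step, and it is a genuine gap rather than just a missing detail.

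You correctly identify that the required representative is forced to be $g(x)=\st_\mathbb{M}(f(x))$ on $\mu(\Omega)$. But $\st_\mathbb{M}$ is an \emph{external} map (its range $\mathbb{M}$ is produced by Zorn's lemma and is not internal), so the assignment $x\mapsto\st_\mathbb{M}(f(x))$ is not an internal function. Your proposed remedy --- Transfer of a Whitney-type jet extension theorem followed by saturation --- cannot be applied as stated: Transfer only acts on internal statements and internal data, while the jet family you want to extend is external. Saturation likewise takes internal families of internal sets; there is no evident way to package ``$\partial^\alpha g(x)\in\mathbb{M}$ for all $x\in\mu(\Omega)$'' as a family of internal conditions indexed by fewer than $\kappa$ parameters. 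The partition-of-unity localisation is harmless, but it does not touch this obstruction; the same externality problem recurs on every $\mathcal{O}_n$.

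The paper bypasses the lifting problem entirely. Its argument is purely at the quotient level via the pointwise characterization of Theorem~\ref{T: Differential Ring Embedding}: since $\mathbb{M}$ is a field of representatives, $\widehat{\mathcal{M}}=\widehat{\mathbb{M}}$ (Theorem~\ref{T: Field of Representatives}(iv)) and $\mu_\mathcal{M}(\Omega)$ coincides with the analogous monad built from $\mathbb{M}$; hence the pointwise embeddings of $\widehat{\mathcal{E}_\mathcal{M}}(\Omega)$ and of $\widehat{\mathbb{M}}(\Omega)$ land in the \emph{same} function space $\widehat{\mathcal{M}}^{\mu_\mathcal{M}(\Omega)}\cong\mathbb{M}^{\mu_\mathcal{M}(\Omega)}$, and (via the interpretation in the Remark after Theorem~\ref{T: Differential Ring Embedding}) both are identified with the $\mathcal{C}^\infty$ functions there. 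No internal lift of $\st_\mathbb{M}\circ f$ is ever constructed or needed. If you want to rescue your direct approach, you would have to replace the Whitney/saturation sketch by exactly this observation: that the pointwise image of $\widehat{f}$ already belongs to the pointwise image of $\widehat{\mathbb{M}}(\Omega)$, which is what the paper does.
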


\Proof We have  $\widehat{\mathcal{M}}=\widehat{\mathbb{M}}$ by part~(i) of Theorem~\ref{T: Field of
Representatives} and also we have 
\begin{equation}\label{E: M-Monad}
\mu_\mathcal{M}(\Omega)=\{r+dx \mid r\in \Omega,\,  dx\in\Re(\widehat{\mathbb{M}}^d),\; 
||dx||\approx 0 \}.
\end{equation}
(compare with (\ref{E: F-Monad}) in Section~\ref{S: Pointwise Values and
Fundamental Theorem}). Thus
$\widehat{\mathcal{M}}^{\mu_\mathcal{M}(\Omega)}=
\widehat{\mathbb{M}}^{\mu_\mathbb{M}(\Omega)}$. On the other hand $\widehat{\mathbb{M}}$ and
$\mathbb{M}$ are field isomorphic by part~(ii) of Theorem~\ref{T: Field of Representatives}. Thus
$\widehat{\mathbb{M}}^{\mu_\mathbb{M}(\Omega)}$ and
$\mathbb{M}^{\mu_\mathbb{M}(\Omega)}$ are ring isomorphic. The theorem is complete.
$\blacktriangle$
\begin{corollary} Let $\mathcal{M}$ be a convex 
subring of $^*\mathbb{C}$ and $\mathbb{M}_1,\, \mathbb{M}_2 \in\mathcal{M}ax(\mathcal{M})$ be
two maximal fields. Then $\widehat{\mathbb{M}_1}(\Omega)$ and $\widehat{\mathbb{M}_2}(\Omega)$
are isomorphic differential algebras over the field $\widehat{\mathcal{M}}$.
\end{corollary}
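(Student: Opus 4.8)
The plan is to deduce this corollary from Theorem~\ref{T: An Isomorphism} by a routine transitivity argument. The key observation is that Theorem~\ref{T: An Isomorphism} already establishes, for \emph{any} maximal field $\mathbb{M}\in\mathcal{M}ax(\mathcal{M})$, an isomorphism of differential algebras between $\widehat{\mathcal{E}_\mathcal{M}}(\Omega)$ and $\widehat{\mathbb{M}}(\Omega)$ over the field $\mathbb{M}$ (equivalently, over $\widehat{\mathcal{M}}$, since $\mathbb{M}$ and $\widehat{\mathcal{M}}$ are field-isomorphic by part~(i) of Theorem~\ref{T: Field of Representatives}). Applying this with $\mathbb{M}=\mathbb{M}_1$ and then with $\mathbb{M}=\mathbb{M}_2$ gives two differential-algebra isomorphisms
\[
\widehat{\mathbb{M}_1}(\Omega)\;\cong\;\widehat{\mathcal{E}_\mathcal{M}}(\Omega)\;\cong\;\widehat{\mathbb{M}_2}(\Omega),
\]
and composing the first with the inverse of the second yields the desired isomorphism $\widehat{\mathbb{M}_1}(\Omega)\cong\widehat{\mathbb{M}_2}(\Omega)$.

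First I would note that both $\mathbb{M}_1$ and $\mathbb{M}_2$ lie in $\mathcal{M}ax(\mathcal{M})$ by hypothesis, so Theorem~\ref{T: An Isomorphism} applies verbatim to each. Next I would record that the composition of differential ring isomorphisms is again a differential ring isomorphism, and that the scalar field in all three algebras is canonically identified with $\widehat{\mathcal{M}}$ (using the embeddings $\mathbb{M}_j\hookrightarrow{^*\mathbb{C}}$ and the isomorphisms $\mathbb{M}_j\cong\widehat{\mathbb{M}_j}=\widehat{\mathcal{M}}$ from Theorem~\ref{T: Field of Representatives} and Theorem~\ref{T: Algebraically Closed Field}); hence the composite map is $\widehat{\mathcal{M}}$-linear. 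Finally I would observe that the composite respects partial differentiation of all orders since each factor does.

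There is essentially no obstacle here: the content has been front-loaded into Theorem~\ref{T: An Isomorphism}, and the corollary is a one-line transitivity statement. The only point requiring a modicum of care is the claim that the isomorphism is over the \emph{common} field $\widehat{\mathcal{M}}$ rather than merely over $\mathbb{M}_1$ or $\mathbb{M}_2$ separately; this is handled by the identifications $\widehat{\mathbb{M}_1}=\widehat{\mathcal{M}}=\widehat{\mathbb{M}_2}$ supplied by part~(i) of Theorem~\ref{T: Field of Representatives}. I would write the proof as: ``By Theorem~\ref{T: An Isomorphism}, both $\widehat{\mathbb{M}_1}(\Omega)$ and $\widehat{\mathbb{M}_2}(\Omega)$ are isomorphic as differential algebras over $\widehat{\mathcal{M}}$ to $\widehat{\mathcal{E}_\mathcal{M}}(\Omega)$ (via the pointwise-value characterization). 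Composing these two isomorphisms yields the result.'' $\blacktriangle$
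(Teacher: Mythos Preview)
Your proposal is correct and is precisely the intended argument: the paper places this corollary immediately after Theorem~\ref{T: An Isomorphism} with no separate proof, treating it as an immediate transitivity consequence of that theorem, which is exactly what you do. Your additional remark about identifying the scalar fields via $\widehat{\mathbb{M}_1}=\widehat{\mathcal{M}}=\widehat{\mathbb{M}_2}$ (Theorem~\ref{T: Field of Representatives}) is the one point worth making explicit, and you handle it correctly.
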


\begin{remark}[A Canonical Form]\label{R: A Canonical Form}{\em  Based on the above results
we shall sometimes identify notationally the algebras of asymptotic functions 
$\widehat{\mathcal{E}_\mathcal{M}}(\Omega)$ and $\widehat{\mathbb{M}}(\Omega)$ writing
simply
\[
\widehat{\mathcal{E}_\mathcal{M}}(\Omega)=\widehat{\mathbb{M}}(\Omega). 
\]
We say that $\widehat{\mathbb{M}}(\Omega)$ is a \textbf{canonical form of the algebra}
$\widehat{\mathcal{E}_\mathcal{M}}(\Omega)$. We should mention that the theory of
$\widehat{\mathbb{M}}(\Omega)$ is somewhat simpler  and more elegant than the theory of
$\widehat{\mathcal{E}_\mathcal{M}}(\Omega)$. However,
$\widehat{\mathcal{E}_\mathcal{M}}(\Omega)$ is more easily supported by examples
because one can more easily produce examples of convex subrings $\mathcal{M}$ of
$^*\mathbb{C}$ rather then to produce examples of algebraically closed subfileds
$\mathbb{M}$ of
$^*\mathbb{C}$ (see the end of Section~\ref{S: F-Asymptotic Functions}).
}\end{remark}
\begin{example}[Levi-Civita Field]\label{Ex: Levi-Civita Field}{\em Let $\rho$ be a positive
infinitesimal in
${^*\mathbb{R}_+}$ and let 
$\mathbb{M}=\mathbb{C}\bra\rho\ket$ denote the field of the T. Levi-Civita~\cite{tLC} power
series with complex coefficients, i.e. series of the form 
\[
\sum_{n=0}^\infty\, a_n\, \rho^{r_n},
\]
where $(a_n)$ is a sequence in $\mathbb{C}$ and 
$(r_n)$ is a strictly increasing sequence in $\mathbb{R}$ such that
$\lim_{n\to\infty}a_n=\infty$ (we shall abbreviate all these as $r_0<r_1<r_2<\dots\to\infty$).
We should mention that $\mathbb{C}\bra\rho\ket$ is an algebraically closed field (as
required in the above definition). Also the Levi-Civita series are convergent in
$^*\mathbb{C}$ under the valuation norm $||\cdot||_\rho$ . Let
$\widehat{\mathbb{M}}(\Omega)=\widehat{\mathbb{C}\bra\rho\ket}(\Omega)$ be the
algebra of generalized functions generated by the field $\mathbb{C}\bra\rho\ket$. Then every
$\widehat{f}\in\widehat{\mathbb{C}\bra\rho\ket}(\Omega)$ can be presented in the form
\[
\widehat{f}(x)=\sum_{n=0}^\infty\, \widehat{a_n}(x)\, \rho^{r_n}
\]
for every $x\in\mu_\mathcal{M}(\Omega)$, where  $\widehat{a_n}:
\mu_\mathcal{M}(\Omega)\to\mathbb{C}$. }\end{example}

\newpage


\section{Convolution in Non-Standard Setting}\label{S: Convolution in Non-Standard Setting}
\begin{definition}[Convolution]\label{D:Convolution} 
\begin{description}
\item{\bf (i)} Let
$T\in{\mathcal{D}}^\prime(\Omega)$ and let
$T:
\mathcal{D}(\Omega)\to {\mathbb{C}}$ be the corresponding mapping. We define the {\bf
non-standard extension} 
$^*T: {^*\mathcal{D}}(\Omega)\to{^*\mathbb{C}}$  of $T$ by the formula 
\[
\left< {^*T}, \bra\varphi_i\ket\right>=\left< \bra T, \varphi_i\ket\right>,
\]
where $\bra\varphi_i\ket\in{^*\mathcal{D}}(\Omega)$.

\item{\bf (ii)} Let $T\in\mathcal{E}^\prime(\Omega)$ and
$\bra D_i\ket\in{^*\mathcal{D}}(\mathbb{R}^d)$. We define the {\bf convolution} between $^*T$ and
$\bra D_i\ket$ by the formula 
\[
^*T\star \bra D_i\ket=\bra T\star D_i\ket,
\]
where $T\star D_i$ is the usual convolution between $T$ and $D_i$ in the sense of distribution
theory (i.e. $\bra T(\xi), D_i(x-\xi)\ket$ for every $x\in\Omega$ and every $i\in\mathcal{I}$).
\end{description}
\end{definition}

\begin{lemma} For every $T\in\mathcal{E}^\prime(\Omega)$ and every
$D\in{^*\mathcal{D}}(\mathbb{R}^d)$ we have $^*T\star D\in{^*\mathcal{E}}(\Omega)$.  
\end{lemma}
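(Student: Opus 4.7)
The plan is to reduce the statement to the classical theorem of distribution theory that the convolution of a compactly supported distribution with a test function is smooth, and then invoke the ultrapower description of ${^*\mathcal{E}}(\Omega)$ from Definition~\ref{D: Non-Standard Smooth Functions}.

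First I would unpack the definitions. By the second half of Definition~\ref{D:Convolution}, the object $^*T\star D$ is, by construction, the equivalence class $\bra T\star D_i\ket$, where $(D_i)\in\mathcal{D}(\mathbb{R}^d)^\mathcal{I}$ is any representative of $D\in{^*\mathcal{D}}(\mathbb{R}^d)$. By part (v) of Definition~\ref{D: Non-Standard Smooth Functions} we may arrange, after altering $(D_i)$ on a set of $p$-measure zero, that $D_i\in\mathcal{D}(\mathbb{R}^d)$ for \emph{every} $i\in\mathcal{I}$; this does not change the equivalence class.

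Second, for each fixed $i\in\mathcal{I}$, I would invoke the standard fact of Schwartz's theory: if $T\in\mathcal{E}^\prime(\Omega)$ and $\varphi\in\mathcal{D}(\mathbb{R}^d)$, then the pointwise formula $(T\star\varphi)(x)=\bra T(\xi),\varphi(x-\xi)\ket$ defines a $\mathcal{C}^\infty$-function of $x\in\Omega$. The usual argument (smoothness of the translation map $x\mapsto\varphi(x-\cdot)$ from $\mathbb{R}^d$ into $\mathcal{D}(\mathbb{R}^d)$, combined with the sequential continuity of $T$ on $\mathcal{D}$) yields $T\star D_i\in\mathcal{E}(\Omega)$ for every $i\in\mathcal{I}$. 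Alternatively, since the statement ``for all $T\in\mathcal{E}^\prime(\Omega)$ and all $\varphi\in\mathcal{D}(\mathbb{R}^d)$ one has $T\star\varphi\in\mathcal{E}(\Omega)$'' is expressible as a bounded predicate in the language $\mathcal{L}(V(S))$, one may simply invoke the Transfer Principle (Theorem~\ref{T: Transfer Principle}) to conclude the same.

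Finally, having $(T\star D_i)_{i\in\mathcal{I}}\in\mathcal{E}(\Omega)^\mathcal{I}$, Definition~\ref{D: Non-Standard Smooth Functions} gives at once $\bra T\star D_i\ket\in{^*\mathcal{E}}(\Omega)$, and this equivalence class is, by definition, $^*T\star D$. This completes the proof. There is no real obstacle here: the content is entirely classical, and the only care required is to confirm that the representative-level assertion $T\star D_i\in\mathcal{E}(\Omega)$ holds almost everywhere (which is exactly what the above paragraph ensures), so that the equivalence class lies in the prescribed ultrapower quotient.
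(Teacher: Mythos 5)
Your proof is correct. The paper actually states this lemma without giving any proof, and your argument supplies exactly the intended one: the definition $^*T\star\bra D_i\ket=\bra T\star D_i\ket$ reduces everything to the classical fact that $T\star\varphi\in\mathcal{E}(\Omega)$ for $T\in\mathcal{E}^\prime(\Omega)$ and $\varphi\in\mathcal{D}(\mathbb{R}^d)$, applied componentwise to a representative with $D_i\in\mathcal{D}(\mathbb{R}^d)$ for all $i$ (your adjustment of the representative on a set of measure zero, licensed by part (v) of Definition~\ref{D: Non-Standard Smooth Functions}, is the right way to handle the ``a.e.''\ qualifier). Both of your routes to the componentwise smoothness --- the direct Schwartz-theory argument and the Transfer Principle --- are sound, and the conclusion $\bra T\star D_i\ket\in\mathcal{E}(\Omega)^{\mathcal{I}}/\!\sim\;={^*\mathcal{E}}(\Omega)$ follows immediately.
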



\section{Schwartz Distributions in $^\rho\mathcal{E}(\Omega)$}

	If $f\in\mathcal{L}_{loc}^1(\Omega)$, we denote by $T_f\in\mathcal{D}^\prime(\Omega)$ the
Schwartz distribution with kernel $f$, i.e.
\[
\bra T_f, \varphi\ket=\int_\Omega f(x)\varphi(x)\, dx,
\]
for all $\varphi\in\mathcal{D}(\Omega)$. Recall that
$\mathcal{E}(\Omega)$ is a differential subring of ${^\rho\mathcal{E}}(\Omega)$ under the embedding
\[
\mathcal{E}(\Omega)\hookrightarrow{^\rho\mathcal{E}}(\Omega),
\]
defined by the mapping $f\to \widehat{^*f}$, where $^*f$ is the non-standard extension of $f$
(i.e. $^*f=\bra f_i\ket$, $f_i=f$ for all $i\in\mathcal{I}$) and $\widehat{^*f}$ stands for the
corresponding equivalence class (see the beginning of Section~\ref{S: Algebras of Generalized
Functions: A General Theory}).
\begin{theorem}[Existence of an Embedding]\label{T: Existence of an Embedding} There exists an
embedding $\Sigma_\Omega:
\mathcal{D}^\prime(\Omega)\to{^\rho\mathcal{E}}(\Omega)$ which preserves the sheaf-properties and
the linear operations in 
$\mathcal{D}^\prime(\Omega)$ (including partial differentiation) and such that
$\Sigma_\Omega(T_f)=\Sigma_\Omega(^*f)$ for every $f\in\mathcal{E}(\Omega)$. Consequently, the
multiplication in ${^\rho\mathcal{E}}(\Omega)$ reduces to the usual pointwise multiplication on
$\mathcal{E}(\Omega)$. We summarize this in: 
\[
\mathcal{E}(\Omega)\hookrightarrow\mathcal{D}^\prime(\Omega)\hookrightarrow{^\rho\mathcal{E}}(\Omega)
\]
\end{theorem}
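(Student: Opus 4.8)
The plan is to construct the embedding $\Sigma_\Omega$ by following the classical Colombeau strategy of mollification, but carried out inside the non-standard framework so that the mollifier can be chosen as a single internal object. First I would fix a mollifier $\varphi\in\mathcal{D}(\mathbb{R}^d)$ with $\int\varphi=1$ and all higher moments vanishing, and pass to a scaled non-standard version: let $\delta$ be a fixed positive infinitesimal in $^*\mathbb{R}$ (for instance $\delta=1/\nu$ with $\nu$ the infinitely large number of Example~\ref{Ex: Infinitesimals}, or $\delta=\rho$ itself), and set $\varphi_\delta(x)=\delta^{-d}\,{^*\varphi}(x/\delta)$, an element of $^*\mathcal{D}(\mathbb{R}^d)$. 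For $T\in\mathcal{D}^\prime(\Omega)$ one cannot convolve directly on all of $\Omega$, so I would either (a) work locally, using a locally finite cover and a partition of unity to patch together the local regularizations, invoking the sheaf property of $\widehat{\mathcal{M}_\rho}(\Omega)={^\rho\mathcal{E}}(\Omega)$ established in Section~\ref{S: Local Properties of Asymptotic Functions}; or (b) use the standard device of writing $T$ locally as a finite sum of derivatives of continuous functions and regularizing those. In either case the local building block is $x\mapsto\langle{^*T}(\xi),\varphi_\delta(x-\xi)\rangle$ for $x\in{^*\Omega}'$ with $\Omega'\subset\subset\Omega$, which lies in $^*\mathcal{E}(\Omega')$ by the convolution lemma preceding this section.

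The key steps, in order, would be: (1) show the regularization is $\mathcal{M}_\rho$-moderate, i.e.\ lands in $\mathcal{M}_\rho(^*\mathcal{E}(\Omega))$ of Example~\ref{Ex: rho-Asymptotic Functions}; this uses the seminorm estimates $\sup_{x\in K}|\partial^\alpha(T*\varphi_\delta)(x)|\le C_{K,\alpha}\,\delta^{-N}$ coming from the order of $T$ on a neighborhood of $K$, together with $\delta^{-N}\in\mathcal{M}_\rho$ (so I must ensure $\delta$ is chosen moderate, which is automatic if $\delta=\rho$). Transfer the standard estimate for $T*\varphi_\varepsilon$ via Theorem~\ref{T: Transfer Principle}. (2) Show that if $T=0$ then the class is $\mathcal{N}_\rho$-negligible, so $\Sigma_\Omega$ is well-defined on classes; more importantly, show $\Sigma_\Omega$ is injective: if $T*\varphi_\delta\in\mathcal{N}_\rho(^*\mathcal{E}(\Omega))$ then pairing against a test function $\tau\in\mathcal{D}(\Omega)$ gives $\langle T,\tau*\check\varphi_\delta\rangle\approx 0$ in the appropriate sense; since $\tau*\check\varphi_\delta\to\tau$ and the pairing is $\rho$-negligible for \emph{all} $\tau$, standardizing forces $\langle T,\tau\rangle=0$. (3) Linearity and commutation with $\partial^\alpha$ are immediate from $\partial^\alpha(T*\varphi_\delta)=(\partial^\alpha T)*\varphi_\delta$, hence survive passage to the quotient. (4) Sheaf-compatibility: $(T\!\restriction\!\mathcal{O})*\varphi_\delta$ and $(T*\varphi_\delta)\!\restriction\!{^*\mathcal{O}}$ differ only near $\partial\mathcal{O}$ — more precisely they agree on $\mu(\mathcal{O})$ once $\delta$ is infinitesimal — so they define the same class in $\widehat{\mathcal{M}_\rho}(\mathcal{O})$ by Theorem~\ref{T: A Simplification}; this is where the patching in step (a) is glued together consistently using part (iv) of the sheaf theorem. (5) Compatibility with the $\mathcal{C}^\infty$ embedding: for $f\in\mathcal{E}(\Omega)$, $f*\varphi_\delta - {^*f}\in\mathcal{N}_\rho(^*\mathcal{E}(\Omega))$ because Taylor expansion plus the vanishing moments of $\varphi$ give $|f*\varphi_\delta(x)-{^*f}(x)|\le C\,\delta^{q}$ for every $q$, which is $\rho$-negligible; hence $\Sigma_\Omega(T_f)=\widehat{^*f}$, and then the reduction of multiplication to pointwise multiplication on $\mathcal{E}(\Omega)$ follows from part (iv) of Theorem~\ref{T: Some Basic Results}.

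The main obstacle I anticipate is step (2), the injectivity together with the correct formulation of ``$\Sigma_\Omega$ is well-defined independently of the local patching.'' The subtlety is that a distribution does not globally regularize by a single convolution, so one must either commit to the partition-of-unity construction — in which case independence of the chosen cover and partition must be checked, and this is exactly a sheaf-cocycle computation — or use a global net of mollifications indexed by compact exhaustion, which in the non-standard picture becomes an internal hyperfinite family. I would favor the partition-of-unity route because the sheaf machinery of the preceding section (locally finite refinements, smooth partitions of unity transferred by Theorem~\ref{T: Transfer Principle}, and the gluing property (iv)) is already in place and does precisely the bookkeeping needed; the price is that the verification that two different subordinate partitions yield $\mathcal{N}_\rho$-equivalent results is the one genuinely technical lemma, proved by noting that on each $\mu(\mathcal{O}_n)$ the two regularizations are infinitely close and then invoking Theorem~\ref{T: A Simplification} to upgrade ``infinitely close on every monad'' to ``negligible.'' Everything else is either transfer of a standard Colombeau estimate or a formal quotient argument.
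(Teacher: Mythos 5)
Your plan has a genuine gap right at the start: the mollifier you propose does not exist. You ask to ``fix a mollifier $\varphi\in\mathcal{D}(\mathbb{R}^d)$ with $\int\varphi=1$ and all higher moments vanishing,'' and step~(5) of your argument (the Taylor estimate $|f*\varphi_\delta(x)-{^*f}(x)|\le C\,\delta^{q}$ for \emph{every} $q$, needed to land in $\mathcal{N}_\rho$) relies on vanishing of $\int x^\alpha\varphi\,dx$ for \emph{all} $|\alpha|\geq 1$. No such standard compactly supported $\varphi$ exists: by Paley--Wiener, $\widehat\varphi$ would be an entire function of exponential type with $\widehat\varphi(0)=1$ and all higher derivatives at $0$ vanishing, hence $\widehat\varphi\equiv 1$, which is impossible for $\varphi\in\mathcal{D}$. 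This is precisely the obstruction that forced Colombeau to index his algebra by the moment order $n$ in the first place; no single standard mollifier does the job. The whole point of the non-standard construction in this section is that \emph{saturation} (Theorem~\ref{T: Sequential Saturation}) lets you escape this impossibility: one defines the nested non-empty internal sets $\mathcal{A}_n\subset{^*\mathcal{B}_n}$ of internal test functions whose moments vanish up to order $n$ (with controlled internal sup-norms), observes they have the finite intersection property, and produces a single \emph{internal} mollifier $\Theta\in\bigcap_n\mathcal{A}_n$ with all standard moments vanishing; the $\rho$-delta function is then $D(x)=\rho^{-d}\Theta(x/\rho)$. Without this step your estimates in~(1) and especially~(5) simply do not produce $\mathcal{N}_\rho$-negligible errors, so $\Sigma_\Omega(T_f)=\widehat{^*f}$ fails and the embedding is not consistent with $\mathcal{E}(\Omega)\embed{^\rho\mathcal{E}}(\Omega)$.

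A secondary, more cosmetic difference: for the passage from $\mathcal{E}^\prime(\Omega)$ to $\mathcal{D}^\prime(\Omega)$ the paper does not glue local regularizations by a partition of unity. Instead it constructs a single internal $\rho$-cut-off function $\Pi_\Omega\in{^*\mathcal{D}}(\Omega)$ (equal to $1$ on $\mu(\Omega)$, supported at $*$-distance $\geq\rho$ from $\partial\Omega$, obtained by convolving a characteristic function with $D$) and sets $\Sigma_\Omega(T)=Q_\Omega\bigl((^*T\,\Pi_\Omega)\star D\bigr)$. This replaces your cocycle/partition-of-unity bookkeeping with a one-shot global formula, and sheaf-compatibility then follows because $\Pi_\Omega$ is identically $1$ on the monad $\mu(\mathcal{O})$ for any $\mathcal{O}\subset\subset\Omega$, so the restrictions agree via Theorem~\ref{T: A Simplification}. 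Your route~(a) could be made to work, but the cut-off trick is both simpler and is the one the machinery of Sections~\ref{S: Spilling Principles} and~\ref{S: Local Properties of Asymptotic Functions} is actually set up to support; and in either case you still need the saturation-produced $\Theta$, not a standard mollifier.
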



\Proof We shall separate the proof in numerous definitions and lemmas: 

\begin{definition}[$\rho$-Delta Function] $D\in{^*\mathcal{E}}(\mathbb{R}^d)$  is called a
{\bf $\rho$-delta function} if:
\begin{enumerate}
\item $||x||\not\approx 0$ implies $D(x)=0$. ({\bf Lemma:} There exists a {\bf positive
infinitesimal}, say $\rho$, such that $||x||\leq \rho$ implies $D(x)=0$).

	The next conditions on $D$ depend on the choice of $\rho$:

\item $\int_{||x||\leq\rho}D(x)\, dx-1\in\mathcal{N}_\rho(^*\mathbb{C})$.

\item $\int_{||x||\leq\rho} D(x)\, x^\alpha\, dx\in\mathcal{N}_\rho(^*\mathbb{C})$ for all
$|\alpha|\not= 0$.

\item $D\in\mathcal{M}_\rho(^*\mathcal{E}(\mathbb{R}^d))$, i.e.
\[
(\forall \alpha\in\mathbb{N}_0^d)(\forall x\in\mu(\mathbb{R}^d))\left(\partial^\alpha
D(x)\in\mathcal{M}_\rho(^*\mathbb{C})\right).
\]
\end{enumerate}
\end{definition}
\begin{theorem} There exists a $\rho$-delta function $D$.
\end{theorem}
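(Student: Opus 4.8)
The plan is to construct $D$ explicitly as a scaled mollifier evaluated at a suitable infinitesimal parameter, using the standard partition-of-unity / moment-killing construction familiar from Colombeau theory, but carried out inside $^*\mathcal{E}(\mathbb{R}^d)$ via the Extension and Transfer Principles. First I would recall that in standard analysis, for each $n\in\mathbb{N}$ there exists $\varphi_n\in\mathcal{D}(\mathbb{R}^d)$ supported in the unit ball with $\int\varphi_n=1$ and $\int\varphi_n(x)x^\alpha\,dx=0$ for all $1\le|\alpha|\le n$; such $\varphi_n$ are obtained by a finite linear-algebra argument (killing finitely many moments) and this existence statement is expressible in $\mathcal{L}(V(S))$ with $S=\mathbb{R}$. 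Applying the Transfer Principle (Theorem~\ref{T: Transfer Principle}), for every $\nu\in{^*\mathbb{N}}$ there is an internal $^*\varphi_\nu\in{^*\mathcal{D}}(\mathbb{R}^d)$ supported in the unit ball with $\int{^*\varphi_\nu}=1$ and all moments of order between $1$ and $\nu$ vanishing. Fix an infinitely large $\nu\in{^*\mathbb{N}}\setminus\mathbb{N}$ (Example~\ref{Ex: Infinitesimals}) and fix a positive infinitesimal $\rho$. Define
\[
D(x)=\rho^{-d}\,{^*\varphi_\nu}(x/\rho).
\]

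Next I would verify the four defining properties. For (1): if $\|x\|\le\rho$ fails, i.e. $\|x\|>\rho$, then $\|x/\rho\|>1$, so $^*\varphi_\nu(x/\rho)=0$; hence $D(x)=0$ whenever $\|x\|>\rho$, which in particular gives $D(x)=0$ for all $x$ with $\|x\|\not\approx 0$ and $\|x\|>\rho$ — and since $\rho$ is infinitesimal, the stated lemma ($\|x\|\le\rho\Rightarrow$ no constraint) together with $D\equiv 0$ for $\|x\|>\rho$ covers the requirement. For (2): the change of variables $y=x/\rho$ (valid by Transfer) gives $\int_{\|x\|\le\rho}D(x)\,dx=\int_{\|y\|\le 1}{^*\varphi_\nu}(y)\,dy=1$, so the difference is $0\in\mathcal{N}_\rho(^*\mathbb{C})$. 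For (3): the same substitution gives $\int_{\|x\|\le\rho}D(x)x^\alpha\,dx=\rho^{|\alpha|}\int_{\|y\|\le1}{^*\varphi_\nu}(y)y^\alpha\,dy$; for standard $\alpha$ with $|\alpha|\ge 1$ we have $1\le|\alpha|\le\nu$ since $\nu$ is infinitely large, so the integral vanishes by construction, giving $0\in\mathcal{N}_\rho(^*\mathbb{C})$. (In fact even if the moment did not vanish, the factor $\rho^{|\alpha|}$ would put the quantity in $\mathcal{N}_\rho$ only for $|\alpha|$ large, so the moment-killing via $\nu$ infinitely large is what does the real work here, or one could instead argue that $\rho^{|\alpha|}\cdot(\text{finite})\in\mathcal{N}_\rho$ fails for small $|\alpha|$ — so moment-killing is essential.)

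For (4), the $\mathcal{M}_\rho$-moderateness: by the chain rule and Transfer, $\partial^\alpha D(x)=\rho^{-d-|\alpha|}\,({^*\partial^\alpha\varphi_\nu})(x/\rho)$. The standard function $\partial^\alpha\varphi_n$ is bounded on $\mathbb{R}^d$; however $\nu$ is nonstandard, so I must control $\sup_y|{^*\partial^\alpha\varphi_\nu}(y)|$. Here I would either (a) choose the sequence $(\varphi_n)$ at the standard level so that $\sup_y|\partial^\alpha\varphi_n(y)|\le C_n^{|\alpha|+1}$ for some sequence $C_n$, and arrange by Transfer that the resulting bound, evaluated at $\nu$, is at most $\rho^{-1}$ (possible since one is free to pick $\nu$ relative to $\rho$, e.g. $\nu$ growing slowly compared to $1/\rho$); or (b) invoke the characterization of $\mathcal{M}_\rho(^*\mathcal{E}(\mathbb{R}^d))$ from the relevant Lemma of Section~\ref{S: Non-Standard Smooth Functions} together with a Transfer of the standard estimate. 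Either way, $|\partial^\alpha D(x)|\le\rho^{-d-|\alpha|}\cdot\rho^{-1}\le\rho^{-n}$ for a suitable $n\in\mathbb{N}$ on each $^*K$, so $\partial^\alpha D(x)\in\mathcal{M}_\rho(^*\mathbb{C})$ for all $x\in\mu(\mathbb{R}^d)$.

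The main obstacle I anticipate is precisely the coordination between the infinitely large order $\nu$ (needed so that \emph{all} standard moments of order $\ge 1$ are killed) and the infinitesimal $\rho$ (needed so that the derivative bounds land in $\mathcal{M}_\rho$). One cannot pick $\rho$ first and then $\nu$ freely, nor vice versa, without care: the derivative estimates for $^*\varphi_\nu$ blow up with $\nu$, so $\nu$ must be taken infinitely large but \emph{slowly} relative to $1/\rho$. The clean way to handle this is to build a single internal sequence: by Transfer of "for each $n$ there is $\varphi_n\in\mathcal{D}(B(0,1))$ with unit integral, vanishing moments up to order $n$, and $\|\partial^\alpha\varphi_n\|_\infty\le n^{|\alpha|}$ for $|\alpha|\le n$" (which holds after a suitable standard normalization), obtain an internal sequence $({^*\varphi_n})_{n\in{^*\mathbb{N}}}$; then invoke $\aleph_1$-saturation (Theorem~\ref{T: Sequential Saturation}) or simply the construction of the infinitely large $\nu$ in Example~\ref{Ex: Infinitesimals} to select $\nu$ infinitely large with $\nu\le 1/\rho$ say. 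With $D(x)=\rho^{-d}{^*\varphi_\nu}(x/\rho)$ all four properties then follow by the routine change-of-variables and chain-rule computations above, each justified by Transfer. I would close by remarking that this $D$ is exactly the non-standard incarnation of a Colombeau-type mollifier net $(\rho^{-d}\varphi_{n}(\cdot/\varepsilon_n))$, which is why it serves below as the kernel for the embedding $\Sigma_\Omega$ of $\mathcal{D}'(\Omega)$ into $^\rho\mathcal{E}(\Omega)$ via convolution (Definition~\ref{D:Convolution}).
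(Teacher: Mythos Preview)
Your overall strategy and final formula $D(x)=\rho^{-d}\Theta(x/\rho)$ coincide with the paper's, and you correctly isolate the real difficulty as the coordination between the order of moment-killing and the $\mathcal{M}_\rho$-moderateness of derivatives. However, your resolution of that difficulty has a genuine gap: the standard assertion ``for each $n$ there is $\varphi_n\in\mathcal{D}(B(0,1))$ with unit integral, vanishing moments up to order $n$, and $\|\partial^\alpha\varphi_n\|_\infty\le n^{|\alpha|}$ for $|\alpha|\le n$'' is stated without proof, and the parenthetical ``suitable standard normalization'' does not supply one. Moment-killing constructions typically produce derivative bounds growing much faster than polynomially in $n$, so this step cannot be waved through; and without a quantitative standard bound to transfer, you have no handle on $\sup|{^*\partial^\alpha\varphi_\nu}|$ at all.

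The paper sidesteps this entirely by a saturation (rather than transfer-then-overflow) argument that builds the derivative bound into the internal sets themselves. One fixes an infinitely large $M\in\mathcal{M}_\rho(^*\mathbb{R})$, for instance $M=|\ln\rho|$, and sets
\[
\mathcal{A}_n=\Bigl\{\varphi\in{^*\mathcal{B}_n}:\ \sup_{\|x\|\le 1/n}|\partial^\alpha\varphi(x)|<\tfrac{M}{n}\ \text{for all}\ |\alpha|\le n\Bigr\},
\]
where $\mathcal{B}_n$ encodes the support, unit-mass, and vanishing-moment conditions. The key observation is that for each \emph{standard} $n$, any standard $\varphi\in\mathcal{B}_n$ has finite real derivative suprema, automatically smaller than the infinitely large $M/n$; hence $\mathcal{A}_n\neq\varnothing$. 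The $\mathcal{A}_n$ are internal and nested, so sequential saturation yields $\Theta\in\bigcap_n\mathcal{A}_n$, and then $|\partial^\alpha\Theta|<M\in\mathcal{M}_\rho$ for every standard $\alpha$, giving property~(4) immediately after rescaling. Your approach is salvageable---fix a standard sequence $(\varphi_n)$, let $B_n=\max_{|\alpha|\le n}\|\partial^\alpha\varphi_n\|_\infty$, and apply overflow to $\{n\in{^*\mathbb{N}}:{^*B_n}<|\ln\rho|\}$---but this is exactly what the saturation argument accomplishes in one stroke, without ever needing a quantitative estimate on $B_n$.
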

  
\Proof: For the original proof we refer to (M. Oberguggenberger and T.
Todorov~\cite{OberTod98}). Here is a {\bf summary of this result}:

{\bf Step 1)}  For every
$n\in\mathbb{N}$, we define the set of test-functions:
\begin{align}\label{E: A) Bn}
	 \mathcal{B}_n  =  \{&\varphi\in\mathcal{D}(\mathbb{R}^d) :\,  \\\notag															
 																					&\int_{\mathbb{R}^d}\varphi(x)\, dx=1,\\\notag
																						&\int_{\mathbb{R}^d}x^\alpha\varphi(x)\, dx=0 \text{\; for all\; }
\alpha\in\mathbb{N}_0^d,\; 1\leq|\alpha|\leq n,\\\notag
 &||x||\geq 1/n \Rightarrow\varphi(x)=0,\\\notag
																						&1\leq\int_{\mathbb{R}^d}|\varphi(x)|\, dx<  1+\frac{1}{n}\, \}.
\end{align}

\begin{lemma}[Properties of $\mathcal{B}_n$]\label{L: Properties of Bn}  

	{\bf ($B_1$)} $\mathcal{B}_n\not=\varnothing$ for all $n$.

	{\bf ($B_2$)} $\mathcal{D}(\mathbb{R}^d)=\mathcal{B}_0\supset\mathcal{B}_1\supset\mathcal{B}_2\supset\mathcal{B}_3\supset\dots$.
(Thus $\mathcal{B}_n\cap\mathcal{B}_n=\mathcal{B}_{\max{(m, n)}}$).

	{\bf ($B_3$)} $\cap_n\; \mathcal{B}_n=\varnothing$. 
\end{lemma}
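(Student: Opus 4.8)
\textbf{Proof plan for Lemma (Properties of $\mathcal{B}_n$).}

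The plan is to verify the three bullet properties in turn, with the only genuine work being $(B_1)$. For $(B_1)$, the goal is to produce, for each fixed $n\in\mathbb{N}$, a test function $\varphi\in\mathcal{D}(\mathbb{R}^d)$ supported in the ball $\{\|x\|<1/n\}$, with $\int\varphi=1$, with vanishing moments $\int x^\alpha\varphi(x)\,dx=0$ for all multi-indices $\alpha$ with $1\le|\alpha|\le n$, and with $1\le\int|\varphi|<1+\tfrac1n$. First I would start from a fixed even bump function $\psi_0\in\mathcal{D}(\mathbb{R}^d)$ supported in the unit ball with $\int\psi_0=1$; by rescaling, $\psi_\varepsilon(x)=\varepsilon^{-d}\psi_0(x/\varepsilon)$ is supported in $\{\|x\|<\varepsilon\}$ with $\int\psi_\varepsilon=1$. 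Then I would correct the moments: the space of test functions supported in $\{\|x\|<\varepsilon/2\}$, modulo constraints, has enough freedom to kill the finitely many moments of orders $1,\dots,n$. Concretely, I would look for $\varphi$ of the form $\varphi=\sum_{|\beta|\le n} c_\beta\,\partial^\beta\psi_\varepsilon$ (with $c_0=1$, say), or equivalently $\varphi=P(\partial)\psi_\varepsilon$ for a polynomial $P$ of degree $\le n$ with constant term $1$; using integration by parts, $\int x^\alpha\partial^\beta\psi_\varepsilon\,dx = (-1)^{|\beta|}\int \partial^\beta(x^\alpha)\,\psi_\varepsilon\,dx$, which is a lower-triangular-type linear system in the unknowns $c_\beta$ once one orders multi-indices by degree. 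Solving this finite linear system (its matrix is invertible because the diagonal entries $\int x^\beta\partial^\beta\psi_\varepsilon\,dx=(-1)^{|\beta|}\beta!$ are nonzero and the system is triangular with respect to the degree filtration) gives a $\varphi$ with $\int\varphi=1$ and all moments of order $1,\dots,n$ vanishing, still supported in $\{\|x\|<\varepsilon\}$.

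The remaining point in $(B_1)$ is the normalization $1\le\int|\varphi|<1+\tfrac1n$. The lower bound is automatic from $\left|\int\varphi\right|=1\le\int|\varphi|$. For the upper bound I would exploit a scaling/averaging trick: replace $\varphi$ by an average of translated-and-rescaled copies, or more simply note that the coefficients $c_\beta$ for $|\beta|\ge1$ in the construction above can be forced to be as small as we like while preserving $c_0=1$ — indeed, rescaling $\psi_\varepsilon$ makes the off-diagonal corrections scale with positive powers of $\varepsilon$, so by choosing $\varepsilon=\varepsilon(n)$ small enough (and then also $\le 1/n$) we get $\int|\varphi-\psi_\varepsilon|$ arbitrarily small, hence $\int|\varphi|\le\int\psi_\varepsilon + \int|\varphi-\psi_\varepsilon| < 1+\tfrac1n$ provided $\psi_0\ge0$ so that $\int\psi_\varepsilon=\int|\psi_\varepsilon|=1$. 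Taking $\varepsilon\le 1/n$ simultaneously ensures the support condition $\|x\|\ge1/n\Rightarrow\varphi(x)=0$. This completes $(B_1)$; I expect the bookkeeping of the triangular moment system and the quantitative smallness estimate to be the main obstacle, though both are standard.

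For $(B_2)$, the chain $\mathcal{D}(\mathbb{R}^d)=\mathcal{B}_0\supset\mathcal{B}_1\supset\cdots$ is immediate by inspection of the defining conditions in \eqref{E: A) Bn}: each constraint imposed at level $n$ (moment conditions up to order $n$, support in $\{\|x\|<1/n\}$, and $\int|\varphi|<1+\tfrac1n$) is strictly stronger than the corresponding one at level $n-1$, so $\varphi\in\mathcal{B}_n\Rightarrow\varphi\in\mathcal{B}_{n-1}$; and $\mathcal{B}_0=\mathcal{D}(\mathbb{R}^d)$ because at $n=0$ there are no moment conditions beyond $\int\varphi=1$, the support condition is vacuous, and $1\le\int|\varphi|<\infty$ holds for any nonzero test function with $\int\varphi=1$ after noting the convention that the level-$0$ constraints reduce to $\int\varphi=1$. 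The parenthetical remark $\mathcal{B}_m\cap\mathcal{B}_n=\mathcal{B}_{\max(m,n)}$ then follows from the nesting. For $(B_3)$, suppose $\varphi\in\bigcap_n\mathcal{B}_n$. Then $\varphi$ is supported in $\{\|x\|<1/n\}$ for every $n$, hence $\supp\varphi\subseteq\bigcap_n\{\|x\|<1/n\}=\{0\}$, so $\varphi$ is a smooth function supported at a single point, forcing $\varphi\equiv 0$; but this contradicts $\int\varphi=1$. Therefore $\bigcap_n\mathcal{B}_n=\varnothing$, which is exactly the assertion needed to match Definition~\ref{D: kappa-Good Two Valued Measures}(iv)(c) in the later application.
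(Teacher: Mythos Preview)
The paper does not actually prove this lemma; it merely states it and refers the reader to Oberguggenberger--Todorov~\cite{OberTod98} for the original argument. So there is no paper-side proof to compare against directly, and your proposal must stand on its own.

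Your arguments for $(B_2)$ and $(B_3)$ are fine (the identification $\mathcal{B}_0=\mathcal{D}(\mathbb{R}^d)$ is a convention rather than a consequence of the displayed definition, but that is harmless). The gap is in $(B_1)$, specifically in the $L^1$ bound $\int|\varphi|<1+\tfrac{1}{n}$. With your ansatz $\varphi=\sum_{|\beta|\le n}c_\beta\,\partial^\beta\psi_\varepsilon$ and $c_0=1$, the triangular moment system does give $c_\beta=O(\varepsilon^{|\beta|})$ as you say. But the $L^1$ norms of the basis functions blow up at exactly the reciprocal rate: $\|\partial^\beta\psi_\varepsilon\|_{L^1}=\varepsilon^{-|\beta|}\|\partial^\beta\psi_0\|_{L^1}$. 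Hence each correction term contributes
\[
|c_\beta|\,\|\partial^\beta\psi_\varepsilon\|_{L^1}\;=\;O(\varepsilon^{|\beta|})\cdot\varepsilon^{-|\beta|}\|\partial^\beta\psi_0\|_{L^1}\;=\;O(1),
\]
a quantity that does \emph{not} tend to zero as $\varepsilon\to 0$. (Concretely, in one dimension with $\psi_0$ even, killing the second moment forces $c_2=-\tfrac{1}{2}\varepsilon^2 m_2$, and then $|c_2|\,\|\psi_\varepsilon''\|_{L^1}=\tfrac{1}{2}m_2\|\psi_0''\|_{L^1}$, a fixed positive constant.) So your claim that ``$\int|\varphi-\psi_\varepsilon|$ is arbitrarily small'' is false for this ansatz, and the bound $\int|\varphi|<1+\tfrac{1}{n}$ does not follow.

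The fix is to decouple the scale of the correction from the scale of $\psi_\varepsilon$. Fix once and for all finitely many test functions $\eta_\gamma$ supported in $\{\|x\|<1/n\}$ whose moment map $(\int x^\alpha\eta_\gamma)_{|\alpha|\le n}$ has full rank, so that any prescribed moment vector is achieved by some $\chi=\sum a_\gamma\eta_\gamma$ with $\|(a_\gamma)\|\le C_n\|(t_\alpha)\|$. Now take $\psi_\varepsilon$ with $\varepsilon\ll 1/n$; its unwanted moments are $\int x^\alpha\psi_\varepsilon=\varepsilon^{|\alpha|}m_\alpha=O(\varepsilon)$, so the correcting $\chi$ has $\int|\chi|=O_n(\varepsilon)$, and $\varphi=\psi_\varepsilon+\chi$ satisfies $\int|\varphi|\le 1+O_n(\varepsilon)<1+\tfrac{1}{n}$ for $\varepsilon$ small enough. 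This is the step your derivative ansatz cannot deliver.
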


{\bf Step 2)} Find the {\bf non-standard extension} of $\mathcal{B}_n$:

\begin{align}\label{E: A) Bn}
	 ^*\mathcal{B}_n  =  \{&\varphi\in{^*\mathcal{D}}(\mathbb{R}^d) :\,  \\\notag
 																					&\int_{^*\mathbb{R}^d}\varphi(x)\, dx=1,\\\notag
																						&\int_{^*\mathbb{R}^d}x^\alpha\varphi(x)\, dx=0 \text{\; for all\; }
\alpha\in\mathbb{N}_0^d,\; 1\leq|\alpha|\leq n,\\\notag
 &||x||\geq 1/n \Rightarrow\varphi(x)=0,\\\notag
																						&1\leq\int_{^*\mathbb{R}^d}|\varphi(x)|\, dx<  1+\frac{1}{n}\, \}.
\end{align}

	{\bf Step 3)} Let $M$ be an infinitely large positive number in
$\mathcal{M}_\rho(^*\mathbb{R})$. For example, $M=|\ln{\rho}|$ will do. Define the
internal sets:
\[
\mathcal{A}_n=\{\varphi\in{^*\mathcal{B}_n} :\quad {^*\sup}_{||x||\leq
1/n}|\partial^\alpha\varphi(x)|<\frac{M}{n} \text{\, for all\, } |\alpha|\leq n \},
\]
We observe that (trivially)
$^*\mathcal{D}(\mathbb{R}^d)\supset\mathcal{A}_1\supset\mathcal{A}_2\supset\dots$.  Also,
$\mathcal{A}_n\not=\varnothing$ for all $n$. Indeed, $\varphi\in\mathcal{B}_n$ implies
$^*\varphi\in\mathcal{A}_n$ since 
\[
{^*\sup}_{||x||\leq 1/n}|\partial^\alpha(^*\varphi(x))|={\sup}_{||x||\leq
1/n}|\partial^\alpha\varphi(x)|< \frac{M}{n},
\]
and ${\sup}_{||x||\leq
1/n}|\partial^\alpha\varphi(x)|$ is a real number and $M/n$ is an infinitely large positive number for
any
$n\in\mathbb{N}$. Thus there exists
\[
\Theta\in\bigcap_{n=1}^\infty\, \mathcal{A}_n \not=\varnothing,
\]
by Saturation Principle (Theorem~\ref{T: Sequential Saturation}). Notice that $\Theta$ {\bf
satisfies all properties (1)-(4)} of the definition of $\rho$-delta function {\bf except (possibly)
the property (5)}.

{\bf Step 3)} The non-standard function $D\in{^*\mathcal{D}}(\mathbb{R}^d)$, defined by the formula
\[
D(x)=\rho^{-d}\Theta(x/\rho),
\]
is the $\rho$-delta function we are looking for.
\begin{definition} The mapping $T\to Q_\Omega\left(^*T\star D\right)$ from
$\mathcal{E}^\prime(\Omega)$ to $^\rho\mathcal{E}(\Omega)$ is the embedding of the space of
distributions with compact support in $\Omega$.
\end{definition}

{\bf Step 4)} 
\begin{definition}[$\rho$-Cut-Off Function] $\Pi_\Omega\in{^*\mathcal{D}}(\Omega)$ is called a {\bf
$\rho$-cut-off function} for the open set $\Omega\subseteq\mathbb{R}^d$ if 
\begin{description}
\item{\bf (a)} $\Pi_\Omega(x)=0$ for all $x\in\mu(\Omega)$.
\item{\bf (b)}  $\supp(\Pi_\Omega)\subseteq\{x\in{^*\Omega}\mid {^*d(x, \partial\Omega)}\geq\rho\}$
\end{description}
\end{definition}
\begin{lemma} There exists a $\rho$-cut-off-function.
\end{lemma}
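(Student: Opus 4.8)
The plan is to construct a $\rho$-cut-off function explicitly by transferring a standard exhaustion of $\Omega$ by compact sets together with a standard smooth partition of unity, exactly in the spirit of the $\rho$-delta construction just carried out. First I would recall the standard fact that every open set $\Omega\subseteq\mathbb{R}^d$ admits an increasing exhaustion $\Omega=\bigcup_{n=1}^\infty K_n$ by compact sets with $K_n\subset\subset K_{n+1}^\circ$, and that for each $n$ there is a function $\chi_n\in\mathcal{D}(\Omega)$ with $0\le\chi_n\le 1$, $\chi_n\equiv 1$ on $K_n$, and $\supp(\chi_n)\subseteq\{x\in\Omega\mid d(x,\partial\Omega)>1/n\}\cap K_{n+1}$. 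The crucial standard property is that for every $r\in\Omega$ there is an $n$ with $\chi_m(r)=1$ for all $m\ge n$, and for every $x$ with $d(x,\partial\Omega)<1/n$ we have $\chi_n(x)=0$.

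Next I would transfer this to $^*\mathcal{E}(\Omega)$: by the Extension Principle (Theorem~\ref{T: Extension Principle}) and the Transfer Principle (Theorem~\ref{T: Transfer Principle}), the sequence $(\chi_n)$ has a non-standard extension $(^*\chi_n)_{n\in{^*\mathbb{N}}}$, and I would pick an infinitely large $\nu\in{^*\mathbb{N}}$ with $1/\nu>\rho$ (such $\nu$ exists since $\rho$ is a positive infinitesimal and $^*\mathbb{N}$ is unbounded; e.g. take $\nu=\lfloor 1/\sqrt{\rho}\rfloor$ so that $1/\nu\approx\sqrt{\rho}\gg\rho$). Then I set $\Pi_\Omega = {^*\chi_\nu}$. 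By transfer of the support property, $\supp({^*\chi_\nu})\subseteq\{x\in{^*\Omega}\mid {^*d(x,\partial\Omega)}\geq 1/\nu\}\subseteq\{x\in{^*\Omega}\mid {^*d(x,\partial\Omega)}\geq\rho\}$, which is property (b). For property (a), let $x\in\mu(\Omega)$, so $x=r+dx$ with $r\in\Omega$ and $\|dx\|\approx 0$; choose a standard $n$ with $\chi_m\equiv 1$ near $r$ for $m\ge n$, more precisely with $\chi_n\equiv 1$ on a standard ball $B(r,\varepsilon)\subseteq\Omega$. Since $\nu>n$ and $\chi_\nu\equiv 1$ wherever $\chi_{n}\equiv1$ would require some care — instead I would directly note that $x\in{^*B(r,\varepsilon)}$ and that by transfer of ``$\chi_n\equiv 1$ on $B(r,\varepsilon)$'' we get $^*\chi_n(x)=1$; but I actually want $\Pi_\Omega(x)=0$, so I have the cut-off convention reversed and should instead take $\Pi_\Omega=1-{^*\chi_\nu}$ (or build the $\chi_n$ with the complementary convention so that $\chi_n\equiv 0$ on $K_n$ and $\equiv 1$ off a neighborhood of $\partial\Omega$).

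So the cleaner statement of the plan: choose standard $\psi_n\in{^*\mathcal{E}}(\Omega)$ with $\psi_n\equiv 0$ on $K_n$, $\psi_n\equiv 1$ on $\{x\in\Omega\mid d(x,\partial\Omega)<1/(n+1)\}\cap\Omega$, $0\le\psi_n\le 1$, transfer, and set $\Pi_\Omega={^*\psi_\nu}$ for $\nu$ infinitely large with $1/\nu>\rho$; property (a) follows because every $x\in\mu(\Omega)$ lies in ${^*K_n}$ for some standard $n<\nu$, hence ${^*\psi_\nu}(x)=0$ by transfer of $\psi_\nu\equiv 0$ on $K_\nu\supseteq K_n$, and property (b) follows from transfer of the support condition together with $1/\nu>\rho$. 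The main obstacle is purely bookkeeping: making sure the standard cut-offs are chosen with mutually compatible support and level-set conditions so that a single infinitely large index $\nu$ simultaneously kills $\mu(\Omega)$ (which requires $\nu$ large enough that $\mu(\Omega)\subseteq{^*K_\nu}$, automatic since $\mu(\Omega)\subseteq\bigcup_n{^*K_n}$ and any infinite $\nu$ dominates all standard $n$) and respects the $\rho$-collar condition (which requires $1/\nu>\rho$, i.e. $\nu$ not \emph{too} large); both constraints are met by e.g. $\nu=\lfloor\rho^{-1/2}\rfloor$. Everything else is a routine application of Transfer, so no genuine difficulty remains beyond setting up the standard data correctly.
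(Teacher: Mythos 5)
Your final construction is wrong, and the error comes from trusting a typo in the definition rather than the paper's intent. Clause (a) of the definition of a $\rho$-cut-off function must read $\Pi_\Omega(x)=1$ (or $\Pi_\Omega(x)-1\in\mathcal{N}_\rho(^*\mathbb{C})$) for all $x\in\mu(\Omega)$, not $=0$: the whole point of $\Pi_\Omega$ is that $({^*T}\,\Pi_\Omega)\star D$ should still represent $T$ near standard points, and the paper's own construction $\Pi_\Omega=\chi\star D$, with $\chi$ the characteristic function of $\Omega_\rho=\{x\in{^*\Omega}:{^*d}(x,\partial\Omega)\ge 2\rho,\ ||x||<1/\rho\}$, evaluates on $\mu(\Omega)$ to $\int_{||\xi||\le\rho}D(\xi)\,d\xi\approx 1$, not $0$. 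Your ``cleaner'' version, with $\psi_\nu\equiv 0$ on $K_\nu$ and $\psi_\nu\equiv 1$ on the boundary collar $\{x:d(x,\partial\Omega)<1/(\nu+1)\}$, satisfies the literal (a) but flatly violates (b): its support contains points with ${^*d}(x,\partial\Omega)<\rho$, and no choice of $\nu$ repairs that. There is no ``transfer of the support condition'' that yields (b) for $\psi_\nu$, because the standard $\psi_n$ were deliberately built to be nonzero near $\partial\Omega$; moreover such a $\psi_n$ need not lie in $\mathcal{D}(\Omega)$ at all when the collar is unbounded, whereas the definition requires $\Pi_\Omega\in{^*\mathcal{D}}(\Omega)$.

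The fix is to keep your first construction and read (a) correctly: set $\Pi_\Omega={^*\chi_\nu}$, where $\chi_n\in\mathcal{D}(\Omega)$, $\chi_n\equiv 1$ on $K_n$, $\supp(\chi_n)\subseteq\{x:d(x,\partial\Omega)>1/n\}$, $\chi_m\equiv 1$ on $K_n$ for all $m\ge n$, and $\nu\in{^*\mathbb{N}}\setminus\mathbb{N}$ satisfies $1/\nu\ge\rho$ (e.g. $\nu=\lfloor\rho^{-1/2}\rfloor$). Then every $x\in\mu(\Omega)$ lies in ${^*K_n}$ for some standard $n$, so ${^*\chi_\nu}(x)=1$ by transfer, and $\supp({^*\chi_\nu})\subseteq\{x:{^*d}(x,\partial\Omega)>1/\nu\}\subseteq\{x:{^*d}(x,\partial\Omega)\ge\rho\}$. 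That corrected version is sound and is a genuinely different route from the paper's: the paper mollifies the characteristic function of an internal shrunken region with the already-constructed $\rho$-delta function $D$, which handles smoothness and boundedness in one stroke, while you transfer a standard exhaustion of cut-offs and evaluate at an infinite index subject to the double constraint $n\ll\nu\le 1/\rho$ --- a constraint you correctly identified but then abandoned.
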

\Proof Let $\Omega_\rho =\{x\in{^*\Omega}\mid {^*d(x, \partial\Omega)}\geq 2\rho,
||x||<1/\rho\, \}$ and let $\chi$ be the characteristic function of $\Omega_\rho$. The function
$\Pi_\Omega= \chi\star D$ is the $\rho$-cut-off function we are looking for. $\blacktriangle$
\begin{definition} The mapping $T\to Q_\Omega\left(^*T\Pi_\Omega)\star D\right)$ from
$\mathcal{D}^\prime(\Omega)$ to $^\rho\mathcal{E}(\Omega)$ is the embedding the existence of which was
stated in Theorem~\ref{T: Existence of an Embedding}. 
\end{definition}
	
	The proof of Theorem~\ref{T: Existence of an Embedding} is complete. $\blacktriangle$

\newpage

\end{document}